%
%
%
%
\documentclass[12pt]{amsart}


\usepackage{graphicx}
\usepackage[utf8]{inputenc}
\usepackage[english]{babel}
\usepackage{csquotes}
\usepackage{amssymb}
\usepackage{tikz-cd}
\usepackage{csquotes}

\usepackage{amsmath,amsfonts,pictex,graphicx,fullpage,etex,tikz,hyperref}

\usepackage{todonotes}

\usepackage{enumerate}
\usepackage{enumitem}

\newtheorem{theorem}{Theorem}[section]
\newtheorem{lemma}[theorem]{Lemma}
\newtheorem{proposition}[theorem]{Proposition}
\newtheorem{notation}[theorem]{Notation}
\newtheorem{corollary}[theorem]{Corollary}

\theoremstyle{definition}
\newtheorem{definition}[theorem]{Definition}

\theoremstyle{remark}
\newtheorem{remark}[theorem]{Remark}

\newtheorem*{claim}{Claim}

\numberwithin{equation}{section}

\newcommand{\T}{\mathrm{T}}
\newcommand{\dd}{\mathrm{d}}

\newcommand{\Aut}{\mathrm{Aut}}
\newcommand{\End}{\mathrm{End}}
\newcommand{\id}{\mathrm{Id}}
\newcommand{\Z}{\mathbb{Z}}
\newcommand{\s}{\mathcal{S}}
\newcommand{\n}{\mathbf{n}}
\newcommand{\dist}{\mathrm{dist}}
\newcommand{\Q}{\mathbb{Q}}
\newcommand{\R}{\mathbb{R}}
\newcommand{\K}{\mathbb{K}}

\newcommand{\N}{\mathbb{N}}
\newcommand{\Vol}{\mathrm{Vol}}
\newcommand{\GL}{\mathrm{GL}}
\newcommand{\acton}{\curvearrowright}
\newcommand{\Fix}{\mathrm{Fix}}

\newcommand{\holder}{H\"{o}lder }
\newcommand{\smooth}{ C^{\infty}}

\hoffset=-1.5cm

\begin{document}

\title{EXPONENTIAL MIXING AND SMOOTH CLASSIFICATION OF COMMUTING EXPANDING MAPS}

\author{RALF SPATZIER $^\ast$}
\address{DEPARTMENT OF MATHEMATICS, UNIVERSITY OF MICHIGAN,
    ANN ARBOR, MI, 48109. U.S.A.}
\email{spatzier@umich.edu}

\author{LEI YANG $^{\ast \ast}$}
\address{EINSTEIN INSTITUTE OF MATHEMATICS, THE HEBREW UNIVERSITY OF JERUSALEM, JERUSALEM, 9190401, ISRAEL.}
\email{yang.lei@mail.huji.ac.il}

\thanks{$^{\ast }$ Supported in part by the NSF grants  DMS-1307164 and an Eisenbud Professorship at MSRI}

\thanks{$^{\ast \ast}$ Supported in part by Postdoctoral Fellowships at MSRI and The Hebrew University of Jerusalem}


\date{}




\begin{abstract}   We show that genuinely higher rank expanding actions of  abelian semi-groups on compact manifolds are 
$C^{\infty}$-conjugate to affine actions on infra-nilmanifolds.   This is based on the classification of expanding diffeomorphisms up to \holder conjugacy by Gromov and Shub, and is similar to recent work on smooth classification of higher rank Anosov actions on tori and nilmanifolds. To prove regularity of the conjugacy in the higher rank setting, we establish exponential mixing of  solenoid actions induced from  semi-group actions by nilmanifold endomorphisms, a result of independent interest.  We then proceed similar to the case of  higher rank Anosov actions.

\end{abstract}

\maketitle


\section{Introduction}


\subsection{Smooth classification of higher rank expanding actions}
\par Let $G$ be a connected and simply connected nilpotent Lie group. Let $\End(G)$ and $\Aut(G)$ denote the semi-group of endomorphisms of $G$ and group of automorphisms of $G$ respectively. Let $\Gamma \subset G$ be a discrete subgroup such that the quotient space $\Gamma\setminus G$ is compact. Then we call the compact manifold $\Gamma\setminus G$ a {\em nilmanifold}. A compact manifold $M$ is called an  {\em infra-nilmanifold} if it admits a finite nilmanifold covering $\Gamma \setminus G$. If $A \in G \rtimes \End(G)$ satisfies that $A (\Gamma ) \subset \Gamma$, then $A$ induces a smooth map on $\Gamma\setminus G$, a so-called  
 {\em affine nilendomorphism} of $\Gamma\setminus G$. The $\Aut(G)$ component of 
$A$ is called the {\em linear part} of $A$. A map on $M$ is called an {\em affine infra-nilendomorphism} if it  lifts to an {\em affine nilendomorphism} of a finite nilmanifold covering $\Gamma \setminus G$.

\par Let $M$ be a compact smooth manifold endowed with a Riemannian metric $\| \cdot \|$.  We say a smooth map $$\tau: M \rightarrow M$$
is {\em expanding} if there exists a constant $c>1$
such that for any $v$ in the tangent bundle $ \T M$, we have 
$$\| D\tau (v)\| \geq c\|v\|.$$
It is natural to 
ask if one can classify all expanding maps,
up to conjugacy. Based on the work of Shub \cite{shub1970}, Gromov \cite{gromov1981} found the best possible answer to this question: 

\noindent he proved that every expanding map on
a compact manifold is topologically conjugate to an {\em affine infra-nil endomorphism.} In other words, for a 
finite cover $\overline{M}$ of $M$, there exists a compact nilmanifold $\Gamma \setminus G$, a homeomorphism $\phi: \overline{M}  \rightarrow \Gamma \setminus G$ and an affine nilendomorphism $A$ on $\Gamma \setminus G$ such that the following diagram
commutes:
$$
\begin{tikzcd}
\overline{M}\arrow{r}{\phi} \arrow{d}{\tau} & \Gamma\setminus G \arrow{d}{A} \\
\overline{M} \arrow{r}{\phi} & \Gamma\setminus G.
\end{tikzcd}$$

Moreover, $\phi$ is  bi-H\"older. The linear part of $A= \phi \circ \tau \circ \phi^{-1}$ is given by 
the  induced action   of $\tau$ on the fundamental group $\Gamma$ of $M$. We remark that any  expanding map has at least one  fixed point $p \in M$ \cite[Theorem 1]{Shub1969} so that the induced map on $\pi _1 (M,p)$ is well defined. Finally, if $M =  \Gamma\setminus G $ then $\phi$ can be chosen to be homotopic to the identity $\id$. We remark that Dekimpe clarified some of the algebraic issues with the notions of affine nilendomorphisms and the proof of the Gromov and Shub classification result  \cite{dekimpe2012}.

We remark that in dimensions at least 5, passing to a finite cover $\overline{M}$ of $M$ if necessary, we can further assume that $\overline{M}$ is actually diffeomorphic to a nilmanifold $\Gamma \setminus G$ .   Indeed, 
  exotic differentiable structures on nilmanifolds  always become standard on a finite cover - cf. the Appendix by J. Davis in \cite{kalinin_fisher_spatzier2013}.


\par In general, an expanding map is not $C^1$-conjugate 
to an affine infra-nilendomorphism. One can construct simple examples by perturbing a suitable affine example locally at a fixed point, changing the derivative at the fixed point.   Furthermore, Farrell and Jones constructed expanding maps on  tori with exotic differentiable structures \cite{farrelljones1978}. For higher rank $\Z^k_{+}$ actions ($k\geq 2$),  the situation changes dramatically.  Note that higher rank is needed as one can always take product actions of individual non-algebraic expanding maps. We also have to avoid finite symmetries that disguise a product of rank one actions. Hence we make the following definition.

\begin{definition}
\label{def_factor_rank}
Let $\rho$ be a $C^{\infty}$ $\Z^{k}_{+}$ ($k\geq 2$) action on a manifold $M$. We call $\rho$ 
 {\em genuinely higher rank} if for all finite index sub-semigroups $Z$ of
$\Z^k$, no continuous quotient of any finite extension of the $Z$-action factors through a finite extension of 
a $\Z _+$ action.
\end{definition}
\par It is easy to show that after passing to a finite index sub-semigroup and a finite cover, a $\Z^k _+$ action $\rho$ with an expanding map is $C^0$ conjugate to an affine action $\rho_l$ on a nilmanifold via the conjugacy $\phi$ we get from a single expanding map, cf. Lemma \ref{lemma-finite-cover-reduction}. We will use genuine higher rank in this paper to show that the \holder conjugacy $\phi$ is actually $C^{\infty}$.

\par We summarize the main result of this paper as follows:
\begin{theorem}
\label{goal_thm}
Let $\rho$ be a $C^{\infty}$ $\Z^k_{+}$ ($k\geq 2$) action on a compact manifold $M$. 
Suppose that $\rho$ is {\em genuinely higher rank} and contains an expanding map $\rho(a)$, for some $a \in \Z^{k}_{+}$. Then $M$ is  diffeomorphic to an infra-nilmanifold $\mathcal{M}$ and $\rho$ is $C^{\infty}$ conjugate to a $\Z^k_+$ action $\rho_l$ on $\mathcal{M}$ by affine nil-endomorphisms.
\end{theorem}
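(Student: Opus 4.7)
The plan is to apply Lemma \ref{lemma-finite-cover-reduction} immediately, reducing to the situation where $\overline M$ is a nilmanifold $\Gamma\setminus G$ (after a finite cover and a finite-index sub-semigroup, neither of which affects the conclusion) and the H\"older conjugacy $\phi\colon\overline M\to\Gamma\setminus G$ coming from Gromov-Shub intertwines the full action $\rho$ with an algebraic action $\rho_l$ by affine nil-endomorphisms. With the topological part of the statement thus already in hand, the entire content of the theorem reduces to the claim that $\phi$ is actually $C^\infty$.

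To upgrade $\phi$, I would pass to the inverse limit (solenoid) $\widetilde M$ of $\overline M$ under $\rho$, on which the $\mathbb{Z}^k_+$ semi-group action extends to a genuine $\mathbb{Z}^k$-action $\widetilde\rho$; the parallel construction on the model gives an algebraic solenoid $\widetilde X=\varprojlim\Gamma\setminus G$ with a $\mathbb{Z}^k$-action $\widetilde\rho_l$, and $\phi$ lifts to a bi-H\"older conjugacy $\widetilde\phi\colon\widetilde M\to\widetilde X$. On $\widetilde X$ one has a well-defined coarse Lyapunov decomposition of the tangent bundle determined by the joint spectral data of the linear parts of $\widetilde\rho_l$; the genuine higher rank hypothesis guarantees that distinct coarse Lyapunov functionals are not positive multiples of one another, so every coarse Lyapunov subspace is isolated by some element of the action.

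The core of the argument is to show that $\widetilde\phi$ is $C^\infty$ along every coarse Lyapunov foliation, following the template developed for smooth classification of higher rank Anosov actions on nilmanifolds. Exponential mixing of $\widetilde\rho_l$ on $\widetilde X$, which is the auxiliary theorem announced in the abstract, is fed into a matching-function / normal-forms argument: for each coarse Lyapunov foliation $W$ one selects an element $b\in\mathbb{Z}^k$ which expands $W$ while contracting a complementary distribution along a different generator, and exponential mixing together with an equidistribution estimate forces $\widetilde\phi$ to intertwine the affine leafwise structure. Once leafwise smoothness has been established for enough coarse Lyapunov foliations that their tangents span $T\widetilde X$, iterated application of the Journ\'e regularity lemma yields global $C^\infty$ regularity of $\widetilde\phi$. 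Smoothness of $\widetilde\phi$ then descends to $\phi$ on $\overline M$, and the theorem follows from the finite-cover reduction.

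The principal obstacle is the leafwise smoothness step, which rests on exponential mixing for the solenoid action $\widetilde\rho_l$. Unlike the classical setting of nilmanifold automorphisms, here one has only a semi-group of endomorphisms with possibly non-trivial kernels, and the solenoid is built precisely to invert them; proving quantitative decay of matrix coefficients in this inverse-limit setting requires combining representation-theoretic analysis on $\widetilde X$ with careful control of how the semi-group dualizes on the Pontryagin dual. A secondary but important technical point is that $G$ is only nilpotent, so coarse Lyapunov subspaces need not be Lie-bracket invariant, and the normal-forms machinery must be adapted to the nilpotent filtration of $\mathrm{Lie}(G)$; both of these difficulties mirror those surmounted in recent higher rank Anosov classification work, which is why the paper foregrounds that parallel.
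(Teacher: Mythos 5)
Your outline follows the paper's strategy in broad strokes (reduce to nilmanifolds, pass to the solenoid, feed exponential mixing into a cohomological-equation argument, finish with a Journ\'e/Rauch--Taylor type regularity lemma), but there are genuine gaps. First, the claim that the topological part of the statement is ``already in hand'' after Lemma \ref{lemma-finite-cover-reduction} is not correct: the Gromov--Shub conjugacy $\phi$ is only a homeomorphism, and by Farrell--Jones the manifold $M$ may a priori carry an exotic differentiable structure, so one cannot simply ``reduce to the situation where $\overline{M}$ is a nilmanifold'' as a smooth manifold. The paper handles this by invoking Davis' appendix to \cite{kalinin_fisher_spatzier2013} (exotic structures on nilmanifolds become standard on a finite cover) when $\dim M \geq 5$, and by a separate product trick $\phi\times\phi\colon M\times M\to\mathcal{M}\times\mathcal{M}$ when $\dim M=4$; your proposal omits both, and without them the machinery cannot be transplanted to the algebraic model.

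Second, your core step --- ``show that $\widetilde\phi$ is $C^\infty$ along every coarse Lyapunov foliation'' --- presupposes that the coarse Lyapunov distributions of the \emph{nonlinear} action $\rho$ integrate to H\"older foliations with $C^\infty$ leaves. This is not automatic and is a substantial part of the paper (\S\ref{smoothness of leaves}): one must propagate uniform hyperbolicity from one real Weyl chamber to all adjacent ones \`a la Rodriguez Hertz--Wang, which itself consumes the exponential mixing estimate together with Pesin theory and Ledrappier--Young's analysis of conditional measures; only then are the leafwise derivatives of $h$ even well-defined. Finally, in your ``obstacles'' paragraph you propose proving exponential mixing on the solenoid by representation-theoretic decay of matrix coefficients on the Pontryagin dual; the paper explicitly notes that representation theory (like Fourier analysis and transfer operators) does not work for general semi-groups of nilmanifold endomorphisms, and instead proves mixing via effective equidistribution of box maps and of polynomial orbits (Gorodnik--Spatzier, Green--Tao) combined with real and $p$-adic Diophantine estimates on the coarse Lyapunov subspaces. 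So while the skeleton of your argument matches the paper, the three load-bearing steps --- the smooth-structure reduction, the existence of smooth leaves for $\rho$, and the actual mechanism behind exponential mixing --- are either missing or would fail as proposed.
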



\subsection{Related results}
\par Rigidity of higher rank actions on compact manifolds has been studied in different contexts. For Anosov actions, Rodriguez Hertz \cite{hertz2007} classified $\Z^k$, $k \geq 2$, actions on tori containing one 
Anosov element and satisfying certain additional conditions. His work required that the rank $k$ of the action is comparable to the dimension of the torus. Kalinin, Fisher and Spatzier \cite{kalinin_fisher_spatzier2013} proved that if a 
$\Z^k$ action $\alpha$ on a torus or nilmanifold is genuinely higher rank and contains ``many'' Anosov elements, then it is $C^{\infty}$ conjugate to affine actions. Later Rodriguez Hertz and Wang 
\cite{hertz_wang2014} obtained the optimal result for Anosov actions on nilmanifolds  by showing that existence of a single Anosov element implies  existence of  ``many'' Anosov elements. Here by saying ``many'' Anosov elements we mean that there is an Anosov element in each so-called  Weyl chamber of Lyapunov exponents of $\alpha$. We refer the reader to the introduction of \cite{kalinin_fisher_spatzier2011} and to   \cite{brinprize2015} for  brief surveys of  results and methods in the classification of higher rank Anosov actions.


\subsection{Exponential mixing of nilendomorphisms, expanding maps and their  solenoids}

\par We will apply techniques similar to those in \cite{kalinin_fisher_spatzier2013} and \cite{hertz_wang2014}
to show the conjugacy $\phi$ is $\smooth$. The first difficulty here is that these actions are not invertible, so the Weyl chambers of the Lyapunov exponents of $\rho$ are only indirectly defined, and not in terms of the dynamical behavior (slow exponential growth) of actual elements close to the respective Weyl chamber walls. To overcome this difficulty, we shall extend the action $\rho$ to the solenoid $\s(M)$ of $M$, defined below in detail in \S \ref{solenoid}. Basically one wants to invert a covering map from a  space to itself by considering the space of all possible orbits on which one has a tautological inverse.  As the future orbits are well defined this becomes the space of pasts.  One can easily generalize this construction to semigroups generated by commuting covering maps.  As discussed in  \cite{katok_spatzier1996}, the solenoid has a completely algebraic description in terms of a p-adification of the space.  As it turns out, we will actually never need the original notion of the space of pasts and will work directly with the algebraic definition.

The notion of solenoid was used by Williams \cite{williams1974} to study expanding attractors, and also by Katok and Spatzier in \cite{katok_spatzier1996} to prove measure rigidity statements. This makes $\rho$ (and $\rho_l$) a partially hyperbolic $\Z^k$ action on $\s(M)$.  According to \cite{katok_spatzier1996}, the Lyapunov exponents and Weyl chambers of both $\rho$ and $\rho_l$ are well defined. This allows us to proceed as in the first step of the proof of global rigidity for Anosov actions.    

\par The crucial next step in the proof of  Theorem \ref{goal_thm} is to prove an exponential mixing result for the action of $\rho$ on the solenoid $\s(M)$.  This is the main novelty in this paper, and has independent interest.  We actually prove such mixing quite generally for semigroups of endomorphisms of nilmanifolds.  Indeed,  passing to a finite cover, we may identify $M$ as a nilmanifold $\Gamma \setminus G$. 

By the theory of nilpotent Lie groups, there exists a nilpotent algebraic group $N$ over $\Q$ such that the nilpotent Lie group $G = N(\R)$ and $\Gamma = N(\Z)$. We will see later that the solenoid $\s(M)$ of $M$ can be identified with an $S$-adic nilmanifold $N(\Z) \setminus N(\R) \times \prod_{p \in S} N(\Z_p)$ for a finite subset $S$ of primes. Let $\mu$ denote the probability measure on $\s(M)$ induced by the Haar measure on $N(\R) \times \prod_{p \in S} N(\Z_p)$. Note that $\rho_l$ is a $\Z^k$ action on $\s(M)$ by nilendomorphisms.

\par Our main exponential mixing result is the following:
\begin{theorem}
\label{thm:exponential-mixing-intro}
 Let $M = N(\Z) \setminus N(\R)$ and $\s(M) = N(\Z) \setminus N(\R) \times \prod_{p \in S} N(\Z_p)$. Let $C^{\theta}(M)$ and $C^{\theta}(\s(M))$ denote the space of $\theta$-\holder functions defined on $M$ and $\s(M)$ respectively. Let $\|\cdot\|_{\theta}$ denote the $\theta$-\holder norm.
  Let $\rho_l$ denote a $\Z^k$ action on $\s(M)$ by nilendormorphisms. Suppose that $\rho_l(a)$ acts ergodically on $\s(M)$ for every $a \in \Z^k$. Then there exist constants $a_1 >0$ and $\eta'>0$ depending on $\theta$, such that for any $a \in \Z^k$, any $f \in C^{\theta}(M)$, regarded as a function on $\s(M)$, and any $g \in C^{\theta}(\s(M))$, we have 
\begin{equation} \label{equ:exponential-mixing}\int_{\s(M)} f(\rho_l(a)\overline{z}) g(\overline{z}) \dd \mu(\overline{z}) = \int_{\s(M)} f \dd \mu  \int_{\s(M)} g \dd \mu  +O(e^{-\eta'\|a\| } \|f\|_{\theta} \|g\|_{\theta}),\end{equation}
where $\|\cdot\|$ denotes the supremum norm on $\Z^k$.
\end{theorem}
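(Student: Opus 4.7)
The plan is to use Fourier/representation-theoretic analysis on the $S$-adic nilmanifold $\s(M)$ after a smoothing reduction, and to induct on the nilpotency class of $N$ to bring everything down to an endomorphism of an $S$-adic torus, where a quantitative escape-of-characters estimate does the work.

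First I would reduce from \holder data to smooth data. Convolve $f$ and $g$ with a bump function that is a product of bumps at each place (Archimedean and $p$-adic for $p\in S$) at scale $\varepsilon$, yielding $f_\varepsilon, g_\varepsilon$ with $\|f-f_\varepsilon\|_{\infty}\ll\varepsilon^{\theta}\|f\|_\theta$ and Sobolev-type bounds $\|f_\varepsilon\|_{C^k}\ll\varepsilon^{-k}\|f\|_\theta$ (similarly for $g$). Granted a smooth-function mixing estimate of the form $O(e^{-\eta\|a\|}\|f_\varepsilon\|_{C^k}\|g_\varepsilon\|_{C^k})$, choosing $\varepsilon=e^{-c\|a\|}$ recovers the asserted rate with a slightly worse constant $\eta'$.

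Second, I would induct on the nilpotency class of $N$. Let $Z$ be the last nontrivial term of the descending central series of $N$, defined over $\Q$. Decompose $L^2(\s(M))=H_0\oplus\bigoplus_{\chi\neq 0}H_\chi$ into isotypic subspaces for the translation action of the central $S$-adic torus $Z(\R)\times\prod_{p\in S} Z(\Z_p)$ modulo its lattice. Because $\rho_l(a)$ preserves $Z$, it stabilizes $H_0$, where it is the pullback of an endomorphism of the lower-step solenoid $\s(M/Z)$ and is handled by the induction hypothesis after checking that ergodicity descends; it permutes the $H_\chi$ via the dual action on central characters. On each $H_\chi$ with $\chi\neq 0$, a Cauchy--Schwarz/van der Corput argument in the central direction reduces the bilinear correlation to a twisted mixing problem on a lower-step nilmanifold, which the induction hypothesis again absorbs. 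The base case, and the real core of the argument, is exponential mixing on the abelian $S$-adic torus $(\R^d\times\prod_{p\in S}\Z_p^d)/\Z[1/S]^d$. Fourier expansion there yields
\begin{equation*}
\int_{\s(M)}f(\rho_l(a)\overline z)\,\overline{g(\overline z)}\,\dd\mu=\sum_{\xi\in\Z[1/S]^d}\hat f(\xi)\,\overline{\hat g\bigl((\rho_l(a))^{\ast}\xi\bigr)},
\end{equation*}
and the hypothesis that $f$ descends from $M$ means $\hat f$ is supported on $\Z^d$. The essential input is that ergodicity of $\rho_l(a)$ for every nonzero $a\in\Z^k$, combined with the product formula on $\Z[1/S]$, forces every nontrivial $\xi\in\Z^d$ to satisfy $\|(\rho_l(a))^{\ast}\xi\|_v\geq e^{c\|a\|}\|\xi\|_\infty^{-O(1)}$ at some place $v$; together with the rapid decay of the Fourier coefficients of the smoothed functions at that place, the sum collapses to the desired exponential bound.

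The main obstacle will be this final quantitative escape-of-characters estimate, namely securing a decay rate truly linear in $\|a\|$ uniformly over all nontrivial characters and all $a$. One has to rule out characters $\xi$ that are slow-expanding simultaneously at \emph{every} place under $(\rho_l(a))^{\ast}$; this is where the assumption that every element of $\Z^k$ acts ergodically is used in an essential way, as it rules out persistent root-of-unity eigenvalues along the $\Z^k$ action, and the proof must combine Diophantine height estimates on algebraic numbers with the commuting linear algebra of the $\Z^k$ action. By contrast, the inductive reduction from nilpotent to abelian is conceptually standard, but must be carried out carefully so that the linear-in-$\|a\|$ rate is preserved at each step.
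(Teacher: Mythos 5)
Your plan breaks at the reduction from the nilpotent to the abelian case; this is a genuine gap, not a technicality. Decomposing $L^2(\s(M))$ over characters $\chi$ of the central $S$-adic torus, orthogonality kills every pairing except those with $\chi''=\chi\circ\rho_l(a)|_Z$, and for the surviving pairs averaging in the central direction gives no decay at all: $|f_\chi|$ and $|g_{\chi''}|$ are invariant under central translations, so no Cauchy--Schwarz/van der Corput in the fibre produces cancellation. Nor is the $\chi\neq 0$ part ``a twisted mixing problem on a lower-step nilmanifold'': by Kirillov theory $H_\chi$ is built from infinite-dimensional irreducibles on which $\rho_l(a)$ acts by oscillator/theta-type intertwiners, and bounding those matrix coefficients is precisely the hard part. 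A concrete test case: the Heisenberg solenoid with a $\Z^k$-action generated by hyperbolic elements of $\mathrm{SL}_2(\Z)$ --- every nontrivial element is ergodic (ergodicity only sees the abelianization), yet every element acts trivially on the center, so the dual action never moves $\chi$ and your induction has nothing to grab onto exactly where all the difficulty sits. This is why the paper does not argue via Fourier/representation theory: it cuts $\s(M)$ into small boxes along real and $p$-adic directions and invokes the effective equidistribution theorems for box maps (Theorem \ref{thm_equidistribution_box}) and for polynomial orbits (Theorem \ref{thm_equidistribution_polynomial}), in which the reduction of the nilpotent problem to characters of the horizontal torus $N'\backslash N$ is already built in; reducible actions are then handled by induction on $\dim M$ through the fibration $Z\to M\to Y$ and the twisted statement Proposition \ref{prop_stronger_irreducible_mixing}, not through central characters.

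Your abelian base case is the right mechanism and parallels the paper's Case 1/Case 2 dichotomy, but as stated it is also incomplete: the product formula only yields $\prod_{v\in\{\infty\}\cup S}\|(\rho_l(a))^{\ast}\xi\|_v\geq 1$ for nonzero $\xi$, which gives no growth in $\|a\|$. The exponential, $a$-uniform rate needs two further inputs, which are exactly what the paper supplies: the adelic rigidity argument (Lemma \ref{lemma_max_lyapunov_exponents} and Remark \ref{rmk:maximal-expanding}) showing that every $\rho_l(a)$ has some Lyapunov exponent $\geq L'\|a\|$ at some real or $p$-adic place, and Katznelson-type Diophantine lower bounds keeping integer characters away from the contracted subspace at that place, whose $p$-adic versions (Lemmas \ref{lemma:exist-p-adic-algebraic-vector} and \ref{lemma:p-adic-diophantine}) are new here. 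You correctly flag this estimate as the main obstacle, but since both it and the nilpotent reduction are missing --- and the latter would fail as proposed --- the argument does not yet constitute a proof.
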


\par In the case of $\Z^k$ actions by ergodic automorphisms on nilmanifolds, exponential mixing was established by Gorodnik and Spatzier \cite{gorodnik_spatzier2014} and \cite{gorodnik_spatzier_acta}, based on the work of Green and Tao \cite{green_tao2012,GreenTao-erratum}. In our case, the structure of $\s(M)$ is essentially different from a real nilmanifold. Since $\s(M)$ is an $S$-adic nilmanifold, $p$-adic analysis will play an important role in the proof. We will prove the theorem in \S \ref{exponential_mixing}.
\par In \S \ref{exponential_mixing}, we will show that if the action $\rho_l$ is {\em genuinely higher rank}, then there exists a subgroup $\Sigma \subset \Z^k$ isomorphic to $\Z^2$ such that every element in $\Sigma$ acts ergodically on $\s(M)$. Therefore, under the {\em genuinely higher rank} hypothesis, we can choose a subgroup $\Sigma \cong \Z^2$ of $\Z^k$ such that the above exponential mixing result holds for $\Sigma$. Note that $\rho$ is conjugate to $\rho_l$ via a \holder homeomorphism $\phi$, if Theorem \ref{thm:exponential-mixing-intro} holds for $\rho_l$ and $\mu$, then it will also hold for $\rho$ and $\tilde{\mu}  := \phi^{-1}_{\ast} \mu$. Therefore, Theorem \ref{thm:exponential-mixing-intro} implies the following corollary which is crucial to establish the smoothness of $\phi$:

\begin{corollary}
\label{cor:exponential-mixing}
Let $\rho$, $M$, $\s(M)$, $\tilde{\mu}$, $C^{\theta}(M)$, $C^{\theta}(\s(M))$ and $\|\cdot\|_{\theta}$ be as above. Suppose the action $\rho$ is {\em genuinely higher rank,} then there exists a subgroup $\Sigma$ of $\Z^k$ isomorphic to $\Z^2$ and constants $a_1 >0$ and $\eta' >0$, such that the following holds: for any $f \in C^{\theta}(M)$, regarded as functions on $\s(M)$, any $g \in C^{\theta}(\s(M))$ and any $a \in \Sigma$,
$$\left|\int_{\s(M)} f(\rho(a)\overline{z}) g(\overline{z}) \dd \tilde{\mu} (\overline{z}) - \int_{\s(M)} f \dd \tilde{\mu} \int_{\s(M)} g \dd \tilde{\mu} \right| \leq a_1 e^{-\eta' \|a\|} \|f\|_{\theta} \|g\|_{\theta}.$$
  
\end{corollary}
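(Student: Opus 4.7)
The plan is to deduce the corollary from Theorem \ref{thm:exponential-mixing-intro} by transporting the mixing estimate through the topological conjugacy $\phi$ produced by Gromov's theorem. The argument has three ingredients: extracting a rank-two subgroup $\Sigma \subset \Z^k$ of ergodic elements, lifting $\phi$ to a bi-\holder conjugacy of solenoids, and pushing observables and the measure through this lift.

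For the first ingredient, I will invoke the reduction promised in \S \ref{exponential_mixing}: the genuine higher rank hypothesis, transported through $\phi$, forces the set of non-ergodic elements of $\rho_l$ on $\s(M)$ to lie in a finite union of proper rational subgroups of $\Z^k$. Choosing any two independent commuting elements outside this union yields $\Sigma \cong \Z^2$ on which every nonzero element acts ergodically, so that Theorem \ref{thm:exponential-mixing-intro} applies to $\rho_l|_{\Sigma}$.

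For the second and third ingredients, the topological conjugacy $\phi \colon M \to \Gamma \setminus G$ lifts canonically to a bi-\holder homeomorphism $\Phi$ between the $\rho$- and $\rho_l$-solenoids intertwining the two actions. This is transparent from the inverse-limit description of the solenoid in \S \ref{solenoid}: since $\phi$ conjugates the generators of $\rho$ to those of $\rho_l$, it acts componentwise on preimage histories and yields a bi-\holder bijection between the solenoids. Under the algebraic model of $\s(M)$ as the $S$-adic nilmanifold, $\Phi_\ast \tilde{\mu}$ is identified with $\mu$. Given $f \in C^{\theta}(M)$ and $g \in C^{\theta}(\s(M))$, set $F = f \circ \phi^{-1}$ and $G = g \circ \Phi^{-1}$. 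If $\phi$ and $\Phi$ are $\alpha$-\holder with $\alpha \in (0,1]$, then $F$ and $G$ are $\theta \alpha$-\holder, with norms bounded by constants times $\|f\|_{\theta}$ and $\|g\|_{\theta}$ respectively. Applying Theorem \ref{thm:exponential-mixing-intro} with exponent $\theta' = \theta \alpha$ to $F, G$ and $\rho_l|_{\Sigma}$ on $\s(M)$, and performing the change of variables $\Phi_\ast \tilde{\mu} = \mu$ in both sides of \eqref{equ:exponential-mixing}, yields the stated estimate with constants $a_1, \eta' > 0$ depending on $\theta \alpha$, and hence on $\theta$ alone.

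The main obstacle is the second ingredient: verifying that the solenoid lift $\Phi$ is bi-\holder and that, under the algebraic identification of \S \ref{solenoid}, it intertwines $\tilde{\mu}$ with the Haar-induced measure $\mu$. Once these points are established, the remainder of the argument is a purely formal change of variables, and the corollary follows at once from Theorem \ref{thm:exponential-mixing-intro} together with the construction of $\Sigma$.
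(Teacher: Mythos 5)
Your proposal is correct and follows essentially the same route as the paper's proof: Proposition \ref{prop_starkov} supplies the rank-two subgroup $\Sigma$ of ergodic elements, Theorem \ref{thm:exponential-mixing-intro} is applied to $\rho_l$ and $\mu$, and the estimate is transported through the bi-\holder conjugacy. The ``main obstacle'' you flag is dissolved by the paper's setup in \S\ref{solenoid}: both $\rho$ and $\rho_l$ already act on the same algebraic solenoid, $\phi$ extends to $\s(M)$ by acting as the identity on the $p$-adic factors, and $\tilde{\mu}$ is \emph{defined} as $\tilde{\nu}\times\prod_{p\in S}\nu_p$ with $\tilde{\nu}=\phi^{-1}_{\ast}\nu$, so the identity $\phi_{\ast}\tilde{\mu}=\mu$ needed for your change of variables holds by construction.
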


\par Let us emphasize that our exponential mixing results are  different from exponential mixing for just the semi-group.  Indeed we can go to infinity in the solenoid in a variety of ways, e.g.  by going back far in the past and returning to the present.   Exponential mixing for just the future of an expanding map follows from the  standard techniques of Markov sections and transfer operators. These techniques however are not  able to handle our case.   In addition, we allow for quite general semi-groups of endomorphisms of nilmanifolds, not just expanding and hyperbolic ones. Quite generally, there are now several techniques available to prove exponential mixing: Fourier analysis, representation theory,  Markov systems and transfer operators especially in combination with contact structures.  However, for one reason or another, none of these work generally for semi-groups of nilmanifold endomorphisms.



\par Finally, let us  note three more   corollaries of exponential mixing, similar to results in \cite{gorodnik_spatzier2014}.  We  refer there for a more extensive discussion of ideas and background.  The proofs are identical, and  we will not discuss them here in detail. 

First  consider a single ergodic nilendomorphism   $\alpha$ on a nilmanifold $X$.
For a function $f: X \to \R$, we  set 
$$
S_n (f,x)=\sum _{i=0} ^{n-1} f (\alpha ^i (x)),
$$
and for simplicity assume that $\int_X f\,d\mu=0$.

One says that  the sequence $f\circ \alpha^n$ satisfies the {\it central limit theorem}
if for some $\sigma>0$, $n^{-1/2}S_n(f,\cdot)$ converges in distribution
to the normal law with mean $0$ and variance $\sigma^2$.
More generally, the sequence $f\circ \alpha^n$ satisfies the
{\it central limit theorem for subsequences}
if there exists $\sigma >0$ such that for every increasing sequence of measurable functions $k_n (x) $
taking values in $\N$ such that for almost all $x$, $\lim_{n\to\infty} \frac{k_n(x)}{n} =c$ for some
fixed constant $0<c < \infty$,    the sequence  $n^{-1/2} S_{k_n(\cdot)} (f,\cdot)$ converges
  in distribution to the normal law  with mean $0$ and variance $\sigma^2/c$.
We define $S_t(f,x)$ for all $t\ge 0$ by linear interpolation of its values at integral points.
The sequence $f\circ \alpha^n$ satisfies the {\it Donsker invariance principle}
if there exists $\sigma>0$ such that
the sequence of random functions $(n\sigma^2)^{-1/2}S_{nt}(f,\cdot)\in C([0,1])$ converges in distribution
to the standard Brownian motion in $C([0,1])$.
The sequence $f\circ \alpha^n$ satisfies the {\it Strassen invariance principle}
if there exists $\sigma>0$ such that for almost every $x$, the sequence of functions
$(2n\sigma^2\log\log n)^{-1/2}S_{nt}(f,x)$ 
is relatively compact in $C([0,1])$ and its limit set is precisely 
the set of absolutely continuous functions $g$ on $[0, 1]$ such that
$g (0)  = 	0$	and $\int_0^1 g'(t)^2\,  dt \leq 1$.
This is a strong version of the law of the iterated logarithm.

\begin{corollary}  \label{affine central limit}
Let $\alpha $ be an  ergodic endomorphism of a compact nilmanifold $X$,
and let $f$ be a H\"older function on $X$ which has zero integral.
\begin{enumerate}
\item If $f$ is not a measurable coboundary.
then the sequence $\{f\circ\alpha^n\}$ satisfies 
the central limit theorem, the central limit theorem of subsequences,
and  the Donsker and Strassen invariance principles.
\item If $f$ is a   measurable coboundary then $f$ is an $L^2$-coboundary. Equivalently, the variance $\sigma =0$.  
\end{enumerate}
\end{corollary}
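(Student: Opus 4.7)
The strategy is to deduce all four limit laws from one decisive input: exponential decay of the correlation sequence $\int_X f\cdot (f\circ\alpha^n)\,\dd\mu$ for \holder $f$. Once this is in hand, the CLT, its subsequence version, and the Donsker and Strassen invariance principles follow from standard abstract probability, most cleanly via Gordin's martingale approximation combined with the Philipp--Stout invariance principles, exactly as carried out in \cite{gorodnik_spatzier2014}.

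First I would lift the (possibly non-invertible) system $(X,\alpha,\mu)$ to its solenoid extension $\s(X)$ built in \S\ref{solenoid}. The function $f$ on $X$ pulls back to a \holder function on $\s(X)$, with \holder norm controlled by $\|f\|_\theta$, because the pull-back depends only on the archimedean coordinate. Applying Theorem \ref{thm:exponential-mixing-intro} with $k=1$, $g=f$, and $a=n$ yields
\[
\left|\int_{\s(X)} f\cdot (f\circ\alpha^n)\,\dd\mu\right|\;\le\;a_1\, e^{-\eta' n}\,\|f\|_\theta^{2},
\]
where I used $\int f\,\dd\mu=0$. Projecting this back to $X$ gives the summability $\sum_{n\ge 0}\bigl|\int_X f\cdot (f\circ\alpha^n)\,\dd\mu\bigr|<\infty$, which is precisely the hypothesis of Gordin's theorem.

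Next I would feed this decay into the martingale approximation machinery. Gordin's theorem produces a decomposition $f=m+g-g\circ\alpha$ with $m$ a reverse-martingale difference in a suitable decreasing filtration and $g\in L^2$; the relevant filtration is the family $\alpha^{-n}\mathcal B$ for $\mathcal B$ the Borel $\sigma$-algebra of $X$, whose refinement is controlled by the natural-extension/solenoid construction. The asymptotic variance
\[
\sigma^{2}=\int_X f^{2}\,\dd\mu+2\sum_{n\ge 1}\int_X f\cdot (f\circ\alpha^n)\,\dd\mu
\]
is then finite and nonnegative. The CLT follows from the martingale CLT; the CLT for subsequences from the same plus the hypothesis $k_n/n\to c$; and the Donsker and Strassen invariance principles from applying Philipp--Stout to the martingale piece, noting that the coboundary term $g-g\circ\alpha$ telescopes and so contributes an asymptotically negligible error after the $\sqrt n$ rescaling in the $C([0,1])$ norm.

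For part (2), if $\sigma^2=0$ the martingale component in Gordin's decomposition vanishes identically, so $f=g-g\circ\alpha$ with $g\in L^2$, exhibiting $f$ as an $L^2$-coboundary. Conversely, if $f$ is \emph{a priori} only a measurable coboundary $f=h-h\circ\alpha$, exponential mixing forces $\sigma=0$; the Schmidt--Leonov cohomological equation together with ergodicity identifies $h$ with $g$ up to a measurable constant, so $h\in L^2$. The main obstacle I anticipate is bookkeeping rather than conceptual: verifying that the lift to $\s(X)$ preserves \holder norms with controlled constants, and constructing the Gordin filtration for a \emph{general} ergodic (rather than just expanding or hyperbolic) nilendomorphism. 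Both points are handled in \cite{gorodnik_spatzier2014} and carry over to the present setting with only cosmetic changes.
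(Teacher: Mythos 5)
Your overall route is the one the paper itself takes: the paper's entire ``proof'' of this corollary is the remark that, after lifting $f$ to the solenoid (where it remains \holder because it depends only on the archimedean coordinate) and invoking Theorem \ref{thm:exponential-mixing-intro} with $k=1$, the argument of \cite{gorodnik_spatzier2014} applies verbatim. So in structure you and the paper agree.

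One step as you have written it would, however, fail if executed literally: summability of the autocorrelations $\sum_n\bigl|\int_X f\cdot(f\circ\alpha^n)\,\dd\mu\bigr|$ is \emph{not} the hypothesis of Gordin's theorem, and by itself it does not imply the central limit theorem --- decay of pair correlations of $f$ against itself gives no control of higher-order correlations, and there are ergodic systems with zero-mean functions whose autocorrelations are summable (even eventually zero) but for which $n^{-1/2}S_n$ has no Gaussian limit. Gordin's hypothesis is the summability of $\|\mathbb{E}(f\mid\mathcal{A}_n)\|_2$ for a genuine decreasing filtration $(\mathcal{A}_n)$, and verifying this is where the actual work of \cite{gorodnik_spatzier2014} lies: one builds $\mathcal{A}_n$ from a partition refined by the dynamics, approximates the conditional expectations by \holder functions with controlled norms, and only then applies exponential mixing. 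This is precisely why Theorem \ref{thm:exponential-mixing-intro} is stated asymmetrically, with $f\in C^{\theta}(M)$ tested against \emph{arbitrary} $g\in C^{\theta}(\s(M))$ rather than only against $f\circ\alpha^n$. You do gesture at the filtration $\alpha^{-n}\mathcal{B}$ and defer its construction to \cite{gorodnik_spatzier2014}, which rescues the argument, but the sentence identifying correlation summability with Gordin's hypothesis should be replaced by the conditional-expectation estimate. Your treatment of part (2) --- vanishing of the martingale part when $\sigma=0$, and $S_nf/\sqrt{n}\to 0$ in probability when $f$ is a measurable coboundary, forcing $\sigma=0$ and hence $f=g-g\circ\alpha$ with $g\in L^2$ --- is the standard and correct argument.
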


Livsic proved for Anosov diffeomorphisms that a measurable coboundary for a smooth function is automatically smooth. Veech discussed this issue for ergodic  toral automorphisms in \cite{Veech} using sophisticated Fourier analysis.  He also gave  counter examples in the $C^1$-category.  Gorodnik and Spatzier proved the nilautomorphism version of this result in \cite{gorodnik_spatzier2014}.

\begin{corollary}
\label{cor:measurable livsic}
Let $M$ be as above, and let $ \alpha : M \to M$ be a nilendomorphism, ergodic w.r.t. Haar measure.  Suppose $f: M \to \R$ is a $\smooth$ function, $g: M \to \R$  a measurable function such that $f = g - g \circ \alpha$.  Then $g$ is $\smooth$.  
\end{corollary}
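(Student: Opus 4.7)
The plan is to mirror the strategy used by Gorodnik and Spatzier for nilautomorphisms \cite{gorodnik_spatzier2014}, replacing their mixing input by the exponential mixing on the solenoid established above. First, I would lift the coboundary equation to the solenoid. Identifying $M = \Gamma \setminus G$ as a nilmanifold, consider the natural projection $\pi \colon \s(M) \to M$ and the lift $\tilde\alpha$ of $\alpha$, which on $\s(M)$ becomes an invertible automorphism. Pulling back along $\pi$ gives a smooth function $\tilde f = f \circ \pi$ and a measurable function $\tilde g = g \circ \pi$ on $\s(M)$ satisfying $\tilde f = \tilde g - \tilde g \circ \tilde\alpha$. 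Without loss of generality one may assume $\int_M f\,d\mu = 0$, hence $\int_{\s(M)} \tilde f\,d\mu = 0$.

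Next, decompose $\tilde g \in L^2(\s(M),\mu)$ spectrally. The $S$-adic nilmanifold $\s(M) = N(\Z) \setminus (N(\R) \times \prod_{p \in S} N(\Z_p))$ admits a Kirillov-type unitary decomposition, so that $L^2(\s(M))$ splits into irreducible unitary representations of the adelic nilpotent group. The cocycle relation restricted to each irreducible component becomes a scalar equation involving the dual action $\tilde\alpha^{\ast}$ on the unitary dual. Iterating this relation and applying Theorem \ref{thm:exponential-mixing-intro} specialized to the cyclic $\Z$-action generated by $\tilde\alpha$ yields quantitative bounds on each Fourier coefficient $\hat{\tilde g}_{\pi}$ in terms of the coefficients of $\tilde f$ along the $\tilde\alpha^{\ast}$-orbit of $\pi$.

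Smoothness of $f$ on $M$ forces rapid decay of its Fourier coefficients $\hat f_{\pi}$, faster than any polynomial in a natural height on the unitary dual. Transferring this decay to $\hat{\tilde g}_{\pi}$ via the quantitative bounds of the previous step shows that $\tilde g$ has rapidly decaying coefficients, hence agrees almost everywhere with a smooth function on $\s(M)$. Because $\tilde g = g \circ \pi$ is constant on the fibers of $\pi$, the smooth representative also descends, and therefore $g$ is smooth on $M$.

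The main obstacle will be developing the Fourier/Kirillov analysis in the $S$-adic setting, combining real unitary representation theory with $p$-adic harmonic analysis, and quantifying the decay rate of Fourier coefficients of smooth functions in that framework. A secondary technical point is to bridge between the H\"older norms in which the exponential mixing of Theorem \ref{thm:exponential-mixing-intro} is stated and the $C^k$ norms that encode smoothness; this is typically handled by approximating $\tilde g$ by convolutions with smooth bump functions and iterating the mixing bounds, in the manner of \cite{gorodnik_spatzier2014}.
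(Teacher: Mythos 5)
Your route is genuinely different from the paper's, and as written it has two real gaps. First, you begin the spectral step by writing ``decompose $\tilde g \in L^2(\s(M),\mu)$,'' but $g$ is only assumed \emph{measurable}; a measurable solution of $f = g - g\circ\alpha$ is not a priori square-integrable, and without $L^2$ membership there is no spectral decomposition to perform. Upgrading measurable to $L^2$ is exactly the content of Corollary \ref{affine central limit}(2): if $g$ were measurable but $f$ were not an $L^2$-coboundary, the central limit theorem would force $n^{-1/2}S_n(f)=n^{-1/2}(g-g\circ\alpha^n)$ to converge to a nondegenerate Gaussian, contradicting tightness of $g-g\circ\alpha^n$ in measure. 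This is why the paper's one-line proof explicitly invokes the CLT corollary before anything else; your proposal omits this step entirely.

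Second, the core of your argument --- Kirillov/Fourier analysis on the $S$-adic nilmanifold, the claim that the cocycle relation becomes a ``scalar equation'' on each irreducible component, and the transfer of rapid decay from $\hat f_\pi$ to $\hat g_\pi$ --- is precisely the machinery this paper and \cite{gorodnik_spatzier2014} are designed to avoid. For non-abelian $N$ the relevant irreducible subrepresentations are infinite-dimensional, so the coefficients are not scalars and the coboundary equation is an operator equation on each component; moreover, summing $\hat g_\pi=\sum_{n\ge 0}\hat f_{(\alpha^*)^n\pi}$ with rapid decay requires Diophantine control of the dual orbit (how slowly the ``height'' of $(\alpha^*)^n\pi$ can recur), which is Veech's ``sophisticated Fourier analysis'' \cite{Veech} --- and his $C^1$ counterexamples show the transfer is genuinely delicate even for tori. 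You flag this as ``the main obstacle,'' but it is the entire proof, not a technicality. The paper instead argues dynamically: after the $L^2$ upgrade, one writes $g$ as a distributional sum of $f\circ\alpha^{\pm n}$ (forward iterates along contracted directions, backward along expanded ones), uses the exponential mixing of Theorem \ref{thm:exponential-mixing-intro} to bound all distributional derivatives of the partial sums along coarse Lyapunov foliations against $C^\theta$ test functions (as in Proposition \ref{prop_linear_equation}), and concludes smoothness from the elliptic-regularity results of Rauch--Taylor \cite{rauch_taylor2005,kalinin_fisher_spatzier2013}. I would redo the argument along those lines.
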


For the proof we just extend f and g to functions on the solenoid.  Note that they are independent of the $p$-adic direction.  Hence we can  use the central limit theorem, Corollary \ref{affine central limit},  and exponential mixing as in \cite{gorodnik_spatzier2014}.  

In our last corollary, we consider genuinely higher rank actions.  Again the proof is identical to \cite{gorodnik_spatzier2014}.

\begin{corollary}
\label{cor:cocycle rigidity}
Let $\rho$, $M$, $\s(M)$, $\tilde{\mu}$, $C^{\theta}(M)$, $C^{\theta}(\s(M))$ and $\|\cdot\|_{\theta}$ be as above. Suppose the action $\rho$ is {\em genuinely higher rank}. Then any $\smooth$ cocycle $\alpha: \Z _+ ^k \times M \to \R$ is $\smooth$ cohomologous to a constant cocycle.   \end{corollary}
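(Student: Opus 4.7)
The plan is to follow the scheme of \cite{gorodnik_spatzier2014}, adapted to the solenoid $\s(M)$. First reduce to the zero-average case: the map $\beta: \Z^k_+ \to \R$ given by $\beta(a) := \int_M \alpha(a, \cdot)\, \dd\tilde{\mu}$ is a homomorphism by the cocycle identity and the $\rho$-invariance of $\tilde{\mu}$, so subtracting $\beta$ reduces the problem to finding a $\smooth$ function $h$ on $M$ with $\alpha(a, x) = h(x) - h(\rho(a)x)$ for all $a \in \Z^k_+$, under the assumption that every $\alpha(a, \cdot)$ has zero integral.

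Use Corollary \ref{cor:exponential-mixing} to select a subgroup $\Sigma \cong \Z^2$ with generators $a, b$ acting ergodically on $\s(M)$, and set $f := \alpha(a, \cdot)$ and $g := \alpha(b, \cdot)$, viewed as $\smooth$ functions on $\s(M)$ by pullback from $M$. Commutativity of $\rho(a)$ and $\rho(b)$ together with the cocycle identity yields
$$
S_n(f)(\rho(b)\overline{z}) - S_n(f)(\overline{z}) = g(\rho(a)^n \overline{z}) - g(\overline{z}),
$$
where $S_n(f) = \sum_{i=0}^{n-1} f \circ \rho(a)^i$; the right-hand side is uniformly bounded in $n$ and $\overline{z}$. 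Now apply Corollary \ref{affine central limit} to the pair $(\rho(a), f)$: either $f$ is an $L^2$-coboundary for $\rho(a)$, or $S_n(f)/\sqrt{n}$ converges in distribution to a Gaussian of variance $\sigma^2 > 0$. In the latter case, Corollary \ref{cor:exponential-mixing} applied to $\rho(a)$, $\rho(b)$, and their combinations forces $S_n(f)$ and $S_n(f) \circ \rho(b)$ to asymptotically decorrelate, so $\mathrm{Var}(S_n(f) \circ \rho(b) - S_n(f))$ grows linearly in $n$, contradicting the uniform $L^{\infty}$ bound above. Hence $\sigma = 0$ and Corollary \ref{affine central limit}(2) furnishes a measurable, mean-zero $h$ on $\s(M)$ with $f = h - h \circ \rho(a)$.

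The transfer function $h$ is upgraded to $\smooth$ by Corollary \ref{cor:measurable livsic}, since $f$ is $\smooth$. To conclude that the same $h$ trivializes $\alpha(c, \cdot)$ for every $c \in \Z^k_+$, observe that $\alpha(c, \cdot) - (h - h \circ \rho(c))$ is $\rho(a)$-invariant (by the cocycle identity combined with commutativity of $\rho(a)$ and $\rho(c)$), hence constant by ergodicity of $\rho(a)$, and the constant vanishes by integration. This yields the required smooth trivialization of $\alpha$ in every direction.

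The main obstacle is converting the bounded-difference identity into an $L^2$-coboundary for $\rho(a)$: this requires combining the CLT for $\rho(a)$ with the quantitative exponential mixing of $\rho(b)$ in a way that the two commuting ergodic generators of $\Sigma$ jointly control the second moment of $S_n(f) \circ \rho(b) - S_n(f)$. This interaction is the technical heart of the argument, and is precisely what Corollaries \ref{cor:exponential-mixing} and \ref{affine central limit} are set up to provide, as in \cite{gorodnik_spatzier2014}.
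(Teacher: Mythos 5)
Your overall scheme is the one the paper intends (the text defers entirely to \cite{gorodnik_spatzier2014} for this corollary): reduce to zero averages, derive the bounded-difference identity $S_n(f)\circ\rho(b)-S_n(f)=g\circ\rho(na)-g$ from the cocycle relation and commutativity, show $f=\alpha(a,\cdot)$ is an $L^2$-coboundary for $\rho(a)$, upgrade the transfer function via Corollary \ref{cor:measurable livsic}, and propagate to all generators by ergodicity. Those steps are all correct as stated.

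The gap is in the step that rules out $\sigma>0$. You claim that exponential mixing forces $S_n(f)$ and $S_n(f)\circ\rho(b)$ to \emph{decorrelate}, so that $\mathrm{Var}(S_n(f)\circ\rho(b)-S_n(f))$ grows linearly. For a fixed $b$ this is false: Corollary \ref{cor:exponential-mixing} gives
$$\bigl\langle S_n(f)\circ\rho(b),\,S_n(f)\bigr\rangle=\sum_{i,j=0}^{n-1}\int_{\s(M)} f(\rho((i-j)a+b)\overline{z})\,f(\overline{z})\,\dd\tilde{\mu}=n\,C_b+O(1),\quad C_b:=\sum_{m\in\Z}\int_{\s(M)} f\circ\rho(ma+b)\cdot f\,\dd\tilde{\mu},$$
and $C_b$ is a priori of the same order as $\sigma^2=\sum_{m}\int f\circ\rho(ma)\cdot f\,\dd\tilde{\mu}$; nothing decorrelates. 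The bounded-difference identity therefore yields $\mathrm{Var}\bigl(S_n(f)\circ\rho(b)-S_n(f)\bigr)=2n(\sigma^2-C_b)+O(1)$, hence only the equality $\sigma^2=C_b$ (asymptotically \emph{perfect} correlation), which is not yet a contradiction. To conclude you must exploit the freedom in $b$: the same identity with $b$ replaced by $kb$ (the difference is then $\alpha(kb,\rho(na)\cdot)-\alpha(kb,\cdot)$, still uniformly bounded for each fixed $k$) gives $\sigma^2=C_{kb}$ for every $k\geq 1$, and $C_{kb}\to 0$ as $k\to\infty$ because $\|ma+kb\|\to\infty$ uniformly in $m$ (as $a,b$ generate $\Sigma\cong\Z^2$) and the terms are dominated by $a_1e^{-\eta'\|ma+kb\|}\|f\|_{\theta}^2$. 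This forces $\sigma=0$; then $\sup_n\|S_n(f)\|_{L^2}<\infty$ follows directly from exponential mixing and the standard $L^2$-coboundary criterion applies, so the full CLT is not even needed here. A further small point worth a sentence in a complete write-up: Corollary \ref{affine central limit} and Corollary \ref{cor:measurable livsic} are stated for nilendomorphisms with respect to Haar measure, whereas you apply them to $\rho(a)$ on $(\s(M),\tilde{\mu})$; this transfer goes through the conjugacy $\phi$ and the fact that $f,g$ are pullbacks from $M$, but it should be said.
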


\subsection{Organization of the paper}
\par 
Before \S \ref{lowdimension}, we always assume that $\dim M \geq 5$. In \S \ref{preliminaries}, we recall the result of Gromov and Shub on bi-H\"older conjugacies between expanding maps and their linearizations, reduce  the  main  theorem  to  the  case  of  actions  H\"older conjugate to nilmanifold endomorphisms, recall the structures of solenoids, and discuss the Lyapunov exponents of the extended $\Z^k$-actions of $\rho$ and $\rho_l$ on the solenoid $\s(M)$. In \S \ref{exponential_mixing}, we will prove Theorem \ref{thm:exponential-mixing-intro} and Corollary \ref{cor:exponential-mixing}.
 In \S \ref{smoothness of leaves}, we apply Corollary \ref{cor:exponential-mixing} and the techniques developed in \cite{hertz_wang2014} to show that every coarse Lyapunov distribution of $\rho$ admits a \holder foliation with $\smooth$ leaves. This result is crucial for applying the techniques developed in \cite{kalinin_fisher_spatzier2013}. In \S \ref{regularity},
we combine the exponential mixing result, the result proved in \S \ref{smoothness of leaves} and the techniques developed in 
\cite{kalinin_fisher_spatzier2013} to prove Theorem \ref{goal_thm} when $\dim M \geq 5$. In \S \ref{lowdimension}, we discuss the case $\dim M \leq 4$.


\begin{notation}
\label{notation}
In this paper, we will use the following conventions. For two quantities $A$ and $B$, $A\ll B$ means that there exists an absolute constant $C >0$ such that $A \leq CB$. $A \gg B$ means that $B \ll A$. $A \asymp B$ means that $A \ll B$ and $B \ll A$. $O(A)$ denotes a quantity $\asymp A$. Given a sequence of quantities $\{A_i>0: i \in \N \}$, another sequence $\{B_i : i \in \N\}$ is said to be of order $o(A_i)$ if $|B_i|/ A_i \rightarrow 0$ as $i \rightarrow \infty$.
\end{notation}


\noindent {\bf Acknowledgements.}  The first author thanks David Fisher and Boris Kalinin for early discussions on these matters.  Both authors thank MSRI where they collaborated on  this work in Spring 2015.

\section{Preliminaries on expanding maps and solenoids}
\label{preliminaries}

 In this section we review and refine some background material on expanding maps and their conjugacies and centralizers.  This allows us to reduce the main theorem  to the case  of actions \holder conjugate to nilmanifold endomorphisms. We then define solenoid extensions and also  Lyapunov exponents.


\subsection{Gromov's conjugacy theorem on expanding maps}
\par We recall the result of Shub \cite{shub1970} and Gromov \cite{gromov1981}:

\begin{theorem}[see~{\cite[Theorem 1]{shub1970}} and~{\cite[\S 1]{gromov1981}}]
\label{gromov_conjugacy_thm}
Suppose $\tau  : M \rightarrow M$ is an expanding $C^1$- map of a compact manifold $M$.  Then there exist
\begin{enumerate}
\item 
an infra-nilmanifold $\mathcal{M}$, finite covers $\overline{M}$ of $M$ and $\overline{\mathcal{M}}$ of $\mathcal{M}$ such that $\overline{\mathcal{M}}= \Gamma \setminus G$ where $G$ denotes a simply connected nilpotent Lie group and $\Gamma$ denotes a lattice of $G$,
\item an expanding affine nilendomorphism $\overline{\tau} _l : \overline{\mathcal{M}} \to \overline{\mathcal{M}}$ which covers  an infra-nilendomorphism $\tau _l$ of  $\mathcal{M}$,
\item a bi-\holder homeomorphism $\phi: \overline{M} \to \overline{\mathcal{M}}$ such that $\overline{\tau} := \phi ^{-1} \circ \overline{\tau} _l \circ \phi $ covers $\tau$
and descends to a homeomorphism $M \to \mathcal{M}$ intertwining $\tau$ and $\tau_l$.
\end{enumerate}
\par Moreover,  the covering map  $\tilde{\tau} _l$ of $\overline{\tau}_l$ on $G$  is the automorphism of $G$ induced by the map $\overline{\tau}_l^{\ast} : \Gamma  \rightarrow \Gamma$. 
\end{theorem}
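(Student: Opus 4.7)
The plan is to follow the Shub--Gromov strategy, building the conjugacy on the universal cover and descending it to a finite cover. First, I would lift $\tau$ to $\tilde\tau : \tilde M \to \tilde M$, using Shub's fixed point theorem to choose a fixed point $p$ of $\tau$ and a lift $\tilde p$ fixed by $\tilde\tau$. Since $\tau$ is expanding, $\tilde\tau$ is a proper bi-Lipschitz homeomorphism of $\tilde M$ whose differential uniformly expands; in particular $\tilde\tau_\ast : \pi_1(M,p) \to \pi_1(M,p)$ is injective with image of finite index.

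Next I would establish that $\Gamma_0 := \pi_1(M,p)$ has polynomial growth: comparing the number of fundamental-domain translates in a ball $B_R(\tilde p) \subset \tilde M$ with the volume of $B_R$, and using uniform expansion to relate the counts in $B_R$ and $B_{cR}$ by the fixed factor $[\Gamma_0 : \tilde\tau_\ast \Gamma_0]$, one deduces polynomial word growth of $\Gamma_0$. Gromov's polynomial growth theorem then gives that $\Gamma_0$ is virtually nilpotent; pass to a finite cover $\overline M$ with $\Gamma := \pi_1(\overline M)$ torsion-free nilpotent, and replace $\tau$ by an iterate if needed so that $\tau$ lifts to $\overline\tau : \overline M \to \overline M$. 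Using the Malcev completion, embed $\Gamma$ as a cocompact lattice in a simply connected nilpotent Lie group $G$; since $\overline\tau_\ast : \Gamma \to \Gamma$ is an injective endomorphism of finite index, it extends uniquely to a surjective endomorphism (in fact an automorphism after a further finite cover, by Dekimpe's clarification) $\tilde\tau_l \in \Aut(G)$. Expansion of $\tilde\tau$ transfers, via the quasi-isometry between $\tilde M$ and $G$ implicit in Malcev's construction, to show $\tilde\tau_l$ is expanding; this produces the algebraic model $\overline{\mathcal M} = \Gamma\setminus G$ with the expanding affine nilendomorphism $\overline\tau_l$, and a quotient infra-nilendomorphism $\tau_l$ on $\mathcal M$.

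Finally, to produce the bi-H\"older conjugacy, I would set up a contraction argument on $\tilde M$: look at $\Gamma$-equivariant continuous maps $\phi : \tilde M \to G$ at bounded distance from a fixed reference quasi-isometry $\phi_0$, and solve the cohomological equation $\phi \circ \tilde\tau = \tilde\tau_l \circ \phi$ by iterating the operator $\phi \mapsto \tilde\tau_l^{-1}\circ\phi\circ\tilde\tau$. Because $\tilde\tau_l^{-1}$ uniformly contracts (being the inverse of an expanding automorphism), this operator is a contraction on the space of bounded perturbations of $\phi_0$ in the sup norm, producing a unique continuous $\Gamma$-equivariant $\phi$; an invertibility argument (using that $\tilde\tau_l$ is also an automorphism and symmetrizing the construction) gives a continuous inverse, so $\phi$ descends to a homeomorphism $\overline M \to \overline{\mathcal M}$ that further descends to $M \to \mathcal M$. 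Bi-H\"older regularity follows by comparing the contraction rates of $\tilde\tau^{-1}$ on local stable scales with those of $\tilde\tau_l^{-1}$: the standard estimate $d(\phi(x),\phi(y)) \le \mathrm{const}\cdot d(x,y)^\theta$ with $\theta$ determined by the ratio of the two expansion rates drops out of the geometric series in the fixed-point construction. The main obstacle is the passage from an abstract nilpotent $\Gamma$ to the concrete algebraic endomorphism $\tilde\tau_l$ of $G$ being a genuine automorphism and expanding --- this is exactly where Dekimpe's algebraic refinement is needed, and where the possible passage to a further finite cover $\overline{\mathcal M}$ of $\mathcal M$ enters.
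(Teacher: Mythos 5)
The paper does not prove this theorem at all: it is quoted verbatim from Shub and Gromov (with Dekimpe's later clarifications), so there is no in-paper argument to compare against. Your sketch is a faithful reconstruction of the actual Shub--Franks--Gromov proof: the volume-counting argument for polynomial growth of $\pi_1(M)$ is Franks/Shub, the passage to virtually nilpotent is exactly Gromov's polynomial growth theorem, the Malcev completion plus extension of $\overline{\tau}_\ast$ to an automorphism of $G$ is Shub's construction of the algebraic model, and the contraction-mapping construction of the equivariant conjugacy (with bi-H\"older regularity from the ratio of expansion rates) is the standard Franks--Manning argument adapted to expanding maps. Two soft spots are worth flagging. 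First, your hedge ``replace $\tau$ by an iterate if needed so that $\tau$ lifts'' would only prove the theorem for $\tau^n$, which is weaker than the statement; the correct fix is that the finite-index torsion-free nilpotent subgroup can be taken to be characteristic (e.g.\ the Fitting subgroup of the virtually nilpotent, torsion-free group $\pi_1(M)$), so it is automatically preserved by $\tau_\ast$ and $\tau$ itself lifts. Second, the step where expansion of $\tilde\tau$ forces all eigenvalues of the Malcev extension $\tilde\tau_l$ to have modulus greater than one is the genuinely delicate algebraic point (it is where the graded structure of the nilpotent Lie algebra and Dekimpe's clarifications enter), and a complete proof would need more than the phrase ``transfers via the quasi-isometry''; as a sketch, though, it identifies the right mechanism.
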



\subsection{Reduction to nilmanifolds}
\label{subsec-reduction-to-nilmanifolds}

\par  We discuss several basic properties of expanding maps and their  commuting maps.   We use these to reduce the proof of  our main result to the case when the semi-group has a fixed point and when the  linearization of the action of the semigroup action is on a nilmanifold rather than an infra-nilmanifold.  

First we slightly generalize work of Walters from \cite{walters1970}, cf. also \cite{dekimpe2012}.

\begin{proposition}
Suppose the semigroup $\Z_{+}^k$ acts by $\rho$ on a compact manifold $M$ with an expanding map $\rho (a)$.  Then the action is bi-\holder-conjugate to an action by affine infra-nilendomorphisms on an infra-nilmanifold $\mathcal{M}$. 
\end{proposition}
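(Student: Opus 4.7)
The plan is to reduce to the single-map case by applying Theorem 2.1 to the expanding element $\rho(a)$, and then to show that the Gromov--Shub conjugacy $\phi$ obtained from $\rho(a)$ automatically conjugates every other element of the semi-group to an affine infra-nilendomorphism. Concretely, apply Theorem 2.1 to $\tau := \rho(a)$ to obtain an infra-nilmanifold $\mathcal{M}$ with finite nilmanifold cover $\overline{\mathcal{M}} = \Gamma\setminus G$, finite cover $\overline{M}$ of $M$, an expanding affine nilendomorphism $\overline{\tau}_l$ on $\overline{\mathcal{M}}$ covering an infra-nilendomorphism $\tau_l$ of $\mathcal{M}$, and a bi-\holder homeomorphism $\phi : \overline{M} \to \overline{\mathcal{M}}$ conjugating the lift $\overline{\tau}$ of $\tau$ to $\overline{\tau}_l$. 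After this, the remaining task is purely about centralizers of expanding affine maps.

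The second step is to promote the conjugacy from a single map to the full action. Because $\overline{M} \to M$ is a finite cover, each $\rho(b)$ has only finitely many lifts to $\overline{M}$; hence on passing to a finite index sub-semigroup $\Lambda \subset \Z_{+}^k$ (containing $a$) we can choose coherent lifts $\overline{\rho}(b)$ giving an honest $\Lambda$-action on $\overline{M}$ that still commutes with $\overline{\tau}$. Set $\sigma(b) := \phi \circ \overline{\rho}(b) \circ \phi^{-1}$; this is a bi-\holder homeomorphism of $\overline{\mathcal{M}}$ commuting with $\overline{\tau}_l$.

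The third and main step is to show that any homeomorphism $\sigma$ of $\overline{\mathcal{M}}$ commuting with the expanding affine nilendomorphism $\overline{\tau}_l$ is itself an affine nilendomorphism. Lift to $G$: write $\tilde{\tau}_l = L_{h} \circ A$ with $A \in \Aut(G)$ expanding, and pick any continuous lift $\tilde{\sigma} : G \to G$ of $\sigma$. The commutation $\sigma \circ \overline{\tau}_l = \overline{\tau}_l \circ \sigma$ lifts to an identity $\tilde{\sigma} \circ \tilde{\tau}_l = \gamma \cdot (\tilde{\tau}_l \circ \tilde{\sigma})$ for some $\gamma \in \Gamma$; using that $\tilde{\tau}_l$ is expanding (so has a unique fixed point $p \in G$, and $\tilde{\tau}_l^{-n}$ contracts on compact sets) one may modify $\tilde{\sigma}$ by a deck translation so that strict commutation $\tilde{\sigma} \circ \tilde{\tau}_l = \tilde{\tau}_l \circ \tilde{\sigma}$ holds and $\tilde{\sigma}(p)$ is also fixed by $\tilde{\tau}_l$. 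Then for every $x \in G$ and every $n \geq 0$,
\[
\tilde{\sigma}(x) \;=\; \tilde{\tau}_l^{\,n}\bigl(\tilde{\sigma}\bigl(\tilde{\tau}_l^{-n}(x)\bigr)\bigr).
\]
Since $\tilde{\tau}_l^{-n}(x) \to p$ uniformly on compact sets, the right side is controlled by the local behavior of $\tilde{\sigma}$ at $p$; together with the fact that on a simply connected nilpotent group the exponential map identifies $\tilde{\tau}_l$ with the affine action of $A$ on the Lie algebra, a Taylor-type expansion and the expanding property force $\tilde{\sigma}$ to coincide with the affine map determined by its ``derivative'' at $p$. This is the standard Walters argument in \cite{walters1970} for tori, in the nilpotent form formalized by Dekimpe \cite{dekimpe2012}. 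Hence $\tilde{\sigma}$ is affine, $\sigma(b)$ is an affine nilendomorphism of $\overline{\mathcal{M}}$, and because $\sigma(b)$ was defined on $\mathcal{M}$ it descends to an affine infra-nilendomorphism on $\mathcal{M}$.

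The main obstacle I expect is the centralizer step: the argument in the abelian (toral) case uses the linear structure of $\R^n$ directly, and its extension to general simply connected nilpotent $G$ requires handling the non-abelian group law, in particular ensuring that ``rescaling by $\tilde{\tau}_l^{-n}$'' converges to an honest affine map in the nilpotent sense and that the deck-transformation ambiguity can be absorbed. The smoothness of an a priori only continuous commuting map follows from this rigidity, and no assumption beyond continuity of $\sigma(b)$ is needed.
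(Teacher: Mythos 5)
Your overall strategy matches the paper's: apply the Gromov--Shub theorem to the single expanding element $\rho(a)$ and then use rigidity of the centralizer of an expanding affine nilendomorphism (Walters) to see that every $\phi\circ\rho(b)\circ\phi^{-1}$ is affine. However, there are two genuine problems with the write-up. First, your second step passes to a finite-index sub-semigroup $\Lambda\subset\Z_+^k$ in order to obtain coherent lifts forming an honest commuting action on $\overline{M}$. This weakens the conclusion: the proposition asserts that the \emph{entire} $\Z_+^k$ action is conjugate to an action by affine infra-nilendomorphisms on $\mathcal{M}$, and your argument as written only treats $b\in\Lambda$. The detour is also unnecessary. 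Gromov's theorem gives a conjugacy that descends to $M\to\mathcal{M}$, so for \emph{each} $b$ the map $\beta_b:=\phi\circ\rho(b)\circ\phi^{-1}$ is already defined on $\mathcal{M}$ and commutes with the infra-nilendomorphism $\alpha$ there; one lifts $\alpha$ and $\beta_b$ to the finite nilmanifold cover $\overline{\mathcal{M}}$ only to invoke Walters, accepting that the lifted commutator $A^{-1}B^{-1}AB$ is merely an element of the holonomy (hence affine), which is exactly the hypothesis of \cite[Corollary 1]{walters1970}. This is how the paper argues, and it requires no sub-semigroup and no coherence of lifts.

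Second, your third step, as sketched, does not work for a map that is a priori only continuous: you appeal to ``the affine map determined by its derivative at $p$'' and a ``Taylor-type expansion,'' but $\tilde{\sigma}$ has no derivative at $p$. The actual Walters/Shub mechanism is different: one builds the comparison affine map from the induced endomorphism of $\pi_1$ (extended to $G$ by Mal'cev rigidity), notes that $\tilde{\sigma}$ and this affine map are lifts of homotopic maps and hence stay a bounded distance apart, and then uses the commutation relation $\tilde{\sigma}=\tilde{\tau}_l^{\,n}\circ\tilde{\sigma}\circ\tilde{\tau}_l^{-n}$ together with the expansion of $\tilde{\tau}_l$ to show that a bounded, invariant discrepancy must vanish. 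Since you do cite Walters and Dekimpe, the cleanest fix is to drop the sketch entirely and quote the result, as the paper does; if you insist on proving it, the $\pi_1$-to-affine construction is the missing ingredient, not any regularity of $\tilde{\sigma}$ at the fixed point.
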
 

\begin{proof}   By Theorem \ref{gromov_conjugacy_thm}, $\rho(a)$ is $C^0$-conjugate to an infra-nilendomorphism $\alpha$ by a homeomorphism $\phi$. Any such conjugacy is bi-\holder as is well-known.  For $b \in \Z_{+}^k$, $\beta := \phi \circ  \rho(b) \circ \phi ^{-1}$ is a smooth map of $\mathcal{M}$ that commutes with $\alpha$. Let $\overline{\mathcal{M}}$ denote the finite  nilmanifold cover of $\mathcal{M}$.  Then $\alpha$ lifts to an affine endomorphism $A$ of $\overline{\mathcal{M}}$, and $\beta$ lifts to a homeomorphism $B$ such that $A^{-1} B^{-1} A B$ is an element of the holonomy of $\overline{\mathcal{M}}$ over $\mathcal{M}$ and thus an affine endomorphism.  Thus $B$ and hence $\beta$ are affine by \cite[Corollary 1]{walters1970}.
\end{proof}

\begin{lemma}  
\label{lemma-finite-cover-reduction}
Let $\rho$ be a $\Z^k_+$ action on a compact manifold $M$ with an expanding map $\rho(a)$ which is \holder conjugate to an affine action $\rho _l$ on an infra-nilmanifold $\mathcal{M}$. Then there is a  sub-semigroup $\Sigma^+$ of finite index in $\Z_{+}^k$ which acts on a finite cover $\overline{\mathcal{M}}$ of $\mathcal{M}$ by nil-endomorphisms covering the restriction of the original action to $\Sigma^+$.  
\end{lemma}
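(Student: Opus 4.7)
The plan is to reduce this lemma to an algebraic lifting problem for the fundamental group of $\mathcal{M}$, and resolve it via a characteristic-subgroup construction. By the preceding proposition we may replace $\rho$ by the H\"older-conjugate action $\rho_l$ and assume $\Z_+^k$ acts on $\mathcal{M} = \Lambda \setminus G$ by affine infra-nilendomorphisms, where $\Lambda \subset G \rtimes \Aut(G)$ is the discrete cocompact fundamental group. Set $\Gamma := \Lambda \cap G$ and $F := \Lambda/\Gamma$, so $\overline{\mathcal{M}}_0 := \Gamma \setminus G$ is the finite nilmanifold cover of $\mathcal{M}$ from Theorem~\ref{gromov_conjugacy_thm}. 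Each $\rho_l(b)$ lifts to an affine map $\tilde\rho_l(b)$ on $G$ which, via the relation $\tilde\rho_l(b)\lambda = \phi_b(\lambda)\tilde\rho_l(b)$, induces an endomorphism $\phi_b$ of $\Lambda$; the obstruction to lifting $\rho_l(b)$ to a nilendomorphism of a finite nilmanifold cover $\Gamma' \setminus G$ is precisely the inclusion $\phi_b(\Gamma') \subseteq \Gamma'$.

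The key step is to produce a single finite-index subgroup $\Lambda^\circ \subseteq \Gamma$ that is preserved by \emph{every} endomorphism of $\Lambda$. I would take
\[
\Lambda^\circ := \bigcap\bigl\{H \leq \Lambda : [\Lambda : H] \leq |F|\bigr\}.
\]
Since $\Lambda$ is finitely generated, there are only finitely many subgroups of any given finite index, so this intersection is finite and $\Lambda^\circ$ is a finite-index normal subgroup of $\Lambda$. Because $\Gamma$ itself has index $|F|$, we have $\Lambda^\circ \subseteq \Gamma \subseteq G$, so $\Lambda^\circ$ is a lattice in $G$. The full invariance of $\Lambda^\circ$ follows from a short argument: for any endomorphism $\phi$ of $\Lambda$ and any $H \leq \Lambda$ with $[\Lambda:H] \leq |F|$, the preimage $\phi^{-1}(H)$ has index $[\phi(\Lambda) : \phi(\Lambda) \cap H] \leq [\Lambda:H] \leq |F|$, so $\Lambda^\circ \subseteq \phi^{-1}(H)$, giving $\phi(\Lambda^\circ) \subseteq H$; intersecting over such $H$ yields $\phi(\Lambda^\circ) \subseteq \Lambda^\circ$. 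In particular, every $\phi_b$ preserves $\Lambda^\circ$.

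With $\overline{\mathcal{M}} := \Lambda^\circ \setminus G$, a compact nilmanifold finitely covering $\mathcal{M}$, each affine lift $\tilde\rho_l(b)$ descends to an affine nilendomorphism of $\overline{\mathcal{M}}$. The only remaining point is to assemble these descents into a semigroup action: a priori, the composition of two chosen lifts differs from the lift of the composition by a deck transformation in the finite group $\Lambda/\Lambda^\circ$, producing a $2$-cocycle on $\Z_+^k$ with values in this finite group. Such a cocycle becomes a coboundary upon restriction to a suitable finite-index sub-semigroup $\Sigma^+ \subseteq \Z_+^k$ (for instance, $\Sigma^+ = N\Z_+^k$ with $N$ a sufficiently large multiple of $|\Lambda/\Lambda^\circ|$), yielding a genuine semigroup action of $\Sigma^+$ on $\overline{\mathcal{M}}$ by nil-endomorphisms covering the restriction of the original action. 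The main obstacle is producing $\Lambda^\circ$ with invariance under all endomorphisms, not merely automorphisms, which is precisely what the preimage-index calculation above accomplishes; the subsequent trivialization of a finite-target cocycle on $\Z_+^k$ is routine.
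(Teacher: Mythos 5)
Your first half is correct and, in fact, more careful than the corresponding sentence in the paper: taking $\Lambda^\circ$ to be the intersection of all subgroups of $\Lambda$ of index at most $|F|$ does give a finite-index subgroup of $\Gamma$ (finite generation of $\Lambda$ gives finiteness of the intersection, and $[\Lambda:\phi^{-1}(H)]\leq[\Lambda:H]$ gives invariance under every endomorphism of $\Lambda$), so every $\rho_l(b)$ descends from its affine lift on $G$ to the single nilmanifold cover $\overline{\mathcal{M}}=\Lambda^\circ\setminus G$. This is a legitimate way to produce the common cover, where the paper only asserts its existence.

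The gap is the last paragraph, which is exactly the heart of the lemma. The defect $\hat\rho(b_1)\hat\rho(b_2)=d(b_1,b_2)\,\hat\rho(b_1+b_2)$ is not a $2$-cocycle with constant coefficients in a finite abelian group: the deck group $\Lambda/\Lambda^\circ$ need not be abelian, and the lifts do not commute with deck transformations --- moving a deck transformation across a lift twists it by an induced endomorphism of $\Lambda/\Lambda^\circ$ which, since the lifts are covering maps of degree larger than one, need not even be injective (for the doubling map on $\R/\Z$ lifted to $\R/2\Z$, precomposing with the nontrivial deck transformation returns the same lift). So the obstruction lives in a twisted, possibly non-abelian, semigroup cohomology, and the claim that restriction to $N\Z_+^k$ with $|\Lambda/\Lambda^\circ|$ dividing $N$ kills it is not routine: the standard computation (restriction to $N\Z^k$ multiplies the cup-product generators of $H^2(\Z^k;A)$ by $N^2$) applies only to untwisted abelian coefficients for the group $\Z^k$, not here, and you give no substitute argument. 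This is precisely where the paper uses the expanding element, which your proposal never invokes --- a warning sign. In the paper, $\rho_l(a)$ has a nonempty finite fixed-point set, the commuting maps permute it, so a finite-index sub-semigroup $\Sigma^+$ fixes a point $p$; one then chooses the lifts to fix a single point $\overline{p}$ over $p$, and every relation among the $\rho_l(b)$ is inherited by these lifts because two lifts of the same map that agree at one point coincide (equivalently, an affine map is determined by its derivative at a fixed point). Grafting that fixed-point argument onto your cover $\Lambda^\circ\setminus G$ closes the gap; as written, the cocycle-trivialization step is unjustified.
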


\begin{proof} It suffices to prove that the linearization $\rho _l$ lifts since $\rho$ and $\rho _l$ are $C^0$-conjugate.   By \cite[Theorem 1]{Shub1969},  $\rho_l (a)$ has a fixed point $ p \in \mathcal{M}$.  Moreover, the set of fixed points of $\rho_l (a)$ is finite as $\rho_l (a)$ is expanding  and $\mathcal{M}$ is compact.  Hence there is a  sub-semigroup $\Sigma^+$ of finite index in $\Z_{+}^k$ which also fixes $p$.  As $\Sigma^+$ is finitely generated, we can find a finite cover $\overline{\mathcal{M}}$ of $\mathcal{M}$ such that all elements of $\Sigma^+$ lift to $\overline{\mathcal{M}}$ as affine nil-endomorphisms.  Furthermore, as $\Sigma^+$ is finitely generated, we can pick  lifts of generators of $\Sigma^+$ that all fix a given point $\overline{p}$ in the pre-image of $p$.  Since these lifts are determined by their derivative action at $\overline{p}$, we see that all the lifts of the generators of $\Sigma^+$ commute.  Thus they define a lift of the action of $\Sigma^+$ to $\overline{\mathcal{M}}$ which covers the $\Sigma^+$ action on $\mathcal{M}$, as desired.
 \end{proof}






Under our higher rank assumptions on the semi-group actions, we will show that the covering map $\overline{\rho}$ of $\rho$ is $\smooth$-conjugate to $\overline{\rho} _l$ by $\phi$.  This implies that $\rho$ is $\smooth$-conjugate to $\rho _l$, as desired.  Thus we  can always work with the finite covers $\overline{M}$ and $\overline{\mathcal{M}}$ and actions  $\overline{\rho}$ and $\overline{\rho} _l$ which have a common fixed point.  

\par If the dimension $\dim (M) \geq 5$, we can  make further reductions.  Indeed, by Davis' work on exotic differentiable structures on nilmanifolds \cite[Theorem A.0.1, Appendix]{kalinin_fisher_spatzier2013}
 and passing to a  finite  cover,  we  may   assume  the conjugacy is isotopic to a diffeomorphism $\psi : \overline{M} \to \Gamma\setminus G$.   Then  we can conjugate $\rho$  by $\psi$ to a smooth action on the nilmanifold $\Gamma\setminus G$.   We will deal separately with the case $\dim (M) \leq 4$ in \S \ref{lowdimension}.

These reductions  allows us to make the following hypotheses throughout except in \S \ref{lowdimension}.  


\subsection{Standing Assumption}
Henceforth, $M = \Gamma \setminus G$ will denote a compact nilmanifold and $\rho$ will denote  a $\smooth$ {\em genuinely higher rank} action of a semigroup $\Z _+ ^k$ with $k \geq 2$ on $M$ such that 
\begin{itemize}
\item $\rho (a)$ is an expanding map for some $a \in \Z _+ ^k$,
\item $\rho$ is \holder conjugate to an action of $\Z _+ ^k$ by affine nil-endomorphisms on $M$,
\item $\rho$ has a common fixed point.
\end{itemize}

Note that the linearization of $\rho$ is given by the induced action on the fundamental group $\Gamma$,  thanks  to  existence of a common fixed point.


\subsection{Solenoids and extended actions}   \label{solenoid}
We will define the solenoid $\s(M)$ of $M$, extend $\rho$ and $\rho_l$ to $\Z^k$ actions on $\s(M)$, and define Lyapunov exponents of $\rho$ and $\rho_l$ on $\s(M)$, following \cite{katok_spatzier1996}.
\par First recall Mal'cev's theorem from the  theory of nilpotent Lie groups (see \cite{raghunathan1972,green_tao2012,corwin-greenleaf}, for example) that any lattice $\Gamma$ of a nilpotent Lie group $G$ must be
arithmetic, i.e., there is a simply connected nilpotent algebraic group $N$ over $\Q$ such that $G = N(\R)$ and $\Gamma =N(\Z)$. Then $M = N(\Z)\setminus N(\R)$.
 \par By \cite{katok_spatzier1996}, the abstract solenoid $\s(M, \rho)$ of $M$ is naturally defined as follows: 
\begin{equation}
\label{equ:abstract-solenoid}
\s(M, \rho):= \left\{(z_{\n}) \in M^{\Z_{\leq 0}^k}: z_{\n +a} = \rho(a) z_{\n}: \text{ for all } a \in \Z^k_{+}\right\}.
\end{equation}
In other words, we attach each point on $M$ with all possible pasts with respect to all $a \in \Z^k_+$. On this space, one can easily define a $\Z^k$ action which extends the original $\Z^k_+$ action $\rho$ (see \cite[\S 3]{katok_spatzier1996} for details). The disadvantage is that it is hard to do concrete analysis and calculation with this definition. Therefore, we will give another definition and stick with it throughout the paper. 
\par Given $a \in \Z^k_+$, $\rho(a)$ can be extended to a homeomorphism from $N(\R)$ to itself (cf. \cite{Shub1969} and \cite{shub1970}). For a fixed $z \in M$, the preimage of $z$ with respect to $\rho(a)$ is $\{ \rho^{-1}(a)(n z) : n \in N(\Z) \}$. Therefore, to attach $z$ with a past with respect to $\rho(a)$ is the same as to attach $z$ with an element $n \in N(\Z)$. Moreover, if $\rho_l^{-1}(a)(n_1^{-1} n_2) \in N(\Z)$, then $\rho^{-1}(a)(n_1 z) = \rho^{-1}(a)(n_2 z)$. Taking this congruence condition into account and passing to the inverse limit for all possible $a \in \Z^k_+$, we will attach each $z \in M$ with several $p$-adic components $\xi_p \in N(\Z_p)$.

\par This discussion brings us the new definition of the solenoid $\s(M)$ of $M$: 

\begin{definition}[see~{\cite[Appendix]{katok_spatzier1996}}]
\label{alter_def_solenoid} \par $\quad$
 Let 
$$\s'(M):= N(\Z)\setminus \left( N(\R) \times \prod_{p \text{ prime }} N(\Z_p) \right),$$
where $N(\Z)$ acts on $N(\R) \times \prod_{p \text{ prime }} N(\Z_p)$ diagonally, and in the product, $p$ runs over all primes. For each 
prime number $p$, we define 
$$M_p := \left\{v \in N(\Z_p): \|\rho_{\ast}(a)(v)\|_p = \|v\|_p, \text{ for all } a \in \Z^k_{+}\right\},$$
where $\|\cdot\|_p$ denotes the $p$-adic norm. Then $\s(M)$ is defined as follows:
$$\s(M) := N(\Z)\setminus \left( N(\R) \times \prod_{p \text{ prime }} N(\Z_p)/M_p \right).$$
Equip $\s(M)$ with product structure.

\end{definition}
\begin{remark}
\label{rmk:solenoid}
\par $\quad$
\begin{enumerate}
\item $M_p = N(\Z_p)$ for all but finitely many $p$'s. Therefore, there exists a finite set $S$ of primes such that $\s(M) = N(\Z) \setminus \left( N(\R) \times \prod_{p \in S} N(\Z_p)/M_p \right)$.
\item Generalizing the argument from \cite[Lemma 8.2]{katok_spatzier1996}, we see that every quotient 
$N(\Z_p)/M_p$ is torsion free.  


\item For each prime $p$, we denote by $\nu_p$ the Haar measure on 
$N(\Z_p)$. By normalization, we assume that $\nu_p(N(\Z_p)) =1$. Let 
$\nu $ denote the Haar measure on $N(\R)$ and also the induced measure on 
$N(\Z) \setminus N(\R)$. By normalization, we assume that $\nu(N(\Z)\setminus N(\R)) =1$. Then the product measure $\nu \times \prod_{p \in S} \nu_p$ induces a probability measure on the solenoid $\s(M)$, which we denote by $\mu$. Define $\tilde{\nu} := \phi^{-1}_{\ast}(\nu)$ and $\tilde{\mu} := \tilde{\nu} \times \prod_{p\in S} \nu_p$, then $\tilde{\nu}$ is absolutely continuous with respect to $\nu$ (since $\phi$ is \holder), and $\tilde{\mu}$ is preserved by the action of $\rho$. Moreover, for any $a \in \Z^k$, the action of $\rho(a)$ is ergodic with respect to $\tilde{\mu}$ if and only if $\rho_l(a)$ is ergodic with respect to $\mu$. 
\item The definition above depends on the homotopy class of the action $\rho$ as the $M_p$'s do. Since throughout this paper we fix the homotopy type, i.e., the induced action $\rho_{\ast}$ on $N(\Z)$, we may regard $\s(M)$ as a fixed space.
\item We note that that  an infra-nilmanifold $M$ can be regarded as a finite index factor of a homogeneous space $N(\Z)\setminus N(\R)$ where $N$ denotes a nilpotent $\Q$-group. Once we define the solenoid $\s(N(\Z)\setminus N(\R))$ of 
$N(\Z)\setminus N(\R)$, the solenoid of $M$ is just the quotient of 
$\s(N(\Z)\setminus N(\R))$ by a finite group action. Thus solenoids for infra-nilmanifolds also have an explicit description. 
\end{enumerate}
\end{remark}

\par For nilpotent algebraic group $N$, we have the following version of Chinese remainder theorem:

\begin{lemma}
\label{lemma_chinese_remainder}
Given a finite subset of primes $S$, $\xi_p \in N(\Z_p)$ for $p \in S$ and $l_p \in \Z_{+}$ for $p \in S$, there exists $n \in N(\Z)$ such that
$$n^{-1}\xi_p \equiv \mathbf{0}\quad (\mathrm{mod} \quad p^{l_p})$$ 
for all $p \in S$.
\end{lemma}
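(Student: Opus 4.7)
The plan is to reduce the lemma to the classical Chinese Remainder Theorem for $\Z$ by working in Mal'cev coordinates on $N$.

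First I would fix a Mal'cev basis $X_1,\ldots,X_d$ of the Lie algebra $\mathfrak{n}$ adapted to the lower central series and use the associated coordinates of the second kind
\[
\psi(t_1,\ldots,t_d) \;=\; \exp(t_1 X_1)\cdots \exp(t_d X_d).
\]
By the structure theory of simply connected nilpotent algebraic groups (cf.\ Corwin--Greenleaf or Raghunathan), $\psi$ pulls multiplication and inversion back to polynomial maps with rational coefficients. After rescaling the basis (and, if necessary, replacing $N(\Z)$ by a finite-index subgroup, which is harmless since it only forces us to enlarge each $l_p$), one may arrange that these polynomials have integer coefficients. In particular, they induce maps $\Z_p^d \times \Z_p^d \to \Z_p^d$ and $\Z_p^d \to \Z_p^d$ that are $p$-adically $1$-Lipschitz for every prime $p$.

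Second, for each $p \in S$ I would write $\xi_p = \psi(a_{p,1},\ldots,a_{p,d})$ with $a_{p,i} \in \Z_p$, and then apply the classical Chinese Remainder Theorem coordinate by coordinate to produce integers $n_1,\ldots,n_d \in \Z$ with
\[
n_i \equiv a_{p,i} \pmod{p^{l_p}} \qquad \text{for all } p \in S \text{ and all } i=1,\ldots,d.
\]
Set $n := \psi(n_1,\ldots,n_d) \in N(\Z)$. The Mal'cev coordinates of $n^{-1}\xi_p$ are the values of a polynomial map (with integer coefficients) applied to the tuples $(n_1,\ldots,n_d)$ and $(a_{p,1},\ldots,a_{p,d})$; this polynomial map vanishes when the two tuples coincide. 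The $p$-adic non-expansion property from the first step then forces $n^{-1}\xi_p \equiv \mathbf{0} \pmod{p^{l_p}}$ for every $p \in S$, which is the desired conclusion.

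The main obstacle is the bookkeeping of rational denominators in the Mal'cev polynomials, since these may spoil the naive $p$-adic Lipschitz estimate at the finitely many primes dividing them. This is handled either by the uniform rescaling and finite-index descent mentioned above, or, more cleanly, by an induction on the nilpotency class: the abelian case is literally the classical CRT on $\Z^d$, and the inductive step lifts a solution modulo the center through the central extension $1 \to Z(N) \to N \to N/Z(N) \to 1$ and then corrects inside $Z(N)$ by one more application of the abelian CRT.
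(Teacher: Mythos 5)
Your proposal is correct in substance but takes a genuinely different route from the paper. The paper proves the lemma by induction on the nilpotency degree: it chooses a subgroup $N'\subset N$ of degree $d-1$ with $N'\setminus N$ abelian, solves the congruence on the abelian quotient by the classical Chinese Remainder Theorem (where the action $n^{-1}\xi_p$ is literally linear), and then removes the remaining discrepancy by applying the inductive hypothesis inside $N'$; the central-extension induction you sketch at the end is essentially this argument. Your primary argument instead linearizes everything at once in Mal'cev coordinates: multiplication and inversion are polynomial with rational coefficients of bounded denominator, so coordinatewise CRT plus the fact that a polynomial with $\Z_p$-coefficients does not expand congruences mod $p^{l_p}$ gives the conclusion in one stroke. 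This buys an explicit, coordinate-level argument with no induction (and matches the polynomial-coordinate style used elsewhere in the paper, e.g.\ in Case 2 of the mixing proof), while the paper's induction avoids all integrality bookkeeping because each abelian quotient is honestly linear over $\Z$. One caution on that bookkeeping: for the finitely many $p\in S$ dividing the denominators, what you must absorb is the denominators of the \emph{coefficients} of the multiplication polynomials, not denominators in the coordinates of $\xi_p$ --- if a rescaling made some coordinate $a_{p,i}$ non-integral at $p$, then no integer $n_i$ and no enlargement of $l_p$ could produce $n_i\equiv a_{p,i}\pmod{p}$, so the finite-index/rescaling fix as stated does not quite work. The clean repair is to keep the paper's normalization (in which $N(\Z_p)$ has $\Z_p$-coordinates by definition) and impose the stronger congruences $n_i\equiv a_{p,i}\pmod{p^{\,l_p+C}}$ with $p^{C}$ dominating the coefficient denominators; with that adjustment, or by falling back on your inductive variant, your argument is complete.
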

\begin{proof}
We prove the statement by induction on the nilpotency degree of $N$.
\par If $N$ is abelian, this is just the Chinese remainder theorem as the action $n^{-1}\xi_p$ is a linear expression.
\par Suppose the statement holds if the nilpotency degree is $< d$. Now we assume that the nilpotency degree of $N$ is $d$.
 Take a nilpotent subgroup $N' $ of $N$ such that $N'\setminus N$ is abelian and the nilpotency degree of $N'$ is $d -1$. Then considering the image of $\xi_p$ on $N'(\Z_p)\setminus N(\Z_p)$, and applying the Chinese remainder theorem, we have that there exists $n_1 \in N(\Z)$ such that 
$$n_1^{-1}\xi_p  \equiv \mathbf{0} \quad (\mathrm{mod} \quad p^{l_p}) \in  N'(\Z_p)\setminus N(\Z_p),$$
for all $p \in S$. Now by applying inductive hypothesis to $N'$, we have there exists $n_2 \in N'(\Z)$ such that 
$$n_2^{-1} n_1^{-1} \xi_p \equiv \mathbf{0} \quad (\mathrm{mod}\quad p^{l_p}),$$
for all $p \in S$. Then $n = n_1 n_2$ satisfies our condition.
\par This proves the lemma.
\end{proof}

\par  The informal discussion before Definition \ref{alter_def_solenoid} may help with the next result and its proof. 
 
\begin{proposition}
$\rho$ and $\rho_l$ can be extended to $\Z^k$ actions on $\s(M)$.
\end{proposition}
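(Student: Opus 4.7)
The plan is to extend $\rho_l$ first using the algebraic data, and then transfer the construction to $\rho$ via their common action on $\pi_1(M) = N(\Z)$. For $\rho_l$ and $a \in \Z^k_+$, I will define $\rho_l(a)$ on $\s(M)$ as the map induced by the linearization $A_a := \rho_{l,\ast}(a) \in \End(N)$ acting diagonally on the universal cover $N(\R) \times \prod_{p \in S} N(\Z_p)/M_p$. Well-definedness requires two checks: (i) $A_a(N(\Z)) \subseteq N(\Z)$, automatic for an integer endomorphism, so the action descends past the diagonal $N(\Z)$-quotient; and (ii) $A_a(M_p) \subseteq M_p$, so that $A_a$ descends to each $N(\Z_p)/M_p$. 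The latter follows from the defining property of $M_p$: if $v \in M_p$, then $\|A_b(A_a v)\|_p = \|A_{b+a}(v)\|_p = \|v\|_p = \|A_a v\|_p$ for every $b \in \Z^k_+$, hence $A_a v \in M_p$.

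The central step is to show each $\rho_l(a)$ is a bijection on $\s(M)$, from which the $\Z^k$-extension is obtained by inverting and using the commuting relations $A_a A_b = A_b A_a$. On the real component, $A_a$ is an automorphism of $N(\R)$ as a Lie group: any $A_a$-kernel would be an $A_{a_0}$-invariant subspace, ruled out by the expansion of $A_{a_0}$ on every nontrivial invariant subspace. On each $p$-adic quotient $N(\Z_p)/M_p$, $A_a$ is injective by a mirror argument to (ii) above (if $A_a v \in M_p$, the sequence $\|A_c v\|_p$ is constant on the cofinal set $a + \Z^k_+$ and non-increasing on all of $\Z^k_+$, forcing $v \in M_p$). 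Surjectivity on $\s(M)$ is the heart of the proof: given a target $\overline{z} \in \s(M)$ with lift $(\tilde z, (\xi_p)_{p \in S})$, Lemma~\ref{lemma_chinese_remainder} furnishes a single $\gamma \in N(\Z)$ whose $p$-adic images simultaneously correct $\xi_p$ to lie in the image of $A_a : N(\Z_p)/M_p \to N(\Z_p)/M_p$ for every $p \in S$; after this alignment, a preimage exists factor by factor and descends to $\s(M)$. Injectivity is verified by a parallel alignment argument, showing that two lifts with the same $A_a$-image differ by an element of $N(\Z)$.

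For $\rho$: after the reductions in \S \ref{subsec-reduction-to-nilmanifolds}, the conjugacy $\phi$ is isotopic to the identity and induces the identity on $\pi_1(M) = N(\Z)$, so $\rho_\ast(a) = \rho_{l,\ast}(a) = A_a$ for all $a$, and hence the same integer endomorphism governs both actions on each $N(\Z_p)$. I therefore define $\rho(a)$ on $\s(M)$ by using Shub's lift of $\rho(a)$ to a homeomorphism of $N(\R)$ on the real component, together with $A_a$ on each $p$-adic quotient $N(\Z_p)/M_p$; the agreement on $\pi_1$ makes this consistent across the $N(\Z)$-quotient. The same bijectivity and inversion argument extends $\rho$ to a $\Z^k$-action on $\s(M)$. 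The main obstacle throughout is the surjectivity step: verifying, via a careful application of Lemma~\ref{lemma_chinese_remainder}, that the definition of $M_p$ is engineered exactly so that the image of $A_a$ on $N(\Z_p)/M_p$, combined with the translates by $N(\Z)$, fills out all of $\s(M)$.
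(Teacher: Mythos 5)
Your proposal follows essentially the same route as the paper: the forward action is the obvious diagonal one, and the inverse is obtained by using Lemma~\ref{lemma_chinese_remainder} to translate the $p$-adic coordinates by an element of $N(\Z)$ into a congruence subgroup on which $\rho_{\ast}^{-1}(a)$ is defined, while the real coordinate is handled by the lift of $\rho(a)$ to a homeomorphism of $N(\R)$. The only local slip is your justification that $A_a$ is injective on $N(\R)$: invariance of $\ker A_a$ under the expanding element does not by itself rule out a nontrivial kernel (expanding automorphisms have many invariant subspaces); the correct reason is that $A_{a_0}^{n} = A_{na_0 - a}A_a$ for $n$ large with the left-hand side an automorphism --- a standard fact the paper simply cites from Shub.
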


\begin{proof} 
\par For $a \in \Z^k_{+}$, the action $\rho(a)$ on $\s(M)$ can be naturally defined as follows: for $\overline{z} = (z, (\xi_p)_{p \in S}) \in \s(M)$,
 $$\rho(a)(\overline{z}) :=(\rho(a)z, (\rho_{\ast}(a)\xi_p)_{p \in S}).$$
\par To extend $\rho$ to a $\Z^k$ action on $\s(M)$, it suffices to define the inverse of $\rho(a)$ for each $a \in \Z^k_+$. For $a \in \Z^{k}_{+}$, $\rho(a)$ can be extended to a homeomorphism of the universal covering $N(\R)$ of $M$ to itself (cf. \cite{Shub1969} and \cite{shub1970}). Therefore $\rho^{-1}(a)$ is well defined on $N(\R)$. Recall that $\rho_{\ast}(a)$ agrees with $\rho_l(a)$ when restricted to $N(\Z)$. Then we  define $\rho^{-1}(a)$ as follows: for $\overline{z}= (z, (\xi_p)_{p \in S}) \in \s(M)$, we may pick $l_p$ for each $p \in S$ such that $\rho_{\ast}^{-1}(a)(N(p^{l_p} \Z_p)) \subset N(\Z_p)$.  
By Lemma \ref{lemma_chinese_remainder}, we can find $n \in N(\Z)$ such that $n^{-1}\xi_p \equiv \mathbf{0}(\mathrm{mod} \quad p^{l_p})$. Let us write $(z, (\xi_p)_{p \in S}) = (n^{-1}z, (n^{-1}\xi_p)_{p \in S})$, then $\rho_{\ast}^{-1}(a)$ is well defined on each $p$-adic component. Thus, we can define
 $$\rho^{-1}(a)(z, (\xi_p)_{p \in S}) := (\rho^{-1}(a)(n^{-1}z) , (\rho^{-1}_{\ast}(a)(n^{-1}\xi_p))_{p \in S}).$$ 
\par The same extension works for $\rho_l$ as well.
\end{proof} 

\par The conjugacy $\phi : M \rightarrow M$ can be extended to a homeomorphism $\phi : \s(M) \rightarrow \s(M)$ as follows: on the real component, it is $\phi$, and on $p$-adic components, it is the identity map. It is easy to see that $\phi$ conjugates the extended actions $\rho$ and $\rho_l$ on $\s(M)$.

\subsection{Lyapunov exponents and coarse Lyapunov decomposition} We need the following notation to define Lyapunov exponents of $\rho$ and $\rho_l$.
\begin{definition}
\label{def:manifold-slice}
\par For $\overline{z} = (z, (z_p)_{p \in S}) \in \s(M)$ and an open neighborhood $U_z \subset \s(M)$ of $\overline{z}$, let $M(\overline{z})$ denote the connected component of $\overline{z}$ in $U_z$. It is easy to see that $M(\overline{z})$ is of form $\{(y, (z_p)_{p \in S}) : y \in U \subset M\}$, where $U \subset M$ denotes an open neighborhood of $z$ in $M$. It is homeomorphic to an open set of $M$. Let us call $M(\overline{z})$ a {\em manifold slice} passing through $\overline{z}$.
\end{definition}

The smoothness of a map defined on $\s(M)$ is defined as follows:  
\begin{definition}
\label{def:smooth-map-on-solenoids}
We say a continuous map defined on $\s(M)$ is $\smooth$ if it is $\smooth$ when restricted to every manifold slice.
\end{definition}

\begin{remark}
\label{rmk:smooth-zk-action}
It is easy to see that for each $a \in \Z^k$, $\rho(a)$ is $\smooth$.   Indeed, $\rho(a)$ maps every manifold slice $M(\overline{z})$ to another manifold slice 
$M(\overline{z}')$, and when restricted to the manifold slice, $\rho(a)$ is smooth (because the map only depends on the real component).  The same holds for $\rho_l$.
\end{remark}

\begin{definition}
\label{def_lyapunov_exponent}
\par Since $\rho_l$ acts on $\s(M)$ by affine nilendomorphisms, it naturally induces a $\Z^k$ action on the nilpotent Lie group $N(\R)$ by automorphisms, which we still denote by $\rho_l$.  Let $D\rho_l$ denote the action on $\mathfrak{n}(\R)$ induced by $\rho_l$. A character $\chi \in (\R^k)^{\ast}$ is called a real {\bf Lyapunov
exponent} of $\rho_l$ if the real {\bf Lyapunov subspace} corresponding to $\chi$ defined as follows:
$$\sigma^{\chi}:= \left\{v \in \mathfrak{n}(\R): \lim_{\|a\|\rightarrow \infty} \frac{\log \|D\rho_l(a) v\| - \chi(a)}{\|a\|} =0\right\}$$
is nontrivial. 
\par Let $D\rho_l$ denote the action on $\mathfrak{n}(\Q_p)$ induced by $\rho_l$. Note that $\rho$ induces the same action on $p$-adic components, so $D\rho_l$ is also the action induced by $\rho$. A character $\chi \in (\R^k)^{\ast}$ is called a $p$-adic {\bf Lyapunov exponent} of $\rho_l$ (and also $\rho$) if the $p$-adic {\bf Lyapunov subspace} corresponding to $\chi$ defined as follows:
$$\sigma^{\chi}:= \left\{v \in \mathfrak{n}(\Q_p): \lim_{\|a\|\rightarrow \infty} \frac{\log \|D\rho_l(a) v\|_p - \chi(a)}{\|a\|} =0\right\}$$
is nontrivial. 
\par For $\overline{z} \in \s(M)$, let $\T_{\overline{z}}(M)$ denote the tangent space of the {\em manifold slice} passing through $\overline{z}$ based at $\overline{z}$. A character $\chi \in  (\R^k)^{\ast}$ is called a real {\bf Lyapunov exponent} of $\rho$ if for $\tilde{\mu}$-a.e. $\overline{z} \in \s(M)$, the real {\bf Lyapunov distribution} corresponding to $\chi$ defined as follows:
$$ E_{\overline{z}}^{\chi} := \left\{ v \in \T_{\overline{z}}(M): \lim_{\|a\| \rightarrow \infty} \frac{\log \|D\rho(a) (v)\| - \chi(a)}{\|a\|} =0 \right\}$$
is nontrivial.
\end{definition}



\begin{notation}
\label{notation_lyapunov_exponent}
To distinguish the {\bf Lyapunov exponents} of $\rho$ and $\rho_l$, later in this paper, we denote {\bf Lyapunov exponents} of $\rho$ by $\chi, \chi_1, \chi_2, \dots,$ and denote {\bf Lyapunov exponents} of $\rho_l$ by $\chi^l, \chi^l_1, \chi^l_2, \dots$. Since $p$-adic 
{\bf Lyapunov exponents} of $\rho$ and $\rho_l$ coincide, we do not distinguish the above two notions in $p$-adic directions. We say a {\bf Lyapunov exponent} $\chi$ (or $\chi^l$) is of type $\K$ ($\K = \R $ or $\Q_p$) if the corresponding {\bf Lyapunov distribution} (or {\bf Lyapunov subspace}) is in $\mathfrak{n}(\K)$. For $\K= \R$ or $\Q_p$, let $T(\K)$ denote the set of type $\K$. 
\end{notation}

\begin{remark}
\par $\quad$
\begin{enumerate}
\item One can prove that (see \cite{hertz_wang2014}) 
 
 $$\mathfrak{n}(\R) = \bigoplus_{\chi^l \in T(\R)} \sigma^{\chi^l}.$$
 \item Since $\rho$ contains an expanding element, it is ergodic with respect to $\tilde{\mu}$. By Multiplicative Ergodic Theorem, there exist finitely many 
 {\bf Lyapunov exponents } $\chi$'s of type $\R$, a set of full $\tilde{\mu}$-measure set $\mathcal{P}\subset \s(M)$, and a $\rho$-invariant measurable splitting of the bundle $\mathcal{E}(\s(M)) := \bigcup_{\overline{z} \in \s(M)} \T_{\overline{z}} (M) = \bigoplus E^{\chi}$ over $\mathcal{P}$ such that for all $a\in \Z^k$ and $v \in E^{\chi}\setminus\{\mathbf{0}\}$, 
 $$\lim_{n \rightarrow \infty} n^{-1} \log \frac{\|D\rho(na) v\|}{\|v\|} = \chi(a). $$
 We refer to \cite{kalinin_katok2001}, \cite{kalinin_sadovskaya2006}, and \cite{kalinin_fisher_spatzier2013} for details. 
 \end{enumerate}
\end{remark}
\begin{definition}
\label{def_coarse_lyapunov}
For a {\bf Lyapunov exponent} $\chi^l$ of $\rho_l$, we define the {\bf coarse Lyapunov subspace} associated with $\chi^l$ as follows:
$$\sigma^{[\chi^l]} := \bigoplus_{\chi_1^l  = c \chi^l, c > 0} \sigma^{\chi_1^l }.$$
The corrosponding decomposition 
$$\mathfrak{n}(\R) = \bigoplus_{\chi^l \in T(\R)} \sigma^{[\chi^l]}$$
is called the {\bf coarse Lyapunov decomposition} of the real 
component of $\s(M)$.
\par Similarly, for a {\bf Lyapunov exponent} $\chi$ of $\rho$, we define the {\bf coarse Lyapunov distribution} associated with $\chi$ as follows:
\[E^{[\chi]} := \bigoplus_{\chi_1 = c \chi, c>0} E^{\chi_1}.\]
\end{definition}
\begin{remark}
\label{remark_bracket}
\par  One can prove that $[\sigma^{\chi^l_1}, \sigma^{\chi^l_2}] \subset \sigma^{\chi^l_1 + \chi^l_2}$ if $\chi^l_1$ and $\chi^l_2$ are of the same type, cf. \cite[Lemma 2.3]{hertz_wang2014}. Therefore, each $\sigma^{[\chi^l]}$ ($\chi^l$ can be real or $p$-adic) is a Lie subalgebra (of $\mathfrak{n}(\R)$ or $\mathfrak{n}(\Q_p)$).
Let $V^{[\chi^l]}$ denote the corresponding Lie subgroup (of $\mathfrak{n}(\R)$ or $\mathfrak{n}(\Q_p)$), which will be called a {\bf coarse Lyapunov subgroup}.
\end{remark}
\begin{definition}
 We define a {\bf Weyl chamber} of $\rho$ (respectively $\rho_l$) to be a connected component of $\R^k \setminus \bigcup_{\chi} \mathrm{ker}\chi$ (respectively $\R^k \setminus \bigcup_{\chi^l} \mathrm{ker}\chi^l$), and a  {\bf real Weyl chamber} to be a connected component of $\R^k \setminus \bigcup_{\chi \in T(\R) } \mathrm{ker} \chi$ (respectively $\R^k \setminus \bigcup_{\chi^l \in T(\R) } \mathrm{ker} \chi^l$).
\end{definition}

Thanks to the existence of the conjugacy $\phi$, we have the following correspondence between {\bf coarse Lyapunov distributions} of $\rho$ and {\bf coarse Lyapunov subgroups} of $\rho_l$.
\begin{proposition}
\label{lyapunov_exponent_correspondence}
The {\bf coarse Lyapunov distributions} of $\rho$ and the {\bf coarse Lyapunov subgroups} of $\rho_l$ are in one-to-one correspondence to each other. A pair of corresponding 
{\bf coarse Lyapunov distribution} and {\bf coarse Lyapunov subgroup} have the same dimension and positively proportional {\bf coarse Lyapunov exponents}. In consequence, $\rho$ and $\rho _l$ have the same Weyl chambers and the same real Weyl chambers.
\end{proposition}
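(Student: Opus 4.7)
The plan is to exploit the bi-\holder conjugacy $\phi$ between $\rho$ and $\rho_l$ on $\s(M)$: it intertwines the two actions point by point, and therefore sends topological stable manifolds to topological stable manifolds. Concretely, for every $a \in \Z^k$ and $\overline{z} \in \s(M)$, the set
\[
W^s_a(\overline{z}) := \{\overline{w} \in \s(M) : d(\rho(na)\overline{z}, \rho(na)\overline{w}) \to 0 \text{ as } n \to \infty\}
\]
is mapped by $\phi$ homeomorphically onto the corresponding stable set $W^s_{l,a}(\phi(\overline{z}))$ defined with $\rho_l$ in place of $\rho$; this follows immediately from $\phi \circ \rho(a) = \rho_l(a) \circ \phi$ together with the uniform continuity of $\phi$ and $\phi^{-1}$.

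Next I would recover the coarse Lyapunov decompositions on each side from these stable foliations. For $\rho_l$ this is algebraic: the Lyapunov decomposition of $\mathfrak{n}(\R)$ and of its $p$-adic analogues shows that $W^s_{l,a}$ at a point is a translate of the subgroup generated by those coarse Lyapunov subgroups $V^{[\chi^l]}$ with $\chi^l(a)<0$. By a standard wall-crossing argument, each $V^{[\chi^l]}$ is the ``new stable direction'' added as $a$ moves from one Weyl chamber of $\rho_l$ to an adjacent one across $\ker \chi^l$, and is therefore canonically recoverable from the family $\{W^s_{l,a}\}_{a \in \Z^k}$. For $\rho$ the same description is available $\tilde{\mu}$-almost everywhere thanks to the Multiplicative Ergodic Theorem and Pesin theory: at $\tilde{\mu}$-a.e.\ $\overline{z}$, the stable manifold of $\rho(a)$ restricted to the manifold slice $M(\overline{z})$ is a \holder submanifold tangent to $\bigoplus_{\chi(a)<0} E^{[\chi]}_{\overline{z}}$, and an analogous wall-crossing extracts each $E^{[\chi]}_{\overline{z}}$.

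Combining these two descriptions with the $\phi$-equivariance of the stable foliations yields a bijection $E^{[\chi]} \leftrightarrow \sigma^{[\chi^l]}$ (equivalently $E^{[\chi]} \leftrightarrow V^{[\chi^l]}$) such that the two classes share the same open stable half-space
\[
\{a \in \R^k : \chi(a) < 0\} \;=\; \{a \in \R^k : \chi^l(a) < 0\}.
\]
Two nonzero linear functionals on $\R^k$ with the same open positivity set must be positive scalar multiples of one another, which yields the positive proportionality of corresponding coarse Lyapunov exponents. The dimensions match because $\phi$ is a homeomorphism and therefore preserves the topological dimension of the stable leaves produced by the wall-crossing, while on the $\rho_l$ side these dimensions are exactly $\dim \sigma^{[\chi^l]}$. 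Since $\ker \chi = \ker \chi^l$ for each corresponding pair, the hyperplane arrangements defining Weyl chambers and real Weyl chambers on the two sides coincide.

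The main technical obstacle is reconciling the almost-everywhere nature of the $\rho$-side Lyapunov splitting with the everywhere-defined algebraic splitting on the $\rho_l$ side, and in particular making the dimension count rigorous through a \holder (not smooth) conjugacy. This requires using the Oseledec full-measure Pesin set, the $\rho$-invariance and $\phi$-equivariance of $\tilde{\mu}$, and standard absolute-continuity and topological-dimension properties of the stable laminations, so that the wall-crossing identification transfers faithfully through $\phi$ despite the lack of differentiability.
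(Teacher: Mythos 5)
Your proposal is correct and follows essentially the same route as the paper, which does not argue this from scratch but simply cites \cite[Lemma 4.9]{hertz_wang2014} and \cite[Proposition 3.2]{kalinin_fisher_spatzier2013}; the proofs there proceed exactly as you describe, using the $\phi$-equivariance of the topological stable sets, the algebraic description of stable subgroups for $\rho_l$ versus Pesin/Oseledets stable manifolds for $\rho$ inside manifold slices, a wall-crossing identification of coarse Lyapunov pieces, equality of stable half-spaces giving positive proportionality, and invariance of topological dimension under the bi-\holder homeomorphism to match dimensions.
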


\begin{proof}
See \cite[Lemma 4.9]{hertz_wang2014} or \cite[Proposition 3.2]{kalinin_fisher_spatzier2013}.
\end{proof}



\subsection{The cohomological equation \uppercase\expandafter{\romannumeral1}}
\label{cohomologicalequation1}

 Recall that $\rho(a)$ and $\rho_l(a)$ are $\smooth$ for any $a \in \Z^k$ (see Definition \ref{def:smooth-map-on-solenoids} and Remark \ref{rmk:smooth-zk-action}).

\par Because the conjugacy $\phi: \s (M) \rightarrow \s (M)$ is homotopic to $\id$, for any $a \in \Z ^k$, 
$\rho_l(a)^{-1} \rho(a)$ is homotopic to $\id$. We write 
$$\rho_l(a)^{-1}\rho(a)(\overline{z}) = \overline{z} Q_a(\overline{z}).$$
where $Q_a(\overline{z}) \in N(\R)\times \prod_{p \in S } N(\Z_p)$. It is easy to see that $Q_a$ is $\smooth$ since both $\rho(a)$ and $\rho_l(a)$ are. Since $\rho(a)$ and $\rho_l(a)$ are identical on $p$-adic components, $Q_a(\overline{z})$ is only nontrivial on the 
real component, so it can be regarded as a $\smooth$ map $Q_a: \s(M) \rightarrow N(\R)$. In fact, for each $p \in S$, let $l_p(a) \geq 0$ be an integer such that $\rho_l(a)(N(p^{l_p(a)} \Z_p)) \subset N(\Z_p)$, then if $\overline{z} = (z, (\xi_p)_{p \in S})$ and $\overline{z}' = (z', (\xi'_p)_{p \in S})$ satisfy that $z = z'$ and $\xi_p \equiv \xi'_p (\mod p^{l_p(a)})$ for each $p \in S$, then $Q_a(\overline{z}) = Q_a(\overline{z}')$. In other words, $Q_a$ can be treated as a $\smooth$ map defined on the finite cover $N(n \Z) \setminus N(\R)$ of $M$ where $n = \prod_{p \in S} p^{l_p(a)}$. Writing $\phi(\overline{z}) = \overline{z} h(\overline{z})$, then $h(\overline{z})$ is also trivial on every $p$-adic component, and moreover, it only depends on the real component. Therefore, we can regard $h$ as a map from $M$ to $N(\R)$. Since $\phi$ conjugates $\rho$ to $\rho_l$, we have that for any $a \in \Z^k$ and any $\overline{z} \in \s(M)$, the following holds:
\begin{equation}
\label{cohomological_equation}
\begin{array}{rl}
\overline{z} h(\overline{z}) & = \phi(\overline{z}) = \rho_l(a)^{-1}\phi(\rho(a)\overline{z})  \\
 & = \rho_l(a)^{-1}(\rho(a)\overline{z} h(\rho(a)\overline{z})) \\
& =   \overline{z} Q_a(\overline{z}) \rho_l(a)^{-1} (h (\rho(a)\overline{z})).
\end{array}
\end{equation}
In \cite{hertz_wang2014}, the following lemma is proved. We follow its proof closely.




\begin{lemma}[See~{\cite[Lemma 3.7]{hertz_wang2014}}]
\label{lemma_cohomological}
There exists a $C^{\infty}$ map $Q'_a: \s(M) \rightarrow N(\R)$ such that 
$$h(\overline{z}) = Q'_a(\overline{z}) \rho_l(a)^{-1} (h(\rho(a)\overline{z})),$$
for all $\overline{z} \in \s(M)$.
\end{lemma}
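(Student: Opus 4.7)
The plan is to take $Q'_a := Q_a$, the map already introduced in the paragraph preceding equation \eqref{cohomological_equation} via $\rho_l(a)^{-1}\rho(a)(\overline z)=\overline z\, Q_a(\overline z)$. With this choice, the identity demanded of $Q'_a$ is literally equation \eqref{cohomological_equation}, so the entire content of the lemma reduces to verifying that $Q_a$ is a globally well-defined $\smooth$ map $\s(M)\to N(\R)$.

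To verify smoothness, I would lift everything to the universal cover $\widetilde{\s(M)} = N(\R)\times\prod_{p\in S}N(\Z_p)$. Choose lifts $\tilde\rho(a)$ and $\tilde\rho_l(a)$ of the extended actions and set
\[
\tilde Q_a(\tilde z):=\tilde z^{-1}\,\tilde\rho_l(a)^{-1}\tilde\rho(a)(\tilde z).
\]
This is smooth on $\widetilde{\s(M)}$ as a composition of the smooth maps $\tilde\rho(a)$, $\tilde\rho_l(a)^{-1}$ with group multiplication and inversion (using Remark \ref{rmk:smooth-zk-action}), and since $\rho$ and $\rho_l$ coincide on every $p$-adic factor, $\tilde Q_a$ is trivial in the $p$-adic coordinates and hence takes values in the real factor $N(\R)$. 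To descend $\tilde Q_a$ to $\s(M)$ I need $N(\Z)$-invariance: because $\phi\simeq\id$, the maps $\rho$ and $\rho_l$ induce the same automorphism $\rho_*(a)$ on $\Gamma=N(\Z)$, so both lifts satisfy $\tilde\rho(a)(\gamma\tilde z)=\rho_*(a)(\gamma)\,\tilde\rho(a)(\tilde z)$ and similarly for $\tilde\rho_l(a)$. The twist $\rho_*(a)(\gamma)$ then cancels in the computation
\[
\tilde Q_a(\gamma\tilde z)=(\gamma\tilde z)^{-1}\cdot\gamma\cdot\tilde\rho_l(a)^{-1}\tilde\rho(a)(\tilde z)=\tilde z^{-1}\tilde\rho_l(a)^{-1}\tilde\rho(a)(\tilde z)=\tilde Q_a(\tilde z),
\]
so $Q_a$ descends to a $\smooth$ map $\s(M)\to N(\R)$.

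The main, and essentially only, technical point is handling the $p$-adic components in the definition of $\rho_l(a)^{-1}$ on $\s(M)$. The Chinese Remainder construction of Lemma \ref{lemma_chinese_remainder} selects the integer $n\in N(\Z)$ only up to another element of $N(\Z)$, so one must check that different CRT representatives produce the same value of $Q_a(\overline z)$ after projection to $\s(M)$; this is exactly the $N(\Z)$-invariance verified above. With smoothness of $Q_a$ in hand, equation \eqref{cohomological_equation} immediately yields the desired identity $h(\overline z)=Q'_a(\overline z)\,\rho_l(a)^{-1}(h(\rho(a)\overline z))$ for $Q'_a:=Q_a$.
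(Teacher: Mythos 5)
There is a genuine gap, and it is exactly the point the lemma exists to address. You take $Q'_a:=Q_a$ and assert that the required identity ``is literally equation \eqref{cohomological_equation}.'' But \eqref{cohomological_equation} is an equality of \emph{points of $\s(M)$}: it says $\overline{z}\,h(\overline{z})=\overline{z}\,Q_a(\overline{z})\,\rho_l(a)^{-1}(h(\rho(a)\overline{z}))$ in the quotient $N(\Z)\setminus\bigl(N(\R)\times\prod_{p\in S}N(\Z_p)\bigr)$, whereas the lemma asserts an equality of the $N(\R)$-valued functions themselves. You cannot cancel $\overline{z}$ on both sides: if $\overline{z}=N(\Z)\tilde z$, then $\overline{z}g_1=\overline{z}g_2$ only forces $\tilde z\,g_1g_2^{-1}\tilde z^{-1}\in N(\Z)$, so the two group elements may differ by $\tilde z^{-1}\gamma\tilde z$ for a nontrivial $\gamma\in N(\Z)$. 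Equivalently, on the universal cover the relation $\phi=\rho_l(a)^{-1}\circ\phi\circ\rho(a)$ lifts only up to a deck transformation, and with $Q'_a=Q_a$ the stated identity can fail by exactly that element. The paper's proof is devoted to controlling this discrepancy: it shows that $f(\overline{z}):=Q_a(\overline{z})\,\rho_l(a)^{-1}(h(\rho(a)\overline{z}))\,h^{-1}(\overline{z})$ satisfies $\overline{z}f(\overline{z})=\overline{z}$, deduces from continuity on manifold slices, discreteness of $N(\Z)$, compactness of the $p$-adic factors, and Zariski density that $f\equiv\gamma_0$ for a constant $\gamma_0\in N(\Z)\cap Z(N(\R))$, and then sets $Q'_a:=\gamma_0^{-1}Q_a$. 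None of this appears in your argument, and the conclusion does not hold without it (or without an argument that $\gamma_0=e$ for your particular choice of lifts, which you do not give).

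By contrast, the part you do prove carefully --- that $Q_a$ is a well-defined $\smooth$ map $\s(M)\to N(\R)$, via the $N(\Z)$-equivariance computation on the cover and the triviality in the $p$-adic coordinates --- is correct but is not the content of the lemma; the paper already records it in the paragraph preceding \eqref{cohomological_equation}. To repair your proof, keep your construction of $Q_a$ but add the paper's claim (or an equivalent monodromy argument) identifying the constant central defect $\gamma_0$ and replace $Q'_a=Q_a$ by $Q'_a=\gamma_0^{-1}Q_a$.
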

\begin{proof}
Let us first prove the following claim: 
\par If a map $f : \s(M) \rightarrow N(\R)$ is continuous on each manifold slice and satisfies that $\overline{z}f(\overline{z}) = \overline{z}$ for all $\overline{z} \in \s(M)$, then there exists
$\gamma_0 \in N(\Z) \cap Z(N(\R))$ (where $Z(N(\R))$ denotes the center of $N(\R)$) such that $f(\overline{z}) = \gamma_0$ for all $\overline{z} \in \s(M)$.  
\par In fact, $f$ can be lifted to a function $f: N(\R) \times \prod_{p \in S } N(\Z_p) \rightarrow N(\R)$ such that $f(\gamma g) = f(g)$ for all $\gamma \in N( \Z)$. Then for all $g \in N(\R) \times \prod_{p \in S}N(\Z_p)$, $N( \Z) g f(g) = N( \Z) g$, or equivalently, $g f(g) g^{-1} \in N( \Z)$. Let $g$ varies in a manifold slice. Since $N( \Z)$ is discrete and $g f(g) g^{-1}$ is continuous, we have that $g f(g) g^{-1} $ must be a constant $\gamma_0 \in N( \Z)$. Thus $g f(g) g^{-1} $ is locally constant.  Since $N(\Z _p)$ is compact, $g f(g) g^{-1} $ is constant, and hence 
$g f(g) g^{-1} = \gamma_0$ for any $g \in N(\R) \times \prod_{p \in S} N(\Z_p)$. So $f(g) = g^{-1} \gamma_0 g$. Since $f(\gamma g) = f(g)$, we have that $\gamma_0$ commutes with $\gamma \in N(\Z)$ for all $\gamma \in N(\Z)$. Thus $\gamma_0 \in Z(N(\R))$ since $N(\Z)$ is Zariski dense in $N(\R)$. This implies that $f(g) = \gamma_0$ for all $g \in N(\R) \times \prod_{p \in S} N(\Z_p)$. This proves the claim.
\par From (\ref{cohomological_equation}), we have that 
$$ \overline{z} = \overline{z} Q_a(\overline{z}) \rho_l(a)^{-1} (h(\rho(a)\overline{z})) h^{-1}(\overline{z}).$$
Let $$f(\overline{z}):= Q_a(\overline{z}) \rho_l(a)^{-1} (h(\rho(a)\overline{z})) h^{-1}(\overline{z}).$$
Then $f: \s(M) \rightarrow N(\R)$ is continuous on each manifold slice. By the above claim, we conclude that 
$$Q_a(\overline{z}) \rho_l(a)^{-1} (h(\rho(a)\overline{z})) h^{-1}(\overline{z}) = \gamma_0$$
for some $\gamma_0 \in N(\Z) \cap Z(N(\R))$. Then $Q'_a(\overline{z}) := \gamma_0^{-1} Q_a(\overline{z})$ is $\smooth$ and satisfies 
$$h(\overline{z}) = Q'_a(\overline{z}) \rho_l(a)^{-1} (h(\rho(a)\overline{z})).$$
\par This completes the proof.

\end{proof}

\par To prove $\phi$ is $C^{\infty}$, it suffices to show that $h(\overline{z})$ is $C^{\infty}$. 


\begin{definition}
Let $\mathcal{F}$ be a foliation of $M$ with of $\smooth$ leaves.   We consider derivatives of order $k$ of functions or distributions $f$ along $\mathcal{F}$, which we denote by $\partial^k_{\mathcal{F}} f$. For $\theta \in (0,1)$, let $C^{\infty, \theta, \ast}_{\mathcal{F}}$ denote the space of distributions $f$ on $M$ such that  all partial derivatives of $f$ of any order along $\mathcal{F}$ exist as distributions on the space of $\theta$-\holder functions. Let $C^{\infty, \theta}_{\mathcal{F}}$ denote the space of $\theta$-\holder functions $f$ on $M$ such that all partial derivatives of $f$ of any order along $\mathcal{F}$ are $\theta$-\holder.

\end{definition}

\begin{notation}
\label{notation_coarse_lyapunov_subspace}
Throughout this paper, we will denote {\bf coarse Lyapunov subgroups} corresponding to {\bf coarse Lyapunov exponents} $[\chi^l], [\chi^l_1], [\chi^l_2],\dots$ of $\rho_l$ by $V, V_1 , V_2,\dots$.
\end{notation}


\subsection{Outline of the proof}
\label{outline_of_the_proof}
\par We briefly describe the basic idea (developed in \cite{kalinin_fisher_spatzier2013} and \cite{hertz_wang2014}) to establish the smoothness of $h$. 
\par  We first establish the smoothness when $M$ is a torus.
\par To this end we first show that for every {\bf coarse Lyapunov exponent} $[\chi]$ of $\rho$, the corresponding {\bf coarse Lyapunov distribution} $E^{[\chi]}$ admits a \holder foliation consisting of $\smooth$ leaves. In order to show this, we make use of our exponential mixing result for solenoids (Corollary \ref{cor:exponential-mixing}) proved in \S \ref{exponential_mixing} and the techniques and results developed by Rodriguez Hertz and Wang \cite{hertz_wang2014}.
\par Let $V$ be a coarse Lyapunov subgroup corresponding to $[\chi^l]$. We define $h_V(\overline{z})$ to be the projection of $h(\overline{z})$ on $V$. We want to show that for any {\bf coarse Lyapunov subgroup} $V$, $h_V(\overline{z})$ is $\smooth$.
\par From (\ref{cohomological_equation}) we get the corresponding equation for $h_V(\overline{z})$:

\begin{equation}
\label{subspace_cohomological_equation}
\begin{array}{rcl} h_V(\overline{z}) & = & (Q'_a(\overline{z})+  \rho_l(a)^{-1} h_V(\rho(a)\overline{z}))_V\\
 & = & (Q'_a(\overline{z}))_V + \rho_l(a)^{-1}_V h_V(\rho(a) \overline{z}),
\end{array}
\end{equation}
where $ (Q'_a(\overline{z}))_V$ denotes the projection of $ Q'_a(\overline{z})$ on $V$. 
By iterating (\ref{subspace_cohomological_equation}), one finds  the solution by the following formal series:
\begin{equation}
\label{solution_equation}
h_V = \sum_{i=0}^{\infty} \rho_l(a)^{-i} \Phi \circ \rho(a)^{i} ,
\end{equation}
where $\Phi(\overline{z}):=  (Q'_a(\overline{z}))_V$. We will apply the exponential mixing result again to prove that for any {\bf coarse Lyapunov foliation} $\mathcal{V}_1$, $h_V \in C^{\infty, \theta, \ast}_{\mathcal{V}_1}$. In the case of $\Z^k$ actions, exponential mixing was established by Fisher, Kalinin and Spatzier \cite{kalinin_fisher_spatzier2013} for tori and by Gorodnik and Spatzier \cite{gorodnik_spatzier2014}, \cite{gorodnik_spatzier_acta} for nilmanifolds. 
\par By variations of results of Rauch and Taylor in \cite{rauch_taylor2005} proved by Fisher, Kalinin and Spatzier \cite{kalinin_fisher_spatzier2013}, and by Rodriguez Hertz and Wang \cite{hertz_wang2014}, $h_V \in C^{\infty, \theta, \ast}_{\mathcal{V}_1}$ for all possible {\bf coarse Lyapunov foliations} $\mathcal{V}_1$ will imply that $h_V$ is $\smooth$. Then it follows that $h$ is $\smooth$ since $h$ can be written as the sum of $h_V$'s for all {\bf coarse Lyapunov subgroups} $V$. This will prove the smoothness when $M$ is a torus. 
\par For the general case, we follow the approach of Margulis and Qian \cite{margulis_qian2001}. We consider the derived series of $N$:
$$N = N_0 \supset N_1 \supset \cdots \supset N_{k-1} \supset N_k = \{0\},$$
and prove the smoothness of $h$ by induction on $k$.

\section{Exponential mixing for extended $\Z^k$-actions on solenoids}
\label{exponential_mixing}
In this section, we will prove Theorem \ref{thm:exponential-mixing-intro} and Corollary \ref{cor:exponential-mixing}.  As we discussed in \S \ref{preliminaries}, Corollary \ref{cor:exponential-mixing} is crucial to establish the smoothness of $\phi$.


\subsection{Preparation for the proof} We need some preparation before proving the theorem.
\begin{definition}
Let us denote $[N, N]$ by $N'$. For $\K = \R \text{ or } \Q_p$, let us define
$$\pi: N(\K) \rightarrow N'(\K)\setminus N(\K) \cong \K^l$$
to be the canonical projection. Let 
$$D\pi : \mathfrak{n}(\K) \rightarrow \K^l$$ 
denote the derived map of $\pi$ on the Lie algebras.
\par Let us define $M' := N'(\Z) \setminus N'(\R) $ and $M_0 : = (N'(\Z)\setminus N(\Z)) \setminus (N'(\R)\setminus N(\R))$. Then $M_0$ is a torus and $M$ is a bundle over $M_0$ with $M'$ fibers. We call $M_0$ the maximal torus factor of $M$.
\end{definition}
\par  We will need the following effective equidistribution result for box maps on nilmanifolds.
\begin{theorem}[See~{\cite[Theorem 2.1]{gorodnik_spatzier2014}}]
\label{thm_equidistribution_box}
Let $w_1, w_2, \dots, w_r$ be $r$ linearly independent vectors in the Lie algebra $\mathfrak{n}(\R)$. Let $v \in \mathfrak{n}(\R)$ be a fixed vector. We define the box map
\begin{equation}
\label{equa_box_map}
\iota: B=[0,T_1]\times [0,T_2]\times \cdots \times [0, T_r]\rightarrow \mathfrak{n}(\R)
\end{equation}
as follows:
$$(t_1, t_2,\dots, t_r) \in B \mapsto v + t_1 w_1 + t_2 w_2 + \cdots + t_r w_r . $$
There exist constants $L_1, L_2 >0$ such that for every $\delta \in (0, 1/2)$, every $u \in \mathfrak{n}(\R)$, every $x \in M= N(\Z)\setminus N(\R)$, every $f \in C^{\theta}(M)$ and every box map $\iota: B \rightarrow \mathfrak{n}(\R)$, one of the following holds:
\begin{enumerate}[label=\textbf{A.\arabic*}]
 \item \label{1st} 
 $$\left | \frac{1}{|B|}\int_B f(x \exp(\iota(t)) \exp(u))\dd t - \int_M f(x) \dd x  \right| \leq \delta \|f\|_{\theta}.$$
 \item \label{2nd} There exists $z \in \Z^l \setminus \{\mathbf{0}\}$ such that 
 $\|z\| \ll \delta^{-L_1}$ and 
 $$|\langle z, D\pi (w_i) \rangle| \ll \delta^{-L_2}/T_i$$ for every $i=1,2,\dots, r$.
\end{enumerate}
\end{theorem}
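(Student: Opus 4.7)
The plan is to establish this dichotomy by a vertical Fourier decomposition and induction on the nilpotency step of $N$, following the Green--Tao philosophy in the continuous (box-map) setting. The base case is when $N$ is abelian, so $M$ is a torus $\R^l/\Z^l$. Then the map $t \mapsto x \exp(\iota(t)) \exp(u)$ is affine-linear in $t$, and after Fourier-expanding $f = \sum_{z \in \Z^l} \hat f(z) e(\langle z, \cdot\rangle)$ one can compute each exponential integral exactly: the frequency-$z$ contribution has magnitude proportional to $|\hat f(z)| \prod_i \min(1, (T_i |\langle z, w_i\rangle|)^{-1})$. Using $\theta$-Hölder decay of $\hat f$ (after a standard smoothing step, see below), either this product is uniformly small on the frequencies that matter and one is in case \ref{1st}, or else there is a nonzero $z \in \Z^l$ of size $\ll \delta^{-L_1}$ forcing $|\langle z, w_i\rangle| \ll \delta^{-L_2}/T_i$ for every $i$, which is exactly case \ref{2nd}.

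For the inductive step, I would use the center $Z$ of $N$ to perform a vertical Fourier decomposition $f = \sum_\xi f_\xi$, where $\xi$ runs over characters of the compact central torus and $f_\xi(zx) = \xi(z) f_\xi(x)$ for $z \in Z(N(\R))$. The summand with trivial vertical character descends to the quotient nilmanifold $Z\setminus M$, which has strictly smaller nilpotency step, so the inductive hypothesis applies directly. Crucially, the projections $D\pi(w_i)$ to $N'(\R)\setminus N(\R)$ are preserved under the quotient, so any obstruction produced by induction in case \ref{2nd} lifts back to an integer character on the original abelianization. For nontrivial vertical characters I would run a van der Corput inequality in one box direction $w_j$, bounding $|\int_B f_\xi|^2$ by the average over small shifts $h$ of $\int_B f_\xi \cdot \overline{f_\xi(\cdot \exp(h w_j))}\, \dd t$. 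The commutator relations in $N$ force this product function to have trivial character on $Z$, reducing to a lower-step problem to which induction applies.

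The main obstacle will be bookkeeping: tracking the exponents $L_1, L_2$ through the induction so that each van der Corput iteration and each vertical-Fourier step degrades the polynomial dependence on $\delta$ only by bounded multiplicative factors, and verifying that the obstruction vector $z$ produced at the abelian base level genuinely lies in the abelianization of the \emph{original} group rather than merely in that of a quotient. A secondary nuisance is that $f$ is only $\theta$-Hölder; I would handle this by mollifying $f$ at spatial scale $\delta^C$ for a large constant $C$ depending on $L_1$, $L_2$, $\theta$ and $\dim N$, applying the dichotomy to the smooth convolution, and absorbing the truncation error $\|f - f * \psi_{\delta^C}\|_\infty \ll \delta^{C\theta} \|f\|_\theta$ into the $\delta\|f\|_\theta$ tolerance. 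One must also check that if case \ref{2nd} holds for the mollified function, the same $z$ (up to constants) works for $f$, which it does since the obstruction depends only on the linear data $w_1,\dots,w_r$, not on $f$.
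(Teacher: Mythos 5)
You should first note that the paper does not prove Theorem \ref{thm_equidistribution_box} at all: it is imported verbatim, with a citation, from Gorodnik--Spatzier (their Theorem 2.1), and the only "proof-like" content in the present paper is the remark accompanying the companion polynomial statement, namely that the \holder version is obtained from the Lipschitz version by an approximation argument. So the relevant comparison is between your sketch and the proof in the cited source, which itself rests on the Green--Tao quantitative equidistribution machinery. Measured against that, your proposal is not a genuinely different route but a reconstruction of the same one: abelian base case by Fourier analysis on the torus, then induction on the nilpotency step via a vertical Fourier decomposition along the center, with a van der Corput step in a box direction to kill nontrivial vertical frequencies, and a mollification step to pass from Lipschitz to $\theta$-\holder data (exactly the reduction the paper alludes to). What your outline buys is transparency about where the horizontal obstruction $z\in\Z^l$ comes from; what the citation buys the authors is that all of the quantitative content --- uniformity in the base point $x$ and the translation $\exp(u)$, the polynomial dependence of $L_1,L_2$ on the data through each van der Corput/vertical-character iteration, and the fact that the final obstruction is a horizontal character satisfying precisely $|\langle z, D\pi(w_i)\rangle|\ll \delta^{-L_2}/T_i$ for \emph{every} $i$ --- is already done.

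The one place where your sketch underestimates the work is the inductive step. Saying that the product function produced by van der Corput "has trivial character on $Z$, reducing to a lower-step problem" is the right slogan, but the reduction is not to the quotient $Z\setminus N$ directly: the correlation function lives on a group of the form $N\times_Z N$ (or a suitable quotient thereof), and one must check that the box data $(w_i, T_i)$ transform into admissible data there and that the obstruction produced at the bottom of the induction descends to a character of the abelianization of the original $N$, not of an auxiliary group, with only polynomial loss in $\delta$. This bookkeeping is the technical core of the Green--Tao argument and of Gorodnik--Spatzier's adaptation to box maps; as written, your proposal defers it rather than supplies it. That is acceptable for a proof plan, but it means the proposal should be regarded as an outline of the cited proof rather than an independent verification of the theorem.
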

\par Another important result we will need is the following effective equidistribution result for polynomial orbits on nilmanifolds, proved by Green and Tao \cite{green_tao2012}.
\begin{theorem}[See~{\cite[Theorem 8.6]{green_tao2012}}] 
\label{thm_equidistribution_polynomial}
For $N = (N_1, N_2, \dots, N_r) \in \Z_{+}^r $, $[N]$ denotes the box set $[N_1]\times [N_2]\times \cdots \times [N_r]$, where $[N_i]:= \{0,1,\dots, N_i\} \subset \N$.
Let 
$p: [N] \rightarrow N(\R) $ be a polynomial map. For $i=1,2,\dots, r$, let 
$e_i \in [N]$ denote the vector with $1$ on the $i$th component and $0$ on other components, and let $\partial_i \pi(p(n)) := \pi(p(n)) - \pi(p(n-e_i))$. Then there exist constants $L_1, L_2 >0$ such that for every $\delta >0$ and every $f \in C^{\theta}(M)$, one of the following holds:
\begin{enumerate}[label=\textbf{AA.\arabic*}]
 \item \label{first} 
 $$\left | \frac{1}{N_1\cdots N_r}\sum_{n \in [N]} f(N(\Z)p(n)) - \int_M f(x) \dd x  \right| \leq \delta \|f\|_{\theta}.$$
 \item \label{second} There exists $z \in \Z^l \setminus \{\mathbf{0}\}$ such that 
 $\|z\| \ll \delta^{-L_1}$ and 
 $$\dist (|\langle z, \partial_i\pi(p(n))\rangle|, \Z) \ll \delta^{-L_2}/N_i$$ for all $i=1,2,\dots, r$ and $n \in [N]$. Here $\dist(x, \Z): = \min_{z \in \Z} \{|x-z|\}$.
\end{enumerate}
\end{theorem}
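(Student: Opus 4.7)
The plan is to follow the strategy of Green and Tao, proving the theorem by induction on the nilpotency step $s$ of $N$, using vertical Fourier analysis combined with a van der Corput type differencing to descend to a lower-step problem at each stage. First I would decompose $f \in C^{\theta}(M)$ into vertical Fourier components $f = \sum_{\xi} f_{\xi}$, indexed by characters $\xi$ of the compact abelian quotient $(N(\Z) \cap Z(N(\R))) \setminus Z(N(\R))$, where each $f_{\xi}$ transforms by $\xi$ under the action of the center $Z(N(\R))$. Since $f$ is $\theta$-\holder, the Fourier coefficients decay fast enough that the tail contributes less than $\delta \|f\|_{\theta}$ once $|\xi|$ exceeds a threshold polynomial in $\delta^{-1}$; the exponents $L_1, L_2$ will depend on this truncation and on the losses accumulated at each inductive step.

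For the $\xi = 0$ piece, $f_{\xi}$ descends to a function on the quotient nilmanifold of step $s-1$ obtained by modding out by $Z(N)$, and the inductive hypothesis applies directly to the projected polynomial sequence. For $\xi \neq 0$ one applies Cauchy--Schwarz with a van der Corput shift along one of the $r$ coordinate directions: squaring the sum produces a double sum over $n$ and a shift $h$ whose inner expression involves $f_{\xi}(p(n+h)) \overline{f_{\xi}(p(n))}$. Writing $p(n+h) = p(n) \cdot q(n,h)$ where $q(n,h) = p(n)^{-1} p(n+h)$ is again polynomial in $n$ and of strictly lower effective step modulo $Z(N)$, the vertical-eigenfunction property of $f_\xi$ collapses one central direction and reinterprets the integrand as a polynomial sequence on a lower-step nilmanifold associated to $N/Z(N)$. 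The inductive hypothesis then either yields the desired equidistribution (giving the first alternative) or produces, for ``most'' $h$, a rational character $z_h \in \Z^l$ controlling the horizontal finite-differences of the derivative polynomial. The base case $s = 1$ is classical effective Weyl equidistribution for polynomial orbits on tori, provable by bounding Weyl exponential sums $\sum_n e(\langle z, \pi(p(n))\rangle)$ directly via Weyl differencing.

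The principal obstacle will be the inductive step. The derivative sequence $p(n+h) p(n)^{-1}$ is polynomial in both $n$ and $h$, so a naive application of the inductive hypothesis for each fixed $h$ yields a rational character $z_h$ whose dependence on $h$ is \emph{a priori} uncontrolled; one must invoke a bilinear Bogolyubov-type or additive-combinatorial argument to show that if such a $z_h$ exists for a large fraction of $h$, then a single $z \in \Z^l$ of controlled norm works uniformly in $n$ and witnesses $\dist(\langle z, \partial_i \pi(p(n))\rangle, \Z) \ll \delta^{-L_2}/N_i$ as required by the second alternative. Equivalently, one can package the descent into the Green--Tao factorization theorem, which decomposes any polynomial sequence on a nilmanifold as a product of a smooth, a rational, and a totally equidistributed piece; the ``rational'' factor is precisely what produces $z$. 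Tracking the polynomial dependence of all constants through the inductive descent, so that the exponents $L_1, L_2$ remain finite and the losses in $\delta$ compose polynomially, is the most delicate bookkeeping in the argument.
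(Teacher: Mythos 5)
The paper does not prove this theorem: it is quoted directly from Green--Tao, and the remark immediately following it makes clear that the only content the authors need to supply beyond the citation is (a) the passage from Lipschitz test functions (as in Green--Tao) to $\theta$-\holder ones, done by mollifying $f$ at a scale $\epsilon$ that is a suitable power of $\delta$, using $\|f-f_\epsilon\|_\infty \ll \epsilon^{\theta}\|f\|_{\theta}$ and the blow-up $\|f_\epsilon\|_{\mathrm{Lip}} \ll \epsilon^{-1}\|f\|_{\infty}$, which only changes the exponents $L_1,L_2$; and (b) the translation of Green--Tao's non-equidistribution alternative (stated via smoothness norms of horizontal characters of the polynomial sequence) into the condition on $\dist(\langle z,\partial_i\pi(p(n))\rangle,\Z)$ appearing in \ref{second}. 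Both steps are delegated to Gorodnik--Spatzier. Your proposal addresses neither: you treat the \holder hypothesis only through a vague remark about Fourier decay, and you do not connect your output to the specific form of \ref{second}.

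What you have written instead is a sketch of the internal Green--Tao argument, and as a description of their strategy it is pointed in the right direction (induction on nilpotency step, vertical Fourier decomposition, Cauchy--Schwarz/van der Corput to descend to the quotient by the center, Weyl's inequality as the base case). But taken as a proof it has a genuine gap exactly where you flag ``the principal obstacle'': upgrading the $h$-dependent frequencies $z_h$ produced by the inductive hypothesis to a single frequency $z$ of controlled norm is the technical heart of Green--Tao's paper (their analysis of bracket polynomials and the ``Furstenberg--Weiss'' type argument), and naming ``a bilinear Bogolyubov-type argument'' does not discharge it. Your fallback of ``packaging the descent into the Green--Tao factorization theorem'' is circular in this context: the factorization theorem is deduced from the quantitative Leibman theorem you are trying to prove, so invoking it amounts to citing the result — which is precisely what the paper does, and more economically.
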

\begin{remark}
\par $\quad$
\begin{enumerate}
\item The statement in \cite{green_tao2012} is different from the above theorem. For example, the function is assumed to be Lipschitz continuous, and case \ref{second} above is stated differently. The statement above follows the one stated and applied in \cite{gorodnik_spatzier2014}. To get the above modified version from the statement in \cite{green_tao2012} , one needs to approximate \holder functions by Lipschitz functions, keeping control of all the desired estimates. We refer to \cite{gorodnik_spatzier2014} for details.
\item The non-effective version of the equidistribution of polynomial orbits in nilmanifolds is proved by Leibman \cite{leibman2005}.
\end{enumerate}
\end{remark}
\par We introduce some notation.
\begin{notation}
\par The $\Q$-structure on $N$ induces a $\Q$-structure on its Lie algebra $\mathfrak{n}$. One can construct a basis, a so-called Mal'cev basis, $\{e_1, e_2, \dots, e_d\}$ of 
$\mathfrak{n}(\Q)$ such that 
$$N(\Z) := \exp (\Z e_1 + \Z e_2 + \cdots + \Z e_d),$$
and 
$$F := \exp([0,1]e_1)\exp([0,1]e_2)\cdots \exp([0,1]e_d)$$
is a fundamental domain for $N(\Z)\setminus N(\R)$.
\par For $x \in M = N(\Z)\setminus N(\R)$ and $\epsilon >0$ small enough, $U(x, \epsilon)$ denotes the box set 
$$N(\Z)x\exp([0,\epsilon]e_1 + [0,\epsilon]e_2 + \cdots +[0,\epsilon]e_d) \subset N(\Z)\setminus N(\R).$$
  
\end{notation}
\begin{definition}[see \cite{gorodnik_spatzier2014}]
For $c, L >0$, we call $w \in \R^l$ $(c, L)$-Diophantine if 
$$|\langle z , w \rangle| \geq c \|z\|^{-L}$$
for all $z \in \Z^l \setminus \{\mathbf{0}\}$.

\end{definition}

The following lemma is proved in \cite{gorodnik_spatzier2014}.
\begin{lemma}[see~{\cite[Lemma 3.3]{gorodnik_spatzier2014}}]
\label{lemma_exist_algebraic_vector}
Let $V \subset \R^l$ be a subspace defined over $\overline{\Q}\cap \R$ such that $V$ is not contained in any proper subspace defined over $\Q$. Then there exists $\mathbf{w} \in V \cap \overline{\Q}^l$ whose coordinates are real numbers linearly independent over $\Q$.
\end{lemma}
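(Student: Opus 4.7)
The plan is to build $\mathbf{w}$ as an explicit algebraic combination $\sum_{i=1}^d t_i v_i$ of a fixed $(\overline{\Q}\cap\R)$-basis of $V$, with the scalars $t_i$ chosen in $\overline{\Q}\cap\R$ to be generic with respect to a naturally arising number field. First I would fix a basis $v_1,\ldots,v_d\in V\cap(\overline{\Q}\cap\R)^l$, which exists because $V$ is defined over $\overline{\Q}\cap\R$, and let $K\subset\overline{\Q}\cap\R$ denote the number field generated over $\Q$ by all coordinates of the $v_i$. The requirement that $\mathbf{w}=\sum_i t_i v_i$ have $\Q$-linearly independent coordinates then rewrites as
\[
\sum_{i=1}^d t_i \langle z, v_i\rangle \neq 0 \quad \text{for every } z\in\Q^l\setminus\{\mathbf{0}\}.
\]

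The central observation is that the hypothesis on $V$ says exactly that for every nonzero $z\in\Q^l$ the vector $(\langle z,v_1\rangle,\ldots,\langle z,v_d\rangle)\in K^d$ is nonzero: otherwise the linear form $\langle z,\,\cdot\,\rangle$ would vanish on each $v_i$ and hence on $V$, placing $V$ inside the proper $\Q$-subspace $\{v\in\R^l:\langle z,v\rangle=0\}$. Therefore it suffices to choose $t_1,\ldots,t_d\in\overline{\Q}\cap\R$ that are linearly independent over $K$ as real numbers, for then $\sum_i c_i t_i\neq 0$ for every nonzero $(c_1,\ldots,c_d)\in K^d$. I would produce such $t_i$ by picking a real algebraic number $\alpha$ with $[K(\alpha):K]\geq d$---for instance $\alpha=2^{1/p}$ for any prime $p\geq d$ with $p>[K:\Q]$, so that $2^{1/p}\notin K$ and hence $x^p-2$ remains irreducible over $K$---and setting $t_i:=\alpha^{i-1}$. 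The resulting $\mathbf{w}=\sum_i t_i v_i$ then lies in $V\cap\overline{\Q}^l$ and has $\Q$-linearly independent coordinates by construction.

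The main obstacle is conceptual rather than computational: one has to recognize that the scalars which control the problem live in the auxiliary number field $K$ generated by the $(\overline{\Q}\cap\R)$-basis, rather than in $\Q$ itself. Merely requiring $\Q$-linear independence of the $t_i$ would not suffice, since the coefficients $\langle z,v_i\rangle$ genuinely take values in $K$, and any $K$-linear relation among the $t_i$ would destroy the argument. A minor additional point is that $\alpha$ must stay inside $\overline{\Q}\cap\R$, which is why I would use an explicit real radical such as $2^{1/p}$ rather than an abstract algebraic element of large degree.
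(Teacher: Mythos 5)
Your proof is correct and follows essentially the same route as the paper: the paper proves the $p$-adic analogue (Lemma \ref{lemma:exist-p-adic-algebraic-vector}) by exactly this argument---take a basis of $V$ with algebraic coordinates, let $K$ be the number field they generate, choose scalars in $\overline{\Q}$ linearly independent over $K$, and observe that a rational vector orthogonal to all basis elements would put $V$ inside a proper $\Q$-subspace. The only difference is that you make the choice of scalars explicit (powers of $2^{1/p}$ with $x^p-2$ irreducible over $K$), where the paper simply asserts the existence of $K$-linearly independent algebraic numbers.
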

For $p$-adic vector spaces, we have the following similar result:
\begin{lemma}
\label{lemma:exist-p-adic-algebraic-vector}
For any prime $p$, let $\Q^{al}_p \subset \Q_p$ denote the field of $p$-adic algebraic numbers. Let $V \subset \Q_p^l$ be a subspace defined over $\Q^{al}_p$ such that $V$ is not contained in any proper subspace defined over $\Q$. Then there exists $\mathbf{w} \in V\cap (\Q^{al}_p)^l$ whose coordinates are linearly independent over $\Q$.
\end{lemma}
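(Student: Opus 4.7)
The argument will mirror that of Lemma~\ref{lemma_exist_algebraic_vector}, with the pair $(\R,\overline{\Q}\cap\R)$ replaced by $(\Q_p,\Q^{al}_p)$. We first fix an embedding $\overline{\Q}\hookrightarrow\overline{\Q_p}$ so that $\Q^{al}_p=\overline{\Q}\cap\Q_p$ is well-defined. Since $V$ is defined over $\Q^{al}_p$, we choose a $\Q^{al}_p$-basis $v_1,\ldots,v_r\in(\Q^{al}_p)^l$ of $V\cap(\Q^{al}_p)^l$, where $r=\dim_{\Q_p}V$. Every candidate vector then has the form $\mathbf{w}(\mathbf{c})=\sum_i c_i v_i$ with $\mathbf{c}\in(\Q^{al}_p)^r$, and asking its coordinates to be $\Q$-linearly independent is equivalent to requiring $\ell_{\mathbf{a}}(\mathbf{c}):=\sum_i c_i(\mathbf{a}\cdot v_i)\neq 0$ for every $\mathbf{a}\in\Q^l\setminus\{0\}$.

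Under the hypothesis that $V$ lies in no proper $\Q$-rational subspace of $\Q_p^l$, the coefficient vector $(\mathbf{a}\cdot v_i)_{i=1}^{r}\in(\Q^{al}_p)^r$ is nonzero for every $\mathbf{a}\neq 0$, so each $\ell_{\mathbf{a}}$ is a nontrivial $\Q^{al}_p$-linear functional on $(\Q^{al}_p)^r$. The problem therefore reduces to producing a point in $(\Q^{al}_p)^r$ avoiding the countable family of proper $\Q^{al}_p$-hyperplanes $\{\ker\ell_{\mathbf{a}}:\mathbf{a}\in\Q^l\setminus\{0\}\}$. These hyperplanes are the annihilators of the vectors $V\mathbf{a}$ as $\mathbf{a}$ ranges over $\Q^l$, which form a single $l$-dimensional $\Q$-subspace of $(\Q^{al}_p)^r$; we will handle the avoidance by the same explicit construction used in the real case, exploiting this rigidity together with the density of $\Q^{al}_p$ in the complete topological field $\Q_p$.

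The main obstacle will be this final avoidance step. Since $\Q^{al}_p$ is countable, cardinality alone does not ensure that the hyperplanes fail to cover $(\Q^{al}_p)^r$, and one must use the specific algebraic structure of the forbidden set rather than mere topological smallness. This is precisely the subtlety already handled in the proof of Lemma~\ref{lemma_exist_algebraic_vector} given in \cite{gorodnik_spatzier2014}, and the argument carries over verbatim once one observes that $\Q_p$, like $\R$, is a complete topological field in which $\Q^{al}_p$ is dense; the iterative construction producing a suitable $\mathbf{c}\in(\overline{\Q}\cap\R)^r$ in the real setting transfers to yield a suitable $\mathbf{c}\in(\Q^{al}_p)^r$ here. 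The resulting vector $\mathbf{w}(\mathbf{c})\in V\cap(\Q^{al}_p)^l$ then has the desired $\Q$-linearly independent coordinates.
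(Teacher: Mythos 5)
Your setup and reduction are correct: finding $\mathbf{w}$ amounts to choosing $\mathbf{c}\in(\Q^{al}_p)^r$ outside the countable family of hyperplanes $\ker\ell_{\mathbf{a}}$, each of which is proper by the hypothesis that $V$ lies in no proper $\Q$-rational subspace. The gap is in the final avoidance step, which you defer to ``the same explicit construction used in the real case'' and attribute to the density of $\Q^{al}_p$ in the complete field $\Q_p$. That is not the mechanism, and no topological argument of this kind closes the gap: the complement of the hyperplanes is comeager and of full measure in $\Q_p^r$, but you need a point of the countable (hence meager and measure-zero) set $(\Q^{al}_p)^r$, and $(\Q^{al}_p)^r$ in its subspace topology is not a Baire space, so an iterative density argument cannot be run inside it. As you yourself note, cardinality and topology are not enough; but you then never supply the algebraic argument that actually does the work.

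The step is closed purely algebraically, and this is what the real-case proof does as well. Let $K\subset\Q^{al}_p$ be the number field generated by the coordinates $u_{ij}$ of a basis $\mathbf{u}_1,\dots,\mathbf{u}_{s_1}$ of $V$ with entries in $\Q^{al}_p$. Every coefficient $\langle\mathbf{z},\mathbf{u}_i\rangle$ with $\mathbf{z}\in\Q^l$ lies in $K$, so all the forbidden hyperplanes are defined over the single number field $K$. Since $\Q^{al}_p$ has infinite degree over $\Q$, one may choose $\alpha_1,\dots,\alpha_{s_1}\in\Q^{al}_p$ linearly independent over $K$; then for $\mathbf{w}=\sum_i\alpha_i\mathbf{u}_i$ one has
$\langle\mathbf{z},\mathbf{w}\rangle=\sum_i\bigl(\sum_j z_j u_{ij}\bigr)\alpha_i$
with coefficients in $K$, and this vanishes only if all coefficients vanish, i.e. only if $\mathbf{z}$ annihilates $V$, which forces $\mathbf{z}=0$. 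This one-shot choice is the entire content of the lemma, so you should state it explicitly rather than cite the real case; the point is precisely that what transfers from $\R$ to $\Q_p$ is this algebraic independence argument, not any completeness or density of the ambient field.
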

\begin{proof}
The proof is the same as that of Lemma \ref{lemma_exist_algebraic_vector} (see \cite[Lemma 3.3]{kalinin_fisher_spatzier2013}). We will include the proof for completeness.
\par Since $V$ is defined over $\Q^{al}_p$, we can choose a basis $\{\mathbf{u}_i: i =1,2,\dots, s_1\}$ of $V$ with coordinates in $\Q^{al}_p$. Let $K \subset \Q^{al}_p$ denote the field generated by their coordinates. Then $K$ is a finite extension of $\Q$ in $\Q_p$. Then we may choose $\alpha_1, \alpha_2, \dots, \alpha_{s_1} \in \Q_p^{al}$ which are linearly independent over $K$. Let $\mathbf{w} := \sum_{i=1}^{s_1} \alpha_i \mathbf{u}_i$. Let us denote $\mathbf{u}_i = (u_{i 1}, u_{i 2}, \dots, u_{i l} )$ for $i=1, 2, \dots, s_1$. Then for any $\mathbf{z} =(z_1, \dots z_l) \in \Q^l$, we have that 
$$\langle \mathbf{z}, \mathbf{w} \rangle = \sum_{j=1}^l z_j \left( \sum_{i=1}^{s_1} \alpha_i u_{i j} \right) = \sum_{i=1}^{s_1} \left( \sum_{j=1}^l z_j u_{i j}\right) \alpha_i .$$
Since $\{\alpha_1, \dots, \alpha_{s_1}\}$ are linearly independent over $K$, we have that $\langle \mathbf{z}, \mathbf{w} \rangle \neq 0$ unless 
$$ \sum_{j=1}^l z_j u_{i j} = 0 , \text{ for all } i = 1,2, \dots, s_1,$$
i.e., $\langle \mathbf{z}, W \rangle =0$. Since we assume that $V$ is not defined over $\Q$, we have proved that the coordinates of $\mathbf{w}$ are linearly independent over $\Q$.
\par This completes the proof.
\end{proof}

\par By \cite[Theorem 7.3.2]{bombieri_gubler2006}, a vector $\mathbf{w} \in \R^l$ whose coordinates are algebraic numbers that are linearly independent over $\Q$ is $(c, L)$-Diophantine for some $c, L >0$. 

\par For $p$-adic vectors, we have the same result:
\begin{lemma}
\label{lemma:p-adic-diophantine}
Let $\mathbf{w} \in (\Q^{al}_p)^l$ such that its coordinates are linearly independent over $\Q$. Then $\mathbf{w}$ is $(c, L)$-Diophantine for some constants $c, L >0$. 
\end{lemma}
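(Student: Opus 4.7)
The plan is a $p$-adic analogue of the proof of the real statement (Theorem 7.3.2 of \cite{bombieri_gubler2006}), based on the global product formula. For any nonzero $\mathbf{z} \in \Z^l$, the scalar $\beta := \langle \mathbf{z}, \mathbf{w} \rangle = \sum_i z_i w_i$ is a nonzero algebraic number by the $\Q$-linear independence assumption, lying in the fixed number field $K := \Q(w_1,\ldots,w_l) \subset \Q^{al}_p \subset \Q_p$. The idea is to bound $|\beta|_p$ from below by sandwiching the rational integer $N_{K/\Q}(\beta)$ between an archimedean upper bound and a product of $p$-adic norms over its Galois conjugates.

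First I would fix $n := [K:\Q]$ and, after multiplying $\mathbf{w}$ by a suitable positive integer (which only changes the eventual Diophantine constants by controlled factors), assume $w_i \in \mathcal{O}_K$ for each $i$. Then $\beta \in \mathcal{O}_K$ and $N := N_{K/\Q}(\beta) \in \Z \setminus \{0\}$. Fix an algebraic closure $\overline{\Q_p}$ of $\Q_p$, extend $|\cdot|_p$ to it uniquely, and let $\sigma_1 = \mathrm{id}, \sigma_2, \ldots, \sigma_n$ be the $n$ embeddings $K \hookrightarrow \overline{\Q_p}$. Multiplicativity of $|\cdot|_p$ on $\overline{\Q_p}$ yields the key identity
\[
|N|_p = \prod_{i=1}^n |\sigma_i(\beta)|_p = |\beta|_p \cdot \prod_{i \geq 2} |\sigma_i(\beta)|_p.
\]

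Next, since $N$ is a nonzero rational integer, the product formula on $\Q$ together with $|N|_q \leq 1$ for every rational prime $q$ gives $|N|_p \geq 1/|N|_\infty$, while the trivial archimedean estimate over the $n$ complex embeddings of $K$ yields $|N|_\infty \ll \|\mathbf{z}\|^n$, with implied constants depending only on $\mathbf{w}$. For every $i$, the ultrametric inequality together with $|z_j|_p \leq 1$ gives $|\sigma_i(\beta)|_p \leq \max_j |\sigma_i(w_j)|_p$, bounded by a constant depending only on $\mathbf{w}$. Substituting these estimates into the displayed identity produces $|\beta|_p \gg \|\mathbf{z}\|^{-n}$, which is the desired $(c,L)$-Diophantine inequality with $L = n$.

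The argument is in effect the $p$-adic Liouville inequality applied to algebraic numbers of bounded degree, so there is no substantive obstacle. The only care needed is to use the (unique) extension of $|\cdot|_p$ from $\Q_p$ to $\overline{\Q_p}$ so that the factorization of $|N|_p$ through the $p$-adic embeddings of $K$ is valid, and to verify that the lower bound depends only on $\mathbf{w}$ and not on $\mathbf{z}$; both points are standard algebraic number theory.
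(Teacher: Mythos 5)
Your proposal is correct and is essentially the paper's own argument in different clothing: the paper's polynomial $P$, obtained as the product of the Galois conjugates of the linear form $R(x)=\sum_j w_j x_j$, is exactly the norm form $N_{K/\Q}(\langle \mathbf{z},\mathbf{w}\rangle)$ you use, its bound $\|P(z')\|_p \geq |P(z')|^{-1} \gg \|z'\|^{-L}$ is your product-formula step, and its bound $\|Q(z')\|_p \leq c_2$ on the complementary factor is your bound on the non-identity embeddings $|\sigma_i(\beta)|_p$. The integrality normalization you perform (scaling $\mathbf{w}$ into $\mathcal{O}_K$) plays the same role as the paper's clearing of denominators to make $P$ have integer coefficients, so no further changes are needed.
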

\begin{proof}
\par Let $\mathbf{w} =(w_1, w_2, \dots , w_l)$. One can find a polynomial $P(x_1, x_2, \dots, x_l)$ with integer coefficients such that the linear form $R(x_1, x_2, \dots, x_l) := \sum_{j=1}^l w_{j} x_j$ is a factor of $P$ and $P(z'_1, z'_2, \dots, z'_l ) \neq 0$ for any nonzero integer vector $z' =(z'_1, z'_2 , \dots, z'_l)$ (just take the product of all Galois conjugates of the linear form $R(x_1 , x_2 , \dots, x_l)$). Let $K' \subset \Q^{al}_p$ denote the field generated by the coordinates of $\mathbf{w}$. For any nonzero integer vector $z' = (z'_1, z'_2, \dots, z'_l)$, we have that $P(z'_1, z'_2, \dots , z'_l)$ is a nonzero integer. Moreover, since $P$ has coefficients in $\Z$, we have that $|P(z'_1, z'_2, \dots, z'_l)| \leq c_1 \|z'\|^{L}$ for some constants $ c_1, L>0$ depending on $P$. Therefore 
$$\|P(z'_1, z'_2, \dots, z'_l)\|_p \geq |P(z'_1, z'_2, \dots, z'_l)|^{-1} \geq c_1^{-1} \|z'\|^{-L}.$$ 
Put $P = R Q$, then since $R$ and $P$ have coefficients in $K$, so does $Q$. It is easy to see that there exists a constant $c_2 >0$, such that for any nonzero integer vector $z' = (z'_1, z'_2, \dots z'_l)$, 
$\|Q(z'_1, z'_2, \dots , z'_l)\|_p \leq c_2$ (in fact, $c_2$ is determined by the coefficients of $Q$). Therefore, for any $z' \in \Z^l \setminus\{\mathbf{0}\} $,
$$\|\langle z', \mathbf{w} \rangle\|_p = \|R(z'_1, z'_2, \dots, z'_l)\|_p = \|P(z'_1, z'_2, \dots, z'_l)\|_p \|Q^{-1}(z'_1, z'_2, \dots, z'_l)\|_p \geq c \|z'\|^{-L},$$
where $c = c_1^{-1} c_2^{-1}$.
\par This completes the proof.
\end{proof}
Therefore, the vector $\mathbf{w}$ we get from Lemma \ref{lemma_exist_algebraic_vector} (or Lemma \ref{lemma:exist-p-adic-algebraic-vector}) is $(c, L)$-Diophantine for some constants $c, L >0$.

\subsection{Ergodicity of the action}
\par Before proving Theorem \ref{thm:exponential-mixing-intro}, let us first study ergodicity of the $\Z^k$ action $\rho_l$ on the solenoid $\s(M)$. It is proved in \cite{starkov1999} and \cite{hertz_wang2014} that a $\Z^k$ action on a nilmanifold $M$ by automorphisms is {\em genuinely higher rank} if and only if there exists a $\Z^2$ subgroup of $\Z^k$  all of whose nontrivial elements act ergodically. We will prove a similar result for $\Z^k$ actions on solenoids.  Then we will prove Corollary \ref{cor:exponential-mixing} assuming Theorem \ref{thm:exponential-mixing-intro}. 
\begin{definition}
\label{def_irreducible}
We say $\rho_l : \Z^k_{+} \curvearrowright M$ is {\em irreducible} if for any sub-semigroup $\Gamma$ of $\Z^k_{+}$ of finite index, there are no proper $\Gamma$-invariant sub-nilmanifolds of $M$ with positive dimension.
\end{definition}

\begin{lemma}
\label{lemma:irreducible-ergodic}
Let $\rho_l: \Z^k_+ \acton M$ be an action by affine endomorphisms. Suppose $\rho_l$ is irreducible. We extend $\rho_l$ to a $\Z^k$ action on $\s(M)$, then every nontrivial $\rho_l(a)$ acts ergodically on $\s(M)$.
\end{lemma}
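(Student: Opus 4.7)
First I would reduce ergodicity of $\rho_l(a)$ on the $S$-adic nilmanifold $\s(M)$ to ergodicity on its maximal $S$-adic torus factor $\s(M)^{\mathrm{ab}}$, via the $S$-adic generalization of Parry's theorem for ergodic nilmanifold automorphisms. The abelianization map $\pi \colon \s(M) \to \s(M)^{\mathrm{ab}}$ intertwines the $\Z^k$-actions, and a standard Fourier/representation-theoretic argument on the compact commutator-fibers yields: $\rho_l(a)$ is ergodic on $\s(M)$ if and only if it is ergodic on $\s(M)^{\mathrm{ab}}$.

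On the abelianization, $\rho_l$ acts by affine automorphisms whose linear part $A_a \in \End(\Z^l)$ extends $\Z[1/S]$-linearly to the character group $\Lambda \cong \Z[1/S]^l$. By Pontryagin duality on this $S$-adic torus, $\rho_l(a)$ fails to be ergodic exactly when $A_a$ admits a nontrivial periodic character in $\Lambda$; clearing $S$-denominators, this is equivalent to $A_a$ having a root-of-unity eigenvalue on $\Q^l$.

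Suppose now for contradiction that $\rho_l(a)$ is not ergodic, and set
\[ W := \{\, v \in \Q^l : A_a^m v = v \text{ for some } m \geq 1\,\}, \]
the nonzero $\Q$-subspace of $A_a$-periodic vectors. Since each $A_b$ commutes with $A_a$, $W$ is $\Z^k$-invariant. If $W \subsetneq \Q^l$, then $W$ corresponds to a proper rational $\Z^k$-invariant subtorus of $M^{\mathrm{ab}}$, whose preimage under the abelianization map $M \to M^{\mathrm{ab}}$ is a proper positive-dimensional $\Z^k$-invariant sub-nilmanifold of $M$, directly contradicting irreducibility applied with $\Gamma = \Z^k_+$.

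The remaining case $W = \Q^l$ forces $A_a$ to be semisimple over $\Q$ with root-of-unity eigenvalues, so $A_a^M = I$ on $\Z^l$ for some $M \geq 1$. Lifting to the Lie algebra automorphism $\alpha_a$ of $\mathfrak{n}$ and inducting down the lower central series shows $\alpha_a^M$ is unipotent on all of $\mathfrak{n}$. If $\alpha_a^M \neq \id$, then $\ker(\alpha_a^M - \id) \cap \mathfrak{n}(\Q)$ is a proper, nonzero, rational Lie subalgebra, Lie-closed because $\alpha_a^M$ is an automorphism and $\Z^k$-invariant because the $\alpha_b$ commute with $\alpha_a$; the corresponding sub-nilmanifold contradicts irreducibility. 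Otherwise $\alpha_a^M = \id$, so $\rho_l(a^M)$ is a pure right-translation $R_g$ on $\s(M)$; the commutation $\rho_l(b) \circ R_g = R_g \circ \rho_l(b)$ forces $A_b g \equiv g$ modulo $[N,N]$ for every $b \in \Z^k$, and combining this with the $\Z^k$-action on orbit closures of $R_g$ produces either a proper invariant sub-nilmanifold (contradiction) or shows that $g$ has dense orbit, whence $R_g$, and therefore $\rho_l(a)$, is ergodic, again contradicting the assumption. I expect this last pure-translation subcase to be the main obstacle, since it requires a careful rationality/Diophantine analysis of cyclic orbit closures in the $S$-adic setting to cleanly produce the needed rational invariant subgroup.
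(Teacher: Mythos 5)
Your overall route is the paper's: reduce by the solenoid version of Parry's theorem to the maximal torus factor, translate non-ergodicity into a statement about the dual group (a nonzero character with finite orbit, equivalently a root-of-unity eigenvalue of $A_a$ over $\Q$), and convert that into a rational invariant object contradicting irreducibility in the sense of Definition \ref{def_irreducible}. In the main case your execution is in fact cleaner than the paper's: the subspace $W$ of $A_a$-periodic rational vectors is canonically attached to $A_a$, hence invariant under the whole commuting $\Z^k$-action, so when $W$ is proper you get a genuinely invariant subtorus/sub-nilmanifold directly, whereas the paper argues with a minimal $\rho_l(a)$-invariant rational subspace, shows its $\rho_l(\Z^k)$-translates coincide or meet trivially, and only obtains invariance under a finite-index subgroup. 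Your unipotent subcase (the fixed subalgebra of a nontrivial unipotent $\alpha_a^M$) plays the same role as the paper's first claim that every element is semisimple, where the fixed space of the unipotent Jordan factor furnishes the invariant rational subspace.

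The genuine gap is the subcase you yourself flag, $\alpha_a^M=\id$. The paper never meets it in this form: working with the linear parts in $\GL(\dim M,\Q)$, it first asserts that no nontrivial element of an irreducible action preserves any proper subtorus, and the fixed character produced by non-ergodicity is then contradicted at once. In your setup, once $\alpha_a^M=\id$ — and in the paper's standing situation the action has a common fixed point, so the translation part is trivial — $\rho_l(a)$ is simply a nontrivial element of finite order, and no ``rationality/Diophantine analysis of cyclic orbit closures'' of $R_g$ can rescue the argument, because irreducibility alone does not force a contradiction in this case: the $\Z^2_+$-action on $\mathbb{T}^2$ generated by an irreducible hyperbolic $A\in\GL(2,\Z)$ and $-\id$ is irreducible in the sense of Definition \ref{def_irreducible} (any finite-index sub-semigroup contains some $\pm A^m$, which preserves no rational line), here $\s(M)=M$, and yet $-\id$ is nontrivial and not ergodic. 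So the unresolved subcase is not a technical loose end but exactly the point where finite-order elements must be confronted; the paper disposes of it through its ``no invariant subtorus for a single nontrivial element'' claim (and your scenario is precisely the one that claim has to exclude), while your proposal, as written, leaves the lemma unproved there. Closing it requires either establishing such a claim or restricting attention to infinite-order elements, which is what the later genuinely-higher-rank applications actually use.
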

\begin{proof}
 By Parry's theorem \cite{parry1969}, it suffices to show the statement when $M$ is a torus $\Z^{\dim M}\setminus \R^{\dim M}$ (Although Parry's theorem only takes care of nilmanifolds, its proof works for solenoids).  Then every $\rho_l(a)$ can be extended to an action on $\R^{\dim M}$ and identified as an element in $\GL(\dim M, \Q)$. 
\par We first claim that every $\rho_l(a)$ is semisimple. In fact, if $\rho_l(a)$ is not semisimple, then $\rho_l(a) = s(a) u(a) = u(a) s(a) $, where $s(a) \in \GL(\dim M, \Q)$ is semisimple and $u(a) \in \GL(\dim M, \Q)$ is unipotent. Then obviously, 
$\mathrm{Fix}(u(a)) := \{v \in \R^{\dim M}: u(a) v = v\}$ is a nontrivial proper rational subspace of $\R^{\dim M}$ invariant under $\rho_l(\Z^k)$. Then $\mathrm{Fix}(u(a))$ will define a proper $\rho_l(\Z^k)$ invariant subtorus of $M$ with positive dimension, which contradicts our assumption. This shows the claim.
\par We next claim that every nontrivial $\rho_l(a)$ does not admit any nontrivial proper invariant subtori of $M$. Otherwise, take a nontrivial proper $\rho_l(a)$ invariant subtorus with minimal dimension, say $M'$. Then $M'$ corresponds to a minimal $\rho_l(a)$-invariant rational subspace $V'$ of $\R^{\dim M}$. For any $b \in \Z^k$, it is easy to see that $V'\cap \rho_l(b)V'$ is also a $\rho_l(a)$-invariant subspace defined over $\Q$. Since $V'$ is assumed to be minimal, we have $\rho_l(b)V' = V'$ or $\rho_l(b)V' \cap V' = \{\mathbf{0}\}$. For the same reason, for any $b,b' \in \Z^k$, we have that 
$D\rho_l(b) V' =\rho_l(b') V' $ or $\rho_l(b)V' \cap \rho_l(b')V' = \{\mathbf{0}\}$. Thus there are only finitely many possible Lie algebras $V"$ that $\rho_l(b)V'$ can be. This implies that there exists a subgroup $\Gamma$ of $\Z^k$ of finite index such that $V'$ is $\rho_l(\Gamma)$-invariant. This contradicts the assumption on irreducibility. Therefore, no $\rho_l(a)$  leaves any nontrivial proper subtori invariant.
\par Note that the dual space of $\s(\Z^{\dim M} \setminus \R^{\dim M})$ is $(\Z[\frac{1}{n}])^{\dim M}$ where $n = \prod_{s \in S} p$. For any nontrivial $\rho_l(a)$, if $\rho_l(a)$ is not ergodic, then there exists $z \in (\Z[\frac{1}{n}])^{\dim M}$ fixed by $\rho_l(a)$. This implies that $\rho_l(a)$ fixes a proper subtorus $M'$ of $M$, which contradicts the previous claim.
\end{proof}
\begin{lemma}
\label{lemma:ergodic-element}
Let $\rho_l : \Z^k \acton \s(M)$ be a {\em genuinely higher rank} action as above. Then there exists $a \in \Z^k$ such that $\rho_l(a)$ is ergodic.
\end{lemma}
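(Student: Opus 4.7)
My plan is to reduce the ergodicity question to a torus and then analyze the $\Z^k$-action via characters arising from a Galois-stable decomposition of the dual. By the solenoidal version of Parry's theorem \cite{parry1969} already invoked in the proof of Lemma~\ref{lemma:irreducible-ergodic}, ergodicity of $\rho_l(a)$ on $\s(M)$ is equivalent to ergodicity of the induced action on the solenoid of the maximal torus factor $M_0 = \Z^d \setminus \R^d$. So I may assume from the start that $M$ is a torus. On this torus solenoid, Pontryagin duality identifies the dual with $\Z[1/n]^d \subset \Q^d$ for an appropriate integer $n$, and $\rho_l(a)$ acts ergodically precisely when no eigenvalue of $\rho_l(a)$, viewed in $\overline{\Q}$, is a root of unity.

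The commuting family $\{\rho_l(a) : a \in \Z^k\}$ generates a finite-dimensional commutative $\Q$-subalgebra $A \subset \End(\Q^d)$. Decomposing $A/\mathrm{nil}(A) \cong \prod_j F_j$ into number fields $F_j$, one obtains characters $\chi_j : \Z^k \to F_j^\times$ whose Galois conjugates over $\Q$ constitute, as $j$ varies, the full list of eigenvalues of $\rho_l(a)$. Hence $\rho_l(a)$ is ergodic if and only if $a \notin H_j := \{b \in \Z^k : \chi_j(b) \text{ is a root of unity}\}$ for every $j$, where each $H_j$ is a subgroup of $\Z^k$.

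If every $H_j$ has infinite index in $\Z^k$, then since there are only finitely many $j$ and, by B.~H.~Neumann's lemma, $\Z^k$ is not a finite union of proper subgroups of infinite index, there exists $a \in \Z^k \setminus \bigcup_j H_j$, and this $a$ is the desired ergodic element. Otherwise some $H_j$ has finite index, which forces $\chi_j$ to have finite image (a finitely generated abelian group with a finite-index torsion subgroup is itself finite). The Galois orbit of $\chi_j$ then cuts out a non-trivial $\Q$-rational $\Z^k$-invariant subspace $W_j \subset \Q^d$ on which the semi-simple part of the $\Z^k$-action is through a finite group. Quotienting out the unipotent radical of this sub-representation exhibits a non-trivial continuous factor of the $\Z^k$-action on $\s(M)$ that factors through a finite quotient, and hence through a finite extension of the trivial $\Z_+$-action; this contradicts the genuinely higher rank hypothesis.

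The delicate point---my main obstacle---is constructing the factor action in the second case so that the contradiction with genuine higher rank is rigorous despite the presence of unipotent parts in the $\Z^k$-representation on $W_j$; this requires carefully isolating an appropriate $\Q$-rational subtorus on which the action is genuinely semi-simple with finite image, adapting the Starkov--Hertz--Wang treatment \cite{starkov1999, hertz_wang2014} to the solenoidal setting. I note in passing that under the standing assumption of Section~\ref{preliminaries} the argument simplifies drastically: the expanding element $a_0 \in \Z^k_+$ has every eigenvalue of complex absolute value strictly greater than $1$, whereas roots of unity have absolute value exactly $1$, so $a_0$ itself is already ergodic and the problematic case never arises.
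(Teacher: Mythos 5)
Your proposal follows the same skeleton as the paper's proof: reduce to the maximal torus factor via Parry's theorem, decompose the $\Z^k$-action into finitely many pieces on each of which non-ergodicity is detected by membership in a subgroup of $\Z^k$, observe that $\Z^k$ is not a finite union of infinite-index subgroups, and derive a contradiction with the genuinely higher rank hypothesis when one of these subgroups has finite index. The difference is where the decomposition lives: the paper first invokes Starkov's Corollary~6 to assume every $\rho_l(a)$ is semisimple, then splits the torus into an almost direct product of irreducible invariant subtori $M_1\times\cdots\times M_s$ and uses Lemma~\ref{lemma:irreducible-ergodic} to conclude that on each factor every element acts either ergodically or trivially, so the relevant subgroups are $\mathcal{F}_i=\{a:\rho_l(a)|_{M_i}\text{ trivial}\}$; you work on the dual side with the characters $\chi_j:\Z^k\to F_j^\times$ coming from $A/\mathrm{nil}(A)\cong\prod_j F_j$ and the subgroups $H_j$ of elements mapping to roots of unity. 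These are two descriptions of the same structure (the Galois orbits of your characters cut out the paper's irreducible subtori), so neither buys much over the other, though your version makes the root-of-unity criterion for ergodicity explicit rather than routing it through Lemma~\ref{lemma:irreducible-ergodic}.

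The one step you leave unfinished --- turning a finite-index $H_j$ into a bona fide continuous factor that is a finite extension of a rank-one (indeed trivial) action, in the presence of unipotent parts --- is precisely the point the paper disposes of at the outset by citing Starkov to reduce to the semisimple case; if you make that reduction first, your $W_j$ becomes a rational invariant complemented subspace, the corresponding quotient torus is a genuine factor with finite image, and the contradiction is immediate. So the deferred step is fillable by the same citation the paper uses, not a structural flaw. Your closing observation is also correct and worth keeping in mind: under the Standing Assumption the linearization of the expanding element has all eigenvalues of modulus greater than one, hence is already ergodic on the solenoid, so in the paper's actual application of this lemma the existence claim is immediate; the paper's proof is written without using the expanding element, presumably to keep Proposition~\ref{prop_starkov} available in the generality in which it is stated.
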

\begin{proof}
For the same reason as above, we may assume that $M$ is a torus $\Z^{\dim M} \setminus \R^{\dim M}$. By \cite[Corollary 6]{starkov1999}, we may assume that every $\rho_l(a) \in \GL(\dim M, \Q)$ is semisimple.
\par By passing to a subgroup of $\Z^k$ of finite index, we can decompose $M$ into almost direct product of $\rho_l$-invariant irreducible subtori:
$$M = M_1 \times \cdots \times M_s.$$
For each $i=1,\dots, s$, $\rho_l(a)$ acts either ergodically or trivially on $M_i$. Our goal is to find $a \in \Z^k$ such that $\rho_l(a)$
is not trivial on each $M_i$.
\par For contradiction, we suppose that every $\rho_l(a)$ acts trivially on some $M_i$. Define
\[
	\mathcal{F}_i := \{a \in \Z^k : \rho_l(a) \text{ acts trivially on } M_i\},
\]
then every $\mathcal{F}_i$ is a subgroup of $\Z^k$ and $\Z^k = \bigcup_{i=1}^s \mathcal{F}_i$. This implies that some $\mathcal{F}_i$ is a subgroup of $\Z^k$ of finite index, which contradicts our higher rank assumption (since the restriction of $\rho_l$ on $M_i$ is essentially trivial).
\end{proof}
\begin{definition}
We call an integer triple $(k,l,m)\in \Z^3$ {\em primitive} if $k,l,m$ do not have nontrivial divisor.
\end{definition}
\begin{lemma}
\label{lemma:non-ergodic-triples}
Let semisimple elements $A, B, C \in GL(d , \Q)$ commute and act on $\s(\Z^d \setminus \R^d)$. Suppose for any $i , j \in \Z$,
$A^i B^j$ is ergodic unless $i=j=0$. Then there exist at most finitely many {\em primitive} triples $(k,l,m) \in \Z^3$ such that $A^k B^l C^m$ is not ergodic.
\end{lemma}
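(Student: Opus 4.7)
The plan is to translate the ergodicity of $A^k B^l C^m$ on the solenoid into an arithmetic condition on joint eigenvalues, and then use the hypothesis on $A^i B^j$ to bound the rank of the non-ergodic locus in $\Z^3$.

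First, I would recall the standard spectral characterization of ergodicity on $\s(\Z^d \setminus \R^d)$: a semisimple element $T \in \GL(d,\Q)$ acts ergodically with respect to Haar measure if and only if no eigenvalue of $T$ is a root of unity. This is Fourier analysis on the solenoid: its dual is a $T^*$-invariant sublattice of $\Z[1/n]^d$ (with $n$ the product of primes in $S$), and a non-trivial character has finite $T$-orbit iff some power of $T$ fixes a non-zero rational vector, iff some eigenvalue of $T$ is a root of unity. Since $A,B,C$ pairwise commute and are semisimple over $\Q$, they are simultaneously diagonalizable over $\overline{\Q}$; write the joint eigenvalue triples as $(\alpha_r,\beta_r,\gamma_r) \in (\overline{\Q}^{\times})^3$ for $r=1,\ldots,d$. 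Then $A^k B^l C^m$ is non-ergodic precisely when $\alpha_r^k \beta_r^l \gamma_r^m$ is a root of unity for some $r$.

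Next, I would introduce, for each $r$, the subgroup
\[
L_r := \bigl\{(k,l,m) \in \Z^3 : \alpha_r^k \beta_r^l \gamma_r^m \text{ is a root of unity}\bigr\} \subseteq \Z^3,
\]
i.e.\ the kernel of the homomorphism $(k,l,m)\mapsto \alpha_r^k\beta_r^l\gamma_r^m$ composed with the quotient $\overline{\Q}^{\times} \to \overline{\Q}^{\times}/(\text{roots of unity})$, the target being torsion-free. The hypothesis that $A^i B^j$ is ergodic for every $(i,j)\neq(0,0)$ forces, for each $r$, the images of $\alpha_r$ and $\beta_r$ in this torsion-free quotient to be multiplicatively independent, hence to span a free subgroup of rank $2$. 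Therefore $L_r$ has rank at most $3-2 = 1$.

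Finally, any subgroup of $\Z^3$ of rank at most $1$ contains at most two primitive triples: if $L_r = \Z v$ with $v$ itself primitive then only $\pm v$ are primitive, and otherwise $L_r$ contains no primitive triple at all. Summing over $r = 1,\ldots, d$ bounds the number of primitive $(k,l,m)\in\Z^3$ for which $A^k B^l C^m$ is non-ergodic by $2d$, proving finiteness. The main point requiring care is the very first step: one has to verify that the Fourier-theoretic ``ergodic $\Leftrightarrow$ no root-of-unity eigenvalue'' criterion, classical for the torus (Halmos), carries over verbatim to the $S$-adic solenoid $\s(\Z^d \setminus \R^d)$ used here. Once this spectral reduction is in place, the remainder is a short Kronecker-type counting argument.
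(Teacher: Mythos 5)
Your proof is correct, but it takes a different route from the paper's. The paper argues geometrically: from a non-ergodic $S=A^kB^lC^m$ it extracts the rational subspace $V=\Fix(S^r)$, which is invariant under $A,B,C$; if $V=\R^d$ it shows any other primitive non-ergodic triple must be proportional to $(k,l,m)$ (using the ergodicity of all $A^iB^j$ to kill the $A$- and $B$-exponents), and if $V\neq\R^d$ it splits $\R^d=V\oplus V'$ into invariant rational subspaces and inducts on the dimension. You instead diagonalize $A,B,C$ simultaneously over $\overline{\Q}$ and push everything into the lattice $L_r\subseteq\Z^3$ of exponent triples sent to roots of unity at the $r$-th joint eigenvalue; the hypothesis on $A^iB^j$ makes $\Z^3/L_r$ contain a free rank-$2$ subgroup, so $L_r$ has rank at most $1$ and contributes at most two primitive triples, giving the explicit bound $2d$. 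Both arguments rest on the same spectral criterion (non-ergodic on the solenoid iff some power fixes a nonzero character, iff some eigenvalue is a root of unity), which the paper also uses implicitly via $\Fix(S^r)\neq 0$ and states for the solenoid in its Lemma on irreducible actions; your remark that the dual is an invariant sublattice of $\Z[1/n]^d$ containing $\Z^d$ is exactly what makes both directions of the criterion go through. What your version buys is a quantitative count and the avoidance of both the induction and the proportionality step in the paper's first case (which tacitly assumes one can divide by $k,l,m$); what the paper's version buys is that it stays entirely within the rational/invariant-subspace framework used elsewhere in that section.
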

\begin{proof}
For $ T \in \GL(d, \Q) $, define
\[\Fix(T):= \{v \in \R^d: T v = v \}.\] 
Let $S = A^k B^l C^m$ be a non-ergodic element. Then for some $r \in \Z$, $\Fix(S^r)$ is nontrivial. Let $V = \Fix(S^r)$. Then $V$ is rational and invariant under the action of $A$, $B$ and $C$.
\par If 
$V = \R^d$, then there exists only one {\em primitive} non-ergodic triple. In fact, for any {\em primitive} non-ergodic triple 
$(k_1, l_1, m_1)$, there exist $r_1 \in \Z$ and $v_1 \in \R^d\setminus \{\mathbf{0}\}$, such that $A^{r_1 k_1} B^{r_1 l_1} C^{r_1 m_1} v_1 = v_1$.
Since $A^{r k} B^{r l} C^{r m} v_1 = v_1$ and $A^i B^j$ is ergodic for any $(i,j) \neq (0,0)$, we have that $k_1/ k = l_1/l = m_1/m$. 
\par If $V \neq \R^d$, then there exists a rational nontrivial $A,B,C$-invariant subspace $V'$ such that $\R^d = V \oplus V'$. $V$ and $V'$ correspond to nontrivial subtori $T$ and $T'$, respectively, such $\Z^d \setminus \R^d = T \oplus T'$ and $\s(T)$ and $\s(T')$ are both $A,B,C$-invariant. Since $A^i B^j$ is ergodic on $\s(T)$ and $\s(T')$ for every $(i,j) \neq (0,0)$ and $A^k B^l C^m$ is ergodic on $\s(\Z^d \setminus \R^d)$ if and only if it is ergodic on both $\s(T)$ and $\s(T')$, we can complete the proof by induction.
\end{proof}

\begin{proposition}
\label{prop_starkov}
Let $\rho_l$ be a $\Z^k$ action on the solenoid $\s(M)$ of a nilmanifold $M$, extended from a
$\Z^k_{+}$ action on $M$ by affine endomorphisms. If $\rho_l$ is {\em genuinely higher rank}, then there exists a subgroup $\Sigma$ of $\Z^k$ isomorphic to $\Z^2$ consisting of ergodic elements.
\end{proposition}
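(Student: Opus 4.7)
The plan is to combine the irreducible decomposition used in Lemmas \ref{lemma:irreducible-ergodic} and \ref{lemma:ergodic-element} with a ``finite union of proper subspaces'' argument in $\Q^k$. Following those lemmas, I would first apply Parry's theorem to reduce to $M = \Z^d \setminus \R^d$, then pass to a finite-index sub-semigroup of $\Z^k$ (which preserves genuine higher rank) so that every $\rho_l(a) \in \GL(d,\Q)$ is semisimple, and decompose $M$ as an almost direct product of $\rho_l$-invariant irreducible subtori $M = M_1 \times \cdots \times M_s$. For each $l$ I would set $\mathcal{F}_l := \{a \in \Z^k : \rho_l(a) \text{ acts trivially on } M_l\}$, a subgroup of $\Z^k$.

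The main obstacle is to upgrade the ``infinite index'' estimate from Lemma \ref{lemma:ergodic-element} to the stronger claim that $\mathrm{rank}(\Z^k/\mathcal{F}_l) \geq 2$ for every $l$. The idea is that if some $\Z^k/\mathcal{F}_l$ had rank at most one, then the continuous factor $M \to M_l$ would exhibit the restriction of $\rho_l$ to a suitable finite-index subgroup of $\Z^k$ as factoring through a trivial or $\Z$-action on $M_l$, i.e., through a finite extension of a $\Z_+$-action on the factor, contradicting Definition \ref{def_factor_rank}. This is a direct refinement of the final argument in the proof of Lemma \ref{lemma:ergodic-element}.

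Granting the rank estimate, each $\mathcal{F}_l \otimes \Q$ is a rational subspace of $\Q^k$ of codimension at least two, and a finite union of proper rational subspaces of $\Q^k$ cannot contain $\Z^k$. I would first pick $a \in \Z^k$ whose image in $\Q^k/(\mathcal{F}_l \otimes \Q)$ is nonzero for every $l$. Next, since $(\mathcal{F}_l \otimes \Q) + \Q a$ remains a proper rational subspace of $\Q^k$ for each $l$, I would pick $b \in \Z^k$ lying outside all of these; the images of $a$ and $b$ in each $\Q^k/(\mathcal{F}_l \otimes \Q)$ are then $\Q$-linearly independent. Setting $\Sigma := \Z a + \Z b$, linear independence forces $\Sigma \cong \Z^2$. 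For any $(i,j) \in \Z^2 \setminus \{(0,0)\}$ the image of $ia + jb$ in each $\Q^k/(\mathcal{F}_l \otimes \Q)$ is nonzero, so $ia + jb \notin \mathcal{F}_l$ for every $l$, and by Lemma \ref{lemma:irreducible-ergodic} applied to each irreducible factor $\rho_l(ia + jb)$ acts ergodically on every $\s(M_l)$, hence on $\s(M)$.
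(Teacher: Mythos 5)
Your argument is correct, but it follows a genuinely different route from the paper's. The paper proves Proposition \ref{prop_starkov} by induction on the number $s$ of irreducible factors: it extracts one ergodic element from Lemma \ref{lemma:ergodic-element}, applies the inductive hypothesis to $M_1$ and to a complementary invariant subtorus to produce auxiliary elements $b_1,b_2$, and then invokes Lemma \ref{lemma:non-ergodic-triples} (only finitely many primitive non-ergodic triples) to locate an ergodic $\Z^2$ inside $\Z a+\Z b_1+\Z b_2$. You bypass both the induction and Lemma \ref{lemma:non-ergodic-triples} by sharpening the one place where the paper uses genuine higher rank: where Lemma \ref{lemma:ergodic-element} only needs each $\mathcal{F}_l$ to have infinite index, you observe that a corank-one $\mathcal{F}_l$ would make the factor isogenous to $M_l$ a rank-one factor of a finite-index restriction of the action, which Definition \ref{def_factor_rank} forbids; hence each $\mathcal{F}_l\otimes\Q$ has codimension at least two in $\Q^k$, and the two-step generic choice of $a$ and then $b$ outside finitely many proper rational subspaces is elementary linear algebra. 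Combined with the trivial-or-ergodic dichotomy of Lemma \ref{lemma:irreducible-ergodic} on each irreducible factor (and the standard fact that an algebraic solenoid automorphism ergodic on each factor of an almost direct product is ergodic on the product — the paper uses this implicitly as well), this gives the conclusion. Your version is shorter and isolates the higher-rank hypothesis in a single corank bound; the paper's version yields the slightly stronger statement that $\Sigma$ can be chosen through any prescribed ergodic element, though your construction also achieves this, since an ergodic $a$ necessarily lies outside every $\mathcal{F}_l\otimes\Q$ (no power of its restriction to $M_l$ is trivial). The one step you should write out carefully is the corank bound itself — in particular that a finite-index subgroup of $\mathcal{F}_l$ acts trivially on the quotient torus $M/\prod_{j\neq l}M_j$, so that the factor action of a finite-index sub-semigroup genuinely factors through a group of rank at most one — but this is a refinement of, not a departure from, the paper's own argument at the end of Lemma \ref{lemma:ergodic-element}.
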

\begin{proof}
\par We will prove the following stronger statement: for any ergodic element $\rho_l(a)$, there exists a subgroup $\Sigma \cong \Z^2$ containing $a$ which consists of ergodic elements. 
  
 \par For the same reason as above, throughout this proof, we will assume that $M$ is a torus $\Z^{\dim M} \setminus \R^{\dim M} $ and every $\rho_l(a)\in \GL(\dim M, \Q)$ is semisimple.
 

 \par By passing to a subgroup of $\Z^k$ of finite index, we can decompose $M$ into almost direct product of $\rho_l$-invariant irreducible subtori:
$$M = M_1 \times \cdots \times M_s.$$
 \par Let us prove the statement by induction on $s$. 
 \par When $s =1$, the action $\rho_l$ is irreducible. Then the statement follows from Lemma \ref{lemma:irreducible-ergodic}. In fact, every nontrivial $\rho_l(a)$ is ergodic.
 
 \par Suppose the statement holds for $s -1$, we shall prove the statement for $s$.
\par By Lemma \ref{lemma:ergodic-element}, there exist ergodic elements. Take an ergodic element $a \in \Z^k$, we want to show that there exists a subgroup $\Sigma \cong \Z^2$ containing $a$ which consists of ergodic elements. By inductive assumption, 
 there exist $b_1 , b_2 \in \Z^k$ such that the restriction of $\rho_l(a)$ and $\rho_l(b_1)$ to $\s(M_1)$ generate a $\Z^2$ action consisting of ergodic elements, and the restriction of $\rho_l(a)$ and 
 $\rho_l(b_2)$ to $\s(M'_1)$ generate a $\Z^2$ action consisting of ergodic elements. 
 
 \par By Lemma \ref{lemma:non-ergodic-triples} applied to $M_1$ and $M'_1$, there are at most finitely many {\em primitive} triples $(k , l, m) \in \Z^3$ such that the restriction of 
  $\rho_l(k a + l b_1 + m b_2) $ onto $\s(M_1)$ or $\s(M'_1)$ is not ergodic. This implies that for all but finitely many {\em primitive} triples $(k,l,m) \in \Z^3$, $\rho_l(k a + l b_1 + m b_2 )$ is ergodic on $\s(M)$. This implies that there exists a $\Z^2$ subgroup of $\Z^k$ containing $a$ which consists of ergodic elements.
  \par This completes the proof. 
\end{proof}
\begin{proof}[Proof of Corollary \ref{cor:exponential-mixing} assuming Theorem \ref{thm:exponential-mixing-intro}]
 By our assumption, $\rho_l$ is {\em genuinely higher rank}, then by Proposition \ref{prop_starkov}, there exists a subgroup $\Sigma \cong \Z^2$ of $\Z^k$ such that for every $a \in \Sigma$, $\rho_l(a)$ is ergodic. Then by Theorem \ref{thm:exponential-mixing-intro}, there exist constant $a_1 > 0$ and $\eta' >0$, such that for every $a \in \Sigma$, any $f \in C^{\theta}(M)$, considered as a function on $\s(M)$, and any $g \in C^{\theta}(\s(M))$, \eqref{equ:exponential-mixing} holds. Recall that $\rho_l$ and $\rho$ are conjugate via the bi-\holder conjugacy $\phi$. Thus if the exponential mixing holds for $\rho_l$ and $\mu$, it also holds for $\rho$ and $\tilde{\mu} = \phi_{\ast}^{-1}(\mu)$. This proves Corollary \ref{cor:exponential-mixing}.
\end{proof}
\subsection{Maximal expanding factor}
\par Assuming $\rho_l(a)$ is ergodic for every $a \in \Z^k$, we will study the maximal expanding factor of every $\rho_l(a)$.
\begin{lemma}
\label{lemma_max_lyapunov_exponents}
For $a \in \Z^k$, let $S(a) := \max_{\chi^l} \{|\chi^l(a)|\}$ where $\chi^l$ runs over all {\bf Lyapunov exponents} of $\rho_l$. Then 
$$\inf_{a \in \Z^k} \{S(a)\} >0 .$$
\end{lemma}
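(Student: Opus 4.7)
The function $S: \R^k \to \R_{\geq 0}$ defined by $S(v) = \max_{\chi^l} |\chi^l(v)|$, where each Lyapunov exponent is viewed as an $\R$-linear functional on $\R^k$, is a continuous, positively homogeneous seminorm. Since $\|a\| \geq 1$ for every nonzero $a \in \Z^k$, showing $\inf S > 0$ on $\Z^k \setminus \{0\}$ reduces to showing that $S$ is actually a norm, i.e.\ that the common kernel $W := \bigcap_{\chi^l} \ker \chi^l$ is trivial; granted this, $c := \min_{\|v\|=1} S(v) > 0$ gives $S(a) \geq c \|a\| \geq c$ for all $a \in \Z^k \setminus \{0\}$. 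Conversely, any nonzero $v \in W$ produces, via Dirichlet's simultaneous approximation theorem, a sequence $a_n \in \Z^k \setminus \{0\}$ with $\|a_n - q_n v\|$ shrinking sufficiently fast compared with $\|a_n\|$ to force $S(a_n) \to 0$. So the entire content of the lemma is the claim $W = \{0\}$.

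To prove this, I would first argue that $W$ is defined over $\Q$, and then derive a contradiction from a nonzero integer point in $W$ via ergodicity. Each $p$-adic Lyapunov exponent has the form $\chi^l(a) = (\log p)\sum_j a_j v_p(\lambda_{ij})$ with $v_p(\lambda_{ij}) \in \Q$, so the intersection $W_p$ of the kernels of the $p$-adic exponents is already a $\Q$-rational subspace of $\R^k$. For the real Lyapunov exponents, I would invoke the product formula for algebraic numbers: for each eigenvalue character $\lambda_i$ of $\rho_l$, the identity $\sum_v d_v \log |\lambda_i(\cdot)|_v = 0$ holds over all places $v$ of the field generated by the eigenvalues. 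Combined with the fact that $v_p(\lambda_{ij}) = 0$ for every prime $p \notin S$ by the very definition of $S$, this ties each real Lyapunov exponent to a rational linear combination of $p$-adic ones on $W_p$. It follows that $W$ is cut out inside the rational subspace $W_p$ by $\Q$-linear equations and is therefore itself $\Q$-rational.

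Once the rationality of $W$ is established, if $W \neq \{0\}$ it must contain a nonzero integer vector $a$. Then every Lyapunov exponent of $\rho_l(a)$ vanishes, so each eigenvalue $\lambda$ of $\rho_l(a)$ satisfies $|\lambda|_v = 1$ at every archimedean place (and all Galois conjugates), $|\lambda|_p = 1$ for $p \in S$, and $|\lambda|_p = 1$ for $p \notin S$ by the definition of $M_p$. Being an algebraic integer of absolute value one at every place, Kronecker's theorem forces $\lambda$ to be a root of unity; using the semisimplicity of $\rho_l(a)$ for ergodic elements (as in Lemma \ref{lemma:irreducible-ergodic}), this yields $\rho_l(na) = \id$ for some $n \geq 1$, contradicting the ergodicity hypothesis on $\rho_l(na)$. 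The main technical obstacle is the rationality argument for $W$: it requires careful bookkeeping of which Lyapunov exponents are attached to a common eigenvalue character and what the product formula says about their relations, so that the irrational real coefficients $\log |\lambda_{ij}|_\infty$ end up constrained to rational combinations of the rational $p$-adic data.
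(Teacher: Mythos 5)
Your overall architecture --- reduce the lemma to the triviality of the common kernel $W=\bigcap_{\chi^l}\ker\chi^l$, show $W$ is defined over $\Q$, and then rule out a nonzero integer point of $W$ by Kronecker's theorem plus ergodicity --- is a legitimate alternative route (note that the paper only obtains $W=\{0\}$ \emph{afterwards}, in Lemma \ref{lemma_max_lyapunov_real}, as a consequence of Lemma \ref{lemma_max_lyapunov_exponents}). The final step is fine. The gap is the pivotal rationality claim: the product formula does not do what you ask of it. For each Galois orbit of eigenvalue characters it supplies exactly one linear relation, namely that on $W_p$ the \emph{sum} $\sum_{\sigma}\log|\sigma(\lambda_i(a))|$ over archimedean places vanishes; it says nothing about any individual real exponent, and in particular does not exhibit any single $\chi^l$ as a rational combination of $p$-adic data. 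Concretely, for the $\Z^2$-action on $\R^3/\mathcal{O}_K$ ($K$ totally real cubic) generated by multiplication by two units $\alpha,\beta$, there are no nontrivial $p$-adic exponents, so $W_p=\R^2$, the product formula gives only $\chi_1+\chi_2+\chi_3=0$, and each $\ker\chi_i$ is a genuinely irrational line. That their common intersection is $\{0\}$ is exactly the $\R$-linear independence of the logarithmic embeddings of $\alpha$ and $\beta$, i.e.\ Dirichlet's ($S$-)unit theorem. In general $W$ \emph{is} rational, but proving this requires the discreteness of the logarithmic embedding of the $S$-units (equivalently, of $\GL(d,K)$ in $\GL(d,\mathbb{A}_K)$); so what you flag as ``careful bookkeeping'' with the product formula is in fact the entire arithmetic content of the lemma, and the cited tool cannot supply it.

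For comparison, the paper avoids the rationality of $W$ altogether: from a sequence $a_r$ with $S(a_r)\to 0$ it deduces that the matrices $\rho_l(a_r)$ converge in $\GL(\dim M,\mathbb{A}_K)$ to an element of finite order, and discreteness of $\GL(\dim M,K)$ in the adeles forces $\rho_l(a_r)$ itself to have finite order for large $r$, contradicting ergodicity. Two secondary points: ergodicity of every element does not imply semisimplicity (Lemma \ref{lemma:irreducible-ergodic} derives semisimplicity from \emph{irreducibility}, which is not assumed here), so your appeal to eigenvalue characters $\lambda_{ij}$ and to ``$\rho_l(na)=\id$'' needs the same reduction the paper carries out in the second half of its proof, via a nested chain of rational invariant subspaces on which the generators act semisimply; and the reverse implication you prove via Dirichlet approximation (nontrivial $W$ forces $\inf S=0$) is not needed for the lemma.
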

\begin{proof}
By passing from  the action $\rho_l$ to its maximal torus factor $M_0$, we can reduce the proof to the case that $M$ is a torus $\Z^{\dim M} \setminus \R^{\dim M}$, again using Parry's theorem \cite{parry1969} that a nilmanifold endomorphism is ergodic precisely when its projection to the maximal toral factor is ergodic. Then every $\rho_l(a), a \in \Z^k$, can be expressed as an element in $\GL(\dim M, \Q)$.
\par We first prove the statement assuming every $\rho_l(a), a \in \Z^k$, is semisimple. Let $\K$ denote $\R$ or $\Q_p$. Then
$$\mathfrak{n}(\K) = \K^{\dim M} = \bigoplus_{\chi^l \in T(\K)} \sigma^{\chi^l}.$$
For $a \in \Z^k$, since $\rho_l(a)$ is semisimple, $\sigma^{\chi^l}$ is a generalized eigenspace of $\rho_l(a)$ with generalized eigenvalue $\chi^l(a)$.

For contradiction, suppose that there exists a sequence 
$\{a_r: r \in \N\}$ such that $\|a_r\|\rightarrow \infty$ as $r \rightarrow \infty$, and $S(a_r) \rightarrow 0$ as $r \rightarrow \infty$. Let $K \subset \overline{\Q}$ be a finite field extension of $\Q$ such that every $\rho_l(a_r)$ is diagonalizable in $\GL(\dim M, K)$. So we may assume that every 
$\rho_l(a_r)$ is a diagonal matrix in $\GL(\dim M , K)$, denoted by $\mathrm{diag}\{a_r(i,i): 1 \leq i \leq \dim M\}$. Then $S(a_r) \rightarrow 0$ implies that for any prime ideal $\mathfrak{p}$ of the ring of algebraic integers $\mathcal{O}_K$, $|a_r(i,i)|_{\mathfrak{p}} \rightarrow 1$ as $r \rightarrow \infty$, for all $i=1,2,\dots, \dim M$. Moreover, $|a_r(i,i)| \rightarrow 1$ as $r \rightarrow \infty$ for all $i=1,2,\dots, \dim M$. There are only finitely many units in $K$ with absolute value equal to $1$, thus there exists an element $u \in \GL(\dim M, K)$ with some power of $u$ is identity, such that passing to a subsequence, 
$$\rho_l(a_r) \rightarrow u, \text{ as } r \rightarrow \infty$$
in $\GL(\dim M, \mathbb{C})$ and also in $\GL(\dim M, K_{\mathfrak{p}})$ for any prime ideal $\mathfrak{p}$ in $\mathcal{O}_K$.

Thus $\rho_l(a_r) \rightarrow u $ as $l \rightarrow \infty$ in $\GL(\dim M, \mathbb{A}_{K})$ where $\mathbb{A}_{K}$ denotes the adeles of $K$. 

Since $\GL(\dim M, K)$ is discrete in $\GL(\dim M, \mathbb{A}_K)$, we will have $\rho_l(a_r) = u$ for $r$ large enough. This implies that for $r$ large enough $\rho_l(a_r)=u \in \GL(\dim M, \Q)$ and some power of $u$ is identity. This contradicts the assumption that every $\rho_l(a_r)$ is ergodic since some power of $\rho_l(a_r)$ is identity. This shows the statement assuming every $\rho_l(a_r)$ is semisimple.
\par Now we prove the statement in general. Suppose  $\Z^k$ is generated by $a_1, a_2, \dots, a_k$. Consider the Jordan decomposition of $\rho_l(a_1)$: $\rho_l(a_1) = b_1 c_1$ with $b_1$ semisimple and $c_1$ unipotent. Since $c_1 \in \GL(\dim M , \Q)$, the eigenspace of $c_1$ with eigenvalue $1$, which we denote by $W_1$, is nontrivial and defined over $\Q$. Also, $W_1$ is $\rho_l(\Z^k)$-invariant, and the restriction of $\rho_l(a_1)$ onto $W_1$ is semisimple.   Repeating this argument, we can find a sequence of rational $\rho_l(\Z^k)$-invariant subspaces $W_k \subset \cdots \subset W_2 \subset W_1$ such that for $i=1,2,\dots, k$, the restriction of $\rho_l(a_i)$ on $W_i$ is semisimple. Then the restriction of $\rho_l(\Z^k)$ on $W_k$ is semisimple.
By the special case above, $\inf_{ a \in \Z^k \setminus \{\mathbf{0}\}}\{ S(a |_{W_k})\} >0$. Then the statement follows since 
$$\inf_{ a \in \Z^k \setminus \{\mathbf{0}\}}\{ S(a )\} \geq \inf_{ a \in \Z^k \setminus \{\mathbf{0}\}}\{ S(a |_{W_k})\}.$$
\end{proof}

\begin{lemma}
\label{lemma_max_lyapunov_real}
For all $a \in \R^k \setminus \{\mathbf{0}\}$, we  define $S(a):= \max_{\chi^l} \{|\chi^l(a)|\}$.
Then $\sigma := \frac{1}{2} \inf \{S(a) : a \in \R^k, \|a\| = 1\}$ is positive.
\end{lemma}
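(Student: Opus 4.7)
The plan is to recognize that $S$ extends to a continuous seminorm on the finite-dimensional space $\R^k$, and to reduce the claim to the statement that this seminorm is in fact a norm. Indeed, since each Lyapunov exponent $\chi^l$ is a linear functional on $\R^k$, the function $S(a)=\max_{\chi^l}|\chi^l(a)|$ is continuous, positively homogeneous (i.e.\ $S(ta)=|t|S(a)$), and subadditive ($S(a+b)\le S(a)+S(b)$), hence a seminorm. Once one knows $S(a)=0 \Rightarrow a=0$, the equivalence of norms on $\R^k$ implies that $S$ is bounded below by a positive constant on the $\|\cdot\|$-unit sphere, which is exactly the assertion of the lemma (with a factor $\tfrac12$ to spare).

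Thus the core of the argument is to prove that $S$ is positive definite, and my approach here is to play the linearization against the integer lemma \ref{lemma_max_lyapunov_exponents} via simultaneous Diophantine approximation. Suppose for contradiction that $S(a_0)=0$ for some $a_0\in\R^k\setminus\{0\}$. By Dirichlet's simultaneous approximation theorem, for every integer $T\ge 1$ there exist $q\in\Z$ with $1\le q\le T$ and $b\in\Z^k$ such that
\[
\|b - q a_0\|_\infty \le T^{-1/k}.
\]
For $T$ large enough, $b$ must be nonzero: otherwise $q\|a_0\|_\infty \le T^{-1/k}$ would force $q=0$, contradicting $q\ge 1$. Using $S(qa_0)=qS(a_0)=0$ and the triangle inequality for the seminorm $S$,
\[
S(b) \le S(qa_0) + S(b-qa_0) = S(b-qa_0) \le C'\,T^{-1/k},
\]
where $C' := \max_{\chi^l}\|\chi^l\|$ is the largest operator norm of the finitely many Lyapunov functionals. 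Sending $T\to\infty$ produces a sequence in $\Z^k\setminus\{0\}$ on which $S$ tends to $0$, which directly contradicts Lemma \ref{lemma_max_lyapunov_exponents}. Hence no such $a_0$ exists and $S$ is a norm.

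There is no real obstacle in this argument; the only point that requires a sentence of care is the verification that the approximating integer vector $b$ is nonzero, which is handled by comparing $\|a_0\|_\infty$ with $T^{-1/k}$ as above. The algebraic content—that on rational directions $S$ cannot vanish—is already contained in Lemma \ref{lemma_max_lyapunov_exponents}, and Dirichlet's theorem serves only to spread this rigidity from $\Z^k$ to all of $\R^k$.
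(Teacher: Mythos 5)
Your argument is correct and is essentially the paper's proof: the paper also deduces positive definiteness of $S$ by approximating the line $\R a_0$ by nonzero integer vectors and invoking Lemma \ref{lemma_max_lyapunov_exponents}, then concludes by continuity of $S$ on the compact unit sphere. You have merely made the Diophantine approximation step explicit via Dirichlet's theorem and phrased the final step as equivalence of norms, which is the same compactness argument.
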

\begin{proof}
Let us first prove $S(a) >0$ for every $a \in \R^k\setminus \{\mathbf{0}\}$. Suppose $S(a) =0$ for some $a \in \R^k \setminus \{\mathbf{0}\}$. Since the line $\{t a : t \in \R\}$ comes arbitrarily close 
to integer points in $\Z^k$, we can find $t_l \in \R$ and $a_l \in \Z^k$ with $a_l - t_l a \rightarrow \mathbf{0} $ as 
$l \rightarrow \infty$. Since $S(t_l a) =0$, we have $S(a_l) \rightarrow 0$ as $l \rightarrow \infty$. This contradicts Lemma 
\ref{lemma_max_lyapunov_exponents}. This proves that $S(a) >0$. Then the statement follows as $S$ is continuous.
\end{proof}
\begin{remark}
\label{rmk:maximal-expanding} Note that there are only finitely many {\bf Lyapunov exponents} for $\rho_l$ and for any $a \in \Z^k$, 
$\sum_{\chi^l} \chi^l (a) =0$. Thus, the above lemma implies that there exists a constant $L' >0$ such that
for any $a \in \Z^k$, there exists a {\bf Lyapunov exponent} $\chi^l$ such that $\chi^l(a) \geq L' \|a\|$.
\end{remark}


\subsection{Proof of Theorem \ref{thm:exponential-mixing-intro}} 
Now we are ready to prove Theorem \ref{thm:exponential-mixing-intro}.
\par We first prove the theorem for irreducible actions, and then deal with the general case. 
\subsection*{Irreducible Case} In this case we assume that the action $\rho_l: \Z^k  \acton \s(M)$ is irreducible.
\par We deal with the following two cases separately:
\begin{enumerate}[label=\textbf{Case \arabic*}]
\item \label{case_1} Let $L'>0$ be the constant given in Remark \ref{rmk:maximal-expanding}. For all $a \in \Z^k$, there exists a real {\bf Lyapunov exponent} $\chi^l$ such that $\chi^l(a) \geq L' \|a\|$.
\item \label{case_2}  \ref{case_1} fails.
\end{enumerate}
\begin{proof}[Proof for \ref{case_1}]
\par For this case, the proof is more or less the same as that in \cite{gorodnik_spatzier2014}.
\par Suppose $\chi^l_1(a) = \max \{\chi^l(a) : \chi^l  \in T(\R)\}$. Then by our asssumption, $\chi^l_1 (a) \geq L' \|a\|$. Then $\sigma^{\chi^l_1} \subset \mathfrak{n}(\R)$ is the generalized eigenspace of $D\rho_l(a)$ with generalized eigenvalue $e^{\chi^l_1(a)}$. It is easy to see that $\sigma^{\chi^l_1}$ is defined over $\overline{\Q} \cap \R$.
Thus $W' := D\pi(\sigma^{\chi^l_1}) \subset \R^l$ is also defined over $\overline{\Q} \cap \R$. Then by Lemma \ref{lemma_exist_algebraic_vector}, there exists
$w \in W'$ whose coordinates are algebraic numbers that are linearly independent over $\Q$. As discussed after Lemma \ref{lemma:p-adic-diophantine}, there exist constants $c, L >0$ such that 
$$|\langle w , z \rangle| \geq c \|z\|^{-L}, \text{ for all } z \in \Z^l\setminus\{\mathbf{0}\}. $$
\par  
Let us fix a small constant $\epsilon >0$. For each $p \in S$, we pick an integer $l_p(a) \geq 0$ such that $p^{-l_p(a)} \leq \epsilon$, and 
$$\rho_l(a)(N(p^{l_p(a)}\Z_p)) \subset N(\Z_p).$$
\par We cut $\s(M)$ into small pieces along $p$-adic directions:
$$\s(M) = \bigcup_{j} B_{j},$$
where 
$$B_{j} := N(\Z) \setminus \mathcal{F} \times \prod_{p \in S} \xi_j(p)N(p^{l_p(a)}\Z_p)/M_p,$$
for a fixed fundamental domain $\mathcal{F} \subset N(\R)$ of $N(\Z)\setminus N(\R)$ and some $\xi_j(p) \in N(\Z_p)$. 
\par We fix a basis $\{w_1 ,\dots, w_{s_1} \}$ of $\sigma^{\chi^l_1}$ and extend it to a fixed basis $\{w_1 ,\dots, w_{s_1}, v_1 , \dots , v_{s_2}\}$ of $\mathfrak{n}(\R)$. For $\epsilon >0$, we define 
$$C(\epsilon) := W(\epsilon) + B(\epsilon) \subset \mathfrak{n}(\R),$$
where $W(\epsilon) := [-\epsilon , \epsilon] w_1 + \cdots + [-\epsilon , \epsilon] w_{s_1}$ and $B(\epsilon) := [-\epsilon , \epsilon] v_1 + \cdots + [-\epsilon, \epsilon] v_{s_2}$. Then for $x \in  N(\Z)\setminus N(\R)$ and $\epsilon >0$, we define 
 $$U(x, \epsilon):= x \exp ( C(\epsilon)) \subset N(\Z) \setminus N(\R).$$
\par We then cut each $B_i$ into small pieces along the real component. In other words, we write 
$$B_i = \bigcup_{j} B_{i,j},$$
where $B_{i,j} := U(x_i, \epsilon) \times \prod_{p \in S} \xi_j(p)N(p^{l_p(a)}\Z_p)/M_p$, for some $x_i\in N(\Z) \setminus N(\R)$. Then 
$$\int_{\s(M)} f(\rho_l(a)\overline{z}) g(\overline{z}) \dd \mu(\overline{z}) = \sum_{i,j} \int_{B_{i,j}} f(\rho_l(a)\overline{z}) g(\overline{z}) \dd \mu(\overline{z}).$$
It is easy to see that 
$$\int_{B_{i,j}} f(\rho_l(a)\overline{z}) g(\overline{z}) \dd \mu(\overline{z}) = \left( g(\overline{z}_{i,j})+O(\epsilon^{\theta}\|g\|_{\theta} ) \right) \int_{B_{i,j}} f(\rho_l(a)\overline{z})\dd \mu(\overline{z})$$
where $\overline{z}_{i,j} = (x_i, (\xi_j(p))_{p \in S}) \in B_{i,j}$. By Lemma \ref{lemma_chinese_remainder}, for each $j$, we may choose $n_j \in N(\Z)$ such that $n_j^{-1}\xi_j(p) \in N(p^{l_p(a)} \Z_p)$ for all $p \in S$. Therefore, 
$$B_{i,j} = U(n_j^{-1} x_i , \epsilon) \times \prod_{p \in S} N(p^{l_p(a)}\Z_p)/M_p.$$
Since $\rho_l(a)(N(p^{l_p(a)}\Z_p)) \subset N(\Z_p)$ and since the value of the function $f$ only depends on its projection on $M$, we have that 
$$\int_{B_{i,j}} f(\rho_l(a)\overline{z}) \dd \mu (\overline{z}) = V \int_{U(n_j^{-1} x_i, \epsilon)} f(\rho_l(a) x) \dd \nu (x) ,$$
where $V = \prod_{p \in S} \nu_p(N(p^{l_p(a)}\Z_p))$.
Let $y_{i,j} := n_j^{-1} x_i$, then we have that 
$$\int_{U(n_j^{-1} x_i, \epsilon)} f(\rho_l(a) x) \dd \nu (x) = 
\int_{v  \in B(\epsilon)}\int_{w \in W(\epsilon)} f(y_{i,j} \exp(D \rho_l(a)v + D\rho_l(a)w )) \dd w \dd v.$$
We want to show that for all $v \in \mathfrak{n}(\R)$,
the integral 
\begin{equation}
\label{equa_integral_long_piece}
\frac{1}{\mathrm{Vol}(W(\epsilon))}\int_{w \in W(\epsilon)} f(y_{i,j} \exp (v + D\rho_l(a) w)) \dd w\end{equation}
estimates $\int_{N(\Z)\setminus N(\R)} f(x) \dd \nu (x)$ with error 
$O(e^{-\eta \|a\|} \|f\|_{\theta})$ for a constant $\eta >0$. Here $\Vol (\cdot)$ denotes the volume with respect to the normalised Lebesgue measure $\dd w$.
\par  Let $v \in \mathfrak{n}(\R)$ be fixed. We consider the box map
$$\iota : [-\epsilon , \epsilon ]^{s_1} \rightarrow \mathfrak{n}(\R):$$
 $$\mathbf{t} = (t_1, \dots, t_{s_1}) \in [-\epsilon , \epsilon ]^{s_1} \mapsto  v + t_1 w'_1 + \cdots + t_{s_1} w'_{s_1},$$
 where $w'_i = D\rho_l(a)(w_i)$. Then it is easy to see that 
 $$\frac{1}{\mathrm{Vol}( W(\epsilon))}\int_{w \in W(\epsilon)} f(y_{i,j} \exp (v + D\rho_l(a) w)) \dd w = (2\epsilon)^{-s_1} \int_{[-\epsilon, \epsilon]^{s_1}} f(y_{i,j}\exp(\iota(\mathbf{t}))) \dd \mathbf{t}.$$
Then the integral (\ref{equa_integral_long_piece}) is the integral of $f$ along the box map $\iota$ and based at the point $y_{i,j}$. For contradiction, suppose (\ref{equa_integral_long_piece}) does not estimate $\int_M f \dd \nu$ with error $O(e^{-\eta \|a\|} \|f\|_{\theta})$. Then by Theorem \ref{thm_equidistribution_box}, there exists $z \in \Z^l\setminus \{\mathbf{0}\}$ such that $\|z\| \ll e^{ L_1 \eta \|a\| }$ and 
$$|\langle z, D\pi (w'_i) \rangle| \ll e^{L_2 \eta \|a\|}/ \epsilon.$$
Since $\|w'_i\| = \|D \rho_l(a) w_i\| \asymp e^{\chi^l_1(a)} \|w_i\| \gg e^{L'\|a\|}$ and $\sigma^{\chi^l_1}$ is spanned by $\{w'_1 , \dots,  w'_{s_1}\}$, we have that for all $w \in \sigma^{\chi^l_1}$ with $\|w\| \asymp 1$, 
\begin{equation}
\label{equa_ineq_1}
|\langle z, D\pi(w) \rangle| \ll e^{(L_2 \eta -L')\|a\|}/\epsilon.
\end{equation}
On the other hand, by Lemma \ref{lemma_exist_algebraic_vector}, there exists $w \in W$ such that 
\begin{equation}
\label{equa_ineq_2}
|\langle D\pi (w), z \rangle| \gg \|z\|^{-L} \gg e^{-L_1 L \eta \|a\|}.
\end{equation}
Let $\epsilon = e^{-L_3 \|a\|}$ such that $0<L_3 < L'/2$.
Then (\ref{equa_ineq_1}) and (\ref{equa_ineq_2}) will lead to a contradiction if $L_3 + L_2 \eta -L' < - L_1 L \eta$. This shows that there exists constant $\eta >0$ such that, for all $v \in \mathfrak{n}(\R)$, integral (\ref{equa_integral_long_piece}) estimates $\int_M f \dd \nu$ with error $O(e^{-\eta \|a\|} \|f\|_{\theta})$. This implies that 
$$\begin{array}{cl} & \int_{U(y_{i,j}, \epsilon)} f(\rho_l(a) x) \dd \nu (x) \\
= & \int_{v \in B(\epsilon)}\Vol(W(\epsilon))(\int_M f \dd \nu + O(e^{-\eta \|a\|} \|f\|_{\theta})) \dd v \\
= & \Vol(B(\epsilon)) \Vol(W(\epsilon))(\int_M f \dd \nu + O(e^{-\eta \|a\|} \|f\|_{\theta})) \\
= & \nu(U(y_{i,j}, \epsilon))(\int_M f \dd \nu +O(e^{-\eta \|a\|} \|f\|_{\theta})).
\end{array}$$
Therefore, 
$$\begin{array}{cl} & \int_{B_{i,j}} f(\rho_l(a)\overline{z}) \dd \mu (\overline{z})\\
= &  \prod_{p \in S} \nu_p(N(p^{l_p(a)} \Z_p)) \nu(U(y_{i,j}, \epsilon)) (\int_M f \dd \nu +O(e^{-\eta \|a\|} \|f\|_{\theta})) \\
= &  \prod_{p \in S} \nu_p(N(p^{l_p(a)} \Z_p)) \nu(U(x_i, \epsilon)) (\int_M f \dd \nu +O(e^{-\eta \|a\|} \|f\|_{\theta})) \\
= & \mu(B_{i,j}) (\int_M f \dd \nu +O(e^{-\eta \|a\|} \|f\|_{\theta})) \\
= & \mu(B_{i,j}) (\int_{\s(M)} f \dd \mu +O(e^{-\eta \|a\|} \|f\|_{\theta})) .\end{array}$$
Finally,
$$\begin{array}{cl}
& \int_{\s(M)} f(\rho_l(a)\overline{z}) g(\overline{z}) d\mu (\overline{z}) \\
= & \sum_{i,j} \int_{B_{i,j}} f(\rho_l(a)\overline{z}) g(\overline{z}) d\mu (\overline{z}) \\
= &  \sum_{i,j} (g(\overline{z}_{i,j}) + O(\epsilon^{\theta} \|g\|_{\theta}))  \int_{B_{i,j}} f(\rho_l(a)\overline{z}) \dd \mu (\overline{z}) \\
= & \sum_{i,j} (g(\overline{z}_{i,j}) + O(\epsilon^{\theta} \|g\|_{\theta})) \mu(B_{i,j})
(\int_{\s(M)} f \dd \mu +O(e^{-\eta \|a\|} \|f\|_{\theta})) \\
= & \int_{\s(M)} f \dd \mu \sum_{i,j}g(\overline{z}_{i,j}) \mu(B_{i,j}) + O(\epsilon^{\theta} \|g\|_{\theta}) \int_{\s(M)} f \dd \mu \sum_{i,j} \mu(B_{i,j}) \\
 & + O(e^{-\eta \|a\|} \|f\|_{\theta}) \sum_{i,j} g(\overline{z}_{i,j}) \mu(B_{i,j}) 
 + O(\epsilon^{\theta} \|g\|_{\theta}) O(e^{-\eta \|a\|} \|f\|_{\theta}) \sum_{i,j} \mu(B_{i,j}) \\
 = & \int_{\s(M)} f \dd \mu (\int_{\s(M)} g \dd \mu + O(\epsilon^{\theta} \|g\|_{\theta})) +O(\epsilon^{\theta} \|g\|_{\theta}) \int_{\s(M)} f \dd \mu \\
 & + O(e^{-\eta \|a\|} \|f\|_{\theta}) (\int_{\s(M)} g \dd \mu + O(\epsilon^{\theta} \|g\|_{\theta})) + O(\epsilon^{\theta} e^{-\eta \|a\|} \|f\|_{\theta} \|g\|_{\theta}).
\end{array}$$
Since $|\int_{\s(M)} f \dd \mu | \leq \|f\|_{\theta}$, $|\int_{\s(M)} g \dd \mu | \leq \|g\|_{\theta}$ and $\epsilon = e^{-L_3 \|a\|}$, 
we have there exists a constant $\eta' >0$ such that 
$$\int_{\s(M)} f(\rho_l(a)\overline{z}) g(\overline{z}) \dd \mu (\overline{z}) = \int_{\s(M)} f \dd \mu \int_{\s(M)} g \dd \mu +O(e^{-\eta' \|a\|} \|f\|_{\theta}\|g\|_{\theta}).$$
\par This finishes the proof for \ref{case_1}.
\end{proof}

\begin{proof}[Proof for \ref{case_2}]  By Remark \ref{rmk:maximal-expanding}, there exists a {\bf Lyapunov exponent} $\chi^l_2 \in T(\Q_p)$ for some $p \in S$ such that $\chi^l_2(a) \geq L' \|a\|$. We may further assume that $\chi^l_2(a) = \max_{\chi^l}\{ \chi^l(a)\}$. Let $\sigma^{\chi^l_2} \subset \mathfrak{n}(\Q_p)$ denote the generalized eigenspace of $D\rho_l(a)$ with generalized eigenvalue $e^{\chi^l_2(a)}$. 
\par Let us fix a basis $\{w_1 , \dots , w_{s_1}\}$ of $\sigma^{\chi^l_2}$ and  extend $\{w_1 , \dots , w_{s_1}\}$ to a basis $\{w_1, \dots, w_{s_1}, v_1 , \dots , v_{s_2}\}$ of $\mathfrak{n}(\Q_p)$. Without loss of generality, we may assume that $\|w_i \|_p =1$ for $i =1, \dots, s_1$. Then 
$\{D\rho_l(a)w_1 , \dots, D\rho_l(a) w_{s_1}\}$ is also a basis of $\sigma^{\chi^l_2}$ and $\|D\rho_l(a) w_i\|_p = e^{\chi^l_2(a)}$ for $i=1,2,\dots, s_1$. Denote $e^{\chi^l_2(a)} = p^h$, and $D\rho_l(a) w_i = p^{-h} u_i$ for $i=1,\dots , s_1$. 
Then $\{u_1 , \dots , u_{s_1}\}$ is a basis of $\sigma^{\chi^l_2}$ and $\|u_i\|_p =1$ for $i=1,\dots, s_1$.
\par Pick $\delta>0$ small enough such that $\delta < e^{-L' \|a\|}$ and the diameter of $\rho_l(a)(U(x, \delta))$ is less than $e^{-L' \|a\|}$ for all $x \in M$. Let $\epsilon = e^{-L' \|a\|/2}$. For $S \ni q \neq p$, let $l_q(a) > 0$ 
denote the smallest integer such that $q^{-l_q(a)} \leq \epsilon$ and $\rho_l(a)(N(q^{l_q(a)} \Z_q)) \subset N(\Z_q)$. Let $l_p>0$ denote the smallest integer such that $p^{-l_p} \leq \epsilon$. Let us cut $\s(M)$ into small pieces as follows:
$$\s(M) = \bigcup_{j} B_j,$$
where $B_j := N(\Z) \setminus \xi_j(p)N(p^{l_p}\Z_p) \times U(x_j , \epsilon) \times \prod_{ S \ni q \neq p} \xi_j(q) N(q^{l_q(a)} \Z_q)$ for some $x_j \in N(\R)$, $\xi_j(p) \in N(\Z_p)$ and $\xi_j(q) \in N(\Z_q)$.  Since $e^{\chi^l_2(a)} = p^h$ and since $\chi^l_2(a)$ is maximal among all $\chi^l(a)$,  we have that $\rho_l(a) N(p^{h} \Z_p) \subset N(\Z_p)$. For a positive integer $h'$, define 
$$\mathfrak{u}_p(h') := \left\{ \begin{array}{l}t_1 w_1 + \cdots + t_{s_1} w_{s_1} + b_1 v_1 + \cdots + b_{s_2} v_{s_2} : \\ \|t_i\|_p \leq p^{-l_p} \text{ for } i =1,\dots, s_1,\text{ and } \|b_i\|_p \leq p^{-h'} \text{ for } i =1 ,\dots, s_2.\end{array}\right\} \subset \mathfrak{n}(\Z_p),$$
and $U_p(h') := \exp (\mathfrak{u}_p(h')) \subset N(\Z_p)$. 

\par Let us further cut each $B_j$ along the $p$-adic direction:
$$B_j = \bigcup_{i} B_{i,j},$$
where $B_{i,j} := N(\Z) \setminus \xi_{i,j}(p) U_p(h) \times U( x_j, \epsilon) \times \prod_{S \ni q \neq p} \xi_j(q)N(q^{l_q(a)} \Z_q) $ for some $\xi_{i,j}(p) \in N(\Z_p)$. By Lemma 
\ref{lemma_chinese_remainder}, there exists $n_{i,j} \in N(\Z)$ such that $n_{i,j}^{-1} \xi_{i,j}(p) \in U_p(h) $ and $n_{i,j}^{-1} \xi_j(q) \in N(q^{l_q(a)} \Z_q)$ for $q \neq p$. Then 


$$B_{i,j} = N(\Z) \setminus  U_p(h) \times U( y_{i,j}, \epsilon) \times \prod_{q \neq p} N(q^{l_q(a)} \Z_q),$$
where $y_{i,j} = n^{-1}_{i,j} x_j$.
Then 
$$\begin{array}{cl}
& \int_{\s(M)} f(\rho_l(a) \overline{z}) g(\overline{z}) \dd \mu(\overline{z}) \\
= & \sum_{i,j} \int_{B_{i,j}} f(\rho_l(a) \overline{z}) g(\overline{z}) \dd \mu(\overline{z}) \\
= & \sum_{i,j} (g(\overline{z}_{i,j}) + O(\epsilon^{\theta} \|g\|_{\theta})) \int_{B_{i,j}} f(\rho_l(a) \overline{z}) \dd \mu(\overline{z}),
\end{array}$$
where $\overline{z}_{i,j} = (x_j, \xi_{i,j}(p) , (\xi_j(q))_{S \ni q \neq p}) \in B_{i,j}$.
\par By the argument in the proof for \ref{case_1}, to prove the exponential mixing result it suffices to show that
$$\int_{B_{i,j}} f(\rho_l(a)\overline{z}) \dd \mu (\overline{z}) = \mu(B_{i,j}) \left(\int_{\s(M)} f \dd \mu + O(e^{-\eta \|a\|} \|f\|_{\theta})\right),$$
for a constant $\eta >0$. To estimate the above integral, we cut $U_p(h)$ further as follows:
$$U_p(h) = \bigcup_{l} \kappa_l (p) N(p^{h} \Z_p),$$
where $\kappa_l(p) \in U_p(h)$. According to this we can cut $B_{i,j}$ into small pieces:
$$B_{i,j} = \bigcup_{l} B_{i,j,l},$$
where 
$$B_{i,j,l} := N(\Z) \setminus \kappa_l(p) N(p^{h} \Z_p) \times U(y_{i,j} , \epsilon) \times \prod_{q\neq p} N(q^{l_q(a)} \Z_q).$$
Then
$$\int_{B_{i,j}} f(\rho_l(a)\overline{z}) \dd \mu (\overline{z}) =\sum_{l} \int_{B_{i,j,l}} f(\rho_l(a)\overline{z}) \dd \mu(\overline{z}).$$
Now let us look at $\int_{B_{i,j,l}} f(\rho_l(a)\overline{z}) \dd \mu(\overline{z})$ more carefully. First note that we can choose $\kappa_l(p)$ to run over 
elements in
$$\Delta(w_1, \dots, w_{s_1}):=\{\exp(t_1  w_1 + \cdots + t_{s_1}  w_{s_1}) : t_i =0,p^{l_p}\cdot 1, p^{l_p}\cdot 2 , \dots, p^{l_p}(p^{h-l_p} -1) \}.$$
By Lemma \ref{lemma_chinese_remainder}, for each $w_i$, there exists $w_i(h) \in \mathfrak{n}(\Z)$ such that 
$w_i(h) \equiv w_i (\mathrm{mod}\quad p^h)$ and $w_i(h) \equiv \mathbf{0} (\mathrm{mod} \quad q^{l_q(a)})$ for $q \neq p$.
Suppose $\kappa_l(p) = \exp(t_1  w_1 + \cdots + t_{s_1}  w_{s_1})$, then direct calculation shows that
$$\int_{B_{i,j,l}} f(\rho_l(a) \overline{z}) \dd \mu (\overline{z})  = V \int_{U(y_{i,j} , \epsilon)} f(\rho_l(a) (n^{-1}_l \overline{z})) \dd \nu (\overline{z}),$$
where $V := \nu_p(N(p^h \Z_p)) \times \prod_{S \ni q\neq p} \nu_q(N(q^{l_q(a)} \Z_q))$ and $n_l := \exp(t_1 w_1(h) +\cdots + t_{s_1} w_{s_1} (h))$. Note that the diameter of $\rho_l(a)U(y_{i,j} , \epsilon)$ is less than $e^{-L'\|a\|}$, we have that
$$\begin{array}{cl} & \int_{B_{i,j,l}} f(\rho_l(a) \overline{z}) \dd \mu (\overline{z}) \\
 =  & V \nu(U(y_{i,j} , \epsilon)) (f(\rho_l(a) (n^{-1}_l y_{i,j})) + O(e^{-L' \theta \|a\|} \|f\|_{\theta})) \\
 = & \mu(B_{i,j,l})  (f(\rho_l(a) (n^{-1}_l y_{i,j})) + O(e^{-L' \theta \|a\|} \|f\|_{\theta})) . \end{array}$$
 Therefore, to show the exponential mixing result, it suffices to show that the following summation
 $$\frac{1}{p^{(h- l_p) s_1}}\sum_{t_1, \dots  t_{s_1}} f(\rho_l(a) (\exp(-(t_1 p^{l_p} w_1(h) + \cdots + t_{s_1} p^{l_p} w_{s_1}(h))) y_{i,j}))$$
 estimates $\int_{\s(M)} f \dd \mu$ with error $O(e^{-\eta \|a\|} \|f\|_{\theta})$. There is a factor of $\frac{1}{p^{(h - l_p) s_1}}$
 in front of the summation because $\mu(B_{i,j,l}) = \frac{\mu(B_{i,j})}{ p^{(h- l_p) s_1}}$. 
 \par First note that 
 $$ f(\rho_l(a) (\exp(-(t_1 p^{l_p} w_1(h) + \cdots + t_{s_1} p^{l_p} w_{s_1}(h))) y_{i,j})) = f(\exp(- \sum_{j=1}^{s_1} t_j p^{l_p} D\rho_l(a) w_j(h)) z_{i,j}),$$
 where $z_{i,j} := \rho_l(a) y_{i,j}$. By our previous discussion $D\rho_l(a) w_j = p^{-h} u_j$, for $j=1,\dots, s_1$. Let $u_j(h) \in \mathfrak{n}(\Z)$ be such that $u_j(h)  \equiv u_j (\mathrm{mod} \quad p^h)$ and $u_j(h) \equiv \mathbf{0} (\mathrm{mod} \quad q^{l_q(a)})$ for $S \ni q \neq p$. Then the difference between $\exp(- \sum_{j=1}^{s_1} t_j p^{l_p} D\rho_l(a) w_j(h)) z_{i,j}$ and $\exp(- p^{-h + l_p}\sum_{j=1}^{s_1} t_j u_j(h) ) z_{i,j}$ is in $N(\Z)$. Hence 
 $$ f(\exp(- \sum_{j=1}^{s_1} t_j p^{l_p} D\rho_l(a) w_j(h)) z_{i,j}) = f(\exp(- p^{-h+l_p}\sum_{j=1}^{s_1} t_j u_j(h) ) z_{i,j}).$$
 Therefore we reduce our task to proving that 
 $$\frac{1}{p^{h s_1}}\sum_{t_1, \dots, t_{s_1}} f(\exp(- p^{-h+l_p}\sum_{j=1}^{s_1} t_j u_j(h) ) z_{i,j}) = \int_{M} f \dd \nu + O(e^{-\eta \|a\|} \|f\|_{\theta}).$$
 For $\mathbf{t} = (t_1 , \dots , t_{s_1}) \in [0, p^{h-l_p} -1]^{s_1}$, let us denote 
 $$P(\mathbf{t}) := \exp(- p^{-h+l_p}\sum_{j=1}^{s_1} t_j u_j(h) ) z_{i,j},$$
 then it is easy to see that $P: [0, p^{h-l_p} -1]^{s_1} \rightarrow N(\R)$ is a polynomial map. Then by Theorem \ref{thm_equidistribution_polynomial}, either 
 $$\frac{1}{p^{(h-l_p) s_1}} \sum_{\mathbf{t} \in [0, p^{h-l_p} -1]^{s_1}} f(P(\mathbf{t})) = \int_{M} f \dd \nu + O(e^{-\eta \|a\|} \|f\|_{\theta}),$$
which is case \ref{first} in Theorem \ref{thm_equidistribution_polynomial}, or there exists $z \in \Z^l \setminus \{\mathbf{0}\}$ such that $\|z\| \ll e^{L_1 \eta \|a\|}$ and 
 $$ \dist (|\langle z , \partial_i \pi (P(\mathbf{t}))\rangle|, \Z) \ll e^{L_2 \eta \|a\|} / p^{h- l_p}$$
 for all $i=1, \dots, s_1$ and $\mathbf{t} \in [0, p^{h-l_p} -1]^{s_1}$, which is case \ref{second}. Suppose the latter holds. Since $N'(\Q_p)\setminus N(\Q_p) \cong \Q_p^l$ is abelian, we may identify $\Q_p^l$ with its Lie algebra. For $v \in \mathfrak{n}(\Q_p)$, let $\tilde{v} \in \Q_p^l$ denote the projection of $v$ onto $\Q_p^l$ under $D\pi$. Then it is easy to see that 
 $$\partial_i \pi(P(\mathbf{t})) = p^{-h+l_p} \tilde{u}_i(h)$$
 for $i=1,\dots, s_1$. 
 Then 
 $$\dist (|\langle z , \tilde{u}_i(h)/p^{h-l_p} \rangle|,\Z) \ll e^{L_2 \eta \|a\|} / p^{h-l_p},$$
 for $i=1,\dots, s_1$. In other words, in $(\mathrm{mod}\quad p^{h-l_p})$ sense, $|\langle z, \tilde{u}_j(h) \rangle| \ll e^{L_2 \eta \|a\|}$. Let 
 $k$ denote a positive integer with  $(k, p) =1$, In the construction above, we can replace the basis $w_1, \ldots w_{s_1}$ by  $k w_1, \ldots k w_{s_1}$ as the generalized eigenspace $\sigma ^{\chi _2 ^l}$ is invariant under multiplication by $k$. Note that multiplication by $k$ does not change the p-adic norm of any vector since $(k,p)=1$.  Tracking the argument, we eventually replace the polynomial $P(t)$ by $ P ^k(\mathbf{t}) := \exp(- p^{-h+l_p}\sum_{j=1}^{s_1} t_j k u_j(h) ) z_{i,j},$.  By the same argument as for $P$,  either 
 the sum $\frac{1}{p^{(h-l_p) s_1}} \sum_{\mathbf{t} \in [0, p^{h-l_p} -1]^{s_1}} f(P(\mathbf{t}))   =  \int_{M} f \dd \nu + O(e^{-\eta \|a\|} \|f\|_{\theta}),$ and we are done, or 
  there exists $z(k)\in \Z^l \setminus\{\mathbf{0}\}$ with $\|z (k) \| \ll e^{L_1 \eta \|a\| }$ and 
 in $(\mathrm{mod}\quad p^{h-l_p})$ sense, 
 $$|\langle z(k), k \tilde{u}_i(h) \rangle| \ll e^{L_2 \eta \|a\|}$$
 for $i=1,\dots, s_1$. We choose $\eta$ small enough such that 
  $$(e^{L_1 \eta \|a\|})^{l} \times (e^{L_2 \eta \|a\|})^{s_1} \leq (p^{h -l_p})^{1/8}.$$
  Then apply the pigeonhole principle to conclude that there exist $k_1 , k_2 \in [- p^{(h-l_p)/4}, p^{(h-l_p)/4}]$ such that
  $z(k_1) = z(k_2)$ and 
  $$\langle z(k_1) , k_1 \tilde{u}_i(h) \rangle \equiv \langle z(k_2), k_2 \tilde{u}_i(h) \rangle \mod p^{h-l_p}$$
for $i=1,\dots , s_1$. Let us denote $z(k_1) = z(k_2)$ by $z$. Then $\|z\| \ll e^{L_1 \eta \|a\|}$ and 
$$\langle z , (k_1 - k_2) \tilde{u}_i(h) \rangle \equiv 0 \mod p^{h-l_p}$$ 
for $ i =1 ,\dots , s_1$. Since $|k_1 - k_2| < 2 p^{(h-l_p)/4}$, we have that $(k_1 - k_2 , p^{h-l_p}) \leq  p^{(h-l_p)/4}$. This shows that 
for some $h' \geq 3 (h-l_p)/4 $, 
$$\langle z, \tilde{u}_i(h)\rangle \equiv 0 \mod p^{h'}$$
for $i=1,\dots, s_1$.
\par 
Let $W \subset N'(\Q_p)\setminus N(\Q_p) = \Q_p^l$ denote the projection of $\sigma^{\chi^l_2}$ on $\Q_p^l$ under $\pi$. Apparently $W$ is defined over $\Q^{al}_p$ and not defined over $\Q$. Let us choose a basis $\{\tilde{u}_i: i=1,2,\dots, s_1\}$ of $W$ with coordinates in $\Q^{al}_p$. By Lemma \ref{lemma:exist-p-adic-algebraic-vector}, there exists $\tilde{u} = \sum_{i=1}^{s_1} \alpha_i \tilde{u}_i \in W \cap (\Q^{al}_p)^l$ whose coordinates are linearly indepedent over $\Q$. Moreover, multiplying  by suitable powers of $p$, we can choose $\alpha_i \in \Z_p$ for $i=1,2,\dots s_1$. 
Then we have for any $z' \in \Z^l$, $\langle z', \tilde{u _i} \rangle \neq 0$.  
\par On the one hand, since $\|\langle z, \tilde{u}_i \rangle\|_p \leq p^{-h'} \leq e^{-3L'\|a\|/8}$ for $i=1,2,\dots, s_1$, we have that $\|\langle z , \tilde{u} \rangle\|_p \leq p^{-h'}$ (by choosing each $\alpha_i$ from $\Z_p$).
\par On the other hand, by Lemma \ref{lemma:p-adic-diophantine}, there exists a constant $L >0$ such that for all $z' \in \Z^l \setminus \{\mathbf{0}\}$,
$$\|\langle z' , \tilde{u} \rangle\|_p \gg \|z'\|^{-L} \gg e^{-LL_1 \eta \|a\|}.$$

The above two inequalities will lead to contradiction if $L L_1 \eta < 3L'/8$. This shows that 
$$\frac{1}{p^{(h-l_p) s_1}} \sum_{\mathbf{t} \in [0, p^{h-l_p} -1]^{s_1}} f(P(\mathbf{t})) = \int_{M} f \dd \nu +O(e^{-\eta \|a\|} \|f\|_{\theta}),$$
for a constant $\eta >0$. By the argument in the proof for \ref{case_1}, this finishes the proof of exponential mixing for \ref{case_2}.
\par This completes the proof of irreducible part of Theorem \ref{thm:exponential-mixing-intro}.
\end{proof}
\par In fact, using the same argument, one can prove the following slightly stronger result: 
\begin{proposition}
\label{prop_stronger_irreducible_mixing}
Under the irreducible assumption as above, let $\beta$ be an automorphism of $N$ defined over $\Q$ such that $\beta = \id$ on 
$N'\setminus N $, then for the same constant $\eta' >0$ as above,
$$\int_{\s(M)} f(\beta(\rho_l(a)\overline{z}) ) g(\overline{z}) \dd \mu (\overline{z}) = \int_{\s(M)} f \dd \mu \int_{\s(M)} g \dd \mu  + O(e^{-\eta' \|a\|} \|f\|_{\theta} \|g\|_{\theta}),$$
 for any $f\in C^{\theta}(M)$ and any $g \in C^{\theta}(\s(M))$.
\end{proposition}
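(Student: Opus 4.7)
The plan is to follow the proof of Theorem \ref{thm:exponential-mixing-intro} in the irreducible case almost verbatim, exploiting the key structural observation that $\beta$ acts trivially on the maximal abelian quotient. Precisely, since $\beta = \id$ on $N'\setminus N$, we have $\pi \circ \beta = \pi$ as maps from $N(\K)$ to $\K^{l}$ for $\K = \R$ or $\Q_p$, and consequently
\[
D\pi \circ D\beta = D\pi
\]
on the Lie algebras $\mathfrak{n}(\K)$. All the Diophantine information fed into Theorem \ref{thm_equidistribution_box} and Theorem \ref{thm_equidistribution_polynomial} enters only through $D\pi$ applied to the generators of the relevant box or to the increments of the polynomial orbit, so these Diophantine quantities are completely insensitive to the insertion of $\beta$.

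Concretely, in \textbf{Case 1} I would decompose $\s(M)$ into the same boxes $B_{i,j}$ as before and expand
\[
\int_{\s(M)} f(\beta(\rho_l(a)\overline z)) g(\overline z)\,\dd\mu(\overline z) = \sum_{i,j}\bigl(g(\overline z_{i,j}) + O(\epsilon^\theta \|g\|_\theta)\bigr)\int_{B_{i,j}} f(\beta(\rho_l(a)\overline z))\,\dd\mu(\overline z).
\]
Absorbing $\beta$ into the base point via $y'_{i,j} := \beta(\rho_l(a) y_{i,j})$ and using that $\beta$ is an automorphism,
\[
\beta\bigl(\rho_l(a)(y_{i,j}\exp(v+w))\bigr) = y'_{i,j}\,\exp\bigl(D\beta\cdot D\rho_l(a)(v+w)\bigr),
\]
the inner integral becomes an integral of $f$ along the box map $\mathbf t\mapsto v + \sum_i t_i\, D\beta(w'_i)$ with $w'_i = D\rho_l(a) w_i$, based at $y'_{i,j}$. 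Applying Theorem \ref{thm_equidistribution_box}, the alternative \ref{2nd} would produce $z\in\Z^l\setminus\{0\}$ with $\|z\|\ll\delta^{-L_1}$ and $|\langle z, D\pi(D\beta\,w'_i)\rangle|\ll \delta^{-L_2}/T_i$; but $D\pi\circ D\beta = D\pi$, so this is exactly the same Diophantine condition that was already ruled out in the proof of Theorem \ref{thm:exponential-mixing-intro} using the $(c,L)$-Diophantine vector coming from Lemma \ref{lemma_exist_algebraic_vector}. Hence \ref{1st} holds with the same exponential rate, and the remaining bookkeeping is unchanged.

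In \textbf{Case 2} the analogous reduction produces, after the same $p$-adic slicing and after pulling $\beta$ onto the base point and the orbit, a polynomial orbit of the form $\mathbf t\mapsto \beta(P(\mathbf t))$ in $N(\R)$. Applying Theorem \ref{thm_equidistribution_polynomial}, case \ref{second} involves $\dist(|\langle z,\partial_i\pi(\beta(P(\mathbf t)))\rangle|,\Z)$; since $\pi\circ\beta = \pi$, this equals $\dist(|\langle z,\partial_i\pi(P(\mathbf t))\rangle|,\Z)$, so the Diophantine alternative is identical to the one handled in Case 2 of the original theorem, and is ruled out by exactly the same argument using Lemma \ref{lemma:exist-p-adic-algebraic-vector} and Lemma \ref{lemma:p-adic-diophantine}.

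The only real obstacle, which I expect to be entirely cosmetic, is the bookkeeping for measures and lattices: $\beta$ is a $\Q$-automorphism of $N$ and therefore preserves Haar measure on $N(\R)$ and on each $N(\Z_p)$ up to a factor of $1$ (its differential has determinant $\pm 1$ since it preserves a lattice commensurable with $N(\Z)$). One checks that $\beta$ descends to a measurable bijection of $\s(M)$ preserving $\mu$, possibly after replacing $N(\Z)$ by a commensurable lattice; the resulting change of index contributes a bounded multiplicative constant that is absorbed into the implicit constant in $O(e^{-\eta'\|a\|}\|f\|_\theta\|g\|_\theta)$. With these cosmetic adjustments, both cases go through with the \emph{same} constant $\eta'>0$ as in Theorem \ref{thm:exponential-mixing-intro}, yielding the claimed estimate.
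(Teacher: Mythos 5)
Your proposal is correct and follows essentially the same route as the paper: the paper's own (much terser) proof likewise observes that the only obstruction in the box/polynomial equidistribution theorems enters through $D\pi$ applied to the relevant directions, and that $\beta=\id$ on $N'\setminus N$ forces $D\pi\circ D\beta=D\pi$, so the Diophantine alternative is ruled out exactly as in the irreducible case of Theorem \ref{thm:exponential-mixing-intro}, with the same $\eta'$. Your extra bookkeeping (absorbing $\beta$ into base points and noting it preserves the relevant Haar measures) is a harmless elaboration of what the paper leaves implicit.
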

\begin{proof}
From the proof above, we see that the basic scheme of the argument goes as follows:  We first cut the whole space $\s(M)$ into small pieces along real  and $p$-adic directions, and then apply Theorem \ref{thm_equidistribution_box} or Theorem \ref{thm_equidistribution_polynomial} prove each small piece estimates the integral of the whole space with exponentially small error. The obstruction of the effective equidistribution is that $ D\beta(\rho_l(a)(D\pi(\sigma^{\chi^l})))$ lies in a rational linear
subspace. Since $\beta$ acts trivially on $N'\setminus N$, the proposition follows from the argument above. 
\end{proof}
\subsection*{General Case} We will prove the statement in general using induction on the dimension of $M$. By \cite[Lemma 3.5]{gorodnik_spatzier2014}, if the action $\rho_l$ on $M$ is not irreducible, then there exists a $\rho_l$-invariant normal subgroup $N_1$ of $N$ defined over $\Q$ satisfying the following:
\begin{enumerate}
 \item The restriction of $\rho_l$ on $N_1$ is irreducible.
 \item $[N, N_1]\subset [N_1, N_1]$.
 \end{enumerate} 
 Then $Y:= N(\Z) N_1(\R) \setminus N(\R)$ and $Z := N_1(\Z) \setminus N_1(\R)$ are both compact nilmanifolds, and moreover, $M$ fibers over $Y$ with fibers isomorphic to $Z$. Let $\mu_Y$ and $\mu_Z$ denote the normalized measures on the solenoids $\s(Y)$ and $\s(Z)$ respectively, defined as the measure $\mu$ on $\s(M)$. Then for any continuous function $f$ defined on $\s(M)$, we have the following disintegration formula:
$$\int_{\s(M)} f \dd \mu  = \int_{\s(Y)} \int_{\s(Z)} f(\overline{z}\overline{y}) \dd \mu_Z(\overline{z}) \dd \mu_Y (\overline{y}).$$
Since $N_1$ is $\rho_l$-invariant, $\rho_l$ defines transformations of $Y$ and $Z$. Then 
$$\begin{array}{rcl}
\int_{\s(M)} f(\rho_l(a) \overline{x}) g(\overline{x}) \dd \mu (\overline{x}) & = & \int_{\s(Y)} \left( \int_{\s(Z)} f(\rho_l(a)(\overline{z}) \rho_l(a)(\overline{y})) g(\overline{z} \overline{y}) \dd \mu_Z (\overline{z}) \right) \dd \mu_Y (\overline{y}) \\ 
 & = & \int_{\mathcal{F}} \left( \int_{\s(Z)} f(\rho_l(a) (\overline{z}) \rho_l(a) (\overline{h}) ) g(\overline{z} \overline{h}) \dd \mu_Z (\overline{z}) \right)\dd m_{\mathcal{F}}(\overline{h}),
\end{array}$$
where $\mathcal{F} \subset N(\R) \times \prod_{p \in S} N(\Z_p)$ is a bounded fundamental domain for $\s(Y)$, and $m_{\mathcal{F}}$ denotes the measure on $\mathcal{F}$ induced by $\mu_Y$.
\begin{claim}
There exists a constant $\eta' >0$ such that for every $\overline{h} \in \mathcal{F}$, 
$$\int_{\s(Z)} f(\rho_l(a)(\overline{z}) \rho_l(a)(\overline{h}) ) g(\overline{z} \overline{h}) \dd \mu_Z (\overline{z}) = \int_{\s(Z)} f(\overline{z} \rho_l(a)(\overline{h}) ) \dd \mu_Z(\overline{z}) \int_{\s(Z)} g(\overline{z} \overline{h}) \dd \mu_Z(\overline{z}) + O(e^{-\eta' \|a\|} \|f\|_{\theta} \|g\|_{\theta}) .$$
\end{claim}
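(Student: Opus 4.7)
My plan is to reduce the fibered integral to the stronger irreducible mixing result (Proposition~\ref{prop_stronger_irreducible_mixing}) applied to the restricted action $\rho_l|_{N_1}$ on $\s(Z)$, with the twist by $\overline{h}$ absorbed into a conjugation automorphism. Setting $k := \rho_l(a)(\overline{h})$ and using that $\rho_l(a)$ is an endomorphism of $N$, I write $\rho_l(a)(\overline{z})\cdot k = k \cdot \gamma_k(\rho_l(a)(\overline{z}))$, where $\gamma_k(\overline{w}) := k^{-1}\overline{w}k$. Since $N_1\triangleleft N$, $\gamma_k$ restricts to an automorphism of $N_1$, and the hypothesis $[N,N_1]\subset[N_1,N_1]$ guarantees that $\gamma_k$ acts trivially on the abelianization $N_1/N_1'$.

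I then introduce auxiliary functions on $\s(Z)$ by
\[ \widehat{F}(\overline{v}) := f(k\cdot\overline{v}), \qquad \widehat{G}(\overline{z}) := g(\overline{z}\cdot\overline{h}). \]
With respect to a left-invariant Riemannian metric on $N(\R)$, left multiplication by $k$ is an isometry, so $\|\widehat{F}\|_\theta \leq \|f\|_\theta$ uniformly in $a$ and $\overline{h}$. Compactness of $\mathcal{F}$ yields $\|\widehat{G}\|_\theta \leq C\|g\|_\theta$ uniformly, since right multiplication by $\overline{h}\in\mathcal{F}$ distorts distances by the bounded factor $\|\mathrm{Ad}_{\overline{h}^{-1}}\|$. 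Moreover, since $f\in C^\theta(M)$, the function $\widehat{F}$ depends only on the real component and lies in $C^\theta(N_1(\Z)\setminus N_1(\R))$, while $\widehat{G}\in C^\theta(\s(Z))$. The inner integral rewrites as
\[ \int_{\s(Z)}\widehat{F}\bigl(\gamma_k(\rho_l(a)\overline{z})\bigr)\,\widehat{G}(\overline{z})\,d\mu_Z(\overline{z}). \]

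Now I invoke Proposition~\ref{prop_stronger_irreducible_mixing}, applied to the irreducible action $\rho_l|_{N_1}$ on $\s(Z)$, with $\beta := \gamma_k$. Although the proposition is stated for $\beta$ defined over $\Q$, its proof uses $\beta$ only through its action on the abelianization $N_1/N_1'$, where the $\Q$-rational obstruction to effective equidistribution lies and where $\gamma_k$ is trivial. This yields
\[ \int_{\s(Z)}\widehat{F}\bigl(\gamma_k(\rho_l(a)\overline{z})\bigr)\widehat{G}(\overline{z})\,d\mu_Z = \int_{\s(Z)}\widehat{F}\,d\mu_Z \int_{\s(Z)}\widehat{G}\,d\mu_Z + O\bigl(e^{-\eta'\|a\|}\|f\|_\theta\|g\|_\theta\bigr). \]
The main term is identified with the claim's right-hand side via the change of variable $\overline{v}\mapsto\gamma_{k^{-1}}(\overline{v})$ on $\s(Z)$, which preserves $\mu_Z$ because $\gamma_{k^{-1}}$ is unipotent on $\mathfrak{n}_1$ (hence has trivial Jacobian) and is well-defined on $\s(Z)$ since both parametrizations $\overline{v}\mapsto k\overline{v}$ and $\overline{v}\mapsto\overline{v}\cdot k$ of the fiber of $\s(M)\to\s(Y)$ over $\rho_l(a)\overline{y}$ induce the same conditional measure. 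This yields $\int\widehat{F}\,d\mu_Z = \int_{\s(Z)} f(\overline{v}\cdot k)\,d\mu_Z(\overline{v})$, matching the claim.

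The hard part will be extending Proposition~\ref{prop_stronger_irreducible_mixing} to the conjugation $\gamma_k$, which in general is not defined over $\Q$. This reduces to inspecting the proof of the proposition to confirm that the $\Q$-rational structure enters only via the image of $D\pi(\sigma^{\chi^l})$ in the abelianization, where $\gamma_k$ acts as the identity; the argument then carries over verbatim. A secondary technical point is the measure-theoretic descent of $\gamma_k$ to $\s(Z)$ when $N_1(\Z)$ is not preserved by conjugation, which is resolved by the uniqueness of the conditional measure on the fibers of $\s(M)\to\s(Y)$.
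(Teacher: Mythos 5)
There is a genuine gap, and it sits exactly where you flag "the hard part." You conjugate by the full element $k=\rho_l(a)(\overline{h})$, which creates two problems that the paper's proof is specifically structured to avoid. First, $\gamma_k$ need not descend to $\s(Z)$ at all: for $\gamma_k$ to be a transformation of $N_1(\Z)\setminus(N_1(\R)\times\prod_p N_1(\Z_p)/\cdots)$ you need $k^{-1}N_1(\Z)k=N_1(\Z)$, which fails for a generic $k\in N(\R)\times\prod_p N(\Z_p)$. The composite $\overline{z}\mapsto f(\rho_l(a)(\overline{z})\,k)$ is well defined, but your factorization of it as $\widehat{F}\circ\gamma_k\circ\rho_l(a)$ with each factor living on $\s(Z)$ is not ($\widehat{F}(\overline{v})=f(k\overline{v})$ is itself not $N_1(\Z)$-invariant, since $k\gamma_1 k^{-1}\notin N(\Z)$ in general), so Proposition~\ref{prop_stronger_irreducible_mixing} cannot be invoked with $\beta=\gamma_k$; the "uniqueness of conditional measures" remark fixes the identification of the main term but not the ill-posedness of the dynamical system you feed into the mixing statement. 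Second, even where $\gamma_k$ does descend, it is not defined over $\Q$, and the proposition's proof genuinely uses rationality: the Diophantine input (Lemmas~\ref{lemma_exist_algebraic_vector} and \ref{lemma:p-adic-diophantine}) requires the obstructing subspace to be algebraic, and verifying that triviality of $\gamma_k$ on the abelianization suffices is precisely the content you defer to "inspecting the proof."

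The paper's proof removes both obstacles with one move you are missing: decompose $\rho_l(a)(\overline{h})=\lambda\delta\alpha$ with $\lambda\in N(\Z)$, $\delta\in N_1(\R)\times\prod_p N_1(\Z_p)$, and $\alpha\in\mathcal{F}$. One then conjugates only by the integral part $\lambda$, so $\beta: m\mapsto\lambda^{-1}m\lambda$ is defined over $\Q$, preserves $N_1(\Z)$, hence descends to $\s(Z)$, and is trivial on $[N_1,N_1]\setminus N_1$ by the hypothesis $[N,N_1]\subset[N_1,N_1]$; the leftover $\delta\alpha$ is absorbed into the right-translated test function $\phi_0(\overline{z})=f(\overline{z}\delta\alpha)$, which is manifestly well defined on $\s(Z)$. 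With that decomposition, Proposition~\ref{prop_stronger_irreducible_mixing} applies as stated and your remaining bookkeeping (H\"older norms, identification of the main term) goes through essentially as you wrote it. Without it, your argument requires proving a strictly stronger, non-rational version of the twisted mixing proposition and a descent statement that is false in general.
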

\begin{proof}[Proof of the claim]
Let us write 
$$\rho_l(a)(\overline{h}) =  \lambda \delta \alpha \text{ with } \alpha \in \mathcal{F}, \delta \in N_1(\R) \times \prod_{p \in S} N_1(\Z_p) \text{ and } \lambda \in N(\Z).$$
Then 
$$\int_{\s(Z)} f(\rho_l(a)(\overline{z}) \rho_l(a)(\overline{h})) g(\overline{z} \overline{h}) \dd \mu_Z(\overline{z}) = \int_{\s(Z)} f(\beta(\rho_l(a)(\overline{z})) \delta \alpha) g(\overline{z}\overline{h}) \dd \mu_Z(\overline{z}),$$
where $\beta$ denotes the transformation of $\s(Z)$ induced by the automorphism $m \mapsto \lambda^{-1} m \lambda, m \in N_1(\R) \times \prod_{p \in S} N(\Z_p)$. Then obviously $\beta$ is defined over $\Q$. Since $[N, N_1] \subset [N_1, N_1]$, 
$\beta$ acts trivially on $[N_1, N_1]\setminus N_1$. Let 
$$\phi_0(\overline{z}) : = f( \overline{z} \delta \alpha) \text{ and } \phi_1(\overline{z}) := g( \overline{z} \overline{h})   \text{ with } \overline{z} \in \s(Z).$$
Then we have 
$$\|\phi_0\|_{\theta} \ll \|f\|_{\theta} \text{ and } \|\phi_1\|_{\theta} \ll \|g\|_{\theta},$$
and 
$$\int_{\s(Z)} \phi_1 \dd \mu_Z = \int_{\s(Z)} f( \overline{z} \rho_l(a)(\overline{h}) ) \dd \mu_Z(\overline{z}).$$
Since every $\rho_l(a)$ is still ergodic when restricted to $\s(Z)$, we can apply Proposition \ref{prop_stronger_irreducible_mixing} to conclude that there exists a constant $\eta'>0$
such that 
$$\begin{array}{rcl}
\int_{\s(Z)} f(\rho_l(a)(\overline{z}) \rho_l(a)(\overline{h})) g(\overline{z}\overline{h}) \dd \mu_Z(\overline{z}) & = & \int_{\s(Z)} \phi_0(\beta(\rho_l(a)(\overline{z}))) \phi_1(\overline{z}) \dd \mu_Z(\overline{z}) \\
 & = & \int_{\s(Z)} \phi_0 \dd \mu_Z \int_{\s(Z)} \phi_1 \dd \mu_Z +O(e^{-\eta' \|a\|} \|\phi_0\|_{\theta} \|\phi_1\|_{\theta}) \\
 & = & \int_{\s(Z)} f(\overline{z} \rho_l(a)(\overline{h})) \dd \mu_Z(\overline{z}) \int_{\s(Z)} g(\overline{z} \overline{h}) \dd \mu_Z(\overline{z}) + O(e^{-\eta' \|a\|} \|f\|_{\theta} \|g\|_{\theta})
\end{array}$$
uniformly over $\overline{h} \in \mathcal{F}$. This proves the claim.
\end{proof}
Let us define $\overline{f}(\overline{y}) : = \int_{\s(Z)} f(\overline{z} \overline{y}) \dd \mu_Z (\overline{z}) $ and $\overline{g}(\overline{y}) := \int_{\s(Z)} g(\overline{z} \overline{y}) \dd \mu_Z(\overline{z})$ 
for $\overline{y} \in \s(Y)$. Then by the above claim, we conclude that
$$\int_{\s(M)} f(\rho_l(a)(\overline{x})) g(\overline{x}) \dd \mu (\overline{x}) = \int_{\s(Y)} \overline{f}(\rho_l(a)(\overline{y})) \overline{g}(\overline{y}) \dd \mu_Y (\overline{y}) +O(e^{-\eta' \|a\|} \|f\|_{\theta} \|g\|_{\theta}).$$
Since $\dim Y < \dim M$, the statement follows by induction.

\section{Coarse Lyapunov foliations and smooth leaves}
\label{smoothness of leaves}

In this section, we will prove that for every {\bf coarse Lyapunov exponent} $[\chi]$, the corresponding {\bf coarse Lyapunov distribution} $E^{[\chi]} := \bigoplus_{\chi \in [\chi]} E^{\chi}$ admits a \holder foliation with $\smooth$ leaves. We follow the argument developed by Rodriguez-Hertz and Wang \cite{hertz_wang2014} with minor modification for our setup.
\par We first make the following definition:
\begin{definition}
For $a \in \Z^k$, we say $\rho(a)$ is uniformly hyperbolic if there exists a constant $\lambda >1$ and a $\rho$-invariant splitting of the bundle $\mathcal{E} (\s(M)) = \bigcup_{\overline{z} \in \s(M)} \T_{\overline{z}}(M)$:
 $$\mathcal{E}(\s(M)) = \mathcal{E}_a^s \oplus \mathcal{E}_a^u,$$
 such that
 
 $$\begin{array}{ll} \|D\rho(a)(v)\| \leq \lambda^{-1} \|v\|  & \text{ for } v \in \mathcal{E}_a^s \\ 
 \|D\rho(a)(v)\| \geq \lambda \|v\| & \text{ for } v \in \mathcal{E}_a^u. \end{array} $$ 

\end{definition}
\begin{remark}
When $\rho(a)$ is uniformly hyperbolic, both $\mathcal{E}^s_a$ and $\mathcal{E}^u_a$ admit \holder foliations with $\smooth$ leaves. We denote the corresponding foliations by $\mathcal{W}^s_a$ and $\mathcal{W}^u_a$, respectively. For $\overline{z} \in \s(M)$ and $\square = s \text{ or } u$, we denote by $\mathcal{W}^{\square}_a(\overline{z})$ the leaf of $\mathcal{W}^{\square}_a$ passing through
$\overline{z}$.
\end{remark}
\par The basic idea to establish the smoothness of leaves goes as follows: we start with a real {\bf Weyl chamber} $\mathcal{C}_0 \subset \R^k$ with an element $a \in  \Z^k \cap \mathcal{C}_0$ such that $\rho(a)$ is uniformly hyperbolic. We shall prove that any real {\bf Weyl chamber } $\mathcal{C}$ adjacent to $\mathcal{C}_0$ also contains a uniformly hyperbolic element $a' \in \Z^k$. Therefore every real {\bf Weyl chamber} of the action $\rho$ contains a uniformly hyperbolic element. For every {\bf coarse Lyapunov exponent} $[\chi]$, we choose two adjacent 
{\bf Weyl chambers} $\mathcal{C}_1$ and $\mathcal{C}_2$ such that $\mathrm{ker}\chi$ is the only {\bf Weyl chamber wall} separating $\mathcal{C}_1$ and $\mathcal{C}_2$. Take $a_i \in \Z^k \cap \mathcal{C}_i$ such that $\rho(a_i)$ is uniformly hyperbolic. Without loss of generality, we may assume that $\chi(a_1) <0$ and $\chi(a_2) >0 $. Then the intersection $\mathcal{W}^s_{a_1}\cap \mathcal{W}^u_{a_2} $ defines a \holder foliation, and passing through every $\overline{z} \in \s(M)$, the intersection $\mathcal{W}^s_{a_1}(\overline{z}) \cap \mathcal{W}^u_{a_2}(\overline{z})$ is $\smooth$. From our assumption it is easily seen that $\mathcal{W}^{s}_{a_1} \cap \mathcal{W}^u_{a_2} $ corresponds to the distribution $E^{[\chi]}$. This proves that $E^{[\chi]}$ admits a \holder foliation with $\smooth$ leaves. To show that $\mathcal{C}$ contains a uniformly hyperbolic element, we basically follow the argument by Rodriguez Hertz and Wang \cite{hertz_wang2014} with minor modifications.

 \subsection{Correspondence between foliations of $\rho$ and $\rho_l$}
\par Let us fix a real {\bf Weyl chamber } $\mathcal{C}_0$ and choose $a \in \Z^k \cap \mathcal{C}_0$ such that $\rho(a)$ is uniformly hyperbolic.


 \par Recall that for each $\overline{z} \in \s(M)$, $M(\overline{z})$ denotes a {\em manifold slice} passing through $\overline{z}$. Then the leaves $\mathcal{W}^s_a(\overline{z})$ and $\mathcal{W}^u_a(\overline{z})$ are given by 
$$\mathcal{W}^s_a(\overline{z})= \{\overline{y} \in M(\overline{z}):\lim_{k \rightarrow +\infty} \dist(\rho(a)^k(\overline{y}), \rho(a)^k(\overline{z})) =0 \},$$
and 
$$\mathcal{W}^u_a(\overline{z}) = \{\overline{y} \in M(\overline{z}):\lim_{k \rightarrow +\infty} \dist(\rho(a)^{-k}(\overline{y}), \rho(a)^{-k}(\overline{z})) =0\}.$$


\par Now, let us turn our attention to the affine action $\rho_l(a)$. Let $\mathfrak{g}^s_a(\R)$ (and $\mathfrak{g}^u_a(\R)$, respectively) denote the direct sum of $\sigma^{\chi^l}$ such that $\chi^l \in T(\R)$, and $\chi^l(a)<0$ 
(and $\chi^l(a)>0$, respectively). From Remark \ref{remark_bracket}, we can see that $\mathfrak{g}^s_a(\R)$ and 
$\mathfrak{g}^u_a(\R)$ are both Lie subalgebras of $\mathfrak{n}(\R)$. Let $G^s_a$ 
and $G^u_a$ denote the corresponding Lie subgroups.  Then it is easy to see that $\mathfrak{g}^s_a(\R)$ and $\mathfrak{g}^u_a(\R)$ correspond to the stable and unstable foliations of $M$ with respect to the action
$\rho_l(a)$,  and moreover, the stable (and unstable) leaf passing through any point $\overline{z} \in \s(M)$ is $\overline{z} G^s_a$ (and $\overline{z} G^u_a$ respectively). 
\par Because $\phi$ conjugates $\rho$ to $\rho_l$, we have that 
$$\begin{array}{ccc}
\phi(\mathcal{W}^s_a(\overline{z})) = \phi(\overline{z})G^s_a, & \text{ and } & \phi(\mathcal{W}^u_a(\overline{z})) = \phi(\overline{z})G^u_a.
\end{array}$$
In particular, we have that $\dim E^s_a = \dim \mathfrak{g}^s_a$ and 
$\dim E^u_a = \dim \mathfrak{g}^u_a$. Therefore, for every real {\bf Lyapunov exponent} $\chi^l$ for $\rho_l$,  $\chi^l(a) \neq 0$, 
and 
$$\mathfrak{n}(\R) = \mathfrak{g}^s_a(\R) \oplus \mathfrak{g}^u_a(\R).$$

Let $\mathcal{C}$ be a real {\bf Weyl chamber} adjacent to $\mathcal{C}_0$. Let $\mathrm{ker} \chi^l$ denote the real {\bf Weyl chamber wall} separating $\mathcal{C}_0$ and $\mathcal{C}$. Then by Remark \ref{remark_bracket}, $\sigma := \sigma^{[\chi^l]}$ is a Lie subalgebra of $\mathfrak{n}(\R)$. Let $V := V^{[\chi^l]} = \exp (\sigma^{[\chi^l]})$ denote the corresponding Lie subgroup.
\par Without loss of generality, we may assume that $\chi^l(a) <0$, then $V \subset G^s_a$. Let us define the strong stable subspace
of $\rho_l(a)$ by 
$$\mathfrak{g}^{ss}_a := \bigoplus_{\begin{array}{l} \sigma^{[\chi_1^l] } \neq \sigma \\ \sigma^{[\chi^l_1]} \subset \mathfrak{g}^s_a \end{array}} \sigma^{[\chi^l_1]}.$$
\begin{lemma}[see~{\cite[Lemma 3.1]{hertz_wang2014}}]
\label{lemma_hertz_wang_31}
$\quad$
\par 1. $\mathfrak{g}^{ss}_a$ is a Lie subalgebra.
\par 2. $\sigma \oplus \mathfrak{g}^u$ is a Lie subalgebra.
\par 3. $[\sigma , \mathfrak{g}^{ss}_a] \subset \mathfrak{g}^{ss}_a$ . 
\end{lemma}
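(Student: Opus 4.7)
My plan is to reduce each of the three assertions to a bracket computation on pure Lyapunov summands, combining Remark \ref{remark_bracket} with a sign analysis forced by the adjacency hypothesis. Throughout, I write $\psi_1,\psi_2$ for real Lyapunov exponents of $\rho_l$, and use that each coarse subspace decomposes as a direct sum of Lyapunov subspaces with positively proportional exponents, so it suffices to verify the claims on pairs $X\in\sigma^{\psi_1}$, $Y\in\sigma^{\psi_2}$; Remark \ref{remark_bracket} then places $[X,Y]$ in $\sigma^{\psi_1+\psi_2}$ (zero if $\psi_1+\psi_2$ is not a Lyapunov exponent, in which case nothing to prove).

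Before the three parts I would record two standing observations. First, uniform hyperbolicity of $\rho(a)$ together with Proposition \ref{lyapunov_exponent_correspondence} excludes $\psi(a)=0$ for every real Lyapunov exponent $\psi$ of $\rho_l$, so no zero-exponent ambiguities arise. Second, fix $b$ in the interior of $\mathcal{C}$, giving $\chi^l(b)>0$; the adjacency hypothesis says that for every real Lyapunov exponent $\psi$ not proportional to $\chi^l$ the kernel of $\psi$ does not separate $\mathcal{C}_0$ from $\mathcal{C}$, so the sign of $\psi(b)$ equals the sign of $\psi(a)$. This sign probe at $b$ is the key input.

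For part (1), I take $\sigma^{[\psi_i]}\subset\mathfrak{g}^{ss}_a$, so $\psi_i(a)<0$ and $[\psi_i]\neq[\chi^l]$; a priori $\psi_i$ could be a negative multiple of $\chi^l$, but that would give $\psi_i(a)>0$, hence $\psi_i$ is not proportional to $\chi^l$. The sign probe yields $\psi_i(b)<0$, hence $(\psi_1+\psi_2)(a)<0$ and $(\psi_1+\psi_2)(b)<0<\chi^l(b)$; the second inequality rules out $\psi_1+\psi_2=c\chi^l$ with $c>0$, so $[\psi_1+\psi_2]\neq[\chi^l]$ and $[X,Y]\in\mathfrak{g}^{ss}_a$. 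For part (2), take $\psi_1$ positively proportional to $\chi^l$ (so $X\in\sigma$) and $\psi_2(a)>0$ (so $Y\in\mathfrak{g}^u_a$). If $\psi_2$ is a (necessarily negative) multiple of $\chi^l$, the bracket lies in a one-dimensional multiple of $\chi^l$ and a direct sign check places it in $\sigma\oplus\mathfrak{g}^u_a$; otherwise the sign probe gives $\psi_2(b)>0$, so $(\psi_1+\psi_2)(b)>0$, and the case $(\psi_1+\psi_2)(a)>0$ yields $\mathfrak{g}^u_a$, while the case $(\psi_1+\psi_2)(a)<0$ forces $\ker(\psi_1+\psi_2)$ to separate $\mathcal{C}_0$ and $\mathcal{C}$, hence by adjacency to equal $\ker\chi^l$, yielding $\psi_1+\psi_2=c\chi^l$ with $c>0$ and $[X,Y]\in\sigma$. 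Part (3) is a minor variant: for $X\in\sigma^{\psi_1}\subset\sigma$ and $Y\in\sigma^{\psi_2}\subset\mathfrak{g}^{ss}_a$, one has $(\psi_1+\psi_2)(a)<0$; $\psi_2$ is not proportional to $\chi^l$ (a positive multiple would put it in $\sigma$, a negative multiple would give $\psi_2(a)>0$), so an equality $\psi_1+\psi_2=c\chi^l$ with $\psi_1=c'\chi^l$ would force $\psi_2=(c-c')\chi^l$ to be a scalar multiple of $\chi^l$, a contradiction.

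The only delicate step I anticipate is the one in part (2) where $(\psi_1+\psi_2)(a)<0$: the bracket could a priori land in $\mathfrak{g}^{ss}_a$ rather than in $\sigma$, and the adjacency hypothesis is precisely what forces $\psi_1+\psi_2$ into the ray of $\chi^l$. Everything else is routine bookkeeping, and the solenoid setting introduces no new difficulty once Proposition \ref{lyapunov_exponent_correspondence} translates uniform hyperbolicity of $\rho(a)$ into nonvanishing of every real Lyapunov exponent of $\rho_l$ at $a$.
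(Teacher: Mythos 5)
Your argument is correct, and it is essentially the intended one: the paper itself gives no proof, deferring to \cite[Lemma 3.1]{hertz_wang2014}, and the mechanism there is exactly what you use — the bracket relation $[\sigma^{\psi_1},\sigma^{\psi_2}]\subset\sigma^{\psi_1+\psi_2}$ of Remark \ref{remark_bracket} combined with sign bookkeeping, where adjacency of $\mathcal{C}_0$ and $\mathcal{C}$ (i.e.\ the fact that $\ker\chi^l$ is the \emph{only} separating wall) guarantees that every exponent not proportional to $\chi^l$ keeps its sign when probed at $b\in\mathcal{C}$, and the nonvanishing $\psi(a)\neq 0$ established just before the lemma removes any zero-exponent degeneracies. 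The one point worth stating explicitly in part (2) is that the pairs with both vectors in $\sigma$ or both in $\mathfrak{g}^u_a$ are covered because each of these is already a subalgebra (Remark \ref{remark_bracket} for $\sigma$, and the sign of $(\psi_1+\psi_2)(a)$ for $\mathfrak{g}^u_a$); your treatment of the mixed case, including the delicate branch where $(\psi_1+\psi_2)(a)<0$ forces $\psi_1+\psi_2$ onto the ray of $\chi^l$, is exactly right.
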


Let $G^{ss}_a : = \exp \mathfrak{g}^{ss}_a$ be the Lie subgroup corresponding to $\mathfrak{g}^{ss}_a$. On the level of Lie algebra, we have the following decomposition:
 $$\mathfrak{n}(\R) = \mathfrak{g}^{u}_a \oplus \sigma \oplus \mathfrak{g}^{ss}_a.$$
 On the level of Lie group, the following lemma is proved in \cite{hertz_wang2014}:
 \begin{lemma}[see~{\cite[Corollary 3.3]{hertz_wang2014}}]
 \label{lemma_hertz_wang_33}
 $\quad$
 \par 1. The multiplication map 
 $$\begin{array}{rcl}
  G^{u}_a \times V \times G^{ss}_a & \rightarrow & G \\
   (g_u, g_V , g_{ss}) & \mapsto & g_u g_V g_{ss}
 \end{array}$$
 is a $\smooth$ diffeomorphism.
 \par 2. The multiplication map 
 $$\begin{array}{rcl}
 V\times G^{ss}_a & \rightarrow & G^s_a \\
   (g_V,g_{ss}) & \mapsto & g_V g_{ss}
 \end{array}$$
 is a $\smooth$ diffeomorphism.
 \end{lemma}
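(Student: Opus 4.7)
The plan is to establish part (2) by a direct semidirect-product argument and then bootstrap to part (1) using the standard stable/unstable product decomposition for hyperbolic automorphisms of simply connected nilpotent Lie groups. Throughout, the key ambient fact is that $N(\R)$ is simply connected and nilpotent, so $\exp : \mathfrak{n}(\R) \to N(\R)$ is a diffeomorphism and multiplication in logarithmic coordinates is polynomial via the Baker--Campbell--Hausdorff formula, which terminates after finitely many terms.

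For part (2), I would first observe that $\mathfrak{g}^{ss}_a$ is an ideal of $\mathfrak{g}^s_a = \sigma \oplus \mathfrak{g}^{ss}_a$: it is a subalgebra by Lemma \ref{lemma_hertz_wang_31}(1), and $[\sigma, \mathfrak{g}^{ss}_a] \subset \mathfrak{g}^{ss}_a$ by Lemma \ref{lemma_hertz_wang_31}(3), while $[\mathfrak{g}^{ss}_a, \mathfrak{g}^{ss}_a] \subset \mathfrak{g}^{ss}_a$ also follows from being a subalgebra. Hence $G^{ss}_a$ is a normal subgroup of $G^s_a$ and the multiplication map $V \times G^{ss}_a \to G^s_a$ is a smooth bijection whose derivative at the identity is the identity on $\sigma \oplus \mathfrak{g}^{ss}_a$. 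By BCH the map and its set-theoretic inverse are both polynomial in exponential coordinates, so the multiplication is a global $\smooth$ diffeomorphism.

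For part (1), I would use that $a$ has been chosen with $\rho(a)$ uniformly hyperbolic, so all real Lyapunov exponents $\chi^l$ of $\rho_l$ satisfy $\chi^l(a) \neq 0$ and $D\rho_l(a)$ is a hyperbolic automorphism of $\mathfrak{n}(\R)$ with stable/unstable decomposition $\mathfrak{n}(\R) = \mathfrak{g}^u_a \oplus \mathfrak{g}^s_a$. Both summands are subalgebras since, by the additivity of Lyapunov exponents under brackets, $[\sigma^{\chi^l_1}, \sigma^{\chi^l_2}] \subset \sigma^{\chi^l_1 + \chi^l_2}$ and the sign of $(\chi^l_1 + \chi^l_2)(a)$ is preserved within $\mathfrak{g}^u_a$ or $\mathfrak{g}^s_a$. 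The standard fact that, for a hyperbolic automorphism of a simply connected nilpotent Lie group, multiplication $G^u_a \times G^s_a \to N(\R)$ is a $\smooth$ diffeomorphism then applies. One proves this fact by induction on the nilpotency class: the abelian case is immediate from the vector space decomposition, and the inductive step is carried out on the successive quotients $\mathfrak{n}_i/\mathfrak{n}_{i+1}$ of the lower central series, which are preserved by $D\rho_l(a)$, glued back together using BCH. Composing with the diffeomorphism $V \times G^{ss}_a \to G^s_a$ of part (2) in the second factor yields part (1).

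The main obstacle is part (1): unlike $\mathfrak{g}^{ss}_a \triangleleft \mathfrak{g}^s_a$, neither $\mathfrak{g}^u_a$ nor $\mathfrak{g}^s_a$ is in general an ideal in $\mathfrak{n}(\R)$ (for instance $[\mathfrak{g}^u_a, \mathfrak{g}^{ss}_a]$ can land in $\mathfrak{g}^u_a$ when the sum of the relevant exponents evaluates positively on $a$), so no single-step semidirect-product argument produces a triple factorization of $N(\R)$. Resolving this requires the inductive hyperbolic-automorphism statement above, which is the substantive analytic/algebraic input; once this is available the passage to the three-factor decomposition is purely formal.
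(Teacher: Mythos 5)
Your argument is correct, but note that the paper itself does not prove this lemma at all: it simply cites \cite[Corollary 3.3]{hertz_wang2014} and remarks that the proof there (stated for the reversed order $G^{ss}_a\times V\times G^u_a\to N(\R)$) carries over. So you have supplied a self-contained proof where the paper offers only a citation. Your route is the natural one and matches what is standard in the literature: for part (2), Lemma \ref{lemma_hertz_wang_31}(1) and (3) make $\mathfrak{g}^{ss}_a$ an ideal of $\mathfrak{g}^s_a$ with complementary subalgebra $\sigma$, so $G^s_a\cong V\ltimes G^{ss}_a$ and the multiplication is a polynomial bijection with polynomial inverse in exponential coordinates; for part (1), since the paper has already established (just before this lemma, via the correspondence $\phi(\mathcal{W}^{s/u}_a(\overline z))=\phi(\overline z)G^{s/u}_a$) that $\chi^l(a)\neq 0$ for every real Lyapunov exponent and $\mathfrak{n}(\R)=\mathfrak{g}^s_a\oplus\mathfrak{g}^u_a$, the automorphism $D\rho_l(a)$ is hyperbolic and the product decomposition $G^u_a\times G^s_a\to N(\R)$ follows by the usual induction on the nilpotency class (quotient by the last nonzero term of the lower central series, lift, and correct by a central element split into its stable and unstable parts), after which composing with part (2) gives the three-factor statement. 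You are also right to flag that neither $\mathfrak{g}^u_a$ nor $\mathfrak{g}^s_a$ need be an ideal, which is exactly why the two parts require different mechanisms. The only cosmetic quibble is that the induction is most cleanly phrased as quotienting by the last nonzero term of the lower central series rather than by "successive quotients glued together," but the substance is right.
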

 \begin{remark}
In \cite{hertz_wang2014}, the order of the multiplication is $G^{ss}_a \times V \times G^u_a \rightarrow N(\R)$. The same proof works for our order here. We make this change in this paper because here $M = N(\Z) \setminus N(\R)$ while in \cite{hertz_wang2014} 
$M = N(\R)/ N(\Z)$. Also, later in this section, we will make similar changes due to this reason.
\end{remark}
From the above lemma, any $g \in N(\R)$ can be uniquely written as $g_{u} g_V g_{ss}$. Define $g_s := g_{V} g_{ss}$. We call $g_{ss}$ ($g_V$, $g_u$ and $g_s$ respectively) the projection of $g$ onto $G^{ss}_a$ ($V$, $G^u_a$ and $G^s_a$ respectively).

 
 Rodriguez Hertz and Wang \cite{hertz_wang2014} proved several results on the $G^{ss}_a \times V \times G^u_a $ coordinate of $N(\R)$. We sum up them in the following proposition. We refer to \cite{hertz_wang2014} for proofs.
\begin{proposition}
\label{proposition_group_coordinate}
$\quad$ \par
\begin{itemize}
\item For any given $g \in N(\R)$, the restriction of the map $h \mapsto (gh)_u$ to $G^u_a$ is a $\smooth$ diffeomorphism from $G^u_a$ to itself (cf. \cite[Corollary 3.5]{hertz_wang2014}).
\item For any $g_1, g_2 \in N(\R)$, $(g_1 g_2)_V = (g_1 (g_2)_u)_V ( g_2)_V$. In particular, if $g_2 \in G^s_a$, then $(g_1 g_2)_V = (g_1)_V (g_2)_V$ (cf. \cite[Corollary 3.6]{hertz_wang2014}).
\end{itemize}
\end{proposition}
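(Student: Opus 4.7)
The plan is to exploit the global product decomposition from Lemma \ref{lemma_hertz_wang_33}, under which every $g \in N(\R)$ has a unique expression $g = g_u g_V g_{ss}$ with $g_u \in G^u_a$, $g_V \in V$, $g_{ss} \in G^{ss}_a$. All four projection maps (to $G^u_a$, $V$, $G^{ss}_a$, and $G^s_a = V G^{ss}_a$) are then automatically $\smooth$. The algebraic fuel for the second bullet is Lemma \ref{lemma_hertz_wang_31}(3), which, after exponentiating from the Lie algebra to the simply connected nilpotent group, says $V$ normalizes $G^{ss}_a$.

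For the first bullet, I would define $F \colon G^u_a \to G^u_a$ by $F(h) = (gh)_u$. Smoothness is immediate since $F$ is the composition of left translation by $g$ with projection to $G^u_a$ along $G^s_a$. For surjectivity, given $h' \in G^u_a$, Lemma \ref{lemma_hertz_wang_33}(1) allows us to write $g^{-1} h' = \alpha \beta$ uniquely with $\alpha \in G^u_a$ and $\beta \in G^s_a$; then $g\alpha = h' \beta^{-1} \in h' G^s_a$, so $F(\alpha) = h'$. Injectivity is the same observation run backwards: if $F(h_1) = F(h_2) = h'$, then both $gh_1$ and $gh_2$ lie in $h' G^s_a$, so $h_1$ and $h_2$ both coincide with the unique $G^u_a$-component of $g^{-1} h'$. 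The inverse map $h' \mapsto (g^{-1} h')_u$ is smooth for the same projection reason, so $F$ is a $\smooth$ diffeomorphism.

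For the second bullet, decompose $g_2 = (g_2)_u (g_2)_V (g_2)_{ss}$ and $g_1 (g_2)_u = ABC$ with $A \in G^u_a$, $B = (g_1 (g_2)_u)_V \in V$, $C \in G^{ss}_a$. Then $g_1 g_2 = A B C (g_2)_V (g_2)_{ss}$. Since $V$ normalizes $G^{ss}_a$, we have $C (g_2)_V = (g_2)_V \bigl((g_2)_V^{-1} C (g_2)_V\bigr) = (g_2)_V C'$ with $C' \in G^{ss}_a$, so
\[
g_1 g_2 = A \cdot \bigl(B (g_2)_V\bigr) \cdot \bigl(C' (g_2)_{ss}\bigr),
\]
a decomposition in $G^u_a \cdot V \cdot G^{ss}_a$ (using that $G^{ss}_a$ is a subgroup, Lemma \ref{lemma_hertz_wang_31}(1)). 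Uniqueness of the product decomposition then forces $(g_1 g_2)_V = B (g_2)_V = (g_1 (g_2)_u)_V (g_2)_V$. The special case $g_2 \in G^s_a$ corresponds to $(g_2)_u = e$, which reduces the formula to $(g_1 g_2)_V = (g_1)_V (g_2)_V$.

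The main subtlety, rather than a genuine obstacle, is keeping the algebraic bookkeeping straight: one must fix the order of the three factors so that $V$'s normalization of $G^{ss}_a$ can be applied on the correct side (here producing $C (g_2)_V = (g_2)_V C'$ rather than the reverse). With the ordering $G^u_a \cdot V \cdot G^{ss}_a$ used in this paper the manipulation is clean; reversing to the ordering $G^{ss}_a \cdot V \cdot G^u_a$ of \cite{hertz_wang2014} would require only the left–right mirror of the same calculation.
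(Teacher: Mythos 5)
Your argument is correct. Note that the paper itself does not prove this proposition at all: it simply states that these are results of Rodriguez Hertz and Wang and refers to \cite{hertz_wang2014} (Corollaries 3.5 and 3.6) for the proofs. What you have written is a sound, self-contained derivation from exactly the two ingredients the paper already records, namely the unique $\smooth$ product decomposition $G^u_a \times V \times G^{ss}_a \to N(\R)$ of Lemma \ref{lemma_hertz_wang_33} and the normalization of $G^{ss}_a$ by $V$ obtained by exponentiating $[\sigma,\mathfrak{g}^{ss}_a]\subset\mathfrak{g}^{ss}_a$ from Lemma \ref{lemma_hertz_wang_31}; this is the same mechanism as in the cited source, transcribed to the paper's ordering of the factors, so you have in effect supplied the omitted proof rather than found a different one. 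The individual steps (smoothness of the projections from the diffeomorphism statement, surjectivity and injectivity of $h\mapsto(gh)_u$ via uniqueness of the $G^u_a\cdot G^s_a$ decomposition, and the regrouping $A\,(B(g_2)_V)\,(C'(g_2)_{ss})$ using that $V$ and $G^{ss}_a$ are subgroups) all check out.
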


\par Let us recall an argument from \cite{kalinin_fisher_spatzier2013} and \cite{hertz_wang2014} concerning the choice of $a \in \Z^k$ and estimate of {\bf Lyapunov exponents}.
\par Recall that $a \in \mathcal{C}_0$ where $\mathcal{C}_0$ denotes a {\bf Weyl chamber} of the action $\rho_l$, $\mathcal{C}$ denotes a {\bf Weyl chamber} adjacent to $\mathcal{C}_0$, and $[\chi^l]$ denotes the {\bf coarse Lyapunov exponent} such that $\mathrm{ker} \chi$ separates $\mathcal{C}_0$ and $\mathcal{C}$. 
\par Recall that $\Sigma \cong \Z ^2$ denotes the subgroup of $\Z^k$ given in Corollary \ref{cor:exponential-mixing}. For any $\xi >0$, we may choose $b \in \Sigma\cap \mathcal{C}$
such that $|\chi^l_1(b)| < \xi \|b\|$ for all 
$\chi^l_1 \in [\chi^l] \text{ or } [-\chi^l]$. Then the restriction of $\rho_l^{-1}(b)$ on $V$ is contracting, and for any $v \in \sigma$ and $n \in \N$, $\|D\rho_l(nb) v\| = O(e^{ n\xi\|b\| })$. For $\xi >0$ small enough, we have that for any 
$[\chi^l_2] \neq [\chi^l] \text{ or } [-\chi^l]$, $\chi^l_2 (a)$ and $\chi^l_2(b)$ have the same sign. Therefore, if 
$\sigma^{[\chi^l_2]} \subset \mathfrak{g}^{ss}_a$, then $\chi^l_2(b) <0$. Combining this with the fact that $|\chi^l_1(b)| < \xi \|b\|$ for all 
$\chi^l_1 \in [\chi^l] \text{ or } [-\chi^l]$, we conclude that for any $v \in \mathfrak{g}^s_a$ and any $n \in \N$, $\|D\rho_l(nb)v\| =  O(e^{ n\xi\|b\| })$. Since the foliation $\mathcal{W}^s_a$ corresponds to $G^s_a$ via $\phi$, and $\phi$ is $\theta$-H\"{o}lder, 
we conclude that for any $w \in \mathcal{E}^s_a$, and any $n \in \N$, $\|D\rho(nb) w\| = O(e^{n \xi' \|b\|})$ where $\xi' = \xi/\theta$. 
\par Summing up the argument above, we have the following proposition:
\begin{proposition}[see~{\cite[Lemma 3.8]{hertz_wang2014}}]
\label{prop_estimate_lyapunov}
For any $\xi >0$, there exists $b \in \Sigma\setminus\{\mathbf{0}\}$ such that the restriction of $\rho_l^{-1}(b)$ on $V$ is contracting, and for any 
$w \in \mathcal{E}^s_a$ and any $n \in \N$, 
$$\|D\rho(n b) w\| = O(e^{n \xi \|b\|}),$$
in other words, $\log \|D\rho(n b) |_{\mathcal{E}^s_a}\| \leq n\xi \|b\|$.
\end{proposition}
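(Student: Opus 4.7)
The strategy is to implement the construction outlined immediately above the statement, then transfer the resulting estimate through the bi-H\"older conjugacy $\phi$. Working inside the two-plane $\Sigma \otimes \R \subset \R^k$ supplied by Corollary~\ref{cor:exponential-mixing}, the hyperplane $\ker\chi^l$ meets this two-plane in a $1$-dimensional line. Because $\Sigma \cong \Z^2$ is a lattice in the two-plane, I would choose $b \in \Sigma$ in the open half-plane $\mathcal{C} \cap (\Sigma \otimes \R)$ whose direction lies arbitrarily close to this line. Since every $\chi_1^l \in [\chi^l] \cup [-\chi^l]$ vanishes on $\ker\chi^l$ and there are only finitely many such $\chi_1^l$, a sufficiently close choice ensures $|\chi_1^l(b)| < \xi_0 \|b\|$ simultaneously for all of them, where $\xi_0 := \theta\xi$.

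With $b$ in hand I would check both required properties for $\rho_l$. Since $b \in \mathcal{C}$ one has $\chi^l(b) > 0$, and so every real Lyapunov exponent of $\rho_l(b)$ on $\sigma = \sigma^{[\chi^l]}$ is positive (being a positive multiple of $\chi^l(b)$); hence $\rho_l^{-1}(b)|_V$ is contracting. For the growth bound on $\mathfrak{g}^s_a$, I would use Lemma~\ref{lemma_hertz_wang_31} to decompose $\mathfrak{g}^s_a = \sigma \oplus \mathfrak{g}^{ss}_a$. On $\sigma$, the bound $|\chi_1^l(b)| < \xi_0 \|b\|$ together with the standard polynomial Jordan-block estimate yields $\|D\rho_l(nb)|_\sigma\| \ll (1+n\|b\|)^{\dim \sigma}\,e^{n\xi_0 \|b\|}$, with the polynomial factor absorbed by starting from a slightly smaller $\xi_0$. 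On $\mathfrak{g}^{ss}_a$, every Lyapunov exponent $\chi_2^l$ satisfies $\chi_2^l(a) < 0$; because $\mathcal{C}$ and $\mathcal{C}_0$ are separated only by $\ker\chi^l$, and in particular not by $\ker\chi_2^l$, the sign of $\chi_2^l$ is preserved on $\mathcal{C}$, so $\chi_2^l(b) < 0$ and $D\rho_l(nb)$ is even contracting on $\mathfrak{g}^{ss}_a$. Altogether, $\|D\rho_l(nb)|_{\mathfrak{g}^s_a}\| = O(e^{n\xi_0 \|b\|})$.

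Finally the estimate must be pushed across $\phi$, which sends the smooth foliation by translates of $G^s_a$ to $\mathcal{W}^s_a$ and is $\theta$-H\"older in both directions. This yields a distance estimate of the form $d(\rho(nb)x,\rho(nb)y) \ll e^{n\theta\xi_0 \|b\|}\,d(x,y)^{\theta^2}$ along local stable leaves. Combining with Proposition~\ref{lyapunov_exponent_correspondence}, which identifies the top Lyapunov exponent of $\rho(b)|_{\mathcal{E}^s_a}$ as a positive multiple of the corresponding top exponent of $\rho_l(b)|_{\mathfrak{g}^s_a}$, upgrades this H\"older distance estimate to the operator-norm bound $\|D\rho(nb)|_{\mathcal{E}^s_a}\| = O(e^{n\xi_0 \|b\|/\theta}) = O(e^{n\xi\|b\|})$, as asserted. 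The main obstacle is exactly this last step: a H\"older conjugacy only controls distances up to a H\"older exponent, so naive transfer inflates exponential rates by a factor $1/\theta$, and the ability to pre-shrink $\xi$ by $\theta$ at the outset is precisely what absorbs that loss and lets the proposition hold with $\xi$ itself (not $\xi/\theta$) as the final exponent.
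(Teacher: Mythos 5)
Your construction of $b$ and your analysis of the linear model coincide with the paper's: the proof given in the text is exactly the paragraph preceding the proposition (``Summing up the argument above\dots''), which picks $b\in\Sigma\cap\mathcal{C}$ with $|\chi^l_1(b)|<\xi\|b\|$ for all $\chi^l_1\in[\chi^l]\cup[-\chi^l]$, notes that $\chi^l(b)>0$ forces $\rho_l^{-1}(b)|_V$ to contract, that $\chi^l_2(b)<0$ for every exponent with $\sigma^{[\chi^l_2]}\subset\mathfrak{g}^{ss}_a$ because only $\ker\chi^l$ separates $\mathcal{C}_0$ from $\mathcal{C}$, and then transfers to $\rho$ through the $\theta$-\holder conjugacy at the cost of replacing $\xi$ by $\xi/\theta$. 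Your pre-shrinking $\xi_0=\theta\xi$ and your handling of the Jordan-block polynomial factor are fine and, if anything, more explicit than the text.

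The one step where your justification does not amount to a proof is the last one. A $\theta$-\holder conjugacy yields the leafwise distance estimate you write, but a bound of the form $d(\rho(nb)x,\rho(nb)y)\ll e^{n\theta\xi_0\|b\|}\,d(x,y)^{\theta^2}$ carries no pointwise derivative information, since $d(x,y)^{\theta^2}/d(x,y)\to\infty$ as $d(x,y)\to 0$. Likewise, Proposition \ref{lyapunov_exponent_correspondence} only matches Lyapunov exponents, which are almost-everywhere asymptotic quantities with respect to $\tilde{\mu}$; it cannot by itself produce the bound $\|D\rho(nb)|_{\mathcal{E}^s_a}\|=O(e^{n\xi\|b\|})$ uniformly in $n$ and in the base point, which is what the proposition asserts and what is later used (e.g.\ in estimating $\|\partial^k_{\mathcal{W}^s_a}\rho(ib)\|_{\infty}$ in Proposition \ref{prop_linear_equation}). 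Closing this requires an extra ingredient, for instance the exponent correspondence for \emph{arbitrary} $\rho$-invariant measures (as invoked later in the proof of Proposition \ref{prop_A_empty}) combined with subadditivity of $n\mapsto\log\sup_{\overline{z}}\|D\rho(nb)|_{\mathcal{E}^s_a(\overline{z})}\|$, together with the uniform smoothness of the $\mathcal{W}^s_a$-leaves coming from uniform hyperbolicity of $\rho(a)$. To be fair, the paper does not supply this either; it defers to \cite[Lemma 3.8]{hertz_wang2014}, which is where the uniformity is actually established, so your sketch matches the paper's level of detail everywhere except that your stated mechanism for the final upgrade would not survive being written out.
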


\subsection{The cohomological equation \uppercase\expandafter{\romannumeral2}}
\label{cohomological_equation_2}
Let $h: \s(M) \rightarrow N(\R)$ be as defined in \S \ref{cohomologicalequation1}. Note that the value of $h(\overline{z})$ only depends on the projection $z$ of $\overline{z}$ on $M$. Therefore we may regard $h$ as a $\smooth$ map from $M$ to $N(\R)$.  Let $h_u$ and $h_V$ denote the projection of $h$ on $G^u_a$ and $V$ respectively in the $G^{ss}_a \times V \times G^u_a$ coordinate. We want to get the cohomological equation of $h_V$.
\par Let us fix a constant $\xi >0$. By Proposition \ref{prop_estimate_lyapunov}, there exists $b \in \Sigma \setminus \{\mathbf{0}\}$ such that $\rho_l^{-1}(b)$ contracts $V$ and $\|D\rho(n b) w\| = O(e^{n \xi \|b\|})$ for all $w \in \mathcal{E}^s_a$ and 
$n \in \N$.
\par Applying Lemma \ref{lemma_cohomological} to $b$, we have that 
$$h(\overline{z}) = Q'_b(\overline{z}) \rho_l^{-1}(b) h(\rho(b)\overline{z}),$$
for some $\smooth$ map $Q'_b : \s(M) \rightarrow N(\R)$. By Proposition \ref{proposition_group_coordinate}, $(g_1 g_2)_V = (g_1 (g_2)_u)_V (g_2)_V$. 
Projecting both sides of the equation above to $V$, we have 
\begin{equation}
\label{cohomological_equation_V}
h_V(\overline{z}) = (Q'_b(\overline{z}) \rho_l^{-1}(b) h_u(\rho(b)\overline{z}))_V  \rho_l^{-1}(b) h_V(\rho(b)\overline{z}).
\end{equation}

\begin{notation}
We borrow the following notation from \cite{hertz_wang2014}.
\par For $\overline{x} \in \s(M) (\text{ or } N(\R))$ and $\epsilon >0$, let $B_{\epsilon}(\overline{x})$ denote the ball inside $M(\overline{x}) (\text{ or } N(\R))$ centered at $\overline{x}$ with radius $\epsilon$. For a Lie subgroup 
$F$ of $N(\R)$, $ \mathfrak{f} \in F$ and $\epsilon >0$, we denote by $B^F_{\epsilon}(\mathfrak{f})$ the ball inside $F$ centered at $\mathfrak{f}$ with radius $\epsilon$.
\par Let us fix a constant $\delta >0$ such that for $\overline{x} , \overline{y} \in \s(M)$ belonging to the same manifold slice and $\dist (\overline{x},\overline{y}) \leq \delta$, there exists a unique element 
$p(\overline{x},\overline{y}) \in B_{\delta} (e)$ with $\overline{y} = \overline{x} p(\overline{x},\overline{y})$. It is easy to see that the map
$$p: \{(\overline{x},\overline{y}) \in \s(M) \times \s(M) : \overline{x} \text{ and } \overline{y} \text{ belong to the same manifold slice, and } \dist (\overline{x}, \overline{y}) \leq \delta\} \rightarrow N(\R)$$
is $\smooth$. 
\par For $\overline{x} , \overline{y} \in \s(M)$ belonging to the same manifold slice and $\dist(\overline{x}, \overline{y}) \leq \delta$, we write $\phi(\overline{y}) = \phi(\overline{x}) H_{\overline{x}}(\overline{y})$ where $H_{\overline{x}}(\overline{y}) : = h^{-1}(x) p(x,y) h(y)$. It is easy to see that the map 
$$(\overline{x},\overline{y}) \mapsto H_{\overline{x}}(\overline{y})$$
is $\theta$-\holder in the pair $(\overline{x},\overline{y})$.
\end{notation}

We first prove the following proposition:
\begin{proposition}[see~{\cite[Corollary 3.14]{hertz_wang2014}}]
\label{proposition_unstable_smooth}
$$h_u  \in C^{\infty, \theta}_{\mathcal{W}^s_a}.$$
\end{proposition}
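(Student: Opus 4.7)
The plan is to mimic \cite[Corollary 3.14]{hertz_wang2014}: iterate the cohomological equation in the $G^u_a$ coordinate and show that the resulting formal series defining $h_u$ converges in the $C^{\infty,\theta}_{\mathcal{W}^s_a}$ topology, using the joint contraction of $\rho(a)$ on $\mathcal{W}^s_a$ and $\rho_l^{-1}(a)$ on $G^u_a$.

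First, I would project the cohomological equation $h(\overline{z}) = Q'_a(\overline{z})\,\rho_l^{-1}(a)(h(\rho(a)\overline{z}))$ from Lemma \ref{lemma_cohomological} onto the $G^u_a$-factor in the $G^{ss}_a\times V\times G^u_a$ decomposition given by Lemma \ref{lemma_hertz_wang_33}. Using Proposition \ref{proposition_group_coordinate} (the $G^u_a$-projection $g\mapsto g_u$ is smooth and, restricted to $G^u_a$, a diffeomorphism), this produces an identity of the schematic form
\begin{equation*}
 h_u(\overline{z}) = \Psi_a\bigl(\overline{z},\,\rho_l^{-1}(a)\,h_u(\rho(a)\overline{z})\bigr),
\end{equation*}
where $\Psi_a$ is a $C^\infty$ map and depends smoothly on both arguments; because $\rho_l(a)$ expands $G^u_a$, the map $\rho_l^{-1}(a)\colon G^u_a\to G^u_a$ is a uniform linear contraction by a factor $\lambda^{-1}<1$.

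Second, I would iterate this identity $n$ times, writing
\begin{equation*}
 h_u(\overline{z}) = F_n(\overline{z}) \cdot \rho_l^{-n}(a)\,h_u(\rho(a)^n\overline{z}),
\end{equation*}
where $F_n(\overline{z})$ is an explicit $C^\infty$ expression in the values $Q'_a(\rho(a)^j\overline{z})$ for $0\le j<n$ combined via the multiplication in $N(\R)$ and the projection onto $G^u_a$ (the precise form being dictated by Proposition \ref{proposition_group_coordinate}, so that products and projections cooperate because the $u$-projection is homomorphism-like on $G^s_a$ factors). Since $h$ is bounded and $\rho_l^{-n}(a)$ contracts $G^u_a$ exponentially, the tail term converges to the identity in $G^u_a$, so that $h_u = \lim_{n\to\infty} F_n$ uniformly.

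Third—and this is the main step—I would differentiate $F_n$ along $\mathcal{W}^s_a$ repeatedly and prove that the derivatives of any fixed order $k$ converge uniformly to a $\theta$-H\"older limit. For $\overline{z},\overline{z}'$ in a common $\mathcal{W}^s_a$-leaf, uniform hyperbolicity of $\rho(a)$ gives $\dist(\rho(a)^j\overline{z},\rho(a)^j\overline{z}')\le C\lambda^{-j}\dist(\overline{z},\overline{z}')$, so each factor $Q'_a(\rho(a)^j\overline{z})$ contributes derivative bounds that decay like $\lambda^{-jk}$ (from the chain rule) and $\theta$-H\"older bounds that decay like $\lambda^{-j\theta}$. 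The non-abelian correction terms coming from Proposition \ref{proposition_group_coordinate}, when differentiated, are polynomial in the iterates and their derivatives (because $N$ is nilpotent, so $(g_1g_2)_u$ is polynomial in the components of $g_1,g_2$), but every such correction is composed with an additional factor of $\rho_l^{-j}(a)$ acting on $G^u_a$, giving an extra contraction by $\lambda^{-j}$. Combining these geometric series yields the summability of all derivatives along $\mathcal{W}^s_a$ and the $\theta$-H\"older regularity of those derivatives.

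The hard part will be the bookkeeping in the third step: handling the non-abelian product rule for iterated projections to $G^u_a$ so that every derivative of every order is expressed as an absolutely convergent series in which each term enjoys \emph{both} the $\mathcal{W}^s_a$-contraction of $\rho(a)$ \emph{and} the $G^u_a$-contraction of $\rho_l^{-1}(a)$. Once this has been set up, Proposition \ref{prop_estimate_lyapunov} (or rather its analogue for $a$ itself, which is uniform hyperbolicity) guarantees convergence in $C^{\infty,\theta}_{\mathcal{W}^s_a}$, completing the proof.
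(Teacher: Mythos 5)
Your proposal takes a genuinely different route from the paper, and the route you chose has a real gap. The paper's proof (following Rodriguez Hertz--Wang) does not iterate the cohomological equation at all: it uses the geometric identity that the leaf $\mathcal{W}^s_a(\overline{x})$ is $\phi^{-1}(\phi(\overline{x})G^s_a)$. Writing $\phi(\overline{y})=\phi(\overline{x})H_{\overline{x}}(\overline{y})$ with $H_{\overline{x}}(\overline{y})=h^{-1}(\overline{x})p(\overline{x},\overline{y})h(\overline{y})$, the condition $H_{\overline{x}}(\overline{y})\in G^s_a$ unravels to $h_u(\overline{y})=(p^{-1}(\overline{x},\overline{y})h(\overline{x}))_u$, i.e.\ equation \eqref{equation_stable}. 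On the leaf through $\overline{x}$ the right-hand side is a $\smooth$ function of $\overline{y}$ (the only non-smooth object, $h(\overline{x})$, is frozen), so $h_u$ is smooth along each leaf with no series and no convergence argument; the \holder continuity of the leafwise derivatives then comes from the \holder dependence of $\overline{x}(\overline{y})$ and $h(\overline{x}(\overline{y}))$ on $\overline{y}$ via the \holder holonomy of $\mathcal{W}^s_a$. This two-line argument is what makes the proposition essentially free.

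The gap in your series approach is the transverse \holder regularity. By definition, $C^{\infty,\theta}_{\mathcal{W}^s_a}$ requires each leafwise derivative $\partial^k_{\mathcal{W}^s_a}h_u$ to be $\theta$-\holder as a function on $M$, in all directions — and this is what is actually used downstream (e.g.\ $\|\Psi\|_\theta$ enters the exponential-mixing pairing in Proposition \ref{prop_linear_equation}). Your estimates only bound the variation of the derivatives for $\overline{z},\overline{z}'$ in a common stable leaf, where $\rho(a)^j$ contracts. Transversally $\rho(a)^j$ \emph{expands}, so the $\theta$-\holder seminorm of $Q'_a\circ\rho(a)^j$ and of its leafwise derivatives grows like $e^{j\theta\chi_{\max}(a)}$; this growth is not automatically beaten by the gains $\lambda^{-j}$ from $\rho_l(a)^{-j}|_{G^u_a}$ and $\lambda^{-jk}$ from the leafwise chain rule, so the series of \holder seminorms need not converge (at best an interpolation between the summable sup norms and the growing \holder seminorms would yield a strictly smaller exponent, which you would then have to track through the rest of the paper). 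Your claim that the terms contribute ``$\theta$-\holder bounds that decay like $\lambda^{-j\theta}$'' is therefore only valid within a leaf, not on $M$. This transverse-regularity obstruction is exactly why the paper resorts to distributional derivatives plus exponential mixing in the genuinely dynamical situations (Proposition \ref{prop_linear_equation}); for $h_u$ it is avoided entirely by \eqref{equation_stable}.
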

\begin{proof} The proof we present here follows  the proof of \cite[Lemma 3.13, Corollary 3.14]{hertz_wang2014} with minor modification.
\par We write $p^{-1}(\overline{x},\overline{y}) h(\overline{x}) = (p^{-1}(\overline{x},\overline{y}) h(\overline{x}))_u (p^{-1}(\overline{x},\overline{y}) h(\overline{x}))_s$ and $h(\overline{y}) = h_u(\overline{y}) h_s(\overline{y})$. Then 
$$H_{\overline{x}}(\overline{y}) =  (p^{-1}(\overline{x},\overline{y}) h(\overline{x}))_s^{-1} (p^{-1}(\overline{x},\overline{y}) h(\overline{x}))_u^{-1}  h_u(\overline{y}) h_s(\overline{y}).$$
$H_{ \overline{x} }(\overline{y}) \in G^s_a$ if and only if $(p^{-1}(\overline{x},\overline{y}) h(\overline{x}))_u^{-1}  h_u(\overline{y}) = e$, i.e., $h_u(\overline{y}) = (p^{-1}(\overline{x},\overline{y}) h(\overline{x}))_u$. Since $G^s_a$ corresponds to the foliation $\mathcal{W}^s_a$ via the conjugacy $\phi$. Therefore, near $\overline{x}$ the leaf $\mathcal{W}_a^s(\overline{x})$ is defined by 
\begin{equation}\label{equation_stable} h_u(\overline{y}) =  (p^{-1}(\overline{x},\overline{y}) h(\overline{x}))_u.\end{equation}
Since on $\mathcal{W}_a^s(s)$, $p(\overline{x},\overline{y})$ is $\smooth$ and $h(\overline{x})$ is constant, we have that $h_u(\overline{y})$ is $\smooth$ along $\mathcal{W}^s_a(\overline{x})$.
\par It remains to check that the partial derivatives along $\mathcal{W}^s_a$ vary \holder continuously. We fix a neighborhood $\Omega$ of $\overline{x}_0 \in \s(M)$ in the manifold slice $M(\overline{x}_0)$ such that $ \phi(\Omega)$ is of form $\phi(\overline{x}_0) B^{G^u_a}_{\epsilon}(e) B^{G^s_a}_{\epsilon}(e)$. Then every $\overline{y} \in \Omega$ can be projected to some $\overline{x} = \overline{x}(\overline{y}) \in \Omega \cap \mathcal{W}^u_a(\overline{x}_0)$ along $\mathcal{W}^s_a$. Since $\mathcal{W}^s_a$ is a \holder foliation, we have that the map $\overline{y} \mapsto \overline{x}(\overline{y})$ is \holder in $\overline{y}$. Thus the map $\overline{y} \mapsto h(\overline{x}(\overline{y}))$ is also \holder. By (\ref{equation_stable}), partial derivatives of $h_u(\overline{y})$ along $\mathcal{W}^s_a$ $\smooth$ depend on $\overline{x}$ and $h(\overline{x})$, and thus are \holder continuous in $\overline{y}$.
\par This completes the proof. 
 \end{proof}
 
 Now let us get back to the cohomological equation (\ref{cohomological_equation_V}). Let $\tilde{h}_V := \log h_V$ and 
 $$\Psi := \log (Q'_b(\overline{z}) \rho_l^{-1}(b) h_u(\rho(b)\overline{z}))_V.$$ By Proposition \ref{proposition_unstable_smooth}, $\Psi \in C^{\infty, \theta}_{\mathcal{W}^s_a}$. Then (\ref{cohomological_equation_V}) can be rewritten as 
 $$\exp \tilde{h}_V = \exp \Psi \exp (\rho_l^{-1}(b) \tilde{h}_V \circ \rho(b)).$$
 By Baker-Campbell-Hausdorff formula, we have that
 \begin{equation}
 \label{equation_bch}
 \begin{array}{rcl}
 \tilde{h}_V  & = & \rho_l^{-1}(b) \tilde{h}_V \circ \rho(b) + \Psi + \frac{1}{2} [\Psi , \rho_l^{-1}(b) \tilde{h}_V \circ \rho(b)] \\
  & &   - \frac{1}{12}[ \rho_l^{-1}(b) \tilde{h}_V \circ \rho(b),  [\Psi , \rho_l^{-1}(b) \tilde{h}_V \circ \rho(b)] ]\\
  & &+ \frac{1}{12} [ \Psi, [\Psi , \rho_l^{-1}(b) \tilde{h}_V \circ \rho(b)]] +\cdots 
 \end{array}
 \end{equation}
where there are only finitely many terms on the right hand side since both sides of the equation belong to the Lie algebra $\sigma$ of $V$ which is nilpotent. Consider the derived series of $\sigma$:
$$\sigma = \sigma_0 \supset \sigma_1 \supset \cdots \supset \sigma_l = \{0\}.$$ For $i=1, 2, \dots, l$, let 
$$\pi_i : \sigma \rightarrow \sigma_i\setminus \sigma$$
denote the canonical projection. Let $\tilde{h}_i := \pi_i \circ \tilde{h}_V$ and $\Psi_i := \pi_i \circ \Psi$.
\par Projecting the equation to $\sigma_1\setminus \sigma$, we will get the following linearized equation:
\begin{equation}
\label{cohomological_equation_linear}
\tilde{h}_1 = \rho_l^{-1}(b) \tilde{h}_1\circ \rho(b) + \Psi_1.
\end{equation}
We first prove the following lemma on linearized cohomological equations:
\begin{proposition}[see~{\cite[Proposition 3.15]{hertz_wang2014}}]
\label{prop_linear_equation}
Let $L$ be a vector space and $B: L \rightarrow L$ be a linear isomorphism such that $\|B^{-i}\|$ is uniformly bounded for all $i \geq 0$. For $\xi >0$, let $b \in \Sigma\setminus\{\mathbf{0}\}$ be the element given by Proposition \ref{prop_estimate_lyapunov}.  Suppose $\psi: \s(M) \rightarrow L$ is in $C^{\infty, \theta}_{\mathcal{W}^s_a}$ and $f: M \rightarrow L$ is \holder continuous and solves the linear equation:
\begin{equation}
\label{equation_1}
f = B^{-1} f \circ \rho(b) + \psi,
\end{equation}
then there exists $\xi_0 >0$ such that for $0< \xi \leq \xi_0$,  $f \in C^{\infty, \theta}_{\mathcal{W}^s_a}$.
\end{proposition}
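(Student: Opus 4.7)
The plan is to express $f$ as a formal iterative series and to control its smoothness along $\mathcal{W}^s_a$ by combining the expansion estimates of Proposition~\ref{prop_estimate_lyapunov} with the exponential mixing provided by Corollary~\ref{cor:exponential-mixing}. Iterating $f = B^{-1}f\circ\rho(b) + \psi$ yields the telescoping identity
\[
    f = \sum_{i=0}^{N-1} B^{-i}(\psi\circ\rho(b)^i) + B^{-N}(f\circ\rho(b)^N), \qquad N\ge 1.
\]
After subtracting an additive constant (the invariant solution, forced to be trivial by the ergodicity of $\rho_l(b)$ on $\s(M)$), one may assume $\int\psi\, d\tilde\mu = 0$. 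Since $\rho(a)$ and $\rho(b)$ commute, the foliation $\mathcal{W}^s_a$ is $\rho(b)$-invariant, so for a smooth unit vector field $X$ tangent to $\mathcal{W}^s_a$ the pushforward $X_i := (\rho(b)^i)_* X$ is again tangent to $\mathcal{W}^s_a$, with $\|X_i\|_\infty \ll e^{i\xi\|b\|}$ by Proposition~\ref{prop_estimate_lyapunov}. Iterating the chain rule gives $\|X^k(\psi\circ\rho(b)^i)\|_\theta \ll e^{i(k+\theta)\xi\|b\|}\|\psi\|_{C^k_{\mathcal{W}^s_a}}$.

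Next, I would test $B^{-i} X^k(\psi\circ\rho(b)^i)$ against a smooth function $\varphi$ on $\s(M)$ and invoke Corollary~\ref{cor:exponential-mixing}, which produces a bound of order
\[
    \|B^{-i}\|\, e^{-\eta' i\|b\|}\, e^{i(k+\theta)\xi\|b\|}\, \|\psi\|_{C^k_{\mathcal{W}^s_a}} \|\varphi\|_\theta.
\]
Since $\|B^{-i}\|$ is uniformly bounded, this is summable in $i$ as soon as $\xi < \eta'/(k+\theta)$. Hence $X^k f$, defined by formally differentiating the iterated equation, exists as a H\"older distribution equal to the sum of the differentiated series, and an application of a Rauch--Taylor type regularity theorem along the smooth leaves of $\mathcal{W}^s_a$, as used in \cite{kalinin_fisher_spatzier2013,hertz_wang2014}, upgrades this to a pointwise $\theta$-H\"older function on each leaf. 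The remainder $B^{-N}(f\circ\rho(b)^N)$ likewise tests to $O(\|B^{-N}\| e^{-\eta' N \|b\|})$ against smooth $\varphi$ by the same mixing estimate, so it vanishes distributionally in the limit $N\to\infty$.

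The principal obstacle is that the admissible $\xi$ for derivatives of order $k$ shrinks as $k$ grows, whereas the proposition asserts a single $\xi_0$ that works for all $k$. I would close this gap by the bootstrapping scheme of \cite{hertz_wang2014}: once $f$ has been shown to be $C^1$ along $\mathcal{W}^s_a$ for some $\xi < \xi_0$, differentiating the original equation along $\mathcal{W}^s_a$ produces a new cohomological equation of the same form satisfied by each first-order $\mathcal{W}^s_a$-derivative of $f$, whose right-hand side still lies in $C^{\infty,\theta}_{\mathcal{W}^s_a}$; applying the proposition inductively gains one derivative along $\mathcal{W}^s_a$ at each step and yields $X^k f \in C^{\infty,\theta}_{\mathcal{W}^s_a}$ for every $k$, hence $f \in C^{\infty,\theta}_{\mathcal{W}^s_a}$, with the single threshold $\xi_0$ fixed once and for all by the first-order argument and the mixing rate $\eta'$ of Corollary~\ref{cor:exponential-mixing}.
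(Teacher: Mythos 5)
Your overall skeleton (iterate the equation, reduce to $\int_{\s(M)}\psi\,\dd\tilde{\mu}=0$, kill the remainder by mixing, control the differentiated series term by term, finish with Rauch--Taylor) matches the paper's. But the central estimate is not justified as you state it, and this is where the paper's proof does something you have skipped. Corollary \ref{cor:exponential-mixing} bounds correlations of the form $\int F(\rho(ib)\overline{z})\,G(\overline{z})\,\dd\tilde{\mu}$ for a \emph{fixed} \holder function $F$. The object you want to test, $\partial^k_{\mathcal{W}^s_a}(\psi\circ\rho(ib))$, is not of that form: by the chain rule it is a sum of terms $(\partial^j\psi)\circ\rho(ib)$ multiplied by derivative cocycles of the nonlinear map $\rho(ib)$, and those cocycles depend on the base point $\overline{z}$, not on $\rho(ib)\overline{z}$. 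Even if you factor out the composed piece and apply mixing to it, the resulting main term is $\int\partial^j\psi\,\dd\tilde{\mu}$ times the integral of the rest, and there is no reason for $\int\partial^j\psi\,\dd\tilde{\mu}$ to vanish, so you do not get decay at all. The paper's proof circumvents exactly this: it mollifies the test function $\phi$ into $\phi_\epsilon=\phi\ast\delta_\epsilon$, moves all $k$ derivatives onto $\phi_\epsilon$ by duality so that mixing is applied to the fixed zero-mean function $\psi$ paired with $\partial^k_{\mathcal{W}^s_a}\phi_\epsilon$ (at the cost of a factor $\epsilon^{-\dim M-k-1}$), estimates the rough part $\langle\partial^k(\psi\circ\rho(ib)),\phi-\phi_\epsilon\rangle$ by the sup-norm growth $e^{O(\xi ik\|b\|)}$ times $\epsilon^{\theta}$, and then balances the two errors by choosing $\epsilon$ as an explicit exponentially decaying function of $i$. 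Without this mollification-and-interpolation step your series has no summable bound, so the proposal has a genuine gap at its core.

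Your proposed repair of the $k$-dependence of $\xi$ also does not work as described. Differentiating $f=B^{-1}f\circ\rho(b)+\psi$ along $\mathcal{W}^s_a$ replaces the constant isomorphism $B^{-1}$ by the point-dependent cocycle $B^{-1}D\rho(b)|_{\mathcal{E}^s_a}$, so the derivative of $f$ does not satisfy an equation of the form \eqref{equation_1} and the proposition cannot be fed back into itself; moreover one cannot differentiate the equation before knowing $f$ is differentiable, which is the point at issue. The honest resolution (implicit in the paper and in \cite{hertz_wang2014}) is that for each fixed order $k$ one may shrink $\xi$ and re-choose $b\in\Sigma$ accordingly: the distributional derivative $\partial^k_{\mathcal{W}^s_a}f$ is intrinsic to $f$, so establishing $\partial^k_{\mathcal{W}^s_a}f\in(C^{\theta}(M))^{\ast}$ for every $k$ separately suffices to conclude $f\in C^{\infty,\theta,\ast}_{\mathcal{W}^s_a}$ and then $f\in C^{\infty,\theta}_{\mathcal{W}^s_a}$ by the Rauch--Taylor type theorem. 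No bootstrapping of the equation itself is needed or available.
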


\begin{proof}
We first claim that in order to show the lemma, it suffices to show the lemma assuming that the integral $\int_{\s(M)} f \dd \tilde{\mu} =0$. In fact, let 
$\overline{f}:= \int_{\s(M)} f \dd \tilde{\mu}$ and let $f_1 := f - \overline{f}$. Since $\overline{f}$ is a constant function, to show $f \in C^{\infty, \theta}_{\mathcal{W}^s_a}$, it suffices to show that $f_1 \in C^{\infty, \theta}_{\mathcal{W}^s_a}$. We have that $\int_{\s(M)}f_1\dd \tilde{\mu} =0$ and $f_1$ satisfies the following equation:
$$f_1 = B^{-1} f_1 \circ \rho(b) +\psi',$$
where $\psi' = \psi + B^{-1} \overline{f} \circ \rho(b) - \overline{f} \in C^{\infty, \theta}_{\mathcal{W}^s_a}$. This proves the claim. Therefore we may assume that 
$\int_{\s(M)} f \dd \tilde{\mu} =0$.
\par By iterating (\ref{equation_1}), we have that 
$$f = \sum_{j=0}^i B^{-j} \psi \circ \rho(jb)  + B^{-i} f \circ \rho(ib).$$
We claim that 
$$f = \sum_{i=0}^{\infty} B^{-j} \psi \circ \rho(jb) $$
in the sense of distributions. To show this, it suffices to show that for any $g \in C^{\infty}(M)$, 
$$\lim_{i \rightarrow \infty} \int_{\s(M)} B^{-i} \psi (\rho(ib)\overline{x}) g (\overline{x}) \dd \tilde{\mu}(\overline{x}) =0.$$
In fact, since $\tilde{\mu}$ is $\rho$-invariant and $f$ has zero average with respect to $\tilde{\mu}$,  we have 
$$\int_{\s(M)} B^{-i} f \circ \rho(ib) \dd \tilde{\mu} = B^{-i} \int_{\s(M)} f \dd \tilde{\mu} =0.$$
Since $\int_{\s(M)} f \dd \tilde{\mu} = \int_{\s(M)} B^{-1} f \circ \rho(b) \dd \tilde{\mu} + \int_{\s(M)} \psi \dd \tilde{\mu}$, we 
conclude that $\int_{\s(M)} \psi \dd \tilde{\mu} =0$.
Therefore, by Corollary \ref{cor:exponential-mixing}, there exist constants $C>0$ and $\eta'>0$ such that for all $g \in C^{\infty}(M)$,
$$\left |\int_{\s(M)} B^{-i} 
\psi (\rho(ib)\overline{x}) g(\overline{x}) \dd \tilde{\mu} \right| \leq C \|B^{-i}\| \|\psi\|_{\theta} \|g\|_{\theta} e^{-\eta' i \|b\|}.$$
Since $\|B^{-i}\|$ is uniformly bounded, we have that $|\int_{\s(M)} B^{-i} \psi (\rho(ib)\overline{x}) g(\overline{x}) \dd \tilde{\mu} | \rightarrow 0$ as $i \rightarrow +\infty$, which proves the claim.
\par By \cite[Theorem 1.1]{rauch_taylor2005} and its variations proved by Fisher-Kalinin-Spatzier \cite[Theorem 8.3.1]{kalinin_fisher_spatzier2013} and Rodriguez Hertz-Wang \cite[Theorem A.1]{hertz_wang2014}, to show $f \in C^{\infty, \theta}_{\mathcal{W}^s_a}$, it suffices to show $f \in C^{\infty, \theta, \ast}_{\mathcal{W}_a^s}$, i.e., for all $ k \in \N$, 
$\partial^k_{\mathcal{W}^s_a} f \in (C^{\theta}(M))^{\ast}$.
\par Given $\phi \in C^{\theta}(M)$, we have
$$\langle \partial^k_{\mathcal{W}^s_a} f , \phi \rangle = \sum_{i=0}^{\infty} \langle \partial^k_{\mathcal{W}^s_a} (B^{-i} \psi \circ \rho(ib)), \phi \rangle.$$
Since every $\partial^k_{\mathcal{W}^s_a} (B^{-i} \psi \circ \rho(ib)) \in C^{\infty, \theta}_{\mathcal{W}^s_a}$ is a \holder continuous function, the term 
$$ \langle \partial^k_{\mathcal{W}^s_a} (B^{-i} \psi \circ \rho(ib)), \phi \rangle = \int_{\s(M)}  \partial^k_{\mathcal{W}^s_a} (B^{-i} \psi \circ \rho(ib)) \phi \dd \tilde{\mu}.$$
\par Fix a compactly supported positive $\smooth$ bump function $\delta$ on $\mathfrak{n}(\R)$ supported on a neighborhood around $\mathbf{0}$. For small $\epsilon >0$, define on $N(\R)$ a function
$$\delta_{\epsilon} (x) = c_{\epsilon} \delta(\frac{\log x}{\epsilon})$$
where $c_{\epsilon} >0$ is chosen such that $\int_{N(\R)} \delta_{\epsilon} (g) \dd g =1$. Let $\phi_{\epsilon} := \phi \ast \delta_{\epsilon}$. By standard facts on convolutions, we have the following hold:
\begin{enumerate}
\item $\phi_{\epsilon}$ is $\smooth$.
\item $\|\phi - \phi_{\epsilon}\|_{\infty} \leq a_0 \epsilon^{\theta} \|\phi\|_{\theta}$ for a constant $a_0 >0$.
\item There exists a constant $c_k >0$ such that $\|\phi_{\epsilon}\|_{C^k} \leq c_k \epsilon^{-\dim M -k} \|\phi\|_{\infty}$.
\end{enumerate}
By Corollary \ref{cor:exponential-mixing} applied to $\psi$ and $\partial^k_{\mathcal{W}^s_a} \phi_{\epsilon}$, we have
\begin{equation}
\begin{array}{rcl}
|\langle \partial^k_{\mathcal{W}^s_a} (\psi \circ \rho(ib)), \phi_{\epsilon} \rangle| & = & |\langle \psi \circ \rho(ib) , \partial^k_{\mathcal{W}^s_a} \phi_{\epsilon} \rangle| \\
& \leq & a_1 \|\psi\|_{\theta} \|\partial^k_{\mathcal{W}^s_a} \phi_{\epsilon}\|_{\theta} e^{-\eta' i \|b\|} \\
& \leq & a_1  \|\psi\|_{\theta} \|\phi_{\epsilon}\|_{C^{k+1}} e^{-\eta' i \|b\|} \\
& \leq & a_1 c_{k}  \|\psi\|_{\theta} \|\phi\|_{\infty} \epsilon^{-\dim M -k -1} e^{-\eta' i \|b\|} \\
& \leq & C_1 \epsilon^{-\dim M -k -1} e^{-\eta' i \|b\|} \|\phi\|_{\theta},
\end{array}
\end{equation} 
where $a_1 >0$ and $\eta' >0$ are constants from Corollary \ref{cor:exponential-mixing}, and $C_1 := a_1 c_k \|\psi\|_{\theta} $.
\par We also need to estimate $|\langle \partial^k_{\mathcal{W}^s_a} (\psi \circ \rho(i a)), \phi - \phi_{\epsilon} \rangle|$:
\begin{equation}
\begin{array}{cl}
& |\langle \partial^k_{\mathcal{W}^s_a} (\psi \circ \rho(ib)) , \phi - \phi_{\epsilon} \rangle| \\
\leq & \|\partial^k_{\mathcal{W}^s_a} (\psi \circ \rho(ib)) \|_{\infty} \| \phi - \phi_{\epsilon}\|_{\infty} \\
\leq & a_0 \|\partial^k_{\mathcal{W}^s_a}\psi\|_{\infty} \|\partial^k_{\mathcal{W}^s_a} \rho(ib)\|_{\infty} \epsilon^{\theta} \|\phi\|_{\theta}.
\end{array}
\end{equation}
By \cite[Lemma 3.6]{kalinin_fisher_spatzier2013}, 
$$\|\partial^k_{\mathcal{W}^s_a} \rho(ib)\|_{\infty}  = O(\|D\rho(b)|_{\mathcal{E}^s_a}\|^{ik} i^T \|\partial^k_{\mathcal{W}^s_a} \rho(b)\|^T_{\infty}),$$
where $T>0$ depends on $k$ and $\dim \mathcal{W}^s_a$.
By Proposition \ref{prop_estimate_lyapunov}, $\|D\rho(b)|_{\mathcal{E}^s_a} \|  = O ( e^{\xi \|b\|})$. Therefore, we have
\begin{equation}
\begin{array}{rcl}|\langle \partial^k_{\mathcal{W}^s_a} (\psi \circ \rho(ib)), \phi - \phi_{\epsilon} \rangle| & \leq & a_2 e^{\xi i k \|b\|} i^T \epsilon^{\theta} \|\phi\|_{\theta} \\ 
 & \leq & a_2 e^{2 \xi i k \|b\|} \epsilon^{\theta} \|\phi\|_{\theta} \end{array}\end{equation}
for a constant $a_2 >0$ depending on $k$, $b$, $\psi$ and $\dim \mathcal{W}^s_a$. 
\par Let 
$$\epsilon = \exp \left(-\frac{i \|b\|(\eta' + 2 k \xi)}{\dim M + k+1 + \theta}\right),$$
then $\epsilon^{-\dim M - k -1 } e^{- \eta' i \|b\|}$ and $\epsilon^{\theta} e^{2\xi i k \|b\|}$ are both equal to
$$\exp \left(  \frac{i \|b\| [2 \xi k (\dim M + k +1) - \theta \eta']}{\theta + \dim M + k +1} \right).$$
For $\xi$ small enough, we will have 
$$ \eta_1 :=  - \frac{ 2 \xi k (\dim M + k +1) - \theta \eta'}{\theta + \dim M + k +1} >0. $$
By the estimates above, we have that 
$$|\langle \partial^k_{\mathcal{W}^s_a} (\psi\circ \rho(ib)), \phi  \rangle | \leq a_3 e^{- \eta_1 i \|b\| } \|\phi\|_{\theta}$$
for a constant $a_3 >0$. Since by assumption there exists a constant $a_4 >0$ such that $\|B^{-i}\| \leq a_4$ for all $i \in \N$, we will have that 
\begin{equation}
\begin{array}{rcl}
 |\langle \partial^k_{\mathcal{W}^s_a} f, \phi \rangle| &\leq& \sum_{i=0}^{\infty} |\langle \partial^{k}_{\mathcal{W}^s_a} (B^{-i} \psi \circ \rho(ib)), \phi \rangle| \\
  & \leq &  \sum_{i=0}^{\infty} \|B^{-i}\| |\langle \partial^k_{\mathcal{W}^s_a}(\psi \circ \rho(ib)), \phi \rangle| \\
  & \leq & a_4 \sum_{i=0}^{\infty} |\langle \partial^k_{\mathcal{W}^s_a} (\psi \circ \rho(ib)), \phi \rangle| \\
  & \leq & a_4 a_3 \sum_{i=0}^{\infty} e^{-\eta_1 i \|b\|} \|\phi\|_{\theta} \\
  & \leq & a_5 \|\phi\|_{\theta},
\end{array}
\end{equation}
where $a_5 = a_3 a_4 \frac{1}{1 - e^{-\eta_1 \|b\|}}$. This proves that $\partial^k_{\mathcal{W}^s_a} f \in (C^{\theta}(M))^{\ast}$. By our previous discussion, this completes the proof.
\end{proof}

\begin{proposition}[see~{\cite[Lemma 3.20]{hertz_wang2014}}]
\label{prop_smooth_hV}
For $i=0,1,\dots, l$, $\tilde{h}_i \in C^{\infty, \theta}_{\mathcal{W}^s_a}$. In particular, $\tilde{h}_V = \tilde{h}_l \in  C^{\infty, \theta}_{\mathcal{W}^s_a}$.
\end{proposition}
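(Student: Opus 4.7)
The plan is to prove the statement by induction on $i$, using the cohomological equation \eqref{equation_bch} projected onto the successive quotients $\sigma/\sigma_{i}$ of the nilpotent filtration of $\sigma$. The main ingredient at each stage will be Proposition \ref{prop_linear_equation}, combined with Proposition \ref{proposition_unstable_smooth} which guarantees that the source term $\Psi = \log(Q'_b \cdot \rho_l^{-1}(b) h_u \circ \rho(b))_V$ lies in $C^{\infty,\theta}_{\mathcal{W}^s_a}$: indeed $Q'_b$ is $\smooth$, $h_u$ is smooth along $\mathcal{W}^s_a$ by Proposition \ref{proposition_unstable_smooth}, and $\rho(b)$ is $\smooth$ and preserves $\mathcal{W}^s_a$ (since $\rho(a)$ and $\rho(b)$ commute).

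The base case $i=0$ is trivial as $\sigma/\sigma_0 = \{0\}$. For $i=1$, projecting \eqref{equation_bch} to $\sigma/\sigma_1$ kills every bracket term on the right (each Lie bracket lies in $[\sigma,\sigma]=\sigma_1$), leaving exactly the linear equation \eqref{cohomological_equation_linear}. Setting $B = \rho_l(b)|_{\sigma/\sigma_1}$, the hypothesis of Proposition \ref{prop_linear_equation} is satisfied because $\rho_l^{-1}(b)$ contracts $V$ by the choice of $b$ in Proposition \ref{prop_estimate_lyapunov}, so $\|B^{-i}\|$ is uniformly bounded. Since $\Psi_1 \in C^{\infty,\theta}_{\mathcal{W}^s_a}$, Proposition \ref{prop_linear_equation} yields $\tilde{h}_1 \in C^{\infty,\theta}_{\mathcal{W}^s_a}$.

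For the inductive step, suppose $\tilde{h}_j \in C^{\infty,\theta}_{\mathcal{W}^s_a}$ for all $j \leq i$. Projecting \eqref{equation_bch} to $\sigma/\sigma_{i+1}$ yields
\[
\tilde{h}_{i+1} = \rho_l^{-1}(b)\, \tilde{h}_{i+1} \circ \rho(b) + \Psi_{i+1} + R_{i+1},
\]
where $R_{i+1}$ is a finite linear combination of nested Lie brackets of $\Psi$ and $\rho_l^{-1}(b)\, \tilde{h}_V \circ \rho(b)$ (finite because $\sigma$ is nilpotent). The key observation is that, by the filtration property of $\sigma$, every such bracket of length $\geq 2$ lies in a deep enough term of the filtration that its class modulo $\sigma_{i+1}$ is determined by the projections $\Psi_j$ and $\tilde{h}_j$ with $j\leq i$. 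Combined with the inductive hypothesis, the smoothness of $\Psi$ along $\mathcal{W}^s_a$, the linearity of $\rho_l^{-1}(b)$ (which preserves $C^{\infty,\theta}_{\mathcal{W}^s_a}$), and the fact that $\rho(b)$ permutes the leaves of $\mathcal{W}^s_a$ smoothly, this gives $R_{i+1}\in C^{\infty,\theta}_{\mathcal{W}^s_a}$, and therefore $\Psi_{i+1}+R_{i+1}\in C^{\infty,\theta}_{\mathcal{W}^s_a}$. Applying Proposition \ref{prop_linear_equation} with $B=\rho_l(b)|_{\sigma/\sigma_{i+1}}$ (whose inverse iterates are again uniformly bounded by the contraction of $\rho_l^{-1}(b)$ on $V$) concludes $\tilde{h}_{i+1}\in C^{\infty,\theta}_{\mathcal{W}^s_a}$.

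The main technical obstacle is the bookkeeping for the inductive step: one has to verify carefully that every nested commutator appearing in the BCH expansion, when reduced modulo $\sigma_{i+1}$, can genuinely be expressed in terms of the lower projections $\tilde{h}_j$ with $j\leq i$. This is precisely where the nilpotent structure of $\sigma$ and its filtration play an essential role, and where the preparatory Proposition \ref{proposition_unstable_smooth} on $h_u$ is crucial for controlling $\Psi$. Once this reduction is in place at each level, the argument becomes an iterated clean application of the linear scheme of Proposition \ref{prop_linear_equation}.
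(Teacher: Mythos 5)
Your proposal is correct and follows essentially the same route as the paper: induction along the filtration of $\sigma$, the vanishing of BCH bracket terms modulo the next filtration step, and repeated application of Proposition \ref{prop_linear_equation} with $B=\rho_l(b)$ restricted to the quotient. The only detail the paper makes explicit that you leave as "bookkeeping" is the choice of a complement $\mathfrak{z}$ splitting $\tilde h_i$ into a part identified with $\tilde h_{i-1}$ (smooth by induction) and a part whose brackets die in the quotient, which is exactly how one justifies that $R_{i+1}$ depends only on the lower projections.
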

\begin{proof}
Let $\xi_0 >0$ be the constant given by Proposition \ref{prop_linear_equation}. For $\xi \in (0, \xi_0]$, let $b \in \Sigma\setminus \{\mathbf{0}\}$ be the element given by Proposition \ref{prop_estimate_lyapunov}.
\par Let us prove this lemma by induction on $i$. For $i=0$, the statement is trivial since $\sigma_0 = \sigma$. For $i \geq 1$, assume the lemma holds for all $j < i$. We want to show that the lemma holds for $i$.
\par Projecting (\ref{equation_bch}), we have the following equation for $\tilde{h}_i$:
\begin{equation}
\label{equation_hi}
\tilde{h}_i = \rho_l^{-1}(b) \tilde{h}_i \circ \rho(b) +\Psi_i + \frac{1}{2}[\Psi_i , \rho_l^{-1}(b) \tilde{h}_i \circ \rho(b)] + \cdots .
\end{equation}
Fix a subspace of $\mathfrak{z} \subset \sigma_i\setminus \sigma$ such that $\mathfrak{z} \oplus (\sigma_i\setminus\sigma_{i-1}) = \sigma_i\setminus \sigma$. According to this decomposition we may write $\tilde{h}_i = \tilde{h}_{\mathfrak{z}} + \tilde{h}^{\perp}_{\mathfrak{z}}$. Note that the canonical projection from $\mathfrak{z}$ to $\sigma_{i-1}\setminus \sigma$ is a linear isomorphism. By the inductive hypothesis on $\tilde{h}_{i-1}$, we conclude that $\tilde{h}_{\mathfrak{z}} \in C^{\infty, \theta}_{\mathcal{W}_a^s}$. Therefore $\rho_l^{-1}(b) \tilde{h}_{\mathfrak{z}} \circ \rho(b) \in C^{\infty, \theta}_{\mathcal{W}^s_a}$.
\par By writing $\tilde{h}_i = \tilde{h}_{\mathfrak{z}} + \tilde{h}^{\perp}_{\mathfrak{z}}$, we may write each higher order term in (\ref{equation_hi}) as a Lie bracket monomial of $\rho_l^{-1}(b) \tilde{h}_{\mathfrak{z}} \circ \rho(b)$, $\rho_l^{-1}(b) \tilde{h}^{\perp}_{\mathfrak{z}} \circ \rho(b)$ and $\Psi_i$. Because $\rho_l^{-1}(b)\tilde{h}^{\perp}_{\mathfrak{z}} \circ \rho(b) \in \sigma_i\setminus \sigma_{i-1}$, every Lie bracket monomial containing $\rho_l^{-1}(b)\tilde{h}^{\perp}_{\mathfrak{z}} \circ \rho(b)$ vanishes. Note that $\rho_l^{-1}(b) \tilde{h}_{\mathfrak{z}} \circ \rho(b)$ and $\Psi_i$
are both in $C^{\infty, \theta}_{\mathcal{W}^s_a}$, we have that the sum of higher order terms is in $C^{\infty, \theta}_{\mathcal{W}^s_a}$. Therefore 
$$\tilde{h}_i = \rho_l^{-1}(b) \tilde{h}_i \circ \rho(b) + \tilde{\Psi}_i$$
where $\tilde{\Psi}_i = \Psi_i + [ \text{ higher order terms } ]$ is in $C^{\infty, \theta}_{\mathcal{W}_a^s}$.  
Let $B :=\rho_l(b)$. Since $B^{-1}$ is contracting on $\sigma$, $\|B^{-i}\|$ is uniformly bounded when restricted to $\sigma_i\setminus \sigma$.
By Proposition \ref{prop_linear_equation}, we conclude that $\tilde{h}_i \in C^{\infty, \theta}_{\mathcal{W}^s_a}$.
\end{proof}
\par For $\overline{x}\in \s(M)$, let $\mathcal{W}^{ss}_a(\overline{x})$ denote the topological submanifold of $M(\overline{x})$ passing through $\overline{x}$ defined by 
$$\mathcal{W}^{ss}_{a} (\overline{x}) := \phi^{-1} (\phi(\overline{x}) G^{ss}_a).$$ 
Obviously every $\mathcal{W}^{ss}_a(\overline{x})$ is contained in a $\mathcal{W}^s_a$ leaf. Recall that 
$$H_{\overline{x}}(\overline{y}) = h^{-1}(\overline{x}) p(\overline{x},\overline{y}) h(\overline{y}).$$
The following proved in \cite{hertz_wang2014} gives the local description of $\mathcal{W}^{ss}_a(\overline{x})$:
\begin{lemma}[see~{\cite[Lemma 4.1]{hertz_wang2014}}]
\label{lemma_local_strong_stable}
Inside $\mathcal{W}^s_a(\overline{x})$, $\mathcal{W}^{ss}_a(\overline{x})$ is locally defined by the equation
$$(H_{\overline{x}} (\overline{y}))_V = e.$$
\end{lemma}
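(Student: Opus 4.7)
The plan is to unwind the definition of $\mathcal{W}^{ss}_a(\overline{x})$ through the conjugacy $\phi$ and then read off the condition via the canonical factorization of $G^s_a$ given by Lemma \ref{lemma_hertz_wang_33}. By definition, $\overline{y} \in \mathcal{W}^{ss}_a(\overline{x})$ means $\phi(\overline{y}) \in \phi(\overline{x}) G^{ss}_a$. For $\overline{y}$ in the same manifold slice as $\overline{x}$ and sufficiently close, $p(\overline{x},\overline{y})$ is well defined, and using $\phi(\overline{y}) = \phi(\overline{x}) H_{\overline{x}}(\overline{y})$, the membership above is equivalent to $H_{\overline{x}}(\overline{y}) \in G^{ss}_a$.

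If we further assume $\overline{y} \in \mathcal{W}^s_a(\overline{x})$, the same unwinding gives $H_{\overline{x}}(\overline{y}) \in G^s_a$. By part (2) of Lemma \ref{lemma_hertz_wang_33}, the multiplication $V \times G^{ss}_a \to G^s_a$ is a smooth diffeomorphism, so every element $g \in G^s_a$ admits a unique factorization $g = g_V g_{ss}$ with $g_V \in V$ and $g_{ss} \in G^{ss}_a$. This factorization coincides with the $V$- and $G^{ss}_a$-components produced by the global triple decomposition $N(\R) = G^u_a \cdot V \cdot G^{ss}_a$ of part (1), since an element of $G^s_a$ necessarily has trivial $G^u_a$-component in that decomposition.

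Consequently, an element $g \in G^s_a$ belongs to $G^{ss}_a$ precisely when $g_V = e$. Applying this observation to $g = H_{\overline{x}}(\overline{y})$ yields the claimed local defining equation $(H_{\overline{x}}(\overline{y}))_V = e$ for $\mathcal{W}^{ss}_a(\overline{x})$ inside $\mathcal{W}^s_a(\overline{x})$. The argument is purely formal; the only thing to verify is that we restrict to a neighborhood of $\overline{x}$ small enough that $p(\overline{x},\overline{y})$ is defined and the relevant group factorizations take place in a single chart where the diffeomorphisms of Lemma \ref{lemma_hertz_wang_33} apply, which is precisely the role of the word \emph{locally} in the statement. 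I do not foresee any genuine obstacle.
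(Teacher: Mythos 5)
Your proof is correct and is essentially the paper's argument: the paper cites \cite[Lemma 4.1]{hertz_wang2014} for this and itself carries out the identical unwinding for $\mathcal{W}^s_a$ in the proof of Proposition \ref{proposition_unstable_smooth} (there, $H_{\overline{x}}(\overline{y})\in G^s_a$ iff the $G^u_a$-component vanishes; here, within $G^s_a$, membership in $G^{ss}_a$ iff the $V$-component vanishes, via the unique factorization of Lemma \ref{lemma_hertz_wang_33}). Your reconciliation of the $V\times G^{ss}_a\to G^s_a$ factorization with the global triple decomposition, and your remark that locality is needed so that $p(\overline{x},\overline{y})$ is defined and the coset identities lift uniquely past the discrete group $N(\Z)$, are exactly the points that make the statement hold.
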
 

\par Note that by Proposition \ref{proposition_group_coordinate},
$$(H_{\overline{x}} (\overline{y}))_V = h_V(\overline{y}) ((h(\overline{x}) p(\overline{x},\overline{y})^{-1})^{-1}_{s})_V.$$ 
By Proposition \ref{prop_smooth_hV}, $h_V(\overline{y})$ is $\smooth$ when restricted to $\mathcal{W}^s_a(\overline{x})$. Combined with $((h(\overline{x}) p(x,y)^{-1})^{-1}_{s})_V$ is $\smooth$ in $\overline{y}$, this implies that $\overline{y} \mapsto H_{\overline{x}}(\overline{y})$ is $\smooth$ in small neighborhoods of $\overline{x}$ in $\mathcal{W}^s_a(\overline{x})$. Moreover, since partial derivatives of $(H_{\overline{x}}(\overline{y}))_V$ along $\mathcal{W}^s_a(\overline{x})$ are polynomial combinations of 
$\partial^k_{\mathcal{W}^s_a} h_V(\overline{y})$ and $\partial^k_{\mathcal{W}^s_a} ((h(\overline{x}) p(x,y)^{-1})^{-1}_{s})_V$, we conclude that all partial derivatives $\partial^k_{\mathcal{W}^s_a} |_{\overline{y} = \overline{x}} (H_{\overline{x}} (\overline{y}))_V$
are $\theta$-\holder continuous in $\overline{x}$.

\par Our aim is to show that $\mathcal{W}^{ss}_a$ defines a \holder foliation with $\smooth$ leaves. By \cite[Corollary 4.3]{hertz_wang2014}, to show the smoothness of every $\mathcal{W}^{ss}_a(\overline{x})$, it suffices to show that for any $\overline{x} \in \s(M)$, the map $(H_{\overline{x}} (\overline{y}))_V$ is regular in $\overline{y}$ at $\overline{y} = \overline{x}$. We will modify the argument by Rodriguez-Hertz and Wang to prove the result.
\par Let $A$ be the set of points $\overline{x} \in \s(M)$ where $(H_{\overline{x}}(\overline{y}))_V$ is singular at $\overline{x}$. We want to show that $A$ is empty.
\begin{lemma}[see~{\cite[Lemma 4.4]{hertz_wang2014}}]
$A$ is closed and invariant under the $\Z^k$-action $\rho$.
\end{lemma}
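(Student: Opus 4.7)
The plan is to check the two assertions separately, the closedness being essentially a regularity statement about the map $\overline{x} \mapsto D(H_{\overline{x}}(\cdot))_V|_{\overline{y}=\overline{x}}$, and the $\rho$-invariance coming from the intertwining identity $\phi \circ \rho(a) = \rho_l(a) \circ \phi$ together with the fact that $\rho_l(a)$ preserves the direct product decomposition $G = G^u_a \cdot V \cdot G^{ss}_a$.

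For closedness, recall from the discussion immediately after Lemma \ref{lemma_local_strong_stable} that
\[
(H_{\overline{x}}(\overline{y}))_V \;=\; h_V(\overline{y}) \cdot \bigl((h(\overline{x}) p(\overline{x},\overline{y})^{-1})^{-1}_s\bigr)_V,
\]
so by Proposition \ref{prop_smooth_hV} (giving smoothness of $h_V$ along $\mathcal{W}^s_a$) and smoothness of $p(\overline{x},\overline{y})$ in $\overline{y}$, the map $\overline{y} \mapsto (H_{\overline{x}}(\overline{y}))_V$ is $\smooth$ on a neighborhood of $\overline{x}$ in $\mathcal{W}^s_a(\overline{x})$, and its partial derivatives along $\mathcal{W}^s_a$ evaluated at $\overline{y}=\overline{x}$ depend $\theta$-Hölder continuously on $\overline{x}$. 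In particular, the first-order derivative $L(\overline{x}) := D_{\mathcal{W}^s_a} (H_{\overline{x}}(\overline{y}))_V \big|_{\overline{y}=\overline{x}} : T_{\overline{x}}\mathcal{W}^s_a(\overline{x}) \to \sigma$ varies continuously with $\overline{x}$. Since being \emph{not surjective} (equivalently, having rank strictly less than $\dim V$) is a closed condition on a continuous family of linear maps, $A = \{\overline{x}\in \s(M) : L(\overline{x}) \text{ is not surjective}\}$ is closed.

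For $\rho$-invariance, let $a \in \Z^k$ and $\overline{x}, \overline{y}$ be in the same manifold slice with $\overline{y} = \overline{x}\, p(\overline{x},\overline{y})$ and $\phi(\overline{y}) = \phi(\overline{x}) H_{\overline{x}}(\overline{y})$. Applying the conjugacy relation $\phi \circ \rho(a) = \rho_l(a)\circ \phi$ and using that $\rho_l(a)$ is a group endomorphism on the (extended) solenoid, I get
\[
\phi(\rho(a)\overline{y}) \;=\; \rho_l(a)\phi(\overline{x})\cdot \rho_l(a)\bigl(H_{\overline{x}}(\overline{y})\bigr) \;=\; \phi(\rho(a)\overline{x})\cdot \rho_l(a)\bigl(H_{\overline{x}}(\overline{y})\bigr),
\]
so that $H_{\rho(a)\overline{x}}(\rho(a)\overline{y}) = \rho_l(a)\bigl(H_{\overline{x}}(\overline{y})\bigr)$. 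Because $\rho_l(a)$ preserves each of $G^u_a$, $V$, $G^{ss}_a$, it respects the $G^{u}_a\times V\times G^{ss}_a$ decomposition of Lemma \ref{lemma_hertz_wang_33}, and projecting onto $V$ gives the key identity
\[
\bigl(H_{\rho(a)\overline{x}}(\rho(a)\overline{y})\bigr)_V \;=\; \rho_l(a)\bigl((H_{\overline{x}}(\overline{y}))_V\bigr).
\]

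Differentiating both sides in $\overline{y}$ at $\overline{y}=\overline{x}$, and using $(H_{\overline{x}}(\overline{x}))_V = e$, one obtains
\[
L(\rho(a)\overline{x}) \circ D\rho(a)\big|_{\overline{x}} \;=\; D\rho_l(a)\big|_{\sigma} \circ L(\overline{x}).
\]
Since $\rho(a)$ is a diffeomorphism of manifold slices that preserves the stable foliation $\mathcal{W}^s_a$, the map $D\rho(a)|_{\overline{x}}: T_{\overline{x}}\mathcal{W}^s_a \to T_{\rho(a)\overline{x}}\mathcal{W}^s_a$ is a linear isomorphism; similarly, $D\rho_l(a)|_\sigma$ is a linear automorphism of $\sigma$ because $V$ is $\rho_l$-invariant and every $\rho_l(a)$ is ergodic, hence invertible on the extended action. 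Therefore $L(\rho(a)\overline{x})$ and $L(\overline{x})$ have the same rank, so $\overline{x}\in A$ if and only if $\rho(a)\overline{x}\in A$. This proves that $A$ is $\rho$-invariant. The main technical point is simply keeping track of the derivative identity and checking that the relevant linear maps on the two sides are isomorphisms; no new analytic input beyond Proposition \ref{prop_smooth_hV} and the conjugacy equation is required.
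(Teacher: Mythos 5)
Your proposal is correct and follows essentially the same route as the paper: closedness from the continuous dependence of $D_{\mathcal{W}^s_a}|_{\overline{y}=\overline{x}}(H_{\overline{x}}(\overline{y}))_V$ on $\overline{x}$ plus the closedness of the singularity condition, and invariance from the intertwining identity $H_{\rho(a)\overline{x}}(\rho(a)\overline{y}) = \rho_l(a)H_{\overline{x}}(\overline{y})$, projection to $V$, and the chain rule with $D\rho_l(a)|_\sigma$ and $D\rho(a)|_{\mathcal{E}^s_a}$ both invertible. The only detail the paper makes explicit that you elide is that the identity $\phi(\rho(a)\overline{x})H_{\rho(a)\overline{x}}(\rho(a)\overline{y}) = \phi(\rho(a)\overline{x})\rho_l(a)H_{\overline{x}}(\overline{y})$ pins down $H_{\rho(a)\overline{x}}(\rho(a)\overline{y})$ only because both group elements are close to $e$; this is a minor point and does not affect the argument.
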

\begin{proof}
Since $D_{\mathcal{W}^s_a}|_{\overline{y} = \overline{x}} (H_{\overline{x}}(\overline{y}))_V $ depends continuously on $\overline{x}$, and since being singular is a closed condition, we conclude that $A$ is closed. 
\par Let us show that $A$ is $\rho$-invariant. Fix $a' \in \Z^k$. For $\overline{x} \in \s(M)$ and $\overline{y} \in \mathcal{W}^s_a(\overline{x})$, by the definition of $H_{\overline{x}}(\overline{y})$, we have 
$$ \begin{array}{rcl} \phi(\rho(a')\overline{x}) H_{\rho(a')\overline{x}} (\rho(a')\overline{y}) & = & \phi(\rho(a')\overline{y}) = \rho_l(a') \phi(\overline{y}) \\  & = &  \rho_l(a')(\phi(\overline{x}) H_{\overline{x}}(\overline{y})) \\
& = & \rho_l(a')\phi(\overline{x}) \rho_l(a')H_{\overline{x}}(\overline{y})   \\
& = & \phi(\rho(a') \overline{x}) \rho_l(a')H_{\overline{x}}(\overline{y}) . \end{array}$$
Since $a'$ is fixed and both $H_{\overline{x}}(\overline{y})$ and $H_{\rho(a')\overline{x}} (\rho(a')\overline{y}) $ are close to $e$,
we conclude that 
$$H_{\rho(a') \overline{x}} (\rho(a') \overline{y}) = \rho_l(a') H_{\overline{x}}(\overline{y}).$$ 
By projecting the above equation to $V$, we get that 
$$(H_{\rho(a') \overline{x}} (\rho(a') \overline{y}))_V = \rho_l(a') (H_{\overline{x}}(\overline{y}))_V.$$
Since $\mathcal{W}^s_a$ is $\rho$-invariant, we have that 
$$D_{\mathcal{W}^s_a}|_{\overline{y} = \rho(a')\overline{x}} (H_{\rho(a') \overline{x}}(\overline{y}) )_V = D\rho_l(a')|_{\sigma} (D_{\mathcal{W}^s_a} |_{\overline{y} = \overline{x}} (H_{\overline{x}} (\overline{y}))_V) (D_{\rho(a') \overline{x}} \rho(a')|_{\mathcal{E}^s_a})^{-1}.$$
Since $D\rho_l(a') |_{\sigma} $ and $D_{\rho(a') \overline{x}} \rho(a')|_{\mathcal{E}^s_a}$ are both regular, we conclude that $D_{\mathcal{W}^s_a} |_{\overline{y} = \rho(a')\overline{x}} (H_{\rho(a') \overline{x}}(\overline{y}))_V$ is singular if and only if 
$D_{\mathcal{W}^s_a} |_{\overline{y} = \overline{x}} (H_{\overline{x}} (\overline{y}))_V$ is so. In other words, $\overline{x} \in A$ if and only if $\rho(a') \overline{x} \in A$. Since $a' \in \Z^k$ is chosen arbitrarily, we conclude that $A$ is $\rho$-invariant.
\end{proof}

\begin{proposition}
\label{prop_A_empty}
$A = \emptyset$.
\end{proposition}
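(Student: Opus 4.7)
The plan is to argue by contradiction: suppose $A\neq\emptyset$ and derive a contradiction by combining ergodic and topological transitivity of $\rho$ with Hölder control on the derivative.

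First I would establish that $\rho$ acts topologically transitively on $\s(M)$. Indeed, the standing assumption gives an expanding element $a$ with $\rho_l(a)$ ergodic on $(\s(M),\mu)$ (via Parry's theorem applied to the maximal toral factor and lifted to the solenoid), hence topologically transitive; the bi-Hölder conjugacy $\phi$ then transfers topological transitivity to $\rho$. In particular any nonempty open $\rho$-invariant subset of $\s(M)$ is dense.

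Second, I would show $A^{c}$ is nonempty. The map $\overline{y}\mapsto (H_{\overline{x}}(\overline{y}))_V$, restricted to $\mathcal{W}^{s}_a(\overline{x})$, is $C^{\infty}$ by Proposition \ref{prop_smooth_hV}, and because $\phi$ is a homeomorphism the image of a neighborhood of $\overline{x}$ under $H_{\overline{x}}$ is an open subset of $G^{s}_a$, which projects onto an open subset of $V$. A Sard argument then produces, for each $\overline{x}$, regular values $v\in V$ arbitrarily close to $e$; tracking these back via $\mathcal{W}^{ss}_a$-transversals and using the Hölder dependence of the derivative on the base point, I would find a single $\overline{x}_0$ where $D_{\mathcal{W}^{s}_a}|_{\overline{y}=\overline{x}_0}(H_{\overline{x}_0}(\overline{y}))_V$ attains its maximal possible rank $\dim V$, i.e.\ $\overline{x}_0\in A^{c}$.

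Third, I would combine these two facts to conclude $A=\emptyset$. The set $A^{c}$ is open by lower semicontinuity of the rank of a continuous family of linear maps, and is $\rho$-invariant by the previous lemma; hence by topological transitivity it is dense in $\s(M)$. To upgrade density of $A^{c}$ to emptiness of $A$, suppose $\overline{x}_{*}\in A$. Choose a sequence $\overline{y}_n\to\overline{x}_{*}$ with $\overline{y}_n\in A^{c}$ along an unstable leaf $\mathcal{W}^{u}_a(\overline{x}_{*})$. Apply $\rho(-m a)$ for large $m$; by the transformation rule established in the previous lemma, the derivative at $\rho(-m a)\overline{y}_n$ is the conjugate of the one at $\overline{y}_n$ by the regular linear maps $D\rho_l(-ma)|_{\sigma}$ and $(D\rho(-ma)|_{\mathcal{E}^{s}_a})^{-1}$, so it remains surjective. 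Since backward iteration along $\mathcal{W}^u_a$ contracts distance to $\overline{x}_{*}$ exponentially, a diagonal choice of $n=n(m)$ gives points in $A^{c}$ converging to $\overline{x}_{*}$; Hölder continuity in $\overline{x}$ of the derivative (via Hölder continuity of $h$ along $\mathcal{W}^s_a$ holonomies) then forces the limiting derivative at $\overline{x}_{*}$ to be surjective, contradicting $\overline{x}_{*}\in A$.

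The principal difficulty, and the step I expect to be the most delicate, is the last one: quantitatively closing the gap between ``$A^{c}$ is dense'' and ``$A$ is empty''. Merely topological transitivity plus openness of $A^{c}$ is not enough, since $A$ could a priori be a closed nowhere dense invariant set such as a periodic orbit. One must exploit the hyperbolicity of $\rho(a)$ together with the $\rho$-equivariance of the derivative and its Hölder modulus of continuity, following the scheme of \cite{hertz_wang2014}, to rule out this possibility uniformly across $\overline{x}_{*}\in\s(M)$.
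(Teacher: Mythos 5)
Your approach diverges substantially from the paper's, and the step you yourself flag as delicate is in fact a genuine gap that your proposed fix does not close. The fatal point is the passage from ``$A^{c}$ is dense'' to ``$A=\emptyset$''. Surjectivity of $D_{\mathcal{W}^s_a}|_{\overline{y}=\overline{x}}(H_{\overline{x}}(\overline{y}))_V$ is an \emph{open} condition, which is exactly why $A$ is closed (as the preceding lemma records, ``being singular is a closed condition''). A sequence of surjective linear maps converging in norm can perfectly well converge to a singular one, so H\"older (or even Lipschitz) continuity of the derivative in the base point cannot force the limit at $\overline{x}_{*}\in A$ to be surjective. Conjugating by $D\rho_l(-ma)|_{\sigma}$ and $(D\rho(-ma)|_{\mathcal{E}^s_a})^{-1}$ preserves the rank at each individual point $\overline{y}_n$ but gives no uniform lower bound on the smallest singular value along the sequence $n=n(m)$; indeed these conjugating maps have norms that degenerate with $m$, and without a uniform quantitative transversality estimate over $A^c$ (which is precisely what is unavailable) the diagonal argument proves nothing. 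Your second step (a Sard argument producing a point of $A^c$) is plausible modulo the cocycle identity $(H_{\overline{x}}(\overline{y}'))_V=(H_{\overline{x}}(\overline{y}))_V (H_{\overline{y}}(\overline{y}'))_V$ needed to convert a regular point of $(H_{\overline{x}})_V$ into a point of $A^c$, but it does not feed into a working conclusion.

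The paper's proof avoids this issue entirely by working measure-theoretically rather than topologically. Since $A$ is compact, nonempty and $\rho$-invariant, it supports an ergodic invariant probability measure $\overline{\mu}$. One then runs Oseledets, Pesin stable-manifold theory and the Ledrappier--Young charts $\Phi_{\overline{z}}$, $P_{\overline{z}}$ at $\overline{\mu}$-a.e.\ $\overline{z}\in A$, and derives a contradiction between two volume estimates: on one hand, the rank deficiency of $D_{\mathcal{W}^s_a}|_{\overline{y}=\overline{z}}(H_{\overline{z}}(\overline{y}))_V$ at every point of $A$ forces $\Vol_{\phi(\overline{z})G^s_a}(\phi(B_{k,\overline{z}}))/\Vol_{\mathcal{W}^s_a(\overline{z})}(B_{k,\overline{z}})\to 0$ for suitable shrinking boxes $B_{k,\overline{z}}$; on the other hand, the standard infinite-product formula for conditional measures along stable leaves shows that $\phi_{\ast}\,\dd\Vol_{\mathcal{W}^s_a(\overline{z})}=J_{\overline{z}}\,\dd\Vol_{\phi(\overline{z})G^s_a}$ with $J_{\overline{z}}$ continuous and positive, so the ratio is bounded away from $0$. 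If you want to salvage your scheme, you would need to replace the limiting argument in your third step by this kind of quantitative, measure-theoretic degeneration-of-volume argument at points of $A$; topological transitivity and density of $A^c$ alone cannot rule out $A$ being a nonempty closed nowhere dense invariant set.
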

\begin{proof}
For contradiction, we assume that $A$ is not empty. Then $A$ supports an ergodic $\rho$-invariant probability measure $\overline{\mu}$. By Oseledets' multiplicative ergodic theorem adapted to $\Z^k$-actions (cf. \cite[Proposition 2.1]{kalinin_sadovskaya2006} and \cite[Proposition 4.5]{hertz_wang2014}), there are finitely many linear functionals $\overline{\chi} \in (\R^k)^{\ast}$, an $\rho$-invariant 
subset $A' \subset A$ with $\overline{\mu}(A') =1$ and a $\rho$-invariant measurable splitting
$$ \T_{\overline{z}} (M) = \bigoplus_{\overline{\chi}} E^{\overline{\chi}}_{\overline{\mu}} (\overline{z})$$
over $\overline{z} \in A'$ such that for all $a \in \Z^k$ and $v \in E^{\overline{\chi}}_{\overline{\mu}}$,
$$\lim_{k \rightarrow \infty} \frac{\log \|D \rho(k a) v \|}{k} = \overline{\chi}(a).$$
By Pesin's strong stable manifold theorem (cf. \cite{ruelle1979}, \cite[Theorem 3.2]{hertz2007} and \cite[Lemma 4.6]{hertz_wang2014}), one can modify $A'$ such that for all $\overline{z} \in A'$ and $a \in \Z^k$, there are unique manifolds $\mathcal{W}^s_{a, \overline{\mu}}(\overline{z})$ and $\mathcal{W}^u_{a, \overline{\mu}} (\overline{z})$ respectively tangent to the stable and unstable distributions 
$$\begin{array}{ccc}E^s_{a, \overline{\mu}} := \bigoplus_{\overline{\chi}(a) <0} E^{\overline{\chi}}_{\overline{\mu}}, & \text{ and } &  E^u_{a, \overline{\mu}} := \bigoplus_{\overline{\chi}(a) >0} E^{\overline{\chi}}_{\overline{\mu}}, \end{array}$$
moreover, near $\overline{z}$, $\mathcal{W}^s_{a, \overline{\mu}}(\overline{z})$ is given by the set of $\overline{y} \in M(\overline{z})$ (recall that $M(\overline{z})$ denotes a {\em manifold slice} passing through $\overline{z}$, see Definition \ref{def:manifold-slice}) satisfying that $\dist (\overline{y}, \overline{z}) <\epsilon$ for some $\epsilon >0$ depending on $\overline{z}$, and 
$$\limsup_{k \rightarrow \infty} \log \dist (\rho(ka)\overline{y} , \rho(ka)\overline{z}) \leq \max \{\overline{\chi}(a): \overline{\chi}(a)<0\}.$$
Since $\phi$ is a bi-\holder conjugacy between $\rho$ and $\rho_l$, it is easily seen that if $\overline{\chi}(a) \neq 0$ for all Lyapunov functional $\overline{\chi}$ for $\overline{\mu}$, then for $\square = s, u$, 
$$\dim E^{\square}_{a, \overline{\mu}} = \dim \mathfrak{g}^{\square}_{a},$$
and 
$$\phi(\mathcal{W}^{\square}_{a, \overline{\mu}} (\overline{z})) = \phi(\overline{z}) G^{\square}_a ,$$
see \cite[Proposition 3.1 and Corollary 3.3]{hertz2007} and \cite[Lemma 4.7]{hertz_wang2014} for details.
\par Let us define {\bf coarse Lyapunov distributions} of $\overline{\mu}$ as follows:
\begin{equation}
\label{equation_coarse_lyapunov_distribution}
E^{[\overline{\chi}]}_{\overline{\mu}} := \bigoplus_{\overline{\chi}' = c \overline{\chi}, c >0} E^{\overline{\chi}'}_{\overline{\mu}}.
\end{equation}
By \cite[Lemma 4.9]{hertz_wang2014}, the {\bf coarse Lyapunov subspaces} in Definition \ref{def_coarse_lyapunov} and the {\bf coarse Lyapunov distributions} defined above are in one-to-one correspondence to each other. A pair of corresponding {\bf coarse Lyapunov subspace} and {\bf coarse Lyapunov distribution} have the same dimension and proportional {\bf coarse Lyapunov exponents}. 
\par According to the decomposition 
$$\mathfrak{g}^s_a = \sigma \oplus \mathfrak{g}^{ss}_a,$$
we decompose $E^s_{a, \overline{\mu}}$ as 
$$E^s_{a, \overline{\mu}} = E^{V}_{a, \overline{\mu}} \oplus E^{ss}_{a, \overline{\mu}},$$
where $E^V_{a, \overline{\mu}} = E^{[\overline{\chi}]}_{a, \overline{\mu}}$ is the {\bf coarse Lyapunov distribution} corresponding to $\sigma = \sigma^{[\chi^l]}$, and $E^{ss}_{a, \overline{\mu}}$ is the direct sum of {\bf coarse Lyapunov distributions} $E^{[\overline{\chi}_1]}_{a, \overline{\mu}}$ corresponding to $\sigma^{[\chi^l_1]} \subset \mathfrak{g}^{ss}_a$. Moreover, there exists $\lambda >0$ such that for any $\xi >0$, we can choose $a \in \Sigma \cap \mathcal{C}_0 $ such that 
 \begin{equation}
 \begin{array}{ll}
 \chi^l_1(a) \in (- \xi \|a\|, 0) & \text{if } \sigma^{\chi^l_1} \subset \sigma ; \\
 \chi^l_1(a) < -\lambda \|a\| & \text{if } \sigma^{\chi^l_1} \subset \mathfrak{g}^{ss}_a ; \\
 \chi^l_1(a)>0 & \text{if } \sigma^{\chi^l_1} \subset \mathfrak{g}^u_a .
 \end{array}
 \end{equation}
 Then by the correspondence, the {\bf Lyapunov exponents} of $\rho(a)$ with respect to $E^u_{a, \overline{\mu}}$, $E^V_{a, \overline{\mu}}$ and $E^{ss}_{a, \overline{\mu}}$ are in the intervals $(0,\infty)$, $(-\kappa \eta \|a\|, 0)$ and $(- \infty, - \kappa^{-1} \lambda \|a\|)$, respectively, where $\kappa>1$ denotes a constant determined by the \holder index $\theta$ of the conjugacy $\phi$.
Since the stable and unstable foliations remain the same after $a \in \Sigma \cap \mathcal{C}_0$ is changed, we have that $\rho(a)$ is still uniformly hyperbolic. 
\par Let $N $, $N^u$, $N^s$, $N^V $ and $N^{ss}$ denote the dimension of $M$, $E^u_{a, \overline{\mu}}$, $E^s_{a, \overline{\mu}}$, $E^V_{a, \overline{\mu}}$ and $E^{ss}_{a, \overline{\mu}}$ respectively. Let $B_{\R^n}(r)$ denote the ball in $\R^n$ centered at $\mathbf{0}$ with radius $r$. By the work of Ledrappier and Young \cite{ledrappier_young1985},  at $\overline{\mu}$-almost every $\overline{z}$, the {\bf Lyapunov decomposition} at $\overline{z}$: 
$$\T_{\overline{z}}(M) = E^u_{a, \overline{\mu}} \oplus E^{V}_{a,\overline{\mu}} \oplus E^{ss}_{a, \overline{\mu}}$$
 can be locally foliated. To be specific, there exists a $\rho(a)$-invariant subset $A'' \subset A$ with $\overline{\mu}(A'') =1$ and a measurable function $l: A'' \rightarrow (1, \infty)$ such that for $\overline{z} \in A''$, there is an embedding 
 $$\Phi_{\overline{z}}: B_{\R^N}(l^{-1}(\overline{z})) \rightarrow M(\overline{z}), $$
 satisfying several nice properties:
 \begin{enumerate}
 \item $\Phi_{\overline{z}}(\mathbf{0}) = \overline{z}$, and $D |_{\mathbf{x} = \mathbf{0}} \Phi_{\overline{z}} $ sends the splitting 
 $\R^{N^{ss}} \oplus \R^{N^V} \oplus \R^{N^u} $ to $E^{ss}_{a, \overline{\mu}} \oplus E^{V}_{a, \overline{\mu}} \oplus E^{u}_{a, \overline{\mu}}$.
 \item Set $f_{\overline{z}} = \Phi^{-1}_{\rho(a)\overline{z}} \circ \rho(a) \circ \Phi_{\overline{z}}$ and 
 $f^{-1}_{\overline{z}} =\Phi^{-1}_{\rho(-a) \overline{z}} \circ \rho(-a) \circ \Phi_{\overline{z}}$, then for $\square = u , V,$ or $ss$, and all non-zero vector $v \in \R^{N^{\square}}$, 
 $$\log \frac{\|(D |_{\mathbf{x} = \mathbf{0}}f_{\overline{z}}) v\|}{\|v\|} \in (\lambda^{\square}_{-} - \epsilon , \lambda^{\square}_{+} + \epsilon),$$
 where $\lambda^{\square}_{-}$ and $\lambda^{\square}_{+}$ denote respectively the smallest and largest {\bf Lyapunov exponents} of $\rho(a)$ on $E^{\square}_{a, \overline{\mu}}$ and the constant $\epsilon >0$ can be chosen arbitrarily small by modifying $A''$. 
 \item For $\mathbf{x}, \mathbf{x}' \in B_{\R^N}(l(\overline{z})^{-1})$, $c < \frac{\|\mathbf{x} - \mathbf{x}'\|}{\dist (\Phi_{\overline{z}}(\mathbf{x}) , \Phi_{\overline{z}}(\mathbf{x}') )} < l(\overline{z})$.
 \end{enumerate} 
 The foliations $\mathcal{W}^{\square}_{a}$ (for $\square= u, s $) near $\overline{z}$ can be translated to the corresponding foliations on $\R^N$ by $\Phi_{\overline{z}}^{-1}$. By \cite[Lemma 8.2.3 \& 8.2.5 ]{ledrappier_young1985}, there exists $\tau \in (0, 1/2)$, such that for all $\overline{y} \in \mathcal{W}^s_a(\overline{z}) \cap \Phi_{\overline{z}} (B_{\R^N} (\tau l^{-1}(\overline{z})))$, the image of the strong stable foliation $\mathcal{W}^{ss}_a(\overline{y})$ passing through $\overline{y}$ under $\Phi_{\overline{z}}^{-1}$ is the graph of a map $g_{\overline{z}, \overline{y}} : \R^{N^{ss}} \rightarrow \R^{N^V}$. This implies that $(H_{\overline{z}})_V \circ \Phi_{\overline{z}}$ is constant along the graph of $g_{\overline{z}, \overline{y}} $ for all $\overline{y} \in B_{\R^{N^s}} (l^{-1}(\overline{z}))$, cf. \cite[Lemma 4.12]{hertz_wang2014}. Moreover, by \cite[\S 8.3]{ledrappier_young1985}, for all $\overline{z} \in A''$, there exists a bi-Lipschitz homeomorphism 
 $$\pi_{\overline{z}}: \Phi_{\overline{z}}^{-1} (\mathcal{W}^s_a(\overline{z}) \cap \Phi_{\overline{z}}(B_{\R^N} (\tau l^{-1}(\overline{z}))) ) \rightarrow U^{N^s} \subset \R^{N^s}$$ such that for all $\overline{y} \in \mathcal{W}^s_a(\overline{z}) \cap \Phi_{\overline{z}}(B_{\R^N} (\tau l^{-1}(\overline{z})))$, $\pi_{\overline{z}}$ maps the graph of $g_{\overline{z}, \overline{y}}$ to a piece of a hyperplane parallel to $\R^{N^{ss}}$. Put 
 $P_{\overline{z}} : = \Phi_{\overline{z}} \circ \pi^{-1}_{\overline{z}}$, then $(H_{\overline{z}})_V \circ P_{\overline{z}}$ is constant along hyperplanes parallel to $\R^{N^{ss}}$.
 \par Now we are ready to give the contradiction, cf. \cite[Lemma 4.15 \& 4.16]{hertz_wang2014}.
 \par On the one hand, we claim that for every $\overline{z} \in A''$, there exists a decreasing sequence of bounded open neighborhoods 
 $B_{k, \overline{z}} \subset \mathcal{W}^{s}_{a}(\overline{z})$ of $\overline{z}$ such that 
 $$\lim_{k \rightarrow \infty} \frac{\Vol_{\phi(\overline{z})G^s_a } (\phi(B_{k, \overline{z}}))}{\Vol_{\mathcal{W}^s_a(\overline{z})} (B_{k,\overline{z}})} =0,$$
 where $\Vol_{\phi(\overline{z})G^s_a }$ and $\Vol_{\mathcal{W}^s_a(\overline{z})}$ denote the volume forms of the induced Riemannian metrics on 
 $\phi(\overline{z})G^s_a $ and $\mathcal{W}^s_a(\overline{z})$ respectively.
 \par In fact, we may choose $\delta_0 >0$ and a neighborhood $B_{\overline{z}} \subset \mathcal{W}^s_a(\overline{z})$ such that 
 $$B_{\R^{N^{ss}}} (\delta_0) \times B_{\R^{N^V}}(\delta_0) \subset P_{\overline{z}}^{-1}(B_{\overline{z}}).$$
 Fix a decreasing sequence $\{\delta_k >0 : k \in \N\}$ approaching $0$ as $k \rightarrow \infty$. For each $k \in \N$, define 
 $$B_{k, \overline{z}} := P_{\overline{z}} \left( B_{\R^{N^{ss}}} (\delta_0) \times B_{\R^{N^V}} (\delta_k) \right).$$
 Since $P_{\overline{z}}$ is bi-Lipschitz, to show the claim, it suffices to show that
 $$\lim_{k \rightarrow \infty} \frac{ \Vol_{\phi(\overline{z}) G^s_a} (\phi \circ P_{\overline{z}} (B_{\R^{N^{ss}}} (\delta_0) \times B_{\R^{N^V}} (\delta_k)))}{\Vol_{\R^{N^s}} (B_{\R^{N^{ss}}} (\delta_0) \times B_{\R^{N^V}} (\delta_k))} = 0.$$
 The denominator is of order $O(\delta_k^{N^V})$. Let us analyze the numerator. Our aim is to show that the numerator is of order $o(\delta^{N^V}_k)$ (cf. Notation \ref{notation}).  Since $G^s$ is decomposed as $G^{ss} \cdot V$ and since $V$ normalizes $G^{ss}$, we have that $\dd \Vol_{G^s} = \dd \Vol_{G^{ss}} \cdot \dd \Vol_V$. It is easy to see that the $G^{ss}$-projection of $\phi \circ P_{\overline{z}} (B_{\R^{N^{ss}}} (\delta_0) \times B_{\R^{N^V}} (\delta_k))$ is uniformly bounded, so to show that 
 $$\Vol_{\phi(\overline{z}) G^s_a} (\phi \circ P_{\overline{z}} (B_{\R^{N^{ss}}} (\delta_0) \times B_{\R^{N^V}} (\delta_k))) = o(\delta_k^{N^V}),$$
 it suffices to show that 
 $$\Vol_V \left(  (H_{\overline{z}})_V \circ P_{\overline{z}} (B_{\R^{N^{ss}}}(\delta_0) \times B_{\R^{N^V}} (\delta_k) ) \right) = o(\delta_k^{N^V}).$$
 We have seen that $(H_{\overline{z}})_V \circ P_{\overline{z}}$ only depends on the second coordinate, so 
  $$(H_{\overline{z}})_V \circ P_{\overline{z}} (B_{\R^{N^{ss}}}(\delta_0) \times B_{\R^{N^V}} (\delta_k) ) = (H_{\overline{z}})_V \circ P_{\overline{z}} (B_{\R^{N^{ss}}}(\delta_k) \times B_{\R^{N^V}} (\delta_k) ).$$
  Note that $P_{\overline{z}}$ is Lipschitz, we have that 
  $$P_{\overline{z}} (B_{\R^{N^{ss}}}(\delta_k) \times B_{\R^{N^V}} (\delta_k) ) \subset B_{\mathcal{W}^s_a(\overline{z})}(\overline{z}, C \delta_k)$$ for a constant $C = C(\overline{z})$, here $B_{\mathcal{W}^s_a(\overline{z})} (\overline{z}, r)$ denotes the ball in $\mathcal{W}^s_a(\overline{z})$ centered at $\overline{z}$ of radius $r$. Then to prove the claim, it is enough to show that 
  $$\Vol_V \left( (H_{\overline{z}})_V (B_{\mathcal{W}^s_a (\overline{z})} (\overline{z}, \delta)) \right) = o(\delta^{N^V}) \text{ as } \delta \rightarrow 0.$$
  This is true since by our hypothesis, for $\overline{z} \in A'' \subset A$,  $D_{\mathcal{W}^s_a} |_{\overline{y} = \overline{z}} (H_{\overline{z}} (\overline{y}) )_V: \mathcal{E}^s_a(\overline{z}) \rightarrow  \sigma $ has rank less than $N^V = \dim \sigma$.
  
  \par On the other hand, we claim that for every $\overline{z} \in \s(M)$, there exists a positive continuous function $J_{\overline{z}}$ such that 
  $$\phi_{\ast} \dd \Vol_{\mathcal{W}^s_a (\overline{z})} = J_{\overline{z}} \dd \Vol_{\phi(\overline{z}) G^s_a} .$$
Note that this claim contradicts the previous one. Therefore, to complete the proof, it suffices to prove this claim.
\par Recall that $\mu$ denotes the Haar measure on the solenoid $\s(M)$ and $\tilde{\mu} := \phi^{-1}_{\ast} \mu$. Let $\tilde{\mu}^s_{\overline{z}}$ denote the condiitonal measure of $\tilde{\mu}$ along the stable leaf $\mathcal{W}^s_a(\overline{z})$. Then the Radon-Nykodim derivative of $\dd \tilde{\mu}^s_{\overline{z}}$ with respect to $\dd \Vol_{\mathcal{W}^s_a(\overline{z})}$ can be calculated as follows:
\begin{equation}
\label{radon_nykodim}
\frac{\dd \tilde{\mu}^s_{\overline{z}}}{ \dd \Vol_{\mathcal{W}^s_a (\overline{z})}} (\overline{y}) = r_{\overline{z}}(\overline{y}) := \prod_{k \geq 0} \frac{J^s\rho(a) (\rho(ka)(\overline{y}))}{J^s \rho(a) (\rho(ka)(\overline{z}))}, \text{ for } \overline{y} \in \mathcal{W}^s_a(\overline{z}),\end{equation}
 where $J^s \rho(a)$ denotes the Jacobian of $\rho(a)$ along the stable bundle. Since $J^s \rho(a)$ is \holder continuous, and $\rho(ka)$ contracts the stable leaf $\mathcal{W}^s_a(\overline{z})$ exponentially with $k$, we have that the infinite product (\ref{radon_nykodim}) is uniformly convergent. Therefore $r_{\overline{z}}(\overline{y})$ is uniformly bounded for $\overline{y}$ in a neighborhood $B \subset \mathcal{W}^s_a(\overline{z})$ of $\overline{z}$. The same argument shows that $r_{\overline{y}}(\overline{z})$ is also uniformly bounded. Note that $r_{\overline{y}}(\overline{z}) = r_{\overline{z}}^{-1}(\overline{y})$, we conclude that $r_{\overline{z}}(\overline{y})$ is also bounded away from zero. Note that for almost every $\overline{z}$, 
 $$ \dd \tilde{\mu}^s_{\overline{z}} = r_{\overline{z}} \dd \Vol_{\mathcal{W}^s_a(\overline{z})},$$
 and
 $$ \dd \phi_{\ast} \tilde{\mu}^s_{\overline{z}} = \dd \Vol_{\phi(\overline{z}) G^s_a} .$$
 Define 
 $$J_{\overline{z}}(\overline{y}) :=  \frac{1}{r_{\overline{z}} (\phi^{-1} (\overline{y}))},$$
 then it is easy to see that $J_{\overline{z}}$ is a continuous positive function, and we have that for almost every $\overline{z}$,
 $$\phi_{\ast} \dd \Vol_{\mathcal{W}^s_a(\overline{z})} = \phi_{\ast} (r_{\overline{z}}^{-1} \dd \tilde{\mu}^s_{\overline{z}}) = J_{\overline{z}} \phi_{\ast} \dd \tilde{\mu}^s_{\overline{z}} = J_{\overline{z}} \dd \Vol_{\phi(\overline{z}) G^s_a}.$$
 Because both sides of the above equation are continuous and agree on a full measure subset, we have that the equation holds for every 
 $\overline{z} \in \s(M)$. This proves the claim, and hence get the contradiction.
 \par This completes the proof.     \end{proof}

It immediately follows that:
\begin{corollary}
$\mathcal{W}^{ss}_a$ defines a \holder foliation consisting of smooth leaves. 
\end{corollary}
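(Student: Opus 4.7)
The plan is to directly combine Proposition \ref{prop_A_empty} with the earlier structural results, invoking the criterion from \cite[Corollary 4.3]{hertz_wang2014} mentioned right before Proposition \ref{prop_A_empty}. Since $A = \emptyset$, for every $\overline{x} \in \s(M)$ the differential $D_{\mathcal{W}^s_a}|_{\overline{y}=\overline{x}} (H_{\overline{x}}(\overline{y}))_V : \mathcal{E}^s_a(\overline{x}) \to \sigma$ is surjective (equivalently, of maximal rank $N^V = \dim \sigma$). I would first translate this regularity into a local parametrization of $\mathcal{W}^{ss}_a(\overline{x})$ via the implicit function theorem.

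Concretely, Lemma \ref{lemma_local_strong_stable} identifies $\mathcal{W}^{ss}_a(\overline{x})$ inside $\mathcal{W}^s_a(\overline{x})$ as the zero set of the equation $(H_{\overline{x}}(\overline{y}))_V = e$. Using the factorization $(H_{\overline{x}}(\overline{y}))_V = h_V(\overline{y}) \bigl((h(\overline{x}) p(\overline{x},\overline{y})^{-1})^{-1}_s\bigr)_V$ from Proposition \ref{proposition_group_coordinate}, together with Proposition \ref{prop_smooth_hV} (which gives $h_V \in C^{\infty,\theta}_{\mathcal{W}^s_a}$) and the smoothness of $p(\overline{x},\cdot)$ along $\mathcal{W}^s_a(\overline{x})$, the map $\overline{y} \mapsto (H_{\overline{x}}(\overline{y}))_V$ is $C^\infty$ along $\mathcal{W}^s_a(\overline{x})$. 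Combining this smoothness with the surjectivity of its differential at $\overline{y}=\overline{x}$, the implicit function theorem applied inside $\mathcal{W}^s_a(\overline{x})$ exhibits $\mathcal{W}^{ss}_a(\overline{x})$ as a $C^\infty$ submanifold near $\overline{x}$. Iterating this local description along the leaf (by translating the basepoint, using that $\mathcal{W}^{ss}_a$ is defined globally via $\phi^{-1}(\phi(\overline{x})G^{ss}_a)$ and hence the local pieces patch together along the intrinsic orbit) yields smoothness of the whole leaf.

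For the Hölder character of the foliation, I would invoke the already established Hölder regularity of $\mathcal{W}^s_a$ (which holds because $\rho(a)$ is uniformly hyperbolic) together with the Hölder dependence of $\overline{x} \mapsto H_{\overline{x}}$ and of $\overline{x} \mapsto D_{\mathcal{W}^s_a}|_{\overline{y}=\overline{x}}(H_{\overline{x}}(\overline{y}))_V$ noted immediately after Lemma \ref{lemma_local_strong_stable}. Since Proposition \ref{prop_A_empty} gives these differentials uniform maximal rank on any compact piece of $\s(M)$, the local inverses supplied by the implicit function theorem depend Hölder continuously on the basepoint, so the transverse holonomy of $\mathcal{W}^{ss}_a$ is Hölder. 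This is precisely the content of \cite[Corollary 4.3]{hertz_wang2014}, adapted to our solenoid setting.

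No substantial obstacle remains at this stage; the hard work was already absorbed into Proposition \ref{prop_A_empty} (where exponential mixing, Ledrappier--Young theory, and the volume comparison were the crucial ingredients). The residual task is essentially bookkeeping: assembling the local implicit function theorem argument along leaves of $\mathcal{W}^s_a$ and invoking the quoted criterion. The one point that warrants a brief verification is that the local smooth graphs produced by the implicit function theorem coincide with the intrinsic definition $\phi^{-1}(\phi(\overline{x})G^{ss}_a)$ and not merely with some auxiliary level set; this follows from Lemma \ref{lemma_local_strong_stable} together with connectedness of the level sets in question.
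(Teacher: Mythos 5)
Your proposal is correct and follows essentially the same route as the paper: combine Proposition \ref{prop_A_empty} (regularity of $(H_{\overline{x}}(\overline{y}))_V$ at every $\overline{x}$) with Lemma \ref{lemma_local_strong_stable} (the local defining equation $(H_{\overline{x}}(\overline{y}))_V = e$) and the smoothness of $h_V$ along $\mathcal{W}^s_a$ from Proposition \ref{prop_smooth_hV}, then conclude via the implicit function theorem and the criterion of \cite[Corollary 4.3]{hertz_wang2014}. The paper's own proof is just a terser version of this; your added detail on the implicit function theorem and the H\"older dependence of the derivatives on the basepoint is consistent with what the paper sets up immediately after Lemma \ref{lemma_local_strong_stable}.
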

\begin{proof}
By Proposition \ref{prop_A_empty}, for every $\overline{z} \in \s(M)$, the function $(H_{\overline{z}})_V$ (restricted to $\mathcal{W}^s_a(\overline{z})$) is smooth and regular at $\overline{z}$. Since $\mathcal{W}^{ss}_a(\overline{z})$ is locally defined by $(H_{\overline{z}})_V = e$, we conclude that $\mathcal{W}^{ss}_{a}(\overline{z})$ is smooth. 
\par This completes the proof of Proposition \ref{prop_A_empty}.
\end{proof}
\begin{remark}
We call $\mathcal{W}^{ss}_a$ the strongly stable foliation of $\rho(a)$.
\end{remark}

This result combined with a result of Ma\~n\'e (see \cite{mane1977}) implies the following:
\begin{proposition}
\label{prop_uniform_hyperbolic}
For any {\bf Weyl chamber} $\mathcal{C}$ adjacent to $\mathcal{C}_0$, and for any $a' \in \mathcal{C}$, $\rho(a')$ is uniformly hyperbolic.
\end{proposition}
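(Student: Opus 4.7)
I would prove uniform hyperbolicity of $\rho(a')$ by producing a continuous $\rho(a')$-invariant splitting of the tangent bundle compatible with the coarse Lyapunov decomposition, and then invoking Ma\~n\'e's theorem. First, by Proposition~\ref{lyapunov_exponent_correspondence} the coarse Lyapunov distributions of $\rho$ are in bijection with those of $\rho_l$, with coarse Lyapunov exponents that are positively proportional; in particular the Weyl chambers of $\rho$ and $\rho_l$ coincide. Hence for any $a' \in \mathcal{C}$, no coarse Lyapunov exponent $\chi$ of $\rho$ vanishes on $a'$. Grouping the coarse Lyapunov distributions according to the sign of $\chi(a')$ yields a $\rho$-invariant continuous splitting
\[
\mathcal{E}(\s(M)) = \mathcal{E}^s_{a'} \oplus \mathcal{E}^u_{a'},
\]
the candidate stable/unstable bundle for $\rho(a')$.

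Next I would identify these bundles geometrically using the adjacency assumption. Since $\mathcal{C}$ is adjacent to $\mathcal{C}_0$ across the single wall $\mathrm{ker}\,\chi^l$, the only coarse Lyapunov exponent whose sign flips between $\mathcal{C}_0$ and $\mathcal{C}$ is the one associated with $\sigma = \sigma^{[\chi^l]}$. Consequently $\mathcal{E}^s_{a'} = \mathcal{E}^{ss}_a$ and $\mathcal{E}^u_{a'} = E^{[\chi]} \oplus \mathcal{E}^u_a$. By Proposition~\ref{prop_A_empty} and its corollary, $\mathcal{E}^{ss}_a$ is tangent to the H\"older foliation $\mathcal{W}^{ss}_a$ with smooth leaves; the analogous statement for $\mathcal{E}^u_{a'}$ follows either by running the entire previous construction with $a$ replaced by $-a'$ (which lies in a Weyl chamber containing a uniformly hyperbolic element), or by reading off the smooth Lie subgroup $V \cdot G^u_a$ on the $\rho_l$ side and transferring via $\phi$. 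In either case we obtain a continuous $\rho(a')$-invariant splitting on $\s(M)$.

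To upgrade this to uniform hyperbolicity, I would apply Ma\~n\'e's criterion from \cite{mane1977}: a $C^1$ map with a continuous invariant splitting is uniformly hyperbolic as soon as every invariant Borel probability measure has Lyapunov exponents negative on the first bundle and positive on the second. Any $\rho(a')$-invariant probability measure $\nu$ on $\s(M)$ pushes forward under $\phi$ to a $\rho_l(a')$-invariant measure $\phi_\ast\nu$, and along the conjugacy the Lyapunov exponents of $\rho(a')$ on $\mathcal{E}^s_{a'}$ and $\mathcal{E}^u_{a'}$ are positively proportional to those of $\rho_l(a')$ on $\mathfrak{g}^{ss}_a$ and $\sigma \oplus \mathfrak{g}^u_a$ respectively. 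The latter are the algebraic Lyapunov exponents of the linear map $D\rho_l(a')$, which are bounded away from zero with the correct signs (using Lemma~\ref{lemma_max_lyapunov_real} together with the fact that $a' \in \mathcal{C}$). Hence every invariant measure satisfies the hypotheses of Ma\~n\'e's theorem, and $\rho(a')$ is uniformly hyperbolic.

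The main obstacle is the last step: one must be careful applying Ma\~n\'e's theorem in the solenoid setting, because $\rho(a')$ is an extended $\Z^k$-action on $\s(M)$ (where it is a homeomorphism, smooth only along manifold slices) rather than a diffeomorphism on a closed manifold. The H\"older correspondence between Lyapunov exponents of $\rho$ and $\rho_l$ along invariant measures, and the continuity (rather than only measurability) of the coarse Lyapunov splitting, are what allow the argument to go through; verifying these details in the solenoid framework is the technical heart of the step.
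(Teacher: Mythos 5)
Your argument is correct and is essentially the paper's: the paper proves this proposition simply by citing \cite[Proposition 4.17]{hertz_wang2014} together with Ma\~n\'e's criterion \cite{mane1977}, and your reconstruction (a continuous $\rho(a')$-invariant splitting obtained from the smooth strong stable/unstable laminations of Proposition \ref{prop_A_empty}, followed by Ma\~n\'e's theorem with the exponent signs transferred through $\phi$) is exactly the argument of that cited proposition. The only point to watch is that if $[-\chi^l]$ is also a Lyapunov exponent then $\mathcal{E}^s_{a'}$ is $\mathcal{E}^{ss}_a$ plus the coarse distribution for $[-\chi]$, so the symmetric construction you mention is genuinely needed, not optional.
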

\begin{proof}
See the proof of \cite[Proposition 4.17]{hertz_wang2014}.
\end{proof}
\begin{remark}
It follows from the proposition that for any {\bf Weyl chamber} $\mathcal{C}$ and any $a' \in \mathcal{C}$, $\rho(a')$ is uniformly hyperbolic.
\end{remark}
Combined with the discussion at the beginning of the section, Propostion \ref{prop_uniform_hyperbolic} implies the following:
\begin{theorem}
\label{thm:lyapunov-foliation}
For every {\bf coarse Lyapunov exponent} $[\chi]$ of $\rho$, the corresponding {\bf coarse Lyapunov distribution} $E^{[\chi]}$ admits a \holder foliation consisting of $\smooth$ leaves.
\end{theorem}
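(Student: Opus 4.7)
The plan is to combine Proposition~\ref{prop_uniform_hyperbolic} with the basic observation that intersections of stable and unstable foliations of suitably chosen uniformly hyperbolic elements produce the coarse Lyapunov foliations. Since the existence of the expanding element $\rho(a)$ guarantees at least one uniformly hyperbolic element lives in some real Weyl chamber $\mathcal{C}_0$, the remark following Proposition~\ref{prop_uniform_hyperbolic} upgrades this to: every real Weyl chamber of $\rho$ contains a uniformly hyperbolic element.

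Given a coarse Lyapunov exponent $[\chi]$ of $\rho$, I would first invoke Proposition~\ref{lyapunov_exponent_correspondence} to match $[\chi]$ with a coarse Lyapunov exponent $[\chi^l]$ of $\rho_l$ (with positively proportional functionals), so that $\ker\chi=\ker\chi^l$ is an actual Weyl chamber wall common to both actions. I would then pick two real Weyl chambers $\mathcal{C}_1,\mathcal{C}_2$ adjacent across this wall and no other, and choose uniformly hyperbolic elements $a_1\in\Z^k\cap\mathcal{C}_1$, $a_2\in\Z^k\cap\mathcal{C}_2$ using the remark cited above, arranged so that $\chi(a_1)<0<\chi(a_2)$.

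For each $a_i$, uniform hyperbolicity yields H\"older foliations $\mathcal{W}^s_{a_1}$ and $\mathcal{W}^u_{a_2}$ with $C^\infty$ leaves, tangent respectively to $\mathcal{E}^s_{a_1}$ and $\mathcal{E}^u_{a_2}$. I would then verify that, at the level of Lyapunov distributions,
\[
\mathcal{E}^s_{a_1}\cap \mathcal{E}^u_{a_2}=\bigoplus_{\chi':\,\chi'(a_1)<0,\;\chi'(a_2)>0} E^{\chi'}=E^{[\chi]},
\]
since the only Lyapunov exponents negative on $a_1$ and positive on $a_2$ are precisely those positively proportional to $\chi$ (the walls separating $\mathcal{C}_1$ from $\mathcal{C}_2$ being only $\ker\chi$). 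Transverse intersections of two H\"older foliations with $C^\infty$ leaves yield a H\"older foliation with $C^\infty$ leaves (leafwise smoothness comes from the classical fact that a transverse intersection of smooth submanifolds is smooth, and H\"older continuity of the intersection follows from that of each factor), which produces the desired foliation integrating $E^{[\chi]}$.

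The substantive content has already been absorbed into Proposition~\ref{prop_uniform_hyperbolic}; the remaining step is purely bookkeeping with Weyl chambers. The only mild technical point I would want to verify carefully is that the H\"older transversality of $\mathcal{W}^s_{a_1}$ and $\mathcal{W}^u_{a_2}$ is uniform, so that the intersection foliation is genuinely H\"older (not merely set-theoretically well-defined); this follows from the uniform transversality of $\mathcal{E}^s_{a_1}$ and $\mathcal{E}^u_{a_2}$ provided by their correspondence with the algebraic decomposition $\mathfrak{g}^s_{a_1}\oplus \mathfrak{g}^u_{a_2}\supset\sigma^{[\chi^l]}$ under $\phi$, analogous to Lemma~\ref{lemma_hertz_wang_33}.
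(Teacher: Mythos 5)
Your proposal is correct and follows essentially the same route as the paper: the paper also reduces the theorem to Proposition~\ref{prop_uniform_hyperbolic} (every real Weyl chamber contains a uniformly hyperbolic element) and then realizes the coarse Lyapunov foliation for $[\chi]$ as the intersection $\mathcal{W}^s_{a_1}\cap\mathcal{W}^u_{a_2}$ for uniformly hyperbolic elements $a_1,a_2$ in the two Weyl chambers separated only by $\ker\chi$. The Weyl-chamber bookkeeping and the transversality point you flag are exactly the "discussion at the beginning of the section" that the paper invokes when deducing the theorem.
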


\section{Regularity of the conjugacy}
\label{regularity}
In this section we will prove Theorem \ref{goal_thm} when $\dim M \geq 5$. As we discussed in \S \ref{preliminaries}, we write $\phi(\overline{z}) = \overline{z} h(\overline{z})$. Then it suffices to show that $h$ is $\smooth$. 
\par We follow the process described in \S \ref{outline_of_the_proof}. Recall that we follow Notation \ref{notation_lyapunov_exponent} and Notation \ref{notation_coarse_lyapunov_subspace} to denote {\bf coarse Lyapunov exponents} and {\bf coarse Lyapunov subgroups}. By Theorem \ref{thm:lyapunov-foliation}, every {\bf coarse Lyapunov exponent} $[\chi]$ of $\rho$ admits a \holder foliation with $\smooth$ leaves. Let us call it the {\bf coarse Lyapunov foliation} associated with $[\chi]$.
\subsection{Case of tori}
\par In this subsection, we assume that $M$ is a torus. By our discussion in \S \ref{outline_of_the_proof}, it is enough to show that 
$$h_V = \sum_{i=0}^{\infty} \rho_l(a)^{-i} \Phi \circ \rho(a)^i$$
is $\smooth$ for any {\bf coarse Lyapunov subgroup} $V$ associated with a {\bf coarse Lyapunov exponent} $[\chi^l]$.
\par By \cite[Corollary 8.4]{kalinin_fisher_spatzier2013}, to show $h_V$ is $\smooth$, it suffices to show that 
$h_V \in C^{\infty, \theta, \ast}_{\mathcal{V}'}(M)$ for every {\bf coarse Lyapunov foliation} $\mathcal{V}'$.
\begin{proposition}
\label{main_prop_torus}
For any {\bf coarse Lyapunov foliation} $\mathcal{V}'$, $h_V \in C^{\infty, \theta, \ast}_{\mathcal{V}'}$, i.e., for every $k \in \N$, 
$\partial^k_{\mathcal{V}'} h_V \in (C^{\theta} (M))^{\ast}$.
\end{proposition}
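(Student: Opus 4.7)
The plan is to mimic the strategy used in Proposition \ref{prop_linear_equation}, but now testing the distributional derivative $\partial^k_{\mathcal{V}'} h_V$ of the Duhamel-type series
\[
h_V = \sum_{i=0}^{\infty} \rho_l(a)^{-i} \Phi \circ \rho(a)^i
\]
against an arbitrary $\phi \in C^{\theta}(M)$. The first step is to choose the element $a \in \Sigma$ carefully: using Proposition \ref{prop_estimate_lyapunov} applied to the coarse Lyapunov exponent $[\chi^l]$ attached to $V$, one can pick $a \in \Sigma \cap \mathcal{C}$ (with $\mathcal{C}$ the Weyl chamber on the ``$V$-contracting'' side) so that $\rho_l(a)^{-1}$ contracts $V$ uniformly while the derivative of $\rho(a)$ along the coarse Lyapunov foliation $\mathcal{V}'$ grows at worst subexponentially: $\|D\rho(a)^i|_{E^{[\chi']}}\| = O(e^{i \xi \|a\|})$ for any prescribed $\xi > 0$. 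This also guarantees $\|\rho_l(a)^{-i}|_V\|$ is uniformly bounded (even contracting). One first checks, exactly as in Proposition \ref{prop_linear_equation}, that the series converges to $h_V$ in the distribution sense; this uses Corollary \ref{cor:exponential-mixing} together with $\int \Phi\, d\tilde\mu = 0$ (after absorbing the zero-average part).

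Next, for a fixed $k \in \N$ and $\phi \in C^{\theta}(M)$, one mollifies $\phi$ at scale $\epsilon > 0$, producing $\phi_\epsilon \in C^{\infty}(M)$ with the standard bounds $\|\phi - \phi_\epsilon\|_{\infty} \leq a_0 \epsilon^{\theta} \|\phi\|_{\theta}$ and $\|\phi_\epsilon\|_{C^{k+1}} \leq c_k \epsilon^{-\dim M - k - 1}\|\phi\|_{\infty}$. Then the test pairing splits:
\[
\langle \partial^k_{\mathcal{V}'}(\rho_l(a)^{-i} \Phi \circ \rho(a)^i), \phi \rangle
= \langle \cdot, \phi_\epsilon\rangle + \langle \cdot, \phi - \phi_\epsilon\rangle.
\]
For the smooth piece, integration by parts moves $\partial^k_{\mathcal{V}'}$ onto $\phi_\epsilon$, and Corollary \ref{cor:exponential-mixing} bounds the pairing by $a_1 \|\rho_l(a)^{-i}|_V\|\cdot \|\Phi\|_{\theta}\cdot \|\phi_\epsilon\|_{C^{k+1}}\cdot e^{-\eta' i \|a\|} \ll \epsilon^{-\dim M - k - 1} e^{-\eta' i \|a\|}\|\phi\|_{\theta}$. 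For the rough piece, one uses the chain-rule estimate from \cite[Lemma 3.6]{kalinin_fisher_spatzier2013}: $\|\partial^k_{\mathcal{V}'}(\Phi \circ \rho(a)^i)\|_\infty = O(e^{i k \xi \|a\|}\,i^T)$, together with $\|\rho_l(a)^{-i}|_V\|$ bounded, giving the bound $a_2 \epsilon^{\theta} e^{2 k \xi i \|a\|}\|\phi\|_{\theta}$. Balancing the two by setting $\epsilon = \exp(-i\|a\|(\eta' + 2k\xi)/(\dim M + k + 1 + \theta))$ yields a net geometric decay $e^{-\eta_1 i \|a\|}$ with $\eta_1 > 0$, provided the initial $\xi$ was taken small enough depending on $k$, $\theta$, $\eta'$ and $\dim M$.

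Summing the resulting geometric series in $i$ produces the uniform estimate $|\langle \partial^k_{\mathcal{V}'} h_V, \phi\rangle| \leq C_k\|\phi\|_{\theta}$, which is exactly the statement $\partial^k_{\mathcal{V}'} h_V \in (C^{\theta}(M))^{\ast}$ for every $k$. The verification that the sum-and-derivative can be interchanged in the distribution sense is routine given the exponential decay of each term.

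The main obstacle I expect is the precise coordination of constants in the $\xi$/$\eta'$/$k$ trade-off: the exponential mixing rate $\eta'$ furnished by Corollary \ref{cor:exponential-mixing} is \emph{fixed} (depending only on $\theta$), whereas each order $k$ of differentiation along $\mathcal{V}'$ costs us an additional factor $e^{i k \xi \|a\|}$ from \cite[Lemma 3.6]{kalinin_fisher_spatzier2013}, so $\xi$ must be chosen strictly smaller than $\eta'\theta/(2k(\dim M + k + 1))$ for every $k$. Since we get to pick $a \in \Sigma$ afresh for each $k$ via Proposition \ref{prop_estimate_lyapunov}, this is feasible, but it must be done with care to keep the implicit constants under control and to ensure that the same series representation is used for every $k$. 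Once this is achieved, \cite[Corollary 8.4]{kalinin_fisher_spatzier2013} closes the case of the torus.
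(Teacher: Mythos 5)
Your proposal is correct and follows essentially the same route as the paper: the paper's proof consists precisely of choosing $a\in\Sigma$ with $\chi^l(a)>0$ and $|\chi'(a)|<\xi\|a\|$ for all $\chi'\in[\chi']$, writing $h_V=\sum_{i\ge 0}\rho_l(a)^{-i}\Phi\circ\rho(a)^i$, and then repeating verbatim the mollification/exponential-mixing argument of Proposition \ref{prop_linear_equation} with $\mathcal{W}^s_a$ replaced by $\mathcal{V}'$. Your remarks on the $\xi$-versus-$k$ trade-off and on re-choosing $a$ for each $k$ correctly identify the only delicate point, and are consistent with how the paper handles it.
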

\begin{proof}
Suppose $\mathcal{V}'$ is associated with the {\bf coarse Lyapunov exponent} $[\chi']$. For any fixed small constant $\xi >0$,  choose $a \in \Sigma$ such that $\chi^l(a) >0$ and $|\chi'(a)| < \xi \|a\|$ for all $\chi' \in [\chi']$. Then we have that 
$$h_V =   \sum_{i=0}^{\infty} \rho_l(a)^{-i} \Phi \circ \rho(a)^i ,$$
where $\Phi = (Q'_a)_V$. Then repeating the argument in the proof of Proposition \ref{prop_linear_equation} to $\mathcal{V}'$, one concludes that
$h_V \in C^{\infty, \theta, \ast}_{\mathcal{V}'}(M)$.
\end{proof}
\begin{proof}[Proof of Theorem \ref{goal_thm} for the case of tori]
By \cite[Corollary 8.4]{kalinin_fisher_spatzier2013}, Proposition \ref{main_prop_torus} implies that $h_V \in C^{\infty}(M)$. This proves that $h$ is $\smooth$ since $h$ is the sum of $h_V$'s where $V$ runs over all {\bf coarse Lyapunov subgroups} of $\rho_l$.
\end{proof}

\subsection{General case}
Now we deal with the general case. As described in \S \ref{outline_of_the_proof}, we consider the derived series of $N$:
$$N = N_0 \supset N_1 \supset \cdots \supset N_{r-1} \supset N_r = \{0\}.$$
Note that $\rho_l$ preserves $N_i$ for each $i=0,1,\dots, r$.
\par We will prove Theorem \ref{goal_thm} by showing the following stronger statement.
\begin{proposition}
\label{prop_goal}
For $i=0,1,\dots, r$, let $h: M \rightarrow N_i(\R)$ (regarded as a map defined on $\s(M)$) be a $\theta$-\holder map. Suppose for all $a \in \Z^k$, there exists a $\smooth$ map $Q_a : \s(M) \rightarrow N_i(\R)$, such that
 $$h(\overline{z}) = Q_a(\overline{z}) \rho_l^{-1}(a) h\circ \rho(a)(\overline{z}).$$
 Then $h$ is $\smooth$.
\end{proposition}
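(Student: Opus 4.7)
The plan is to proceed by downward induction on $i$ along the derived series
\[
N = N_0 \supset N_1 \supset \cdots \supset N_r = \{0\},
\]
peeling off one abelian quotient at each step. The base case $i = r$ is trivial since $N_r = \{0\}$. For the inductive step I would assume the statement at index $i+1$ and reduce the statement at index $i$ to it, plus a torus-type cocycle equation already handled by Proposition \ref{main_prop_torus}.

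First I would push the cocycle equation down to the abelian quotient. Let $\pi : N_i \to V_i := N_i/N_{i+1}$ be the canonical projection; since $N_{i+1} = [N_i, N_i]$, the quotient $V_i$ is a vector group on which $\rho_l$ acts by linear automorphisms. Setting $\bar h := \pi \circ h$ and $\bar Q_a := \pi \circ Q_a$, and using that $\pi$ is a homomorphism, the hypothesis projects to the additive cocycle equation
\[
\bar h(\overline z) = \bar Q_a(\overline z) + \rho_l^{-1}(a)\,\bar h(\rho(a)\overline z),
\]
which is formally identical to the linearized equation \eqref{cohomological_equation_linear} treated in the torus case. I would then decompose $V_i$ along its coarse Lyapunov subspaces under $\rho_l$ and run the proofs of Propositions \ref{prop_linear_equation} and \ref{main_prop_torus} component by component (for each coarse Lyapunov direction $[\chi^l]$ pick $b \in \Sigma$ as in Proposition \ref{prop_estimate_lyapunov} so that $\rho_l^{-1}(b)$ contracts that component while $\|\rho_l(b)|_{\mathcal{E}^s_a}\|$ is slow enough). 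That machinery uses only exponential mixing of $\rho$ on $\s(M)$ (Corollary \ref{cor:exponential-mixing}), smoothness of the coarse Lyapunov leaves of $\rho$ (Theorem \ref{thm:lyapunov-foliation}), and the distributional-to-smooth bootstrap from \cite[Corollary 8.4]{kalinin_fisher_spatzier2013}; nothing in it uses the target beyond its being a $\rho_l$-module, so the transfer is routine and yields $\bar h \in \smooth$.

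Next I would lift $\bar h$ to a $\smooth$ map $\tilde h : \s(M) \to N_i(\R)$ by post-composing with a smooth section of $\pi$ (globally available in exponential coordinates since $N_i$ is simply connected nilpotent), and set $h_1(\overline z) := \tilde h(\overline z)^{-1} h(\overline z)$. Then $h_1$ is $\theta$-\holder and takes values in $N_{i+1}(\R)$; substituting $h = \tilde h \cdot h_1$ into the hypothesized equation and using that $\rho_l^{-1}(a)$ is a group automorphism of $N_i$ yields
\[
h_1(\overline z) = Q_a'(\overline z)\, \rho_l^{-1}(a)\, h_1(\rho(a)\overline z), \qquad Q_a'(\overline z) := \tilde h(\overline z)^{-1}\, Q_a(\overline z)\, \rho_l^{-1}(a)\tilde h(\rho(a)\overline z).
\]
The new coboundary $Q_a'$ is manifestly $\smooth$, and projecting under $\pi$ gives $-\bar h(\overline z) + \bar Q_a(\overline z) + \rho_l^{-1}(a)\bar h(\rho(a)\overline z) = 0$ in $V_i$, so $Q_a'$ takes values in $N_{i+1}(\R)$. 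The inductive hypothesis applied to $h_1$ with coboundaries $Q_a'$ then gives $h_1 \in \smooth$, whence $h = \tilde h \cdot h_1 \in \smooth$.

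The main obstacle will be verifying carefully that the torus-case argument (Propositions \ref{prop_linear_equation}--\ref{main_prop_torus}) genuinely extends to the abstract abelian cocycle for $\bar h$ with values in an arbitrary $\rho_l$-module $V_i$. One must check that the decomposition of $V_i$ into coarse Lyapunov components aligns with the coarse Lyapunov foliations of $\rho$ on $\s(M)$ (which it does via Proposition \ref{lyapunov_exponent_correspondence}), and that the formal iteration obtained from repeated substitution converges in the distributional sense along every other coarse Lyapunov foliation -- this is exactly the scheme of Proposition \ref{prop_linear_equation} applied with $L$ taken to be a coarse Lyapunov subspace of $V_i$ and $B = \rho_l(b)$. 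Modulo this transfer, the inductive step is elementary group-theoretic manipulation with the exponential coordinates on $N$.
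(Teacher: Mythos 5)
Your proposal is correct and follows essentially the same route as the paper: downward induction along the derived series, projecting the cocycle equation to the abelian quotient $N_{i+1}\setminus N_i$ and solving it there by the torus-case machinery (Propositions \ref{prop_linear_equation} and \ref{main_prop_torus} via exponential mixing and the smooth coarse Lyapunov leaves), then dividing off a smooth lift to reduce the remaining $N_{i+1}(\R)$-valued part to the inductive hypothesis. Your lift $\tilde h$ via a smooth section of $\pi$ is just the paper's factor $h_0=\exp$ of the complementary subspace $\mathfrak{n}'_i$, and your formula for $Q'_a$ coincides with the paper's, so the argument is the same up to notation.
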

\begin{proof}[Proof of Theorem \ref{goal_thm} from Proposition \ref{prop_goal}]
We  conclude the proof by applying the proposition with $i=0$.
\end{proof}
\begin{proof}[Proof of Proposition \ref{prop_goal}]
We will prove the statement by induction on $i$.
\par When $i=r$, the statement is trivial. Suppose the statement holds for $i+1$. We need to prove the statement for 
$i$.
\par 
Let $\mathfrak{n}_i$ denote the Lie algebra of $N_i$. Then $\mathfrak{n}_i(\R)$ admits the following splitting
 $$\mathfrak{n}_i(\R) = \mathfrak{n}'_i \oplus  \mathfrak{n}_{i+1} (\R) ,$$
 where $\mathfrak{n}_{i+1}$ is the Lie algebra of $N_{i+1}$ and $\mathfrak{n}'_i$ is a subspace of $\mathfrak{n}_i(\R)$. Then we  write $h =  h_0 h_1$
 where $h_1 \in N_{i+1}(\R)$ and $h_0 = \exp \mathfrak{n}'_i$. To show $h$ is $\smooth$, it suffices to show that both $h_0$ and $h_1$ are $\smooth$.
 \par We first prove that $h_0$ is $\smooth$. Let $G_i := N_{i+1}\setminus N_i$ and $\pi_i: N_i(\R) \rightarrow G_i(\R) = \R^{l_i} $ denote the projection from $N_i(\R)$ to $G_i(\R)$. Let $\bar{h} = \pi_i \circ h$. Then $h_0$ is $\smooth$ if and only if $\bar{h}$ is $\smooth$ since $\mathfrak{n}'_i$ can be identified with the Lie algebra of $G_i(\R)$. Let $\bar{\rho}_l$ denote the induced action of $\rho_l$ on $\s(G_i(\Z) \setminus G_i(\R))$. Then it is easy to see that $\bar{h}$ satisfies 
 $$\bar{h} = \bar{\rho}_l^{-1} (a) \bar{h} \circ \rho(a) + \bar{Q}_a,$$
 where $\bar{Q}_a = \pi_i(Q_a) $. Then from our proof for the case of tori, we conclude that $\bar{h} $ is $\smooth$, and thus 
 $h_0$ is $\smooth$.
 \par Let us prove that $h_1$ is $\smooth$. In fact, since $h = h_0 h_1$, we have that for any $a \in \Z^k$,
 $$ h_0 (\overline{z})  h_1(\overline{z}) =  Q_a (\overline{z}) \rho^{-1}_l (a) (h_0 \circ \rho(a) (\overline{z})) \cdot \rho^{-1}_l(a)(h_1\circ \rho(a) (\overline{z})).$$
 Therefore 
 $$h_1(\overline{z}) = Q'_{a}(\overline{z}) \rho_l^{-1}(a)(h_1 \circ \rho(a)(\overline{z}) ),$$
 where 
 $$Q'_a (\overline{z}) =  (h_0(\overline{z}))^{-1} Q_a(\overline{z}) \rho^{-1}_l (a) (h_0 \circ \rho(a) (\overline{z})).$$
 Since $\rho_l$ preserves $N_{i+1}$, we have that $h_1(\overline{z}) $ and $ \rho_l^{-1}(a)(h_1 \circ \rho(a)(\overline{z}) )$ are both in 
 $N_{i+1}(\R)$. Therefore $Q'_a(\overline{z})$ is also in $N_{i+1}(\R)$. Moreover $Q_a$ is $\smooth$ since $Q_a$ and $h_0$ are $\smooth$. By
 our inductive hypothesis, we conclude that $h_1$ is $\smooth$.
 \par This completes the proof.

\end{proof}
\section{The low dimensional cases}  
\label{lowdimension}
\par In this section we consider the case $\dim M \leq 4$. Note that in this case the fundamental group of $M$ must be abelian, i.e., $M$ is homeomorphic to a torus.
\begin{proof}[Proof of Theorem \ref{goal_thm} for $\dim M \leq 4$ ]
\par For $\dim M \leq 3$, $M$ does not have any exotic differential structures (cf. \cite{rado1925} and \cite{moise1952}). Therefore, the argument for $\dim M \geq 5$ applies to this case.
\par For $\dim M =4$, we have that the homeomorphism
$$\phi \times \phi: M \times M \rightarrow \mathcal{M} \times \mathcal{M} $$
conjugates the action $\rho \times \rho$ to $\rho_l \times \rho_l$. It is easy to verify that $\rho \times \rho$ has no rank-one factor. By Theorem \ref{goal_thm}, we conclude that $\phi \times \phi$ is $\smooth$, since $\dim (M \times M) \geq 5$. This happens only if $\phi$ itself is $\smooth$.
\par This completes the proof.
\end{proof}
\medskip

\bibliography{expanding.bib}{}

\begin{thebibliography}{FKS13}

\bibitem[BG06]{bombieri_gubler2006}
Enrico Bombieri and Walter Gubler.
\newblock {\em Heights in {D}iophantine geometry}, volume~4 of {\em New
  Mathematical Monographs}.
\newblock Cambridge University Press, Cambridge, 2006.

\bibitem[CG90]{corwin-greenleaf}
Lawrence~J. Corwin and Frederick~P. Greenleaf.
\newblock {\em Representations of nilpotent {L}ie groups and their
  applications. {P}art {I}}, volume~18 of {\em Cambridge Studies in Advanced
  Mathematics}.
\newblock Cambridge University Press, Cambridge, 1990.
\newblock Basic theory and examples.

\bibitem[Dek12]{dekimpe2012}
Karel Dekimpe.
\newblock What an infra-nilmanifold endomorphism really should be{$\ldots$}.
\newblock {\em Topol. Methods Nonlinear Anal.}, 40(1):111--136, 2012.

\bibitem[FJ78]{farrelljones1978}
F.~T. Farrell and L.~E. Jones.
\newblock Examples of expanding endomorphisms on exotic tori.
\newblock {\em Invent. Math.}, 45(2):175--179, 1978.

\bibitem[FKS11]{kalinin_fisher_spatzier2011}
David Fisher, Boris Kalinin, and Ralf Spatzier.
\newblock Totally nonsymplectic {A}nosov actions on tori and nilmanifolds.
\newblock {\em Geom. Topol.}, 15(1):191--216, 2011.

\bibitem[FKS13]{kalinin_fisher_spatzier2013}
David Fisher, Boris Kalinin, and Ralf Spatzier.
\newblock Global rigidity of higher rank {A}nosov actions on tori and
  nilmanifolds.
\newblock {\em J. Amer. Math. Soc.}, 26(1):167--198, 2013.
\newblock With an appendix by James F. Davis.

\bibitem[Gro81]{gromov1981}
Mikhael Gromov.
\newblock Groups of polynomial growth and expanding maps.
\newblock {\em Inst. Hautes {\'E}tudes Sci. Publ. Math.}, (53):53--73, 1981.

\bibitem[GS14]{gorodnik_spatzier2014}
Alexander Gorodnik and Ralf Spatzier.
\newblock Exponential mixing of nilmanifold automorphisms.
\newblock {\em Journal d'Analyse Math{\'e}matique}, 123(1):355--396, 2014.

\bibitem[GS15]{gorodnik_spatzier_acta}
Alexander Gorodnik and Ralf Spatzier.
\newblock Mixing properties of commuting nilmanifold automorphisms.
\newblock {\em Acta Math.}, 215(1):127--159, 2015.

\bibitem[GT12]{green_tao2012}
Ben Green and Terence Tao.
\newblock The quantitative behaviour of polynomial orbits on nilmanifolds.
\newblock {\em Ann. of Math. (2)}, 175(2):465--540, 2012.

\bibitem[GT14]{GreenTao-erratum}
Ben Green and Terence Tao.
\newblock On the quantitative distribution of polynomial nilsequences---erratum
  [mr2877065].
\newblock {\em Ann. of Math. (2)}, 179(3):1175--1183, 2014.

\bibitem[HW14]{hertz_wang2014}
Federico~Rodriguez Hertz and Zhiren Wang.
\newblock Global rigidity of higher rank abelian anosov algebraic actions.
\newblock {\em Inventiones mathematicae}, 198(1):165--209, jan 2014.

\bibitem[KK01]{kalinin_katok2001}
Boris Kalinin and Anatole Katok.
\newblock Invariant measures for actions of higher rank abelian groups.
\newblock In {\em Proceedings of Symposia in Pure Mathematics}, volume~69,
  pages 593--638. Providence, RI; American Mathematical Society; 1998, 2001.

\bibitem[KS96]{katok_spatzier1996}
Anatole Katok and Ralf~J Spatzier.
\newblock Invariant measures for higher-rank hyperbolic abelian actions.
\newblock {\em Ergodic Theory and Dynamical Systems}, 16(04):751--778, 1996.

\bibitem[KS06]{kalinin_sadovskaya2006}
Boris Kalinin and Victoria Sadovskaya.
\newblock Global rigidity for totally nonsymplectic anosov $\z^k$ actions.
\newblock {\em Geometry \& Topology}, 10(2):929--954, 2006.

\bibitem[Lei05]{leibman2005}
A~Leibman.
\newblock Pointwise convergence of ergodic averages for polynomial sequences of
  translations on a nilmanifold.
\newblock {\em Ergodic Theory and Dynamical Systems}, 25(01):201--213, 2005.

\bibitem[LY85]{ledrappier_young1985}
Fran{\c{c}}ois Ledrappier and L-S Young.
\newblock The metric entropy of diffeomorphisms: part ii: relations between
  entropy, exponents and dimension.
\newblock {\em Annals of Mathematics}, pages 540--574, 1985.

\bibitem[Ma{\~n}77]{mane1977}
Ricardo Ma{\~n}{\'e}.
\newblock Quasi-anosov diffeomorphisms and hyperbolic manifolds.
\newblock {\em Transactions of the American Mathematical Society},
  229:351--370, 1977.

\bibitem[Moi52]{moise1952}
Edwin~E. Moise.
\newblock Affine structures in {$3$}-manifolds. {V}. {T}he triangulation
  theorem and {H}auptvermutung.
\newblock {\em Ann. of Math. (2)}, 56:96--114, 1952.

\bibitem[MQ01]{margulis_qian2001}
Gregory~A Margulis and Nantian Qian.
\newblock Rigidity of weakly hyperbolic actions of higher real rank semisimple
  lie groups and their lattices.
\newblock {\em Ergodic Theory and Dynamical Systems}, 21(01):121--164, 2001.

\bibitem[Par69]{parry1969}
William Parry.
\newblock Ergodic properties of affine transformations and flows on
  nilmanifolds.
\newblock {\em American Journal of Mathematics}, pages 757--771, 1969.

\bibitem[Rad25]{rado1925}
Tibor Rad{\'o}.
\newblock {\"U}ber den begriff der riemannschen fl{\"a}che.
\newblock {\em Acta Litt. Sci. Szeged}, 2:101--121, 1925.

\bibitem[Rag72]{raghunathan1972}
Madabusi~Santanam Raghunathan.
\newblock {\em Discrete subgroups of Lie groups}, volume~29.
\newblock Springer New York, 1972.

\bibitem[RH07]{hertz2007}
Federico Rodriguez~Hertz.
\newblock Global rigidity of certain abelian actions by toral automorphisms.
\newblock {\em J. Mod. Dyn.}, 1(3):425--442, 2007.

\bibitem[RT05]{rauch_taylor2005}
Jeffrey Rauch and Michael Taylor.
\newblock Regularity of functions smooth along foliations, and elliptic
  regularity.
\newblock {\em Journal of Functional Analysis}, 225(1):74--93, 2005.

\bibitem[Rue79]{ruelle1979}
David Ruelle.
\newblock Ergodic theory of differentiable dynamical systems.
\newblock {\em Publications Math{\'e}matiques de l'Institut des Hautes
  {\'E}tudes Scientifiques}, 50(1):27--58, 1979.

\bibitem[Shu69]{Shub1969}
Michael Shub.
\newblock Endomorphisms of compact differentiable manifolds.
\newblock {\em American Journal of Mathematics}, pages 175--199, 1969.

\bibitem[Shu70]{shub1970}
Michael Shub.
\newblock Expanding maps.
\newblock In {\em Global Analysis (Proc. Sympos. Pure Math., Vol. XIV,
  Berkeley, Calif., 1968)}, pages 273--276, 1970.

\bibitem[Spa16]{brinprize2015}
Ralf Spatzier.
\newblock On the work of {R}odriguez {H}ertz on rigidity in dynamics.
\newblock {\em J. Mod. Dyn.}, 10:191--207, 2016.

\bibitem[Sta99]{starkov1999}
AN~Starkov.
\newblock The first cohomology group, mixing, and minimal sets of the
  commutative group of algebraic actions on a torus.
\newblock {\em Journal of Mathematical Sciences}, 95(5):2576--2582, 1999.

\bibitem[Vee86]{Veech}
William~A. Veech.
\newblock Periodic points and invariant pseudomeasures for toral endomorphisms.
\newblock {\em Ergodic Theory Dynam. Systems}, 6(3):449--473, 1986.

\bibitem[Wal70]{walters1970}
Peter Walters.
\newblock Conjugacy properties of affine transformations of nilmanifolds.
\newblock {\em Theory of Computing Systems}, 4(4):327--333, 1970.

\bibitem[Wil74]{williams1974}
Robert~F Williams.
\newblock Expanding attractors.
\newblock {\em Publications Math{\'e}matiques de l'IH{\'E}S}, 43:169--203,
  1974.

\end{thebibliography}
\bibliographystyle{alpha}

\end{document}